\documentclass[1996/10/24]{amsart}

\usepackage{color}

\allowdisplaybreaks[3]

\usepackage{amsfonts}
\usepackage{epsf,latexsym,amsfonts,amsbsy,mathrsfs}
\usepackage{amsmath, amssymb, amsthm,amscd,amsxtra}
\usepackage[]{graphicx,subfigure}
\usepackage{epstopdf}
\usepackage{float}
\usepackage{multirow}
\usepackage{stmaryrd}
\usepackage{epsfig}
\usepackage{tikz}
\usetikzlibrary{positioning} 
\usepackage{xcolor}
\usepackage{marginnote}
\usepackage{sidenotes}

\usepackage{changes}
\usepackage{lipsum}

\newtheorem{theorem}{Theorem}[section]
\newtheorem{lemma}[theorem]{Lemma}

\theoremstyle{definition}

\newtheoremstyle{italicremark}
{3pt}
{3pt}
{\itshape}
{}
{\bfseries}
{.}
{ }
{}

\theoremstyle{italicremark}
\newtheorem{remark}[theorem]{Remark}

\numberwithin{equation}{section}

\newcommand{\ignore}[1]{}

\newcommand{\T}{\mathcal{T}}
\newcommand{\E}{\mathcal{E}}
\newtheorem{rem}{Remark}[section]

\begin{document}
	
	\title[]{An Energy-Stable, Bound-Preserving and Locally Conservative Numerical Framework for Multicomponent Gas Flow in Poroelastic Media}
	
	\author{Huangxin Chen}
	\address{School of Mathematical Sciences and Fujian Provincial Key Laboratory on Mathematical Modeling and
		High Performance Scientific Computing, Xiamen University, Fujian, 361005, China}
	\email{chx@xmu.edu.cn}
	\author{Yuxiang Chen}
	\address{School of Mathematical Sciences and Fujian Provincial Key Laboratory on Mathematical Modeling and
		High Performance Scientific Computing, Xiamen University, Fujian, 361005, China}
	\email{chenyuxiang@stu.xmu.edu.cn}
	\author{Jisheng Kou}
	\address{State Key Laboratory of Intelligent Deep Metal Mining and Equipment, Zhejiang Key Laboratory of Rock Mechanics and Geohazards, School of Civil Engineering, Shaoxing University, Shaoxing 312000, Zhejiang, China} 
	\email{jishengkou@163.com}
	\author{Shuyu Sun}
	\address{School of Mathematical Sciences, Tongji University, Shanghai 200092, China.}
	\email{suns@tongji.edu.cn}
	\subjclass[2010]{}
	
	\keywords{gas flow in porous media, poroelasticity model, energy stability, conservation of mass, adaptive time step,
		stabilized scheme}

	\date{}
	
	\begin{abstract}
		{In this paper, we propose a robust and efficient numerical framework for simulating multicomponent gas flow in poroelastic media, with a focus on preserving fundamental thermodynamic principles and ensuring computational reliability. The model captures the complex nonlinear coupling between multicomponent transport and solid deformation, while addressing critical numerical challenges such as mass conservation, energy stability, and molar density boundedness. To achieve this, we develop a stabilized discretization approach that guarantees the preservation of the original energy dissipation law and ensures the boundedness of each gas component's molar density. Furthermore, the proposed method incorporates an adaptive time-stepping strategy that dynamically adjusts the time step size based on the system's dynamics, significantly enhancing computational efficiency without compromising stability or accuracy. For spatial discretization, a mixed finite element method combined with an upwind scheme is employed for the flow and transport equations to ensure local mass conservation, while a discontinuous Galerkin (DG) method is utilized for discretizing the momentum equation of poroelasticity to effectively overcome numerical locking phenomena. Numerical experiments are presented to demonstrate the performance, robustness, and applicability of the method in simulating multicomponent gas flow under various scenarios.}
	\end{abstract}
	
	\maketitle
	
	\section{Introduction}
	The flow of multicomponent gases through poroelastic media is a fundamental process in numerous geophysical and engineering applications, including carbon sequestration, hydrogen storage, natural gas recovery, and subsurface contaminant transport \cite{El2018, Guo2018, Mikyska2014}. These processes are characterized by complex nonlinear couplings between multicomponent transport, phase behavior, and the mechanical response of the solid skeleton. A critical challenge in modeling such systems lies in the incorporation of the solid matrix deformation. Inspired by Biot's poroelasticity theory, contemporary models often solve mechanical equilibrium equations to obtain solid displacements, from which porosity changes are derived. While significant advances have been made in modeling single-phase and two-phase flow in poroelastic media \cite{Chencmame2024}, the study of multicomponent compressible flow in poroelastic media remains relatively scarce. Moreover, the thermodynamic consistency of these models, particularly their adherence to the second law of thermodynamics, has often been overlooked, despite its critical role in ensuring model reliability and numerical stability \cite{Chen2006,Kou2018,kousun2018,Casas-Vazquez2008}. Beyond mechanical coupling, accurate description of diffusion in multicomponent mixtures is essential. The Maxwell-Stefan (MS) equations, which balance driving forces with inter-component friction, provide a robust framework for modeling diffusion \cite{Bothe2011, Fowler2018, Hoteit2013, Kou2023, Leahy-Dios2007, Runstedtler2006, Wesselingh2000}. Recent extensions have incorporated friction between fluid components and the solid matrix \cite{Koupof2023, Krishna2018, Leonardi2010, Lipnizki2001}. In this work, we rigorously derive a novel generalized MS model that couples fluid transport with solid mechanics, ensuring that the resulting coeﬀicients satisfy irreversible thermodynamics principles, which is a cornerstone of non-equilibrium thermodynamics and include the linear phenomenological assumption, Onsager’s reciprocal relations, the energy dissipation law, the minimum dissipation principle, Curie's principle, and others.
	
	From a numerical perspective, the design of efficient and stable discretization methods for coupled nonlinear systems (e.g., those governing multiphysics processes in porous media) presents substantial challenges. An efficient scheme must concurrently satisfy several stringent requirements rooted in physical principles: it should preserve a discrete energy dissipation law to guarantee nonlinear stability and thermodynamic conformity  \cite{Eyre1998, Shen2018}, enforce the strict boundedness of physical quantities like molar density to preclude non-physical solutions \cite{Wise2012, Yang2022cutoff}, ensure local mass conservation for all components, and retain high computational efficiency. As our physical model is derived from strict thermodynamic consistency, the numerical methods must also uphold this property to avoid unstable or non-physical results \cite{Chen2006, Kou2018}.
	
	Various established strategies exist for handling Helmholtz free energy in gradient flow type systems. These range from the rigorously nonlinear convex splitting method \cite{Eyre1998, Qiao2014}, which guarantees stability at the cost of solving nonlinear algebraic systems, to more efficient linear strategies. The latter category includes versatile stabilization methods \cite{XuT2006, Ju2021, Li2023}, structure-preserving exponential time-differencing (ETD) schemes \cite{Beylkin1998, Cox2002}, and the widely used invariant energy quadratization (IEQ) \cite{Ju2017, Wang2017} and scalar auxiliary variable (SAV) approaches \cite{Shen2018, Shen2019}. While the IEQ and SAV schemes offer linear, easily implementable energy-stable schemes, they typically preserve a modified energy functional rather than the original one \cite{Shen2021}. Other notable methods, such as the energy factorization (EF) approach \cite{kou2020, kou2022}, operate within an efficient linear framework while successfully preserving the dissipation characteristics of the original energy. Building upon this foundation, the present work employs a tailored stabilization approach \cite{Kou2023} that enables a linear, energy-stable scheme while exactly preserving the original energy dissipation structure.
	
	Beyond energy stability, the boundedness of molar density is a fundamental requirement in simulating compressible flow in porous media. Designing numerical schemes that respect these bounds is crucial for obtaining physically reasonable solutions. The exploration of provably bound-preserving schemes has led to various strategies, including Lagrange multiplier \cite{Cheng2022, IICheng2022}, variational inequality \cite{Joshi2018, Yang2017}, post-processing strategy \cite{Zhang2010}, cut-off scheme \cite{Yang2022cutoff}, and nonlinear convex splitting approaches \cite{Wise2012, Dong2021, Shen2021}. Recent advances have further yielded high-order methods that simultaneously ensure energy stability and solution boundedness, including stabilized linear schemes \cite{Tang2016,Shen2016} and SAV methods combined with cut-off operations \cite{Akrivis2019, Shen2019}. Notably, a significant advancement by Ju et al. \cite{Ju2022, JuJCS2022} ingeniously combined SAV method with stabilized ETD schemes to construct numerical methods preserving both the original energy-dissipation law and the maximum bound principle for a class of Allen-Cahn type gradient flows. However, in the specific context of advection-dominated transport in porous media, many overshoot/undershoot correction techniques may compromise numerical accuracy and critical properties like local mass conservation.
		
    To address the intertwined challenges of computational efficiency, exact discrete energy dissipation, and strict solution boundedness, in this work we combine a robust adaptive time-stepping strategy with a modified linear stabilization method (cf. \cite{Kou2023}). The boundedness requirement entails the simultaneous control of each component molar density and the total molar density. The key design is to use the total molar density $c$ together with $M-1$ component densities as primary variables, which allows us to enforce both the total and component-wise bounds in a unified manner. At each time step, the resulting nonlinear fully discrete system is solved by a stabilized linear fixed-point iteration. We prove that the iteration is contractive and hence converges to a unique limit. Furthermore, the unique limit coincides with the solution of the original nonlinear discrete system. The boundedness properties established at the iterative level are inherited by the converged solution. In particular, all molar density bounds and the discrete energy dissipation law hold for the final iterate. To enable an efficient and fully explicit time step size update, the velocity term in the mass conservation equation is treated explicitly within the linear iterations, which yields an explicit formula for computing the admissible adaptive time step.
	
	For spatial discretization, we employ a mixed finite element method combined with an upwind scheme to ensure local mass conservation and stability when handling advection-dominated transport. Furthermore, the momentum balance equation for the solid skeleton is discretized using a discontinuous Galerkin method to eﬀiciently avoid numerical locking phenomena  \cite{Phillips2008}. The resulting framework aims to provide a reliable, efficient, and physically faithful numerical tool for simulating complex coupled nonlinear processes in porous media.

	The remainder of this paper is organized as follows. In Section \ref{sec-model}, we present the governing equations of our thermodynamically consistent model for multicomponent gas flow in poroelastic media. Section \ref{sec-semi} elaborates the semi-discrete numerical scheme and shows the energy stability of the proposed scheme. Section \ref{sec-fully} is devoted to the construction of a fully discrete numerical scheme, where its essential properties including energy dissipation and boundedness of molar densities are rigorously established, and an adaptive time-stepping algorithm is introduced. Numerical experiments in Section \ref{sec-num} validate the performance and robustness of the proposed method. Finally,
	we provide some concluding remarks in Section \ref{sec-con}.

	\section{Mathematical Model}\label{sec-model}

We consider a compressible fluid mixture composed of $M$ components. The molar density vector is denoted as $\mathbf{c} = [c_1, c_2, \ldots, c_M]^T$, with the total molar density given by $c = \sum_{i=1}^M c_i$. Assuming constant temperature $T$, we employ the Peng–Robinson equation of state to construct the Helmholtz free energy of the mixture.

For each pure component, the energy parameters $\alpha_i$ and $\beta_i$ are determined from its critical properties

\begin{equation}
	\alpha_i = 0.45724 \frac{R^2 T_{c,i}^2}{P_{c,i}} \left[1 + \chi_i \left(1 - \sqrt{T / T_{c,i}} \right) \right]^2, \quad
	\beta_i = 0.07780 \frac{R T_{c,i}}{P_{c,i}},
\end{equation}
where $T_{c,i}$ and $P_{c,i}$ are the critical temperature and pressure of component $i$, respectively. The parameter $\chi_i$ depends on the acentric factor $\omega_i$ as follows
\begin{equation}
	\chi_i = 
	\begin{cases}
		0.37464 + 1.54226\omega_i - 0.26992\omega_i^2, & \omega_i \leq 0.49, \\
		0.379642 + 1.485030\omega_i - 0.164423\omega_i^2 + 0.016666\omega_i^3, & \omega_i > 0.49.
	\end{cases}
\end{equation}

The parameters $\alpha(T)$ and $\beta$ are computed using the following mixing rules

\begin{equation}
	\alpha(T) = \sum_{i=1}^{M} \sum_{j=1}^{M} \frac{c_i c_j}{c^2} \left( \alpha_i \alpha_j \right)^{1/2} (1 - \iota_{ij}), \quad
	\beta = \sum_{i=1}^{M} \frac{c_i}{c} \beta_i,
\end{equation}
where $\iota_{ij}$ is the binary interaction parameter, satisfying $\iota_{ii} = 0$ and $\iota_{ij} = \iota_{ji}$. The mixture parameter $\beta$ is a convex combination of the pure component parameters $\beta_i$. Consequently, it is bounded by the minimum and maximum values of the $\beta_i$
\begin{equation}
	\min_{i} \beta_i \leq \beta \leq \max_{i} \beta_i.
\end{equation}

The Helmholtz free energy density determined by the Peng-Robinson equation of state $f(\mathbf{c})$ is expressed as \cite{Robinson1976}
\begin{equation}
	\begin{aligned}
		f(\mathbf{c}) &= RT \sum_{i=1}^{M} c_i (\ln c_i - 1) - cRT \ln(1 - \beta c) \\
		&\quad + \frac{\alpha(T)c}{2\sqrt{2} \beta} \ln \left( \frac{1 + (1 - \sqrt{2})\beta c}{1 + (1 + \sqrt{2})\beta c} \right) + \zeta_c(T)cRT,
	\end{aligned}
\end{equation}
where $R$ is the universal gas constant, taken to be 8.314 J/(mol$\cdot$ K). The term $\zeta_c(T)$ represents a temperature-dependent correction related to the heat capacity of the mixture. For isothermal systems, we assume $\zeta_c(T) = 0$.

The chemical potential of component $i$ is defined by
$
\mu_i := \frac{\partial f(\mathbf{c})}{\partial c_i},
$
and the pressure satisfies the thermodynamic identity
\begin{equation}\label{eq-p}
p = \sum_{i=1}^{M} c_i \mu_i - f(\mathbf{c}).
\end{equation}

To account for the mechanical response of the poroelastic media, we also incorporate an elastic deformation model for the solid matrix. The solid displacement field is denoted by $\boldsymbol{w}_s$, and the associated strain tensor is given by

\begin{equation}
	\boldsymbol{\varepsilon}(\boldsymbol{w}_s) = \frac{1}{2} \left( \nabla \boldsymbol{w}_s + \nabla \boldsymbol{w}_s^T \right).
\end{equation}

Under the premise of infinitesimal skeleton deformations \cite{coussy2004}, mathematically represented by the condition $|\nabla \boldsymbol{w}_s|\ll 1$, Biot’s poroelasticity framework is adopted to model the resultant variations in rock volume as well as rock deformation. Assuming a linear isotropic elastic solid, the Cauchy stress tensor is expressed as

\begin{equation}
	\boldsymbol{\sigma}_e = \gamma\, \text{tr}(\boldsymbol{\varepsilon}(\boldsymbol{w}_s)) \mathbf{I} + 2\eta \boldsymbol{\varepsilon}(\boldsymbol{w}_s),
\end{equation}
where $\gamma$ and $\eta$ are  Lam$\acute{e}$ first and second parameters, respectively, and $\text{tr}(\cdot)$ denotes the trace operator. The governing equation for the quasi-static deformation of the solid is

\begin{equation}
	-\nabla \cdot \boldsymbol{\sigma} = \mathbf{f}_{\text{ext}},
\end{equation}	
where 	$ \boldsymbol{\sigma} = \boldsymbol{\sigma}_e - \alpha p$, and  $\mathbf{f}_{\text{ext}}$ is the external body force. Here $\alpha $ is Biot's constant, quantifying the degree of coupling between the volumetric deformation of the porous solid skeleton and the pore fluid pressure change.

To more accurately describe mass transport in multicomponent mixtures, especially in poroelastic media, we adopt the Maxwell--Stefan--Darcy (MSD) framework as follows \cite{Kou2023}, rather than the classical Darcy model which cannot resolve interspecies interactions:
\begin{align}
	\sum_{j=1, j \neq i}^M \frac{c_i c_j\left(\mathbf{u}_{i}-\mathbf{u}_{j}\right)}{c^2 \phi D_{i,j}}+\frac{(1 - \phi)\mathbf{u}_{i}}{\phi D_{i, s}}=-c_i \nabla \mu_i, \quad i = 1,\cdots, M.
\end{align}
The MSD model captures frictional forces arising between different components and between fluid and solid phases by introducing binary diffusion coefficients \( D_{i,j} \) and fluid--solid interaction coefficients \( D_{i,s} \). These coefficients satisfy \( D_{i,j} = D_{j,i} > 0 \) for all \( i \neq j \), and \( D_{i,s} > 0 \), ensuring thermodynamic consistency and positive definiteness of diffusion. The model is consistent with Onsager's reciprocal principle, which demands symmetric relations between fluxes and driving forces, as established in the analysis of the derived coefficient matrices \cite{Koupof2023}.

To simplify the notation and facilitate the analysis, we rescale the MSD diffusion coefficients by introducing
\[
\mathscr{D}_{i,j} = \phi D_{i,j}, \quad \mathscr{D}_{i,s} = \frac{\phi D_{i,s}}{1 - \phi},
\]
where \( \phi \in (0,1) \) is the porosity of porous media. 
Based on the Darcy's law, we define
\begin{align*}
	\mathscr{D}_{i,s} = \frac{\phi D_{i,s}}{1 - \phi} = \frac{\kappa(\phi)}{\eta_i}\boldsymbol{K},
\end{align*}
where $\eta_i$ is the viscosity of component $i$, $\boldsymbol{K} = \mathcal{K} \boldsymbol{I}$ is the absolute permeability, $\mathcal{K}$ is the scalar and $\boldsymbol{I}$ is the identity matrix, the function $\kappa(\phi)$ is given by the Kozeny-Carman model as follows \cite{Chen2006}
\begin{equation}
	\kappa(\phi) = \left(\frac{\phi}{\phi_r}\right)^{3} \left(\frac{1-\phi_r}{1-\phi}\right)^{2}.
\end{equation}
Here, $\phi_r$ is the porosity at the reference pressure.

Combining mass conservation, the MSD model, thermodynamic relations, and the momentum conservation law for the solid phase, the governing equations for the multicomponent transport system are given as follows:
\begin{align}
	&\frac{\partial\left(\phi c_i\right)}{\partial t}+\nabla \cdot\left(c_i \boldsymbol{u}_i\right)=q_i, \quad i=1, \cdots, M, \qquad\qquad{\rm in}~\Omega_{t}:=\Omega\times(0,t), \label{eq-1-1}\\
	&\sum_{j=1, j \neq i}^M \frac{c_i c_j\left(\boldsymbol{u}_i-\boldsymbol{u}_j\right)}{c^2 \mathscr{D}_{i,j}}+\frac{\boldsymbol{u}_i}{\mathscr{D}_{i, s}}=-c_i \nabla \mu_i, \quad i=1, \cdots, M, \qquad    {\rm in}~\Omega_{t},\label{eq-1-2-c}\\
	&p=\sum_{i=1}^Mc_i \mu_i-f(\mathbf{c}), ~\qquad   \ \ \  {\rm in}~\Omega_{t},\\
	&\nabla\cdot\boldsymbol{\sigma}(\boldsymbol{w}_{s}, p) = 0, \qquad\qquad  {\rm in}~\Omega_{t},\label{eq-1-2-e}\\
	&\frac{\partial \phi}{\partial t} = \frac{1}{N}\frac{\partial p}{\partial t} + \alpha  \nabla\cdot \frac{\partial\boldsymbol{w}_s}{\partial t},  \qquad  {\rm in}~\Omega_{t}.\label{eq-1-6}
\end{align}
In this work, the computational domain is modeled as a closed system, i.e., no external forces are applied and no mass flux occurs across the boundary. Accordingly, pure Neumann boundary conditions are prescribed
\begin{align}\label{eq-BD}
	\boldsymbol{\sigma}(\boldsymbol{w}_s, p)\cdot\boldsymbol{n} = 0, \qquad {\rm on}~\partial\Omega,
\end{align}
representing the traction-free condition for the solid mechanics part, and
\begin{align}\label{eq-BD-1}
	\boldsymbol{u}_{i}\cdot\boldsymbol{n} = 0, \quad i=1, \cdots, M, \quad {\rm on}~\partial\Omega,
\end{align}
representing the impermeable boundary condition for the fluid transport part.

To close the system, initial conditions are prescribed for the porosity and the
component concentrations. At the initial time $t=0$, we assume
\begin{align}
	&\phi(\boldsymbol{x},0)=\phi_0(\boldsymbol{x}), \qquad {\rm in}~\Omega,\\
	&c_i(\boldsymbol{x},0)=c_{i,0}(\boldsymbol{x}), \quad i=1,\cdots,M, \qquad {\rm in}~\Omega.\label{eq-IC}
\end{align}

The total free energy of the system \eqref{eq-1-1}--\eqref{eq-1-6} in $\Omega$ is defined as \cite{Chencmame2024}
\[
E(t)=\int_{\Omega}\left(\phi f(\boldsymbol{c})
+\frac{1}{2}\boldsymbol{\sigma}_{e}(\boldsymbol{w}_{s}):\boldsymbol{\varepsilon}(\boldsymbol{w}_{s})
+\frac{1}{2N} p^2\right)\, d \boldsymbol{x}.
\]
Here $\phi f(c)$ is the fluid free energy per unit bulk volume, weighted by the porosity $\phi$ since only the pore space is occupied by the fluid; $\tfrac12\,\boldsymbol{\sigma}_e(\boldsymbol w_s):\boldsymbol{\varepsilon}(\boldsymbol w_s)$ is the recoverable elastic strain energy stored in the solid skeleton; and $\tfrac{1}{2N}p^2$ is the storage (compressibility) energy associated with the pore pressure, where $N>0$ is a Biot-type storage modulus.

To verify the thermodynamic consistency of \eqref{eq-1-1}--\eqref{eq-BD-1}, we derive the associated energy dissipation.
Decompose the total energy into the fluid-mixture part $\Psi_f$ and the solid-skeleton part $\Psi_s$ (cf.~\cite{coussy2004}),
\[
E=\Psi_f+\Psi_s,\qquad
\Psi_f:=\int_\Omega \phi f(\mathbf c)\,d\boldsymbol{x},\qquad
\Psi_s:=\int_{\Omega}\left(\frac{1}{2}\boldsymbol{\sigma}_{e}(\boldsymbol{w}_{s}):\boldsymbol{\varepsilon}(\boldsymbol{w}_{s})
+\frac{1}{2N} p^2\right)\, d \boldsymbol{x}.
\]
Differentiating with respect to time gives
\[
\frac{dE}{dt}=\frac{d\Psi_f}{dt}+\frac{d\Psi_s}{dt}.
\]

Using the chain rule, the thermodynamic identity \eqref{eq-p}, the mass conservation law
\eqref{eq-1-1}, and the impermeable boundary condition \eqref{eq-BD-1}, we obtain
\begin{align}
	\frac{d \Psi_f}{dt}
	&=\int_{\Omega}\frac{\partial(\phi f(\mathbf{c}))}{\partial t}\, d \boldsymbol{x}\nonumber\\
	&=\int_{\Omega}\left(f(\mathbf{c}) \frac{\partial \phi}{\partial t}
	+\phi \sum_{i=1}^{M}\mu_i\frac{\partial c_i}{\partial t} \right)\, d \boldsymbol{x}\nonumber\\
	&=\int_{\Omega}\left(\Big(\sum_{i=1}^M c_i \mu_i-p\Big)\frac{\partial \phi}{\partial t}
	+ \phi \sum_{i=1}^{M}\mu_i\frac{\partial c_i}{\partial t} \right)\, d \boldsymbol{x}\nonumber\\
	&=\int_{\Omega}\left(\sum_{i=1}^M\mu_i \frac{\partial (\phi c_i)}{\partial t}
	- p \frac{\partial \phi}{\partial t}\right)\, d \boldsymbol{x}\nonumber\\
	&=\int_{\Omega}\left(\sum_{i=1}^M\mu_i\Big(q_i-\nabla \cdot(c_i \boldsymbol{u}_i)\Big)
	- p \frac{\partial \phi}{\partial t}\right)\, d \boldsymbol{x}\nonumber\\
	&=\int_{\Omega}\left(\sum_{i=1}^M\mu_i q_i
	+\sum_{i=1}^M c_i\nabla\mu_i \cdot \boldsymbol{u}_i
	- p \frac{\partial \phi}{\partial t}\right)\, d \boldsymbol{x}.
	\label{eq:Psif_pre}
\end{align}
Taking the dot product of the MSD relation \eqref{eq-1-2-c} with $\boldsymbol{u}_i$ and summing over $i$ yields the
nonnegative dissipation 
\[
\sum_{i=1}^M c_i\nabla\mu_i \cdot \boldsymbol{u}_i
= -\sum_{i=1}^M \frac{|\boldsymbol{u}_i|^2}{\mathscr{D}_{i, s}}
-\sum_{i=1}^M\sum_{j=1, j \neq i}^{M}\frac{c_ic_j}{2 c^2 \mathscr{D}_{i,j}}\,
\bigl|\boldsymbol{u}_i-\boldsymbol{u}_j\bigr|^2 .
\]
Substituting this identity into \eqref{eq:Psif_pre} and recalling that the system is closed (thus $q_i\equiv 0$) give
\begin{align}
	\frac{d \Psi_f}{dt}
	&= 
	-\int_{\Omega}\left(\sum_{i=1}^M \frac{|\boldsymbol{u}_i|^2}{\mathscr{D}_{i, s}}
	+\sum_{i=1}^M\sum_{j=1, j \neq i}^{M}\frac{c_ic_j}{2 c^2 \mathscr{D}_{i,j}}\,
	\bigl|\boldsymbol{u}_i-\boldsymbol{u}_j\bigr|^2
	+ p \frac{\partial \phi}{\partial t}\right)\, d \boldsymbol{x}.
	\label{eq:Psif_final}
\end{align}

Let $\boldsymbol{v}_s:=\partial_t\boldsymbol{w}_s$.  By \eqref{eq-1-2-e} and \eqref{eq-1-6}, we get
\begin{align}
	\frac{d\Psi_s}{dt}
	&=\frac{d}{dt}\int_{\Omega}\left(\frac{1}{2}\boldsymbol{\sigma}_{e}(\boldsymbol{w}_{s}):\boldsymbol{\varepsilon}(\boldsymbol{w}_{s})
	+\frac{1}{2N} p^2\right)\, d \boldsymbol{x}\nonumber\\
	&=\int_{\Omega}\boldsymbol{\sigma}_{e}(\boldsymbol{w}_{s}):\boldsymbol{\varepsilon}(\boldsymbol{v}_s)\, d \boldsymbol{x}
	+\int_{\Omega}\frac{1}{N}p\,\partial_t p\, d \boldsymbol{x}.
	\label{eq:Psis_start}
\end{align}
We now express the first term in \eqref{eq:Psis_start} using the momentum equation \eqref{eq-1-2-e} and the traction-free
condition \eqref{eq-BD}. Testing \eqref{eq-1-2-e} by $\boldsymbol{v}_s$ and integrating by parts, we get
\begin{align*}
	0
	&=\int_\Omega (\nabla\cdot\boldsymbol{\sigma}(\boldsymbol{w}_s,p))\cdot\boldsymbol{v}_s\,d\boldsymbol{x}\\
	&= -\int_\Omega \boldsymbol{\sigma}(\boldsymbol{w}_s,p):\nabla\boldsymbol{v}_s\,d\boldsymbol{x}
	+\int_{\partial\Omega}(\boldsymbol{\sigma}(\boldsymbol{w}_s,p)\boldsymbol{n})\cdot\boldsymbol{v}_s\,ds\\
	&= -\int_\Omega \boldsymbol{\sigma}(\boldsymbol{w}_s,p):\boldsymbol{\varepsilon}(\boldsymbol{v}_s)\,d\boldsymbol{x}.
\end{align*}
Substituting $\boldsymbol{\sigma}=\boldsymbol{\sigma}_e-\alpha p\boldsymbol I$ into the above equation, we obtain
\begin{equation}
	\int_\Omega \boldsymbol{\sigma}_e(\boldsymbol{w}_s):\boldsymbol{\varepsilon}(\boldsymbol{v}_s)\,d\boldsymbol{x}
	=\alpha\int_\Omega p\,\nabla\cdot\boldsymbol{v}_s\,d\boldsymbol{x}.
	\label{eq:sigmae_identity}
\end{equation}
Next, using \eqref{eq-1-6} we rewrite the storage term as
\begin{equation}
	\frac{1}{N}p\,\partial_t p
	= p\left(\partial_t\phi-\alpha\nabla\cdot\boldsymbol{v}_s\right),
\end{equation}
then, we get
\begin{equation}
	\int_\Omega\frac{1}{N}p\,\partial_t p\,d\boldsymbol{x}
	=\int_\Omega p\,\partial_t\phi\,d\boldsymbol{x}
	-\alpha\int_\Omega p\,\nabla\cdot\boldsymbol{v}_s\,d\boldsymbol{x}.
	\label{eq:storage_rewrite}
\end{equation}
Combining \eqref{eq:Psis_start}--\eqref{eq:storage_rewrite}, the two $\alpha$-terms cancel exactly, yielding
\begin{equation}
	\frac{d\Psi_s}{dt}=\int_\Omega p\,\frac{\partial\phi}{\partial t}\,d\boldsymbol{x}.
	\label{eq:Psis_final}
\end{equation}

Adding \eqref{eq:Psif_final} and \eqref{eq:Psis_final} gives
\[
\frac{dE}{dt}
=
-\int_{\Omega}\left(\sum_{i=1}^M \frac{|\boldsymbol{u}_i|^2}{\mathscr{D}_{i, s}}
+\sum_{i=1}^M\sum_{j=1, j \neq i}^{M}\frac{c_ic_j}{2 c^2 \mathscr{D}_{i,j}}\,
\bigl|\boldsymbol{u}_i-\boldsymbol{u}_j\bigr|^2\right)\, d \boldsymbol{x}.
\]
In particular, for the closed system considered in this work,
\[
\frac{dE}{dt}
=
-\int_{\Omega}\left(\sum_{i=1}^M \frac{|\boldsymbol{u}_i|^2}{\mathscr{D}_{i, s}}
+\sum_{i=1}^M\sum_{j=1, j \neq i}^{M}\frac{c_ic_j}{2 c^2 \mathscr{D}_{i,j}}\,
\bigl|\boldsymbol{u}_i-\boldsymbol{u}_j\bigr|^2\right)\, d \boldsymbol{x}
\le 0,
\]
which confirms that the model satisfies the second law of thermodynamics.

\section{Semi-discrete Scheme}\label{sec-semi}

To develop a thermodynamically consistent numerical scheme, we now introduce a semi-discrete time approximation for the model \eqref{eq-1-1}--\eqref{eq-1-6}. Let $t^n = \sum^{n}_{j = 0}\tau_j$, where $\tau_j$ denotes the $j$-th time step, and denote the numerical approximation of a quantity $c_i$ at time $t^n$ by $c_i^n$. $c^n = \sum_{i=1}^Mc^n_i$ denotes the total molar density.  For the chemical potential, we consider a first-order semi-implicit approximation inspired by the stabilization method of the free energy \cite{kou2023}. Specifically, the chemical potential at time level $n+1$ is approximated by	
\begin{align}
	\mu^{n+1}_{i} = \mu_i(\mathbf{c}^{n}) + \theta_n R T\frac{c^{n+1}_{i} - c^{n}_{i}}{c^{n}(1-\beta^{\ast} c^{n})},\label{eq-3-7}\\
	\mu^{n+1} = \sum^{M}_{i=1}\mu_i(\mathbf{c}^{n}) + \theta_n R T\frac{c^{n+1}- c^{n}}{c^{n}(1-\beta^{\ast} c^{n})},\label{eq-3-8}
\end{align}
where $\theta_n$ is a stabilization parameter, $\beta^{\ast} = \max_{i} \beta_i$. This formula provides a stabilized discrete approximation of the chemical potential based on the previous step and a stabilization term involving the change in molar density.
To analyze the associated discrete energy behavior, we consider the Taylor expansion of the Helmholtz free energy density $f(\boldsymbol{c}^{n+1})$ around $\boldsymbol{c}^n$
as follows:	
\begin{align}
	f(\boldsymbol{c}^{n+1}) = f(\boldsymbol{c}^{n}) + \boldsymbol{\mu}(\boldsymbol{c}^n) \cdot (\boldsymbol{c}^{n+1} - \boldsymbol{c}^{n}) +\frac{1}{2} (\boldsymbol{c}^{n+1} - \boldsymbol{c}^{n})^{\top} f^{\prime\prime}(\boldsymbol{\varkappa}) (\boldsymbol{c}^{n+1} - \boldsymbol{c}^{n}), \label{f_n_1}
\end{align}
where $\boldsymbol{c}^{n} := [c^n_1, \cdots, c^n_M]$, $\boldsymbol{\mu}(\boldsymbol{c}^n) := [\mu_1(\boldsymbol{c}^n), \cdots, \mu_M(\boldsymbol{c}^n)]$, and $\varkappa$ is a weighted average of the two vectors  $\boldsymbol{c}^n$ and $\boldsymbol{c}^{n+1}$ with a certain scalar weight.

On the other hand, using the above definition of $\boldsymbol{\mu}^{n+1}$, we obtain
\begin{align}
	\boldsymbol{\mu}^{n+1} = \boldsymbol{\mu}(\boldsymbol{c}^n) + \frac{\theta_n RT}{c^n(1 - \beta^{\ast} c^n)} \mathbf{I}_{M \times M} (\boldsymbol{c}^{n+1} - \boldsymbol{c}^n), \label{mu_n_1}
\end{align}
where $\mathbf{I}_{M \times M}$ is the identity matrix in $\mathbb{R}^{M \times M}$. Then, substituting (\ref{mu_n_1}) into the energy expansion (\ref{f_n_1}), we arrive at
\begin{align}
	f(\boldsymbol{c}^{n+1}) &= f(\boldsymbol{c}^{n}) + \boldsymbol{\mu}^{n+1} \cdot (\boldsymbol{c}^{n+1} - \boldsymbol{c}^{n}) \nonumber\\
	& \quad + (\boldsymbol{c}^{n+1} - \boldsymbol{c}^{n})^{\top} \left(\frac{1}{2}f^{\prime\prime}(\boldsymbol{\varkappa}) - \frac{\theta_n RT}{c^n(1 - \beta^{\ast} c^n)} \mathbf{I}_{M \times M} \right) (\boldsymbol{c}^{n+1} - \boldsymbol{c}^{n}).
\end{align}
By Gershgorin's circle theorem \cite{Horn2003}, all eigenvalues of 
$
f^{\prime\prime}(\boldsymbol{\varkappa}) - \frac{\theta_n RT}{c^n(1 - \beta^{\ast} c^n)} \mathbf{I}_{M \times M}
$
lie within the union of intervals centered at the diagonal entries, with radii equal to the sum of the absolute values of the corresponding off-diagonal entries. 
If the stabilization parameter $\theta_n$ is chosen sufficiently large, each Gershgorin interval is shifted far enough to the left of the origin, ensuring that all eigenvalues are non-positive. Consequently, the matrix is negative semi-definite. As a result, the remaining quadratic term in the Taylor expansion is non-positive, and the following discrete energy inequality holds
\begin{align}\label{eq-3-6}
	f(\boldsymbol{c}^{n+1}) - f(\boldsymbol{c}^{n}) \leq \boldsymbol{\mu}^{n+1} \cdot (\boldsymbol{c}^{n+1} - \boldsymbol{c}^{n}),
\end{align}
which guarantees that the discrete free energy does not increase in time. This property is essential for ensuring the thermodynamic consistency and stability of the numerical scheme.

Based on the governing system \eqref{eq-1-1}–\eqref{eq-IC} and the energy stability considerations discussed above, we now propose a semi-discrete-in-time numerical scheme. This scheme is constructed using a stabilized chemical potential formulation \eqref{eq-3-7}-\eqref{eq-3-8}, ensuring the dissipation of a discrete energy functional. Let $\tau_n$ denote the time step size at step $n$, and we define the backward difference operator $D_{\tau_{n}}a^{n+1} := \frac{a^{n+1} - a^n}{\tau_{n}}$ for any time-dependent quantity $a$. The semi-discrete system at time level $n+1$ is given as follows:
\begin{align}
	&D_{\tau_n}\left(\phi^{n+1} c^{n+1}_i\right)+\nabla \cdot\left(c^{n}_i \boldsymbol{u}^{n+1}_i\right)= 0, \quad i=1, \cdots, M , \label{eq-2-1}\\
	&\sum_{j=1, j \neq i}^M \frac{c^{n}_i c^{n}_j\left(\boldsymbol{u}^{n+1}_i-\boldsymbol{u}^{n+1}_j\right)}{(c^{n})^2 \mathscr{D}_{i, j}}+\frac{\boldsymbol{u}^{n+1}_i}{\mathscr{D}_{i, s}}=-c^{n}_i \nabla \mu^{n+1}_i, \quad i=1, \cdots, M,\label{eq-2-2-c}\\
	&p^{n+1}=\sum_{i=1}^Mc^{n}_i \mu^{n+1}_i-f(\mathbf{c}^{n}),\\
	&\nabla\cdot\boldsymbol{\sigma}(\boldsymbol{w}^{n+1}_{s}, p^{n+1}) = 0, \label{eq-2-2-e}\\
	&D_{\tau_n}\phi^{n+1} = \frac{1}{N}D_{\tau_n} p^{n+1}+ \alpha  D_{\tau_n}\nabla\cdot \boldsymbol{w}^{n+1}_{s}.\label{eq-2-6}
\end{align}
\begin{theorem}
	Consider the semi-discrete system \eqref{eq-2-1}–\eqref{eq-2-6} and the boundary condition \eqref{eq-BD}-\eqref{eq-BD-1}.  The discrete total energy at time level $n$ is defined as
	\[
	E^n = \int_{\Omega} \left( \phi^n f(\boldsymbol{c}^n) + \frac{1}{2} \boldsymbol{\sigma}_e(\boldsymbol{w}^n_s) : \varepsilon(\boldsymbol{w}^n_s) + \frac{1}{2N}(p^n)^2 \right) \, d\boldsymbol{x}.
	\]
	Then the following inequality holds
	\[
	E^{n+1} - E^n \leq - \tau_n \int_{\Omega} \left( \sum_{i=1}^M\frac{|\boldsymbol{u}^{n+1}_i|^2}{\mathscr{D}_{i,s}} + \sum_{i=1}^M\sum_{\substack{j=1 \\ j \neq i}}^M \frac{c^{n}_i c^{n}_j|\boldsymbol{u}^{n+1}_i - \boldsymbol{u}^{n+1}_j|^2}{2(c^n)^2 \mathscr{D}_{i,j}} \right) \, d\boldsymbol{x}\leq 0.
	\]
	Hence, the discrete energy $E^n$ is non-increasing with respect to $n$. It is strictly increasing unless flow and diffusion reaches equilibrium, i.e. all $u_i^{n+1}$ are zero, which is equivalent to the condition that the pressure as well as the chemical potentials of all components become spatially constants.
\end{theorem}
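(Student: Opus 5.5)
We mimic the continuous energy computation at the discrete level, splitting $E^{n+1}-E^n$ into a fluid part and a solid part. The aim is for the pressure--porosity cross terms to cancel, as they did in the derivation of \eqref{eq:Psis_final}.

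\textbf{Fluid part.} Write
\[
\phi^{n+1}f(\mathbf c^{n+1})-\phi^n f(\mathbf c^n)=\phi^{n+1}\bigl(f(\mathbf c^{n+1})-f(\mathbf c^n)\bigr)+f(\mathbf c^n)(\phi^{n+1}-\phi^n).
\]
For the first term, with $\theta_n$ chosen large enough, the stabilized inequality \eqref{eq-3-6} bounds it by $\phi^{n+1}\sum_i\mu_i^{n+1}(c_i^{n+1}-c_i^n)$. For the second term, the discrete thermodynamic identity $p^{n+1}=\sum_i c_i^n\mu_i^{n+1}-f(\mathbf c^n)$ gives $f(\mathbf c^n)=\sum_i c_i^n\mu_i^{n+1}-p^{n+1}$. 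Adding the two pieces and using
\[
\phi^{n+1}(c_i^{n+1}-c_i^n)+c_i^n(\phi^{n+1}-\phi^n)=\phi^{n+1}c_i^{n+1}-\phi^nc_i^n,
\]
we obtain
\[
\int_\Omega\bigl(\phi^{n+1}f(\mathbf c^{n+1})-\phi^nf(\mathbf c^n)\bigr)\le\int_\Omega\sum_i\mu_i^{n+1}\bigl(\phi^{n+1}c_i^{n+1}-\phi^nc_i^n\bigr)-\int_\Omega p^{n+1}(\phi^{n+1}-\phi^n).
\]
Next we substitute \eqref{eq-2-1} and integrate by parts using \eqref{eq-BD-1}. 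This turns the first integral into $\tau_n\int_\Omega\sum_i c_i^n\nabla\mu_i^{n+1}\cdot\boldsymbol u_i^{n+1}$. We then test \eqref{eq-2-2-c} with $\boldsymbol u_i^{n+1}$ and sum over $i$, symmetrizing the double sum via $D_{i,j}=D_{j,i}$ and the symmetry of $c_i^nc_j^n$. This produces exactly $-\tau_n$ times the dissipation integrand in the statement.

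\textbf{Solid part.} Because $\boldsymbol\sigma_e$ is linear and symmetric, we have
\[
\tfrac12\boldsymbol\sigma_e(\boldsymbol w^{n+1}):\varepsilon(\boldsymbol w^{n+1})-\tfrac12\boldsymbol\sigma_e(\boldsymbol w^{n}):\varepsilon(\boldsymbol w^{n})\le\boldsymbol\sigma_e(\boldsymbol w^{n+1}):\varepsilon(\boldsymbol w^{n+1}-\boldsymbol w^n),
\]
the difference being $-\tfrac12\boldsymbol\sigma_e(\delta\boldsymbol w):\varepsilon(\delta\boldsymbol w)\le0$ with $\delta\boldsymbol w=\boldsymbol w^{n+1}-\boldsymbol w^n$. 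Similarly, $\frac1{2N}((p^{n+1})^2-(p^n)^2)\le\frac1N p^{n+1}(p^{n+1}-p^n)$. We test \eqref{eq-2-2-e} with $\boldsymbol w^{n+1}-\boldsymbol w^n$ and use the traction-free condition \eqref{eq-BD}, which gives
\[
\int_\Omega\boldsymbol\sigma_e(\boldsymbol w^{n+1}):\varepsilon(\boldsymbol w^{n+1}-\boldsymbol w^n)=\alpha\int_\Omega p^{n+1}\nabla\cdot(\boldsymbol w^{n+1}-\boldsymbol w^n).
\]
Multiplying \eqref{eq-2-6} by $\tau_np^{n+1}$ and integrating gives
\[
\frac1N\int_\Omega p^{n+1}(p^{n+1}-p^n)=\int_\Omega p^{n+1}(\phi^{n+1}-\phi^n)-\alpha\int_\Omega p^{n+1}\nabla\cdot(\boldsymbol w^{n+1}-\boldsymbol w^n).
\]
Hence the solid part is bounded by $\int_\Omega p^{n+1}(\phi^{n+1}-\phi^n)$. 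This cancels the pressure term from the fluid part, and the inequality in the theorem follows.

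\textbf{Main obstacle and equality case.} The delicate step is the fluid part. There the semi-implicit coefficients $c_i^n$, the stabilized $\mu_i^{n+1}$, and the lagged $f(\mathbf c^n)$ in the definition of $p^{n+1}$ must match exactly so that the product rule for $\phi c_i$ closes without leftover terms. We will also have to check that the quadratic remainder in \eqref{eq-3-6} keeps its sign when multiplied by $\phi^{n+1}$, which requires $\phi^{n+1}>0$. For the equality statement: the dissipation vanishes iff all $\boldsymbol u_i^{n+1}=0$, since $\mathscr D_{i,s}>0$. By \eqref{eq-2-2-c}, and assuming $c_i^n>0$, this holds iff every $\nabla\mu_i^{n+1}=0$. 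By the discrete Gibbs--Duhem relation this then forces the pressure to be spatially constant as well.
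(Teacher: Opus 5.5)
Your proposal is correct and follows the same route as the paper: the fluid estimate via \eqref{eq-3-6} plus the lagged thermodynamic identity for $p^{n+1}$, substitution of \eqref{eq-2-1}, testing \eqref{eq-2-2-c} with $\boldsymbol u_i^{n+1}$, and a solid estimate whose $\int_\Omega p^{n+1}(\phi^{n+1}-\phi^n)$ term cancels the fluid one. Your explicit polarization argument and testing of \eqref{eq-2-2-e} with $\boldsymbol w^{n+1}-\boldsymbol w^n$ just spell out what the paper asserts in \eqref{eq-3-18}. One caveat, shared with the paper's own argument: with the stabilized $\mu_i^{n+1}$ the discrete Gibbs--Duhem relation is not exact, since $\nabla p^{n+1}=\sum_i c_i^n\nabla\mu_i^{n+1}+\sum_i(\mu_i^{n+1}-\mu_i(\mathbf c^n))\nabla c_i^n$. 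So the claim that the pressure is spatially constant at equilibrium needs more than that appeal.
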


\begin{proof}
	The total energy consists of two parts
	\[
	E^n = \Psi_{\text{f}}^n + \Psi_{\text{s}}^n,
	\]
	where 
	$\Psi_{\text{f}}^n = \int_\Omega \phi^n f(\boldsymbol{c}^n)\, d\boldsymbol{x}, \quad
	\Psi_{\text{s}}^n = \int_\Omega \frac{1}{2}\boldsymbol{\sigma}_e(\boldsymbol{w}^n_s):\varepsilon(\boldsymbol{w}^n_s) + \frac{1}{2N}(p^n)^2 \, d\boldsymbol{x}.$
	
	By multiplying both sides of  \eqref{eq-2-1} by the corresponding chemical potential \( \mu_i^{n+1} \), summing over all components \( i = 1, \dots, M \), and integrating over the domain \( \Omega \), and further applying boundary conditions \eqref{eq-BD-1}, we obtain
	\begin{align}\label{eq-3-14}
		\frac{1}{\tau_n} \int_\Omega \sum_{i=1}^M\mu^{n+1}_i (\phi^{n+1} c^{n+1}_i - \phi^n c^n_i) \, d\boldsymbol{x}
		= -\int_\Omega \sum_{i=1}^Mc^n_i \boldsymbol{u}^{n+1}_i \cdot \nabla \mu^{n+1}_i \, d\boldsymbol{x}.
	\end{align} 
	We multiply both sides of \eqref{eq-2-2-c} by $\boldsymbol{u}^{n+1}_i$, then integrate over the domain $\Omega$, Summing over all $i$, we get
	\begin{align}\label{eq-3-15}
		\int_\Omega \sum_{i=1}^Mc^n_i \boldsymbol{u}^{n+1}_i \cdot \nabla \mu^{n+1}_i \, d\boldsymbol{x}
		= -\int_\Omega \sum_{i=1}^M\left( \frac{|\boldsymbol{u}^{n+1}_i|^2}{\mathscr{D}_{i,s}} + \sum_{j\neq i}^{M}\frac{c^{n}_i c^{n}_j|\boldsymbol{u}^{n+1}_i - \boldsymbol{u}^{n+1}_j|^2}{2(c^n)^2 \mathscr{D}_{i,j}} \right) \, d\boldsymbol{x}.
	\end{align}
	By using \eqref{eq-3-6}, we can obtain
\begin{align}\label{eq-3-16}
	&\int_{\Omega} \left( (\phi^{n+1}f(c^{n+1}) - \phi^n f(c^n)) + p^{n+1}(\phi^{n+1} - \phi^n) \right) \, d\boldsymbol{x} \\\nonumber
	&\leq \frac{1}{\tau_n} \int_{\Omega} \Bigg( f(c^n)(\phi^{n+1} - \phi^n) + \phi^{n+1}\boldsymbol{\mu}^{n+1} \cdot (c^{n+1} - c^n) \\\nonumber
	&\quad + ( \sum_{i=1}^M c_i^n \mu_i^{n+1} - f(c^n) ) (\phi^{n+1} - \phi^n) \Bigg) \, d\boldsymbol{x} \\\nonumber
	&= \frac{1}{\tau_n} \int_{\Omega} \sum_{i=1}^M \mu_i^{n+1} \left( \phi^{n+1}c_i^{n+1} - \phi^n c_i^n \right) \, d\boldsymbol{x}.
\end{align}
	By \eqref{eq-3-14}, \eqref{eq-3-15} and \eqref{eq-3-16}, we get
	
	\begin{align}\label{eq-3-17}
		&\frac{1}{\tau_n}(\Psi_{\text{f}}^{n+1} - \Psi_{\text{f}}^n + \int_\Omega p^{n+1}(\phi^{n+1}-\phi^n)\,d\boldsymbol{x})\\\nonumber
		&\leq -\int_\Omega \left( \sum_{i=1}^M\frac{|\boldsymbol{u}^{n+1}_i|^2}{\mathscr{D}_{i,s}} + \sum_{j\neq i}^{M} \frac{c^{n}_i c^{n}_j|\boldsymbol{u}^{n+1}_i - \boldsymbol{u}^{n+1}_j|^2}{2(c^n)^2 \mathscr{D}_{i,j}} \right) \, d\boldsymbol{x}.
	\end{align}	
	From the energy estimate derived from \eqref{eq-2-2-e} and \eqref{eq-2-6}, we have
	\begin{align}\label{eq-3-18}
		&\frac{1}{\tau_n} \left( \eta\|\varepsilon(\boldsymbol{w}^{n+1}_{s})\|^{2}_{L^{2}} - \eta\|\varepsilon(\boldsymbol{w}^{n}_{s})\|^{2}_{L^{2}}
		+ \frac{\gamma}{2}\|\nabla\cdot\boldsymbol{w}_{s}^{n+1}\|^{2} - \frac{\gamma}{2}\|\nabla\cdot\boldsymbol{w}^{n}_{s}\|^{2} \right. \\\nonumber
		&\left.\quad + \frac{1}{2N}\|p^{n+1}\|^{2}_{L^2} - \frac{1}{2N}\|p^{n}\|^{2}_{L^2} - (p^{n+1}, \phi^{n+1} - \phi^n) \right) \leq 0.
	\end{align}
	Adding \eqref{eq-3-17} and \eqref{eq-3-18}, we obtain
	\[
	\frac{1}{\tau_n}(E^{n+1} - E^n) \leq -\int_\Omega \left( \sum_{i=1}^M\frac{|\boldsymbol{u}^{n+1}_i|^2}{\mathscr{D}_{i,s}} + \sum_{j\neq i}^{M} \frac{c^{n}_i c^{n}_j|\boldsymbol{u}^{n+1}_i - \boldsymbol{u}^{n+1}_j|^2}{2(c^n)^2 \mathscr{D}_{i,j}} \right) \, d\boldsymbol{x}.
	\]
	
	Since all coefficients in the dissipation term are nonnegative, $E^{n+1}=E^n$ implies
	$\boldsymbol{u}_i^{n+1}\equiv \boldsymbol{0}$ for all $i$.
	Conversely, if $\boldsymbol{u}_i^{n+1}\equiv \boldsymbol{0}$, then \eqref{eq-2-2-c} reduces to
	$c_i^n\nabla\mu_i^{n+1}=\boldsymbol{0}$, hence $\nabla\mu_i^{n+1}=\boldsymbol{0}$ on $\{c_i^n>0\}$.
	Using the Gibbs--Duhem relation at constant temperature, $dp=\sum_{i=1}^M c_i\,d\mu_i$, we obtain
	$\nabla p^{n+1}=\sum_{i=1}^M c_i^{n+1}\nabla\mu_i^{n+1}=\boldsymbol{0}$, i.e., $p^{n+1}$ is spatially constant.
	This completes the proof.
\end{proof}

\section{Fully discrete Scheme}\label{sec-fully}
This section presents the fully discrete numerical scheme for the governing equation system. Temporal discretization is achieved using a semi-implicit discrete scheme. For spatial discretization, we employ a strategic combination of finite element methods tailored to the specific physics of each sub-problem. The mass conservation equations and the MSD model are discretized using a mixed finite element method. This approach allows for the accurate computation of molar density and velocity, ensuring local mass conservation. The momentum equation for the elastic solid is discretized using a discontinuous Galerkin (DG) method. This choice is particularly effective in mitigating numerical locking phenomena, a common issue in computational elasticity when modeling nearly incompressible materials. 

To rigorously enforce boundedness for multicomponent mixtures, a primary challenge stems from the need to satisfy a set of interdependent constraints: each molar density $c_i$ must satisfy the physical bound $0 <= \beta_i c_i < 1$, and concurrently, the total molar density $c = \sum_{i=1}^{M} c_i$ must obey the same strict bound $0 < \beta c < 1$, a condition intrinsic to the form of the Helmholtz free energy. In the conventional formulation, the $M$ component densities $(c_1, \dots, c_M)$ are taken as independent primary variables, and the essential constraints, in particular the upper bounds on $c_i$ and the total density $c$ are implicit and strongly coupled. This makes it extremely challenging to design algorithms that can simultaneously and strictly enforce this complete set of bounds. To circumvent this limitation, we develop a carefully tailored change of variables. A key observation is that the molar density of the $M$-th component can be algebraically eliminated, expressing it as a function of the remaining $M-1$ component densities and the total molar density $c$:
\begin{equation}
	c_{M} = c - \sum_{i=1}^{M-1} c_{i}.
\end{equation}
This simple yet pivotal relation allows us to reformulate the governing system, adopting the set $(c_1, \dots, c_{M-1}, c)$ as the new primary unknowns.


Next, we introduce the finite element spaces and associated notation required for the discretization of the multicomponent flow system. Let $\mathcal{T}_h$ be a family of conforming, shape-regular, and quasi-uniform triangulations of the domain $\Omega$, where $h$ denotes the maximum element diameter. For any element $K \in \mathcal{T}_h$, we denote by $\mathcal{E}_h$ the set of all facets (edges in 2D, faces in 3D) of the triangulation, and by $\mathcal{E}_h^I$ the subset of interior facets. For two adjacent elements $K^+, K^-$ sharing a common facet $e = \partial K^+ \cap \partial K^- \in \mathcal{E}_h^I$, we fix a unit normal vector $\boldsymbol{n}_e$ pointing from $K^+$ to $K^-$.

We define the average and jump operators for any piecewise smooth function $\psi$ on an interior face $e$
\[
\{\psi\}_e := \frac{1}{2}\left(\psi^+ + \psi^-\right), \quad [\psi]_e := \psi^+ - \psi^-,
\]
where $\psi^\pm$ denote the traces of $\psi$ from $K^\pm$ onto $e$, respectively.

The $L^2$ inner products over domains and facets are defined in the standard manner. For any domain $D \subseteq \Omega$ and functions $\xi, \zeta$ (scalar- or vector-valued), we define
\[
(\xi, \zeta)_D := \int_D \xi \zeta  d\boldsymbol{x}, \quad \langle \xi, \zeta \rangle_e := \int_e \xi \zeta  ds.
\]
The corresponding norms are denoted by $\|\cdot\|_{L^2(D)}$ and $\|\cdot\|_{L^2(e)}$, respectively.

We employ a mixed finite element approximation that uses different function spaces tailored to the specific physical fields, the solid displacement $\boldsymbol{w}_s$ is approximated in the space of piecewise linear vector-valued functions
\[
\boldsymbol{\mathcal{V}}_h := \left\{ \boldsymbol{v}_h \in L^2(\Omega)^d : \boldsymbol{v}_h|_K \in [\mathbb{P}_1(K)]^d, \ \forall K \in \mathcal{T}_h \right\},
\]
the Darcy velocity $\boldsymbol{u}_i$ is discretized using the lowest-order Raviart-Thomas space to ensure exact mass conservation at the discrete level
\[
\boldsymbol{\mathcal{U}}_h := \left\{ \boldsymbol{v}_h \in H(\text{div}; \Omega) : \boldsymbol{v}_h|_K \in RT_0(K), \ \forall K \in \mathcal{T}_h \right\},
\]
where $RT_0(K) := [\mathbb{P}_0(K)]^d + \boldsymbol{x} \mathbb{P}_0(K)$ denotes the local Raviart-Thomas space on element $K$. We also define its subspace with homogeneous normal flux on the boundary
\[
\boldsymbol{\mathcal{U}}_h^0 := \left\{ \boldsymbol{v}_h \in \boldsymbol{\mathcal{U}}_h : \boldsymbol{v}_h \cdot \boldsymbol{n} = 0 \text{ on } \partial\Omega \right\}.
\]
The scalar unknowns, molar density $c_i$, pressure $p$, and porosity $\phi$, are approximated in the space of piecewise constant functions
\[
\mathcal{Q}_h := \left\{ q_h \in L^2(\Omega) : q_h|_K \in \mathbb{P}_0(K), \ \forall K \in \mathcal{T}_h \right\}.
\]

To discretize the advective terms in the mass conservation equations, we employ an upwind scheme. The upwind value of a density $c_h$ on a facet $e$ is defined based on the direction of the velocity flux
\begin{align}
	c^{n*}_{h}= \begin{cases}c^{n}_{h}|_{K^{+}}, & \boldsymbol{u}_{i,h}^{n} \cdot \boldsymbol{n}_{e} \geq 0, \\ c^{n}_{h}|_{K^{-}}, & \boldsymbol{u}_{i,h}^{n} \cdot \boldsymbol{n}_{e} <0. \end{cases}
\end{align}

The fully discrete problem seek $(\boldsymbol{w}^{n+1}_{s,h}, \boldsymbol{u}^{n+1}_{i,h} | _{i=1,\cdots, M}, c^{n+1}_{i,h}| _{i=1,\cdots, M-1}, c^{n+1}_{h}, p^{n+1}_h$, $\phi^{n+1}_h)$ $\in$ $\boldsymbol{\mathcal{V}}_h$ $\times \boldsymbol{\mathcal{U}}_h$ $\times \mathcal{Q}_h$ $\times \mathcal{Q}_h$ $\times \mathcal{Q}_h$ $\times \mathcal{Q}_h$ such that the weak form of the governing equations is satisfied for all admissible test functions in the corresponding discrete spaces.
\begin{subequations}\label{eq-full-discrete}
    \begin{align}
	&(D_{\tau_n} (\phi^{n+1}_{h}c^{n+1}_{h}), q_h) + 
	\sum_{e\in \mathcal{E}_h^I}\langle \sum_i c_{i,h}^{n*}\boldsymbol{u}_{i,h}^{n+1}\cdot\boldsymbol{n}, [q_h]\rangle_e + \sum_{e\in \mathcal{E}_h^I}\frac{\varsigma}{h_e}\langle \mathcal{K}_{e} [\mu^{n+1}_{h}], [q_h] \rangle_e= 0,\label{eq-4-1-1}\\ 
	&(D_{\tau_n} (\phi^{n+1}_{h}c^{n+1}_{i,h}), q_h) + 
	\sum_{e\in \mathcal{E}_h^I}\langle c_{i,h}^{n*}\boldsymbol{u}_{i,h}^{n+1}\cdot\boldsymbol{n}, [q_h]\rangle_e \label{eq-4-1-2}\\\nonumber
	&\qquad\qquad+ \sum_{e\in \mathcal{E}_h^I}\frac{\varsigma}{h_e}\langle \mathcal{K}_{e}[\mu^{n+1}_{i,h}], [q_h] \rangle_e= 0, \quad i=1, \cdots, M-1, \\
	&\sum_{j=1}^{i} \left(\frac{c^{n}_{i,h} c^{n}_{j,h}\left(\boldsymbol{u}^{n+1}_{i,h} -\boldsymbol{u}^{n+1}_{j,h}\right)}{(c^{n}_{h})^2 \mathscr{D}_{i, j}}, \mathbf{v}_{h}\right)+\sum_{j=i+1}^{M} \left(\frac{c^{n}_{i,h} c^{n}_{j,h}\left(\boldsymbol{u}^{n+1}_{i,h} -\boldsymbol{u}^{n+1}_{j,h}\right)}{(c^{n}_{h})^2 \mathscr{D}_{i, j}}, \mathbf{v}_{h}\right)\label{eq-4-1-3}\\\nonumber
	&\qquad\qquad\qquad+\left(\frac{\boldsymbol{u}^{n+1}_{i,h}}{\mathscr{D}_{i, s}},\mathbf{v}_{h}\right)=\sum_{e\in \mathcal{E}_h^I}\langle [\mu_{i,h}^{n+1}],c^{n*}_{i,h}\mathbf{v}_{h}\cdot\boldsymbol{n}\rangle_e , \quad i=1, \cdots, M,\\
	&(p^{n+1}_h, z_h) = (\sum_{i=1}^{M}c_{i,h}^{n}\mu_{i,h}^{n+1} - f(\mathbf{c}_{h}^{n}), z_h),\label{eq-4-1-4}\\
	&\mathcal{A}(\boldsymbol{w}^{n+1}_{s,h},  p_h^{n+1},\boldsymbol{\upsilon}_h) = 0, \label{eq-4-1-5} \\
	&(D_{\tau_n} \phi^{n+1}_h,\varphi_h) = \frac{1}{N}(D_{\tau_n} p^{n+1}_h, \varphi_h) + \alpha(D_{\tau_n} (\nabla\cdot \boldsymbol{w}_{s,h}^{n+1}), \varphi_h)\label{eq-4-1-6}\\\nonumber
	&\qquad\qquad\qquad\qquad\qquad\qquad\qquad-\alpha\sum_{e \in \mathcal{E}_h^I}\langle\{\varphi_h\boldsymbol{n}_e\}, [D_{\tau_n}\boldsymbol{w}^{n+1}_{s,h}]\rangle_e,
\end{align}
\end{subequations}
where $\mathcal{K}_{e} > 0$ represents the average of the components of $\boldsymbol{K}$ on the edge $e$ and $\mathcal{A}(\boldsymbol{w}_{s,h}, p_h,\textbf{v}_h)$ is the bilinear form defined as 
	\begin{align}
		\mathcal{A}(\boldsymbol{w}_{s,h}, p_h,\textbf{v}_h):= &\sum_{K \in \mathcal{K}_h}(\mathbf{\sigma}_e(\boldsymbol{w}_{s,h}), \varepsilon(\textbf{v}_h))_K-\sum_{e \in \mathcal{E}^{I}_h}\langle\{\mathbf{\sigma}_e(\boldsymbol{w}_{s,h}) \boldsymbol{n}_e\}, [\textbf{v}_h]\rangle_e \\\nonumber
		&- \alpha\sum_{K \in \mathcal{K}_h}(p_h, \nabla\cdot \mathbf{v}_h)_{K}+\alpha\sum_{e \in \mathcal{E}^{I}_h}\langle\{p_h\boldsymbol{n}_e\}, [\textbf{v}_h]\rangle_e\\\nonumber
		&-\sum_{e \in \mathcal{E}^{I}_h}\langle [\boldsymbol{w}_{s,h}],\left\{\mathbf{\sigma}_{e}(\textbf{v}_h) \boldsymbol{n}_e\right\}\rangle_e+\sum_{e \in \mathcal{E}^{I}_h} \frac{\varsigma_1}{h_e}\langle[\boldsymbol{w}_{s,h}], [\textbf{v}_h]\rangle_e.
	\end{align}.
\begin{rem}\label{re-4-1}
	In \eqref{eq-4-1-1}--\eqref{eq-4-1-2} we solve for the total molar density $c_h^{n+1}$ together with the first $M\!-\!1$
	component densities $\{c_{i,h}^{n+1}\}_{i=1}^{M-1}$. The remaining component is recovered by the algebraic constraint
	\[
	c_{M,h}^{n+1}:=c_h^{n+1}-\sum_{i=1}^{M-1}c_{i,h}^{n+1}\qquad(\text{elementwise in }\mathcal Q_h),
	\]
	so that the full set $\{c_{i,h}^{n+1}\}_{i=1}^M$ is always available.
	
	This formulation is unbiased with respect to the choice of the dependent component. Indeed, subtracting the sum of
	\eqref{eq-4-1-2} over $i=1,\dots,M-1$ from \eqref{eq-4-1-1} yields the discrete mass balance for $c_{M,h}^{n+1}$.
	Hence one may equivalently enforce the $M$-th component balance in place of the total-balance equation.
	The same argument applies if any other component is selected as the recovered one.
	Therefore, \eqref{eq-4-1-1}--\eqref{eq-4-1-2} provides a nonredundant representation of the full $M$-component conservation system:
	it is algebraically equivalent to solving all $M$ component conservation laws simultaneously, while avoiding redundant unknowns.
\end{rem}
\begin{theorem}
	Assume the boundary condition \eqref{eq-BD}-\eqref{eq-BD-1} holds. Then, the total free energy defined by the fully discrete scheme is dissipated over time, i.e.,
	$$D_{\tau_n} E^{n+1}_{h}  \leq 0,$$
	where the discrete energy $E^{n+1}_h$ is given by
	\begin{align}
		E^{n+1}_h =& \sum_{K \in \mathcal{T}_h}\int_{K}\left(\phi^{n+1}_{h}f(\boldsymbol{c}^{n+1}_h)+\frac{1}{2}\boldsymbol{\sigma}_{e}(\boldsymbol{w}^{n+1}_{s,h}):\varepsilon(\boldsymbol{w}^{n+1}_{s,h}) + \frac{1}{2N}|p^{n+1}_{h}|^2\right) ~ d \boldsymbol{x}\\\nonumber
		&+\sum_{e \in \mathcal{E}_{h}^I} \frac{\varsigma_1}{2h_e}\langle[\boldsymbol{w}^{n+1}_{s,h}], [ \boldsymbol{w}^{n+1}_{s,h}]\rangle_e
		-\sum_{e \in \mathcal{E}_{h}^I}\langle\{\boldsymbol{\sigma}_e(\boldsymbol{w}^{n+1}_{s,h}) \boldsymbol{n}_e\}, [\boldsymbol{w}^{n+1}_{s,h}]\rangle_e.
	\end{align}
\end{theorem}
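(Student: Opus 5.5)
The plan is to repeat the semi-discrete energy argument at the fully discrete level, with the upwind fluxes, the penalty terms and the DG consistency terms handled by discrete summation by parts. Split $E_h^{n+1}=\Psi_{f,h}^{n+1}+\Psi_{s,h}^{n+1}$. Here $\Psi_{f,h}$ is the fluid part $\sum_K\int_K\phi_h f(\boldsymbol c_h)$, and $\Psi_{s,h}$ collects the elastic energy, the storage energy and the two DG interface terms.

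\emph{Fluid part.} Because $\mathcal Q_h$ consists of piecewise constants, the pointwise inequality \eqref{eq-3-6} holds elementwise. Applying it exactly as in \eqref{eq-3-16} gives
\[
\Psi_{f,h}^{n+1}-\Psi_{f,h}^n+(p_h^{n+1},\phi_h^{n+1}-\phi_h^n)\le \sum_{i=1}^M(\mu_{i,h}^{n+1},\phi_h^{n+1}c_{i,h}^{n+1}-\phi_h^nc_{i,h}^n),
\]
where we use \eqref{eq-4-1-4} with $z_h=\phi_h^{n+1}-\phi_h^n\in\mathcal Q_h$. The right-hand side is written in terms of the original $M$ balances. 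By Remark \ref{re-4-1}, \eqref{eq-4-1-1}--\eqref{eq-4-1-2} are equivalent to the $M$ component equations.

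\emph{Component balances and friction.} Test the $i$-th component equation with $q_h=\mu_{i,h}^{n+1}$ and sum over $i$. The flux terms become $\sum_e\langle c_{i,h}^{n*}\boldsymbol u_{i,h}^{n+1}\cdot\boldsymbol n,[\mu_{i,h}^{n+1}]\rangle_e$. Test \eqref{eq-4-1-3} with $\mathbf v_h=\boldsymbol u_{i,h}^{n+1}\in\boldsymbol{\mathcal U}_h^0$ and sum over $i$. Then:
\begin{itemize}
\item the right-hand side of the tested \eqref{eq-4-1-3} equals exactly this flux term, so the two cancel;
\item the symmetrization $D_{i,j}=D_{j,i}$ turns the left-hand side into $\sum_i\|\boldsymbol u_{i,h}^{n+1}\|^2_{\mathscr D_{i,s}^{-1}}$ plus the pairwise friction term.
\end{itemize}
The penalty terms contribute $-\sum_e\frac{\varsigma}{h_e}\mathcal K_e\|[\mu_{i,h}^{n+1}]\|_{L^2(e)}^2\le0$. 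This yields $\tau_n^{-1}(\Psi_{f,h}^{n+1}-\Psi_{f,h}^n+(p_h^{n+1},\phi_h^{n+1}-\phi_h^n))\le -(\text{dissipation})\le 0$.

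The delicate point in this step is consistency. When the total-density equation is used, its penalty involves $[\mu_h^{n+1}]$ rather than $\sum_i[\mu_{i,h}^{n+1}]$. We therefore interpret the scheme through the equivalent component form of Remark \ref{re-4-1}, with penalties on each $[\mu_{i,h}]$. We also check that the upwind coefficient in \eqref{eq-4-1-3} is the same $c_{i,h}^{n*}$ as in the mass equation; it is, since both are the upwind value for $\boldsymbol u_{i,h}$.

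\emph{Solid part.} Take $\boldsymbol\upsilon_h=\boldsymbol w_{s,h}^{n+1}-\boldsymbol w_{s,h}^n$ in \eqref{eq-4-1-5}. The bilinear form $a(\boldsymbol w,\boldsymbol v)$ collects the elastic terms together with $-\langle\{\sigma_e\boldsymbol n\},[\cdot]\rangle$, its symmetric counterpart and the penalty. It is symmetric, so the identity $a(x,x-y)=\tfrac12a(x,x)-\tfrac12a(y,y)+\tfrac12a(x-y,x-y)$ applies. The quantity $\tfrac12a(\boldsymbol w,\boldsymbol w)$ is precisely the elastic and DG part of $E_h$, because the two consistency terms combine to $-\langle\{\sigma_e\boldsymbol n\},[\boldsymbol w]\rangle$. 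This gives
\[
\tfrac12a(\boldsymbol w^{n+1},\boldsymbol w^{n+1})-\tfrac12a(\boldsymbol w^n,\boldsymbol w^n)\le\alpha\Big[(p_h^{n+1},\nabla\!\cdot(\boldsymbol w^{n+1}-\boldsymbol w^n))-\sum_e\langle\{p_h^{n+1}\boldsymbol n_e\},[\boldsymbol w^{n+1}-\boldsymbol w^n]\rangle_e\Big],
\]
provided $\tfrac12a(\boldsymbol w^{n+1}-\boldsymbol w^n,\boldsymbol w^{n+1}-\boldsymbol w^n)\ge0$. That coercivity holds for $\varsigma_1$ sufficiently large, by the standard trace and inverse inequality on quasi-uniform meshes. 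I expect this to be the main obstacle, and we will assume this choice of $\varsigma_1$.

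Next test \eqref{eq-4-1-6} with $\varphi_h=p_h^{n+1}$. Its $\alpha$-terms match the bracket above exactly. Using $\tfrac1N(p^{n+1}-p^n,p^{n+1})\ge\tfrac1{2N}(\|p^{n+1}\|^2-\|p^n\|^2)$, we obtain
\[
\Psi_{s,h}^{n+1}-\Psi_{s,h}^n\le(p_h^{n+1},\phi_h^{n+1}-\phi_h^n).
\]
Adding this to the fluid estimate cancels the porosity–pressure coupling and gives $D_{\tau_n}E_h^{n+1}\le0$.
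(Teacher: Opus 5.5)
Your proposal is correct and follows essentially the same route as the paper. It applies \eqref{eq-3-6} elementwise together with \eqref{eq-4-1-4}, tests the component balances with $\mu_{i,h}^{n+1}$ and combines them with \eqref{eq-4-1-3} tested by $\boldsymbol u_{i,h}^{n+1}$, and then uses a DG solid-energy inequality to absorb the $(p_h^{n+1},\phi_h^{n+1}-\phi_h^n)$ coupling. The one difference is that you derive that solid inequality \eqref{eq-2-3130} yourself, using symmetry of the SIPG form, $\varsigma_1$ large, and testing \eqref{eq-4-1-6} with $p_h^{n+1}$; the paper instead cites \cite{Chencmame2024}, which tacitly relies on the same coercivity.

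Your consistency concern about $[\mu_h^{n+1}]$ goes away: summing \eqref{eq-3-7} over $i$ gives exactly \eqref{eq-3-8}, i.e.\ $\mu_h^{n+1}=\sum_{i=1}^M\mu_{i,h}^{n+1}$. So the reduction to the $M$ component equations via Remark~\ref{re-4-1} is an identity rather than an interpretation; the paper also uses it without comment when it sums over $i=1,\dots,M$.
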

\begin{proof}
	The proof proceeds by estimating the difference in the total free energy between two consecutive time steps. From \eqref{eq-3-6} , we derive the following key estimate that
	\begin{align}\label{eq-2-2-13}
		& \frac{1}{\tau_n}\sum_{K \in \mathcal{T}_h}\int_{K}\left((\phi_h^{n+1} f(\boldsymbol{c}_h^{n+1})-\phi_h^n f(\boldsymbol{c}_h^n)) + (\phi_h^{n+1}-\phi_h^{n}) p_h^{n+1}\right) ~d\boldsymbol{x}\\\nonumber
		\leq &\frac{1}{\tau_n}\sum_{K \in \mathcal{T}_h}\int_{K} \left(f(\boldsymbol{c}_h^n)(\phi_h^{n+1}-\phi_h^n) + \phi_h^{n+1} \boldsymbol{\mu}_{h}^{n+1}\cdot(\boldsymbol{c}_h^{n+1}-\boldsymbol{c}_h^n)\right.\\\nonumber
		&\left. + \left(\sum_{i=1}^{M}c^{n}_{i,h}\mu_{i ,h}^{n+1}-f(\boldsymbol{c}_h^n)\right)(\phi_h^{n+1}-\phi_h^n)\right) ~d\boldsymbol{x}.
	\end{align}
	Combining the right-hand side of the above inequality and \eqref{eq-4-1-2} allows us to rewrite it as follows:
	\begin{align*}
		&\frac{1}{\tau_n} \sum_{K \in \mathcal{T}_h}\int_{K}\sum_{i=1}^{M}\mu_{i,h}^{n+1}\left(\phi_h^{n+1} c_{i,h}^{n+1}-\phi_h^n c_{i, h}^n\right) d \boldsymbol{x}\\
		=&-\sum_{e \in \mathcal{E}_h^I}\int_{e} \sum_{i=1}^{M}\left([\mu_{i,h}^{n+1}] c^{n*}_h\boldsymbol{u}_{i,h}^{n+1}\cdot\boldsymbol{n}_e + \frac{\varsigma}{h_e} \mathcal{K}_{e}[\mu_{i,h}^{n+1}]^2\right) ~d \boldsymbol{x}.
	\end{align*}
	Finally, employing \eqref{eq-4-1-3} allows us to express this solely as a negative dissipation term, thereby confirming the energy decay induced by the transport process
	\begin{align}\label{eq-dissipation-transport}
		&\frac{1}{\tau_n} \sum_{K \in \mathcal{T}_h}\int_{K}\sum_{i=1}^{M}\mu_{i,h}^{n+1}\left(\phi_h^{n+1} c_{i,h}^{n+1}-\phi_h^n c_{i, h}^n\right) d \boldsymbol{x}\\\nonumber
		=&-\sum_{e \in \mathcal{E}_h^I}\int_{e} \sum_{i=1}^{M}\left([\mu_{i,h}^{n+1}] c^{n*}_h\boldsymbol{u}_{i,h}^{n+1}\cdot\boldsymbol{n}_e + \frac{\varsigma}{h_e} \mathcal{K}_{e}[\mu_{i,h}^{n+1}]^2\right) ~d \boldsymbol{x}\\\nonumber
		=&-\int_{\Omega} \left( \sum_{i=1}^{M}\frac{|\boldsymbol{u}^{n+1}_{i, h}|^2}{\mathscr{D}_{i, s}} + \sum_{i=1}^M\sum_{j=1, j \neq i}^M\frac{ c^{n}_{i,h}c^{n}_{j,h} |\boldsymbol{u}^{n+1}_{i,h}-\boldsymbol{u}^{n+1}_{j,h}|^2}{2(c^n_h)^2 \mathscr{D}_{i,j}} \right) ~d \boldsymbol{x} -\sum_{e \in \mathcal{E}^I_h}\int_{e}\sum_{i=1}^{M} \frac{\varsigma}{h_e} \mathcal{K}_{e}[\mu_{i,h}^{n+1}]^2 ~d \boldsymbol{x}.
	\end{align}
	As established in \cite{Chencmame2024} for the DG discretization of the momentum equation, the solid free energy evolution satisfies
	\begin{align}\label{eq-2-3130}
		&\frac{1}{2}\sum_{K \in \mathcal{T}_h}D_{\tau_n}(\boldsymbol{\sigma}_e(\boldsymbol{w}_{s,h}^{n+1}), \varepsilon(\boldsymbol{w}_{s,h}^{n+1}))_K\\\nonumber
		& + \sum_{e \in \mathcal{E}_h^I} \frac{\varsigma_1}{2h_e}D_{\tau_n}\langle[\boldsymbol{w}_{s,h}^{n+1}], [ \boldsymbol{w}_{s,h}^{n+1}]\rangle_e + \frac{1}{2N}\sum_{K \in \mathcal{T}_h}D_{\tau_n}(p_{h}^{n+1}, p_{h}^{n+1})\\\nonumber
		& -\sum_{e \in \mathcal{E}_h^I}D_{\tau_n}\left(\langle\{\boldsymbol{\sigma}_e(\boldsymbol{w}_{s,h}^{n+1}) \boldsymbol{n}_e\}, [\boldsymbol{w}_{s,h}^{n+1}]\rangle_e\right) -\sum_{K \in \mathcal{T}_h}(D_{\tau_n} \phi^{n+1}_h,p^{n+1}_h)_K\\\nonumber
		\leq&  \  0.
	\end{align}
	Combining \eqref{eq-dissipation-transport} with \eqref{eq-2-3130}, we directly obtain the desired result
	$$D_{\tau_n} E^{n+1}_{h}  \leq 0.$$
	This completes the proof.
\end{proof}

Furthermore, a linearized iterative strategy is employed within each time step to handle nonlinearities, where the superscript $l$ denotes the value at the $l$-th iteration. The fully discrete scheme is given as follows
\begin{align}
	&(D_{\tau_n} (\phi^{n+1,l}_{h}c^{n+1,l+1}_{h}), q_h) + 
	\sum_{e\in \mathcal{E}_h^I}\langle \sum_i c_{i,h}^{n*}\boldsymbol{u}_{i,h}^{n+1, l}\cdot\boldsymbol{n}, [q_h]\rangle_e + \sum_{e\in \mathcal{E}_h^I}\frac{\varsigma}{h_e}\langle\mathcal{K}_{e}[\mu^{n+1,l+1}_{h}], [q_h] \rangle_e= 0,\label{eq-4-1}\\ 
	&(D_{\tau_n} (\phi^{n+1,l}_{h}c^{n+1,l+1}_{i,h}), q_h) + 
	\sum_{e\in \mathcal{E}_h^I}\langle c_{i,h}^{n*}\boldsymbol{u}_{i,h}^{n+1,l}\cdot\boldsymbol{n}, [q_h]\rangle_e \label{eq-4-2}\\\nonumber
	&\qquad\qquad+ \sum_{e\in \mathcal{E}_h^I}\frac{\varsigma}{h_e}\langle\mathcal{K}_{e}[\mu^{n+1,l+1}_{i,h}], [q_h]\rangle_e= 0, \quad i=1, \cdots, M-1, \\
	&\sum_{j=1}^{i} \left(\frac{c^{n}_{i,h} c^{n}_{j,h}\left(\boldsymbol{u}^{n+1,l+1}_{i,h} -\boldsymbol{u}^{n+1,l+1}_{j,h}\right)}{(c^{n}_{h})^2 \mathscr{D}_{i, j}}, \mathbf{v}_{h}\right)+\sum_{j=i+1}^{M} \left(\frac{c^{n}_{i,h} c^{n}_{j,h}\left(\boldsymbol{u}^{n+1,l+1}_{i,h} -\boldsymbol{u}^{n+1,l}_{j,h}\right)}{(c^{n}_{h})^2 \mathscr{D}_{i, j}}, \mathbf{v}_{h}\right)\label{eq-4-3}\\\nonumber
	&\qquad\qquad\qquad+\left(\frac{\boldsymbol{u}^{n+1,l+1}_{i,h}}{\mathscr{D}_{i, s}},\mathbf{v}_{h}\right)=\sum_{e\in \mathcal{E}_h^I}\langle [\mu_{i,h}^{n+1,l+1}],c^{n*}_{i,h}\mathbf{v}_{h}\cdot\boldsymbol{n}\rangle_e , \quad i=1, \cdots, M,\\
	&(p^{n+1,l+1}_h, z_h) = (\sum_{i=1}^{M}c_{i,h}^{n}\mu_{i,h}^{n+1,l+1} - f(\mathbf{c}_{h}^{n}), z_h),\label{eq-4-4}\\
	&\mathcal{A}(\boldsymbol{w}^{n+1,l+1}_{s,h},  p_h^{n+1,l+1},\boldsymbol{\upsilon}_h) = 0, \label{eq-4-5} \\
	&(D_{\tau_n} \phi^{n+1,l+1}_h,\varphi_h) = \frac{1}{N}(D_{\tau_n} p^{n+1,l+1}_h, \varphi_h) + \alpha(D_{\tau_n} (\nabla\cdot \boldsymbol{w}_{s,h}^{n+1,l+1}), \varphi_h)\label{eq-4-6}\\\nonumber
	&\qquad\qquad\qquad\qquad\qquad\qquad\qquad-\alpha\sum_{e \in \mathcal{E}_h^I}\langle\{\varphi_h\boldsymbol{n}_e\}, [D_{\tau_n}\boldsymbol{w}^{n+1,l+1}_{s,h}]\rangle_e.
\end{align}
\begin{theorem}
	The multicomponent numerical scheme \eqref{eq-4-1-1}--\eqref{eq-4-1-6} ensures both global and local conservation of mole numbers for each component  $i \in \{1, \dots, N\}$.
\end{theorem}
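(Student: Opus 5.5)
The plan is to test the mass balance equations \eqref{eq-4-1-1}--\eqref{eq-4-1-2} with piecewise constant test functions $q_h\in\mathcal Q_h$. Local conservation comes from choosing $q_h=\chi_K$, the characteristic function of a single element $K\in\mathcal T_h$. Global conservation then follows from $q_h\equiv 1$, or equivalently from summing the local identities over $K$. The component index should be read as $i\in\{1,\dots,M\}$; the ``$N$'' in the statement is evidently a typo for $M$.

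\textbf{Local step.} Take $q_h=\chi_K$. Then $[q_h]_e$ vanishes on every interior facet not contained in $\partial K$. On a facet $e\subset\partial K$ it equals $\pm1$: it is $+1$ if $K=K^+$ and $-1$ if $K=K^-$, so that $[q_h]_e\,\boldsymbol n_e=\boldsymbol n_K$, the outward unit normal of $K$. Equation \eqref{eq-4-1-2} therefore becomes, for $i=1,\dots,M-1$,
\[
|K|\,\frac{\phi^{n+1}_h c^{n+1}_{i,h}-\phi^n_h c^n_{i,h}}{\tau_n}\Big|_K
+\sum_{e\subset\partial K\setminus\partial\Omega}\Big(\langle c^{n*}_{i,h}\boldsymbol u^{n+1}_{i,h}\cdot\boldsymbol n_K,1\rangle_e
+\frac{\varsigma}{h_e}\langle\mathcal K_e[\mu^{n+1}_{i,h}]\,\mathrm{sgn}_{e,K},1\rangle_e\Big)=0 .
\]
I would then observe that the numerical flux $F_{i,e}:=c^{n*}_{i,h}\boldsymbol u^{n+1}_{i,h}\cdot\boldsymbol n_e+\frac{\varsigma}{h_e}\mathcal K_e[\mu^{n+1}_{i,h}]$ is single-valued on $e$. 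This holds because $\boldsymbol u^{n+1}_{i,h}\in\boldsymbol{\mathcal U}_h$ has a continuous normal component, the upwind value is uniquely defined on $e$, and the jump term is defined facet-wise. Consequently the flux leaving $K$ through $e$ equals exactly the flux entering the neighbor $K'$. Boundary facets contribute nothing: they are excluded from the sums, and $\boldsymbol u_{i,h}\cdot\boldsymbol n=0$ there by \eqref{eq-BD-1}. This yields a local conservation law for the mole number $\int_K\phi_h c_{i,h}$ with conservative fluxes. For component $M$, I would subtract the sum over $i\le M-1$ of these identities from \eqref{eq-4-1-1} tested with $\chi_K$, exactly as in Remark~\ref{re-4-1}. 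Since $c_{M,h}=c_h-\sum_{i<M}c_{i,h}$ elementwise, this gives the same local balance for $c_{M,h}$. The flux for $c_{M,h}$ is the difference of the total flux and the partial fluxes, which is again single-valued.

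\textbf{Global step.} Take $q_h\equiv1$, so that all jumps $[q_h]_e$ vanish. Then \eqref{eq-4-1-1} and \eqref{eq-4-1-2} give $\int_\Omega\phi^{n+1}_hc^{n+1}_{i,h}=\int_\Omega\phi^n_hc^n_{i,h}$ for $i\le M-1$ and for the total density $c_h$. Subtracting the component identities from the total one yields the same statement for $i=M$. Induction on $n$ then gives $\int_\Omega\phi^n_hc^n_{i,h}=\int_\Omega\phi^0_hc^0_{i,h}$ for all $n$.

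\textbf{Main obstacle.} The only delicate point is checking that the interface terms are genuinely antisymmetric between neighboring elements. This requires the flux to be single-valued on each facet. For this the $H(\mathrm{div})$ conformity of the RT$_0$ space is essential, along with a consistent orientation convention for $\boldsymbol n_e$ and the jump. I would verify this carefully, noting that the upwind value depends on $\boldsymbol u^n_{i,h}\cdot\boldsymbol n_e$, which is itself single-valued. The recovery of component $M$ is the other step to state explicitly, and it is purely algebraic.
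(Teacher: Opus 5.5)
Your proposal is correct. It is the standard argument that the paper omits, referring instead to earlier work: testing \eqref{eq-4-1-1}--\eqref{eq-4-1-2} with $q_h=\chi_K$ gives local conservation, $q_h\equiv 1$ gives global conservation, and component $M$ is recovered by subtraction as in Remark~\ref{re-4-1}. The ``main obstacle'' you flag is not really one, because the flux terms are already written facet-wise as $\sum_{e\in\mathcal E_h^I}\langle F_e,[q_h]\rangle_e$, so antisymmetry between neighbouring elements comes automatically from the jump of the test function; the $H(\mathrm{div})$ conformity of $\boldsymbol{\mathcal U}_h$ only ensures that $\boldsymbol u_{i,h}\cdot\boldsymbol n_e$ is well defined on $e$.
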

\begin{proof}
	Since the proof follows a standard procedure, we omit the detailed derivation here and refer interested readers to \cite{Chencmame2024}. 
\end{proof}
\begin{lemma}\label{le-1}
	For two given real constants $a, b$ and $a < b$, for any $c_h \in \mathcal{Q}_{h}$, we define $c_{h,-} = \min(c_h+a, 0)$, $c_{h,+} = \max(c_h-b, 0)$. We have the following inequalities hold
	\begin{align}
		\sum_{e\in \mathcal{E}^{I}_h}\langle[c_h], [c_{h, -}]\rangle_e \geq \sum_{e\in \mathcal{E}^{I}_h} \langle[c_{h, -}], [c_{h, -}]\rangle_e,\label{eq-108-1}\\
		\sum_{e\in \mathcal{E}^{I}_h}\langle[c_h], [c_{h, +}]\rangle_e \geq \sum_{e\in \mathcal{E}^{I}_h} \langle[c_{h, +}], [c_{h, +}]\rangle_e.\label{eq-108-2}
	\end{align}
\end{lemma}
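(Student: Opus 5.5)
Since $c_h\in\mathcal Q_h$ is piecewise constant, every quantity in the lemma is constant on each element, and on each interior facet $e$ the jump $[\cdot]_e$ is just the difference of two real numbers. The plan is therefore to reduce both inequalities to a pointwise inequality for real numbers, prove that inequality by a case analysis, and then integrate over $e$ and sum over $e\in\mathcal E_h^I$.

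For \eqref{eq-108-1}, fix a facet $e$ with neighbours $K^\pm$ and write $x=c_h|_{K^+}$, $y=c_h|_{K^-}$. Let $g(s)=\min(s+a,0)$, so that $[c_{h,-}]_e=g(x)-g(y)$. It suffices to prove
\[
(x-y)\bigl(g(x)-g(y)\bigr)\ \ge\ \bigl(g(x)-g(y)\bigr)^2,
\]
which is the same as $(g(x)-g(y))\bigl((x-y)-(g(x)-g(y))\bigr)\ge 0$. The key observation is that $g$ is nondecreasing and $1$-Lipschitz, and $s\mapsto s-g(s)=\max(s,-a)$ is also nondecreasing. Hence for $x\ge y$ both factors are nonnegative, and for $x<y$ both are nonpositive, so the product is nonnegative in every case. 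One could instead split into the four sign cases of $x+a$ and $y+a$, but the monotonicity argument covers them all at once.

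Since the integrand is a constant on $e$, the pointwise inequality gives $\langle[c_h],[c_{h,-}]\rangle_e\ge\langle[c_{h,-}],[c_{h,-}]\rangle_e$ for each facet, and summing over $e\in\mathcal E_h^I$ yields \eqref{eq-108-1}. The proof of \eqref{eq-108-2} is identical with $g(s)=\max(s-b,0)$: again $g$ is nondecreasing, and $s-g(s)=\min(s,b)$ is nondecreasing.

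The argument never uses $a<b$, and that hypothesis seems to matter only for later applications. The only step needing care is the sign bookkeeping in the factorization, which the monotonicity of $g$ and $s-g(s)$ handles cleanly, so I expect no real obstacle.
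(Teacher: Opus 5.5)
Your proof is correct and complete. The paper gives no argument of its own for this lemma and simply defers to \cite{Chenpre}. The standard proof of such statements is a four-case analysis on the signs of $c_h+a$ (resp.\ $c_h-b$) on the two elements sharing a facet. Your observation that both $g$ and $s\mapsto s-g(s)=\max(s,-a)$ (resp.\ $\min(s,b)$) are nondecreasing settles all four cases at once, so the two routes coincide in substance and yours is simply more compact.

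You are also right that the hypothesis $a<b$ plays no role in either inequality.

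Your argument proves more than the lemma states. It gives the inequality facet by facet: $\langle[c_h],[c_{h,-}]\rangle_e\ge\langle[c_{h,-}],[c_{h,-}]\rangle_e$ for every $e\in\mathcal{E}_h^I$, and likewise for $c_{h,+}$. This stronger form is exactly what the proof of Theorem~\ref{theo-1} needs. There the jump terms carry the facet-dependent positive weights $\varsigma_1\mathcal{K}_e/h_e$, and the unweighted summed inequality as literally stated would not by itself imply the weighted one.
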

A detailed proof of Lemma \ref{le-1} is provided in \cite{Chenpre}.

We now provide a rigorous proof demonstrating that the proposed numerical scheme, combined with the adaptive time-stepping strategy, preserves the physical bounds for all components. Specifically, we prove that the discrete molar densities satisfy $0< c^{n+1}_h < \frac{1}{\beta^{\ast}}$ and  $0 < c^{n+1}_{i,h} \leq c^{n+1}_h < \frac{1}{\beta^{\ast}}$ for $i = 1, \cdots, M$.
\begin{theorem}\label{theo-1}
	Assume that $0 < \beta^{\ast} c^n_h < 1$ and the boundary condition \eqref{eq-BD}-\eqref{eq-BD-1} holds. For $n \geq 0$ and given constants $0 < \delta_{i, 1}, \delta_{i, 2}< \frac{c^{n}_{i,h}}{c^{n}_{h}}< 1$ and $0 <\delta_{1}, \delta_{2} <1$, if the time step size $\tau^{l+1}_n$ satisfies
	\begin{align}\label{eq-tau}
		\tau^{l+1}_n = \min_i\min_{K\in\mathcal{T}_h}\left(\tau^{i, l+1}_{\ast}, \tau^{i, l+1}_{\star}, \tau_{max}\right),
	\end{align}
	where 
	$$\tau^{i,l+1}_{\ast}:= \frac{\left(\phi^{n+1,l}_{h} c^{n}_{h}\left(1-\beta^{\ast} c^{n}_{h}\right)\delta_{i, 1}- (\phi^{n+1,l}_{h}-\phi^{n}_{h})c^{n}_{i,h}\right)|K|}{\sum\limits_{e\in \partial K_{\boldsymbol{u}_{i,h}}^{+}} c^{n}_{i,h}\boldsymbol{u}_{i,h}^{n+1, l}\cdot\boldsymbol{n}|e| + \sum\limits_{e\in \partial K_{\mu_i}^{+}}\frac{\varsigma_1}{h_e}\mathcal{K}_{e}[\mu_i(\boldsymbol{c}^{n}_{h})]|e|+ \epsilon},$$\\
	$$\tau^{i,l+1}_{\star} := \frac{\left(\phi^{n+1,l}_{h} c^{n}_{h}\left(1-\beta^{\ast} c^{n}_{h}\right)\delta_{i, 2}+ (\phi^{n+1,l}_{h}-\phi^{n}_{h})c^{n}_{i,h}\right)|K|}{-\left(\sum\limits_{e\in \partial K_{\boldsymbol{u}_{i,h}}^{-}} c^{n*}_{i,h}\boldsymbol{u}_{i,h}^{n+1, l}\cdot\boldsymbol{n}|e| + \sum\limits_{e\in \partial K_{\mu_i}^{-}}\frac{\varsigma_1}{h_e}\mathcal{K}_{e}[\mu_i(\boldsymbol{c}^{n}_{h})]|e|\right)+ \epsilon}.$$
	Here, $\epsilon > 0$ is a very small constant to avoid zero denominator, $\tau_{max}$ is the allowed maximum time step size to
	guarantee the accuracy of numerical solutions and $\partial K_{\boldsymbol{u}_{i,h}}^{+} = \{e \in \partial K: \boldsymbol{u}_{i,h}^{n+1}\cdot\boldsymbol{n}|_e > 0, \forall K \in \mathcal{T}_{h}\}$, $\partial K_{\boldsymbol{u}_{i,h}}^{-} = \{e\in\partial K: \boldsymbol{u}_{i,h}^{n+1}\cdot\boldsymbol{n}|_e < 0, \forall K \in \mathcal{T}_{h}\}$, $\partial K_{\mu_i}^{-} = \{e\in\partial K: [\mu(c^{n}_{h})] < 0, \forall K \in \mathcal{T}_{h}\}, \partial K_{\mu_i}^{+} = \{e\in\partial K: [\mu(c^{n}_{h})] > 0, \forall K \in \mathcal{T}_{h}\}$. Then $c^{n+1, l+1}_h, c^{n+1, l+1}_{i,h}$ satisfies
	\begin{align}
		0<(1 - \delta_{1}\left(1-\beta^{\ast} c^{n}_{h}\right))c^{n}_{h}\leq c_{h}^{n+1,l+1} \leq (1 + \delta_{2}\left(1 - \beta^{\ast} c^{n}_{h}\right))c^{n}_{h}<\frac{1}{\beta^{\ast}}<\frac{1}{\beta},\label{eq-bound-c}\\
		0< c^{n+1, l+1}_{i,h} \leq  c_{h}^{n+1,l+1}.\label{eq-bound-ci}
	\end{align}
\end{theorem}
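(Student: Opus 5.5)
The plan is a discrete maximum-principle argument on the linear iteration \eqref{eq-4-1}--\eqref{eq-4-2}. Since all scalar unknowns lie in $\mathcal{Q}_h$, each equation can be tested with the characteristic function of a single element $K$. The velocity $\boldsymbol{u}^{n+1,l}_{i,h}$ and $\phi^{n+1,l}_h$ are frozen at iteration $l$. The unknown enters only through the time term and through the stabilized chemical potential \eqref{eq-3-7}--\eqref{eq-3-8}:
\[
\mu^{n+1,l+1}_{i,h}=\mu_i(\boldsymbol{c}^n_h)+\theta_nRT\frac{c^{n+1,l+1}_{i,h}-c^n_{i,h}}{c^n_h(1-\beta^{\ast}c^n_h)},
\]
and similarly for $\mu^{n+1,l+1}_h$ with $c^{n+1,l+1}_h$. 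Hence each equation is a linear system for a single scalar field. Its matrix is
\[
\frac{\phi^{n+1,l}_h}{\tau_n}|K|\,\mathrm{Id}+\text{(graph Laplacian with weights }\varsigma\mathcal{K}_e\theta_nRT|e|/(h_e c^n_h(1-\beta^\ast c^n_h))).
\]
This is an M-matrix, which gives uniqueness and monotonicity. I would prove the bounds in the energy style suggested by Lemma \ref{le-1}.

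\textbf{Step 1: the component bounds.} Write $c^{n+1,l+1}_{i,h}=c^n_{i,h}+d_i$. Following the definitions of $\tau_\ast^{i,l+1}$ and $\tau_\star^{i,l+1}$, I aim to show
\[
-\delta_{i,1}c^n_h(1-\beta^\ast c^n_h)\le d_i\le \delta_{i,2}c^n_h(1-\beta^\ast c^n_h).
\]
Set $d_{i,-}=\min(d_i+\delta_{i,1}c^n_h(1-\beta^\ast c^n_h),0)$ and test the equation for $c_i$ with $q_h=d_{i,-}$. The increment is a scaled jump, so the stabilization term splits into an explicit part $[\mu_i(\boldsymbol{c}^n_h)]$ and a part $\propto[d_i]$. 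Lemma \ref{le-1} bounds the latter from below by $\sum_e\langle[d_{i,-}],[d_{i,-}]\rangle_e\ge0$; to apply it with a constant shift I would treat $d_i/(c^n_h(1-\beta^\ast c^n_h))$ as the variable, which is exactly the quantity appearing in $\mu$.

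The time term gives $\phi^{n+1,l}_h d_i+(\phi^{n+1,l}_h-\phi^n_h)c^n_{i,h}$. The explicit flux and explicit $[\mu_i(\boldsymbol{c}^n_h)]$ contributions are bounded elementwise by the outflow sums ($\partial K^+$) in the denominator of $\tau_\ast$. The choice $\tau_n\le\tau^{i,l+1}_\ast$ makes
\[
\bigl(\phi^{n+1,l}_h\delta_{i,1}c^n_h(1-\beta^\ast c^n_h)-(\phi^{n+1,l}_h-\phi^n_h)c^n_{i,h}\bigr)|K|-\tau_n(\text{outflow})\ge0.
\]
This forces $(\phi^{n+1,l}_h d_{i,-},d_{i,-})\le0$, so $d_{i,-}=0$. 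The upper bound is symmetric, using $\max(\cdot,0)$, inflow faces $\partial K^-$, and $\tau_\star$. Since $\delta_{i,1}<c^n_{i,h}/c^n_h$ and $1-\beta^\ast c^n_h\le1$, the lower bound gives
\[
c^{n+1,l+1}_{i,h}\ge c^n_{i,h}-\delta_{i,1}c^n_h>0 .
\]

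\textbf{Step 2: the total density.} I repeat the same argument for \eqref{eq-4-1} with $\delta_1,\delta_2$. The constraint defining $\tau^{l+1}_n$ is taken as the minimum over $i$, and summing the per-component flux bounds controls the summed flux $\sum_i c^{n*}_{i,h}\boldsymbol{u}^{n+1,l}_{i,h}\cdot\boldsymbol{n}$. This yields \eqref{eq-bound-c}. The final inequality $(1+\delta_2(1-\beta^\ast c^n_h))c^n_h<1/\beta^\ast$ reduces to
\[
(1-\beta^\ast c^n_h)(1-\delta_2\beta^\ast c^n_h)>0,
\]
which holds by the hypothesis $\beta^\ast c^n_h<1$. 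The comparison $1/\beta^\ast\le1/\beta$ follows from $\beta\le\max_i\beta_i$.

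\textbf{Step 3: \eqref{eq-bound-ci}.} Positivity of $c_{M,h}^{n+1,l+1}=c_h^{n+1,l+1}-\sum_{i<M}c_{i,h}^{n+1,l+1}$ follows from Remark \ref{re-4-1}. The $M$-th component obeys a conservation equation of the same form, so Step 1 applies with $i=M$. Then all $c_{i,h}^{n+1,l+1}>0$, and $c_{i,h}^{n+1,l+1}\le c_h^{n+1,l+1}$ follows because the sum of the positive components equals $c_h^{n+1,l+1}$.

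\textbf{Main obstacle.} The hardest step is the bookkeeping in Step 1. The implicit stabilization jump has to be absorbed via Lemma \ref{le-1} with the correct shifted variable, and the jump coefficient $1/(c^n_h(1-\beta^\ast c^n_h))$ varies between elements. If this prevents a clean application of Lemma \ref{le-1}, I would fall back on the M-matrix/minimum-element argument: at the element where the normalized increment attains its minimum, all implicit jump terms have a favorable sign, and only the explicit terms remain to be controlled by $\tau_\ast$.
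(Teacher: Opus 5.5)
Your Steps 1 and 3 follow the paper's argument. The paper tests \eqref{eq-4-2} with $\zeta_{i,h,-}^{n+1,l+1}=\min(\zeta_{i,h}^{n+1,l+1}+\delta_{i,1},0)$, where $\zeta_{i,h}^{n+1,l+1}=(c_{i,h}^{n+1,l+1}-c_{i,h}^n)/(c_h^n(1-\beta^\ast c_h^n))$. It then applies Lemma \ref{le-1} to the implicit jump, uses the elementwise sign condition from $\tau_n\le\tau_\ast^{i,l+1}$, and recovers $c_{M,h}^{n+1,l+1}$ through Remark \ref{re-4-1}.

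Your ``main obstacle'' disappears once you test with $\zeta_{i,h,-}^{n+1,l+1}$ rather than $d_{i,-}$. The implicit part of $[\mu_{i,h}^{n+1,l+1}]$ is then exactly $\theta_nRT[\zeta_{i,h}^{n+1,l+1}]$, and the element-dependent factor $c_h^n(1-\beta^\ast c_h^n)$ appears only in the elementwise mass term. The reduction to $i=M$ works because all components share the stabilization coefficient in \eqref{eq-3-7}--\eqref{eq-3-8}, so $\mu_h^{n+1,l+1}-\sum_{i<M}\mu_{i,h}^{n+1,l+1}=\mu_{M,h}^{n+1,l+1}$.

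You should also state that Step 1 needs the numerators of $\tau_\ast^{i,l+1}$ and $\tau_\star^{i,l+1}$ to be positive. This is a closeness condition on $\phi_h^{n+1,l}/\phi_h^n$ such as \eqref{eq-1012-2}; without it the prescribed step size is negative. The paper simply assumes this.

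The genuine gap is Step 2. Summing $\tau_n\le\tau_\ast^{i,l+1}$ over $i=1,\dots,M$ gives, on each $K$,
\[
\tau_n\sum_{i=1}^{M}\bigl(\text{outflow}_i\bigr)\le\Bigl(\phi_h^{n+1,l}c_h^n(1-\beta^\ast c_h^n)\sum_{i=1}^{M}\delta_{i,1}-(\phi_h^{n+1,l}-\phi_h^n)c_h^n\Bigr)|K|.
\]
Repeating the argument for \eqref{eq-4-1} with shift $\delta_1$ requires the same inequality with $\delta_1$ in place of $\sum_i\delta_{i,1}$. So you need $\sum_{i=1}^M\delta_{i,1}\le\delta_1$ and $\sum_{i=1}^M\delta_{i,2}\le\delta_2$, and neither is a hypothesis. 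In fact nothing in \eqref{eq-tau} depends on $\delta_1,\delta_2$ at all, so \eqref{eq-bound-c} cannot be expected for arbitrary $\delta_1,\delta_2\in(0,1)$.

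There are two ways to repair this. One is to add to \eqref{eq-tau} total-density step restrictions built from \eqref{eq-4-1}, using $c_h^n$, the summed upwind flux, the jump of $\sum_i\mu_i(\boldsymbol{c}_h^n)$, and $\delta_1,\delta_2$. The other is to prove \eqref{eq-bound-c} with $\delta_k:=\sum_{i=1}^M\delta_{i,k}$. This is legitimate, since $\delta_{i,k}<c_{i,h}^n/c_h^n$ forces $\sum_i\delta_{i,k}<1$. It also needs no second maximum principle: once Step 3 supplies $i=M$, sum the Step 1 bounds $-\delta_{i,1}c_h^n(1-\beta^\ast c_h^n)\le c_{i,h}^{n+1,l+1}-c_{i,h}^n\le\delta_{i,2}c_h^n(1-\beta^\ast c_h^n)$ over all $i$, and finish with your factorization $(1-\beta^\ast c_h^n)(1-\delta_2\beta^\ast c_h^n)>0$.

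The paper does not close this point either; it defers \eqref{eq-bound-c} to an external reference.
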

\begin{proof}
	
	Let us define $\zeta_{i,h}^{n+1, l+1} =  \frac{c^{n+1, l+1}_{i,h} - c^{n}_{i,h}}{c^{n}_{h}(1-\beta^{\ast} c^{n}_{h})}$, $\zeta_{i, h, -}^{n+1, l+1} = \min(\zeta_{i,h}^{n+1, l+1} + \delta_{i,1}, 0)$ and $\zeta_{i, h, +}^{n+1, l+1} = \max(\zeta_{i, h}^{n+1, l+1} - \delta_{i, 2}, 0)$. Obviously $\zeta_{i, h, -}^{n+1, l+1} \leq 0 $ and $\zeta_{i, h, +}^{n+1, l+1} \geq 0$.
	
	Taking $q_h = \zeta_{i, h, -}^{n+1,l+1}$ in \eqref{eq-4-2} and using \eqref{eq-3-7}, we can obtain
	\begin{align}\label{eq-616-1}
		&\frac{1}{\tau^{l}_n}\left(\phi^{n+1,l}_{h}(c_{i,h}^{n+1,l+1}-c^{n}_{i,h}), \zeta_{i, h, -}^{n+1, l+1}\right) + \frac{1}{\tau^{l}_n}\left((\phi^{n+1,l}_{h}-\phi^{n}_{h})c^{n}_{i,h}, \zeta_{i, h, -}^{n+1, l+1}\right)\\\nonumber
		&\quad+ \sum_{e\in \mathcal{E}^{I}_h} \langle c^{n*}_{i,h}\boldsymbol{u}_{i,h}^{n+1, l}\cdot\boldsymbol{n}, \zeta_{i, h, -}^{n+1, l+1}\rangle_{e} + \sum_{e\in \mathcal{E}^{I}_h}\frac{\varsigma_1}{h_e}\langle\mathcal{K}_{e}[\mu_i(\boldsymbol{c}^{n}_{h})], [\zeta_{i, h, -}^{n+1, l+1}]\rangle_e\\\nonumber
		&\quad + \theta_n R T \sum_{e\in \mathcal{E}^{I}_h}\frac{\varsigma_1}{h_e}\langle\mathcal{K}_{e}[\zeta_{i,h}^{n+1, l+1}], [\zeta_{i, h, -}^{n+1, l+1}]\rangle_e = 0.
	\end{align}
	In terms of the definition of $\zeta_{i, h, -}^{n+1, l+1}$, we deduce that
	\begin{align}\label{eq-616-2}
		\left(\phi^{n+1,l}_{h}(c_{i,h}^{n+1,l+1}-c^{n}_{i,h}), \zeta_{i, h, -}^{n+1, l+1}\right) = & \left(\phi^{n+1,l}_{h} c^{n}_{h}\left(1-\beta^{\ast} c^{n}_{h}\right) \zeta_{i,h}^{n+1,l+1},  \zeta_{i, h, -}^{n+1, l+1}\right) \\\nonumber
		= & \left(\phi^{n+1,l}_{h} c^{n}_{h}\left(1-\beta^{\ast} c^{n}_{h}\right)\left(\zeta_{i, h}^{n+1,l+1}+\delta_{i, 1}\right), \zeta_{i, h, -}^{n+1, l+1}\right)\\\nonumber
		& -\left(\phi^{n+1,l}_{h} c^{n}_{h}\left(1-\beta^{\ast} c^{n}_{h}\right) \delta_{i, 1}, \zeta_{i, h, -}^{n+1, l+1}\right)\\\nonumber
		= & \left(\phi^{n+1,l}_{h} c^{n}_{h}\left(1-\beta^{\ast} c^{n}_{h}\right)  \zeta_{i, h, -}^{n+1, l+1}, \zeta_{i, h, -}^{n+1, l+1}\right)\\\nonumber
		& - \left(\phi^{n+1,l}_{h} c^{n}_{h}\left(1-\beta^{\ast} c^{n}_{h}\right) \delta_{i, 1},  \zeta_{i, h, -}^{n+1, l+1}\right).
	\end{align}
	Taking into account \eqref{eq-tau}, $\zeta_{i, h, -}^{n+1,l+1} \leq 0$, and Lemma \ref{le-1},  we can get  
	
	\begin{align}\label{eq-616-3}
		&  \sum_{K\in \mathcal{T}_h }\sum_{e\in \partial K }\langle c^{n*}_h\boldsymbol{u}_{i,h}^{n+1, l}\cdot\boldsymbol{n}, \zeta_{i, h, -}^{n+1, l+1}\rangle_{e} -\frac{1}{\tau^{l}_n}\sum_{K\in \mathcal{T}_h }\left(\phi^{n+1, l}_{h} c^{n}_{h}\left(1-\beta^{\ast} c^{n}_{h}\right) \delta_{i, 1}, \zeta_{i, h, -}^{n+1, l+1}\right) \\\nonumber
		&\quad + \frac{1}{\tau^{l}_n}\sum_{K\in \mathcal{T}_h }\left((\phi^{n+1,l}_{h}-\phi^{n}_{h})c^{n}_{i, h}, \zeta_{i, h, -}^{n+1, l+1}\right) + \sum_{K\in \mathcal{T}_h }\sum_{e\in \partial K }\frac{\varsigma_1}{h_e}\langle\mathcal{K}_{e}[\mu_i(\boldsymbol{c}^{n}_{h})], \zeta_{i, h, -}^{n+1, l+1}\rangle_e\\\nonumber
		& =\frac{1}{\tau^{l}_n}\sum_{K\in \mathcal{T}_h }\left((\phi^{n+1,l}_{h}-\phi^{n}_{h})c^{n}_{i, h}, \zeta_{i, h, -}^{n+1, l+1}\right) - \frac{1}{\tau^{l}_n}\sum_{K\in \mathcal{T}_h }\left(\phi^{n+1, l}_{h} c^{n}_{h}\left(1-\beta^{\ast} c^{n}_{h}\right) \delta_{i, 1}, \zeta_{i, h, -}^{n+1, l+1}\right) \\\nonumber
		&\quad+ \sum_{K\in \mathcal{T}_h }\sum_{e\in \partial K_{\boldsymbol{u}_{i,h}}^{-}}\langle c^{n*}_h\boldsymbol{u}_{i,h}^{n+1, l}\cdot\boldsymbol{n}, \zeta_{i, h, -}^{n+1, l+1}\rangle_{e} + \sum_{K\in \mathcal{T}_h }\sum_{e\in \partial K_{\boldsymbol{u}_{i,h}}^{+}}\langle c^{n*}_h\boldsymbol{u}_{i,h}^{n+1, l}\cdot\boldsymbol{n}, \zeta_{i, h, -}^{n+1, l+1}\rangle_{e}\\\nonumber
		&\quad + \sum_{K\in \mathcal{T}_h }\sum_{e\in \partial K_{\mu_i}^{-}}\frac{\varsigma_1}{h_e}\langle\mathcal{K}_{e}[\mu_i(\boldsymbol{c}^{n}_{h})], \zeta_{i, h, -}^{n+1, l+1}\rangle_e + \sum_{K\in \mathcal{T}_h }\sum_{e\in \partial K_{\mu_i}^{+}}\frac{\varsigma_1}{h_e}\langle\mathcal{K}_{e}[\mu_i(\boldsymbol{c}^{n}_{h})], \zeta_{i, h, -}^{n+1, l+1}\rangle_e\\\nonumber
		& =\sum_{K\in \mathcal{T}_h }\frac{\left((\phi^{n+1,l}_{h}-\phi^{n}_{h})c^{n}_{i, h}-\phi^{n+1,l}_{h} c^{n}_{h}\left(1-\beta^{\ast} c^{n}_{h}\right)\delta_{i, 1}\right) \zeta_{i, h, -}^{n+1, l+1}|K|}{\tau^{l}_n} \\\nonumber
		&\quad +  \sum_{K\in \mathcal{T}_h }\sum_{e\in \partial K_{\boldsymbol{u}_{i,h}}^{-}}c^{n*}_h\boldsymbol{u}_{i,h}^{n+1, l}\cdot\boldsymbol{n} \zeta_{i, h, -}^{n+1, l+1}|e| +  \sum_{K\in \mathcal{T}_h }\sum_{e\in \partial K_{\boldsymbol{u}_{i,h}}^{+}} c^{n}_h\boldsymbol{u}_{i,h}^{n+1, l}\cdot\boldsymbol{n}\zeta_{i, h, -}^{n+1, l+1}|e|\\\nonumber
		&\quad +  \sum_{K\in \mathcal{T}_h }\sum_{e\in \partial K_{\mu_i}^{-}}\frac{\varsigma_1}{h_e}\mathcal{K}_{e}[\mu_i(\boldsymbol{c}^{n}_{h})]\zeta_{i, h, -}^{n+1, l+1}|e| +  \sum_{K\in \mathcal{T}_h }\sum_{e\in \partial K_{\mu_i}^{+}}\frac{\varsigma_1}{h_e}\mathcal{K}_{e}[\mu_i(\boldsymbol{c}^{n}_{h})]\zeta_{i, h, -}^{n+1, l+1}|e|\geq 0.
	\end{align}
	Here, we assume that $\phi^{n+1,l}_{h}\left(1-\beta^{\ast} c^{n}_{h}\right)\delta_{i, 1} > (\phi^{n+1,l}_{h}-\phi^{n}_{h})$, which means 
	\begin{align}\label{eq-1012-2}
		\frac{\phi^{n+1,l}_{h}}{\phi^{n}_{h}}<\frac{1}{1-\left(1-\beta^{\ast} c^{n}_{h}\right)\delta_{i, 1}}.
	\end{align}
    We note that a rigorous proof of a similar inequality under comparable assumptions can be found in \cite{Chenpre}.
    
	Combining \eqref{eq-616-1}-\eqref{eq-616-3}, we get
	\begin{align}
		\left(\phi^{n+1,l}_{h} c^{n}_{h}\left(1-\beta^{\ast} c^{n}_{h}\right)  \zeta_{i, h, -}^{n+1, l+1}, \zeta_{i, h, -}^{n+1, l+1}\right) + \theta_n R T\sum_{e\in \mathcal{E}^{I}_h}\frac{\varsigma_1}{h_e}\langle\mathcal{K}_{e}[\zeta_{i, h, -}^{n+1, l+1}], [\zeta_{i, h, -}^{n+1, l+1}]\rangle_e \leq 0.
	\end{align}
	Due to $0< c^{n}_{i, h} < \frac{1}{\beta^{\ast}} $, we obtain
	\begin{align}
		\zeta_{i, h, -}^{n+1, l+1} \equiv 0,
	\end{align}
	Due to the assumption $0 < \delta_{i, 1}, \delta_{i, 2}< \frac{c^{n}_{i,h}}{c^{n}_{h}}< 1$, we get
	\begin{align}
		c_{i, h}^{n+1, l+1} \geq (1 - \delta_{i, 1}\frac{c^{n}_{h}}{c^{n}_{i, h}}\left(1-\beta^{\ast} c^{n}_{h}\right))c^{n}_{i, h}>0.
	\end{align}
	The proof of Eq. \eqref{eq-bound-c} could be provided with reference to \cite{Chenpre}.
    In the preceding proof, we have shown that the boundedness of the total molar density $c^{n+1}_{h}$ and of each molar density $c^{n+1}_{i,h}$ for $i = 1,\dots, M-1$ follows from equations \eqref{eq-4-1-1}–\eqref{eq-4-1-2}. According to Remark \ref{re-4-1}, if the condition $\sum_{i=1}^{M} c^{n+1}_{i,h} = c^{n+1}_{h}$ holds, solving the system \eqref{eq-4-1-1}–\eqref{eq-4-1-2} is equivalent to solving the mass conservation equation individually for each of the $M$ components. Consequently, the boundedness of $c^{n+1}_{M,h}$ can be derived from its own mass conservation equation, thereby establishing the boundedness of all component densities $c_1,\dots,c_M$ in a complete sense. Now we complete the proof.
\end{proof}

\begin{theorem}\label{thm:phi_bound}
Assume that $\beta^{\ast} c_h^{n}$ is bounded below and above by constants $\varrho_0$ and $\varrho$, respectively, satisfying $0 < \varrho_0 \le \beta^{\ast} c_h^{n} \le \varrho < 1$.
Given that the stabilization parameter $\gamma$ and the penalty parameter $\varsigma_1$ are assigned sufficiently large values.
Then there exists a constant $C_\epsilon>0$ 
such that, for all $l\ge0$,
\begin{align}\label{eq-phi-bounded}
    0<\phi_h^{n}-C_\epsilon \le \phi_h^{n+1,l+1} \le \phi_h^{n}+C_\epsilon <1,
\end{align}
Let \( C_{\phi, \min} \) and \( C_{\phi, \max} \) denote the minimum and maximum values of \( \phi^{n} \), respectively. Then the constant \( C_{\epsilon} \) depends only on these bounds and on the parameter \( \delta_i \).
\end{theorem}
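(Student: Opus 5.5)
We start from the porosity update \eqref{eq-4-6}. Since $\phi_h$ and $p_h$ are piecewise constant, taking $\varphi_h$ as the indicator of a single element $K$ gives an elementwise identity:
\begin{align*}
\phi_h^{n+1,l+1}-\phi_h^n=\frac1N\big(p_h^{n+1,l+1}-p_h^n\big)+\alpha\,\Pi_K\Big(\nabla\cdot(\boldsymbol w_{s,h}^{n+1,l+1}-\boldsymbol w_{s,h}^n)\Big)-\frac{\alpha}{|K|}\sum_{e\subset\partial K}\tfrac12\langle\boldsymbol n_e,[\boldsymbol w_{s,h}^{n+1,l+1}-\boldsymbol w_{s,h}^n]\rangle_e .
\end{align*}
Here $\Pi_K$ is the elementwise mean. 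The time step cancels, so the porosity increment is controlled by two quantities: the pressure increment and a discrete divergence of the displacement increment.

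\textbf{Step 1: bound the pressure increment.} By \eqref{eq-4-4} and \eqref{eq-3-7},
\[
p_h^{n+1,l+1}-p_h^n=\sum_i c_{i,h}^n\big(\mu_i(\boldsymbol c_h^n)-\mu_{i,h}^n\big)+\theta_nRT\sum_i c_{i,h}^n\,\zeta_{i,h}^{n+1,l+1},
\]
where $\zeta_{i,h}^{n+1,l+1}$ is the quantity introduced in the proof of Theorem \ref{theo-1}. That proof shows $-\delta_{i,1}\le\zeta_{i,h}^{n+1,l+1}\le\delta_{i,2}$. The assumption $\varrho_0\le\beta^\ast c_h^n\le\varrho<1$ keeps $c_h^n$ away from $0$ and from $1/\beta^\ast$. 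On that compact range $f$ and $\mu_i$ are smooth and bounded, so $|p_h^{n+1,l+1}-p_h^n|\le C_p(\varrho_0,\varrho,\delta_i)$ uniformly in $l$ and $K$.

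\textbf{Step 2: bound the displacement increment.} Subtract the DG elasticity equations \eqref{eq-4-5} at levels $n+1,l+1$ and $n$. This gives $\mathcal A(\delta\boldsymbol w,\delta p,\boldsymbol\upsilon_h)=0$, with $\delta\boldsymbol w$ and $\delta p$ the increments. Test with $\boldsymbol\upsilon_h=\delta\boldsymbol w$. For $\varsigma_1$ large enough, the symmetric interior penalty part of $\mathcal A$ is coercive in the broken energy norm $\|\varepsilon_h(\cdot)\|^2+\gamma\|\nabla_h\cdot\|^2+\sum_e\frac{\varsigma_1}{h_e}\|[\cdot]\|_e^2$; this is standard and as in \cite{Chencmame2024}. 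The pressure coupling is handled by Cauchy--Schwarz and a discrete trace inequality. Together these give
\[
\gamma\|\nabla_h\cdot\delta\boldsymbol w\|+\Big(\sum_e\tfrac{\varsigma_1}{h_e}\|[\delta\boldsymbol w]\|_e^2\Big)^{1/2}\lesssim \alpha\|\delta p\|,
\]
so the discrete divergence term is $O(\alpha C_p/\gamma)$ in $L^2$.

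\textbf{Step 3: pass from $L^2$ to pointwise bounds and conclude.} Use an inverse estimate on the quasi-uniform mesh, or localize the argument. Choosing $\gamma$ and $\varsigma_1$ large then makes the elementwise displacement contribution small compared with $1/N\cdot C_p$. We set $C_\epsilon:=C_p/N+C\alpha C_p/\gamma$. With the free parameters ($\delta_i$, $\gamma$, $\varsigma_1$) we require $C_\epsilon<\min(C_{\phi,\min},1-C_{\phi,\max})$. This gives $0<\phi_h^n-C_\epsilon\le\phi_h^{n+1,l+1}\le\phi_h^n+C_\epsilon<1$, which is \eqref{eq-phi-bounded}. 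It is consistent with the ratio condition \eqref{eq-1012-2}, since $C_\epsilon$ depends only on $C_{\phi,\min}$, $C_{\phi,\max}$ and $\delta_i$.

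\textbf{Main obstacle.} The hard step is the passage from the global $L^2$ energy bound of Step 2 to an elementwise, $h$-independent bound. A naive inverse inequality costs a factor $h^{-d/2}$. I would first try to localize by testing with a cut-off displacement. If that fails, I would absorb the $h$-dependence into the requirement that $\gamma$ and $\varsigma_1$ are sufficiently large, which is what the hypothesis of the theorem seems to allow.
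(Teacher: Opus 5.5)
Your proposal follows the same route as the paper's sketch. You test the DG momentum equation with the displacement increment to control the divergence and jump terms by the pressure increment, bound that increment through the discrete equation of state together with the density bounds (your explicit use of the $\zeta$-bounds from the proof of Theorem~\ref{theo-1} is what the paper leaves implicit), and then require $C_\epsilon<\min(C_{\phi,\min},1-C_{\phi,\max})$. The $L^2$-to-elementwise step you flag is simply asserted in the paper, so absorbing the inverse-estimate factor into mesh-dependent largeness of $\gamma$ and $\varsigma_1$ is an acceptable way to close it. Note, however, that those $\zeta$-bounds rely on \eqref{eq-1012-2} at iterate $l$, so the argument should be organized as an induction in $l$, with $C_\epsilon$ also small enough to reproduce \eqref{eq-1012-2} at iterate $l+1$.
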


\begin{proof}
The proof is similar to that in \cite{Chenpre}. We only outline the key idea here.
Testing the DG momentum equation with $\mathbf u_{s,h}^{n+1,l+1}-\mathbf u_{s,h}^{n}$ and using standard trace,
Cauchy--Schwarz and Young inequalities yield an estimate that controls
$\nabla\!\cdot(\mathbf u_{s,h}^{n+1,l+1}-\mathbf u_{s,h}^{n})$ and the jump terms by $\sum_{K\in\mathcal{T}_h}\|p_h^{n+1,l+1}-p_h^{n}\|_{L^{2}_{K}}$,
provided the stabilization parameters are sufficiently large.
Together with the discrete equation of state and the assumption $0<\varrho_0\le \beta^{\ast} c_h^{n}\le \varrho<1$,
this gives a uniform bound for $p_h^{n+1,l+1}$ and hence a one-step perturbation bound
$\|\phi_h^{n+1,l+1}-\phi_h^{n}\|_{L^\infty(\Omega)}\le C_\epsilon$.
Choosing $C_\epsilon$ small enough so that $C_\epsilon<C_{\phi,\min}$ and $C_\epsilon<1-C_{\phi,\max}$
implies $0<\phi_h^{n}-C_\epsilon\le \phi_h^{n+1,l+1}\le \phi_h^{n}+C_\epsilon<1$, and the proof is complete.
\end{proof}

Theorem~\ref{theo-1} proves that, under the adaptive time-step restriction \eqref{eq-tau}, the admissible bounds are preserved at each nonlinear iterate within a fixed time step. 
To transfer this iterate-wise bound preservation to the numerical solution at the time level $t^{n+1}$, it remains to show that the nonlinear splitting iteration converges as $l\to\infty$. 
Once convergence is established, the limit inherits the same bounds.

We now turn to the convergence analysis of the nonlinear splitting iteration. 
To this end, we first collect the error relations between two successive nonlinear iterates. 
Throughout the following analysis, the superscript $l$ denotes the nonlinear iteration index 
within the fixed time step, and we define the iteration errors (increments) between the $(l+1)$-th 
and $l$-th iterates by
\[
e_{\boldsymbol{w}_{s,h}}^{n+1,l+1}
:=\boldsymbol{u}_{s,h}^{n+1,l+1}-\boldsymbol{u}_{s,h}^{n+1,l},\quad
e_{p_h}^{n+1,l+1}:=p_h^{n+1,l+1}-p_h^{n+1,l},\quad
e_{c_{i,h}}^{n+1,l+1}:=c_{i,h}^{n+1,l+1}-c_{i,h}^{n+1,l},
\]
and similarly for 
$e_{\boldsymbol{u}_{i,h}}^{n+1,l+1}$, $e_{\mu_{i,h}}^{n+1,l+1}$, and $e_{\phi_h}^{n+1,l+1}$.
These iteration error variables satisfy the following coupled system, which will serve as the 
starting point for the energy estimates derived below:
\begin{subequations}\label{eq:error_system}
\begin{align}
&\mathcal{A}(e_{\boldsymbol{w}_{s,h}}^{n+1,l+1}, e_{p_{h}}^{n+1,l+1}, \mathbf{v}_h) = 0, \label{eq:error_u_p}\\
&\left(\frac{e_{\phi_{h}}^{n+1, l}c^{n+1,l}_{i,h}+\phi_{h}^{n+1,l}e_{c_{i,h}}^{n+1,l+1}}{\tau}, e_{c_{i,h}}^{n+1,l+1}\right)
+ \sum_{e\in \E_h^I}\langle c^{n*}_h\, e_{\boldsymbol{u}_{i, h}}^{n+1, l}\cdot\boldsymbol{n}, [e_{c_{i,h}}^{n+1,l+1}]\rangle_e \label{eq:error_c}\\\nonumber
&\qquad\qquad\qquad\qquad
+ \sum_{e\in \E_h^I}\frac{\varsigma}{h_e}\langle [e_{\mu_{i,h}}^{n+1,l+1}], [e_{c_{i,h}}^{n+1,l+1}]\rangle_e = 0,
\quad i=1,\dots,M, \\
&\sum_{j=1}^{i} \left(\frac{c^{n}_{i,h} c^{n}_{j,h}\left(e_{\boldsymbol{u}_{i,h}}^{n+1,l+1}-e_{\boldsymbol{u}_{j,h}}^{n+1,l+1}\right)}{(c^{n}_{h})^2 \mathscr{D}_{i, j}}, \mathbf{v}_{h}\right)
+\sum_{j=i+1}^{M} \left(\frac{c^{n}_{i,h} c^{n}_{j,h}\left(e_{\boldsymbol{u}_{i,h}}^{n+1,l+1}-e_{\boldsymbol{u}_{j,h}}^{n+1,l}\right)}{(c^{n}_{h})^2 \mathscr{D}_{i, j}}, \mathbf{v}_{h}\right)\nonumber\\
&\qquad\qquad\qquad+
\left(\frac{e_{\boldsymbol{u}_{i,h}}^{n+1,l+1}}{\mathscr{D}_{i, s}},\mathbf{v}_{h}\right)
=\sum_{e\in \E_h^I}\langle [e_{\mu_{i,h}}^{n+1,l+1}],c^{n*}_{i,h}\,\mathbf{v}_{h}\cdot\boldsymbol{n}\rangle_e ,
\quad i=1,\dots,M, \label{eq:error_ui}\\
&e_{\mu_{i,h}}^{n+1,l+1}
= \frac{\theta_n R T}{ c_h^n(1-\beta^{\ast} c_h^n)}\,e_{c_{i,h}}^{n+1,l+1}, \label{eq:error_mu}\\
&e_{p_{h}}^{n+1,l+1}
= c_h^{n}\sum_{i=1}^{M}e_{\mu_{i,h}}^{n+1,l+1}, \label{eq:error_p}\\
&(e_{\phi_{ h}}^{n+1, l+1}, \varphi_h)
= \frac{1}{N}(e_{p_{h}}^{n+1,l+1}, \varphi_h)
+ \alpha (\nabla\cdot e_{\boldsymbol{w}_{s, h}}^{n+1, l+1}, \varphi_h)
-\alpha \sum_{e \in \E_h^I}\langle\{\varphi_h\boldsymbol{n}_e\}, [e_{\boldsymbol{w}_{s,h}}^{n+1,l+1}]\rangle_e. \label{eq:error_phi}
\end{align}
\end{subequations}
\begin{lemma}\label{lem:matrix_recursion}
Assume that the total concentration satisfies
\begin{equation}\label{eq:assumption_rho}
0<\varrho_0 \le \beta^{\ast} c_h^n \le \varrho <1,
\end{equation}
and the parameters Biot's modulus $N$, penalty parameters $\varsigma,\varsigma_1$, the Lam$\acute{e}$ parameter $\gamma$
are chosen sufficiently large and the MSD diffusion coefficients $\mathscr{D}_{i, s}$ are chosen sufficiently small.
Then there exists a nonnegative $2\times 2$ matrix
\[
A=
\begin{pmatrix}
a_{11} & a_{12}\\
a_{21} & a_{22}
\end{pmatrix},
\qquad a_{ij}\ge 0,
\]
such that the iteration errors satisfy the coupled recursion
\begin{equation}\label{eq:X_recursion}
\mathbf{X}^{\,l+1}\le A\,\mathbf{X}^{\,l},
\end{equation}
where the error vector is defined by
\begin{equation}\label{eq:X_def}
\mathbf{X}^{\,l+1}:=
\begin{pmatrix}
X_1^{l+1}\\[0.3em]
X_2^{l+1}
\end{pmatrix}
:=
\begin{pmatrix}
\displaystyle
\sum_{i=1}^{M}\sum_{K \in \T_h}\|e_{c_{i,h}}^{n+1,l+1}\|^{2}_{L^2(K)}
+ \sum_{i=1}^{M}\sum_{e\in \E_h^I}\frac{1}{h_e}\|[e_{c_{i,h}}^{n+1,l+1}]\|^{2}_{L^2(e)}
\\[1.0em]
\displaystyle
\sum_{i=1}^{M}\sum_{K \in \T_h}\|e_{\boldsymbol{u}_{i,h}}^{n+1,l+1}\|_{L^2(K)}^2
\end{pmatrix}.
\end{equation}
Moreover, the coefficients $a_{ij}$ can be determined as
\begin{subequations}\label{eq:A_entries}
\begin{align}
a_{11}
&=\frac{4M \mathcal{C}_{\phi}}{3C_{\phi,\min}},\label{eq:a11}\\
a_{12}&=\frac13,\label{eq:a12}\\
a_{21}
&=\frac{\displaystyle
\frac{\varrho^2 C_{\mu,\max}^2\,C}{(\beta^{\ast})^2 C_{\min}M}
\frac{M\mathcal{C}_{\phi}}{\left(\varsigma C_{\mu,\min}- \frac{\varrho^{2} C}{(\beta^{\ast})^{2} C_{\phi,\min}}\right)}
}{\displaystyle\Upsilon},\label{eq:a21}\\
a_{22}
&=\frac{\displaystyle
\frac{MC_{\max}}{C_{\min}}
+
\frac{\varrho^2 C_{\mu,\max}^2\,C}{(\beta^{\ast})^2 C_{\min}M}
\frac{C_{\phi,\min}}{4\left(\varsigma C_{\mu,\min}- \frac{\varrho^{2} C}{(\beta^{\ast})^{2} C_{\phi,\min}}\right)}
}{\displaystyle\Upsilon},\label{eq:a22}
\end{align}
\end{subequations}
where
\begin{equation}\label{eq:Delta_def}
\Upsilon:=\left(\frac{M C_{\min}}{2} +\frac{1}{D_{\max}} -  M^2\frac{C_{\max}}{C_{\min}}\right)>0,
\end{equation}
and $C_{\max}, C_{\min}$ are the maximum and minimum value of
$\frac{c^{n}_{i,h}c^{n}_{j,h}}{(c^{n}_{h})^2\mathscr{D}_{i, j}}$,
$C_{\mu,\max}$ is the maximum of $\frac{\theta_n R T}{ c_h^n(1-\beta c_h^n)^2}$,
and $\frac{1}{D_{\max}}$ is a lower bound of $\frac{1}{\mathscr{D}_{i, s}}$.
The constant $\mathcal{C}_{\phi}$ is defined in \eqref{eq:Cphi_def} below.
\end{lemma}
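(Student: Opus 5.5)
The plan is to derive two scalar inequalities from the error system \eqref{eq:error_system}: one bounding $X_1^{l+1}$ by $X_1^l$ and $X_2^l$, and one bounding $X_2^{l+1}$ by $X_1^{l}$ and $X_2^l$. Stacked together, they give \eqref{eq:X_recursion}. The porosity increment $e_{\phi_h}^{n+1,l}$ is first reduced to concentration increments through the mechanics. Test \eqref{eq:error_u_p} with $e_{\boldsymbol{w}_{s,h}}^{n+1,l}$ and \eqref{eq:error_phi} with $\varphi_h=e_{p_h}^{n+1,l}$. Then use DG coercivity of $\mathcal A$ (with $\gamma,\varsigma_1$ large, exactly as in the proof of Theorem \ref{thm:phi_bound}) together with trace, Cauchy--Schwarz and Young inequalities. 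This yields $\|e_{\phi_h}^{n+1,l}\|\le (1/N+C)\|e_{p_h}^{n+1,l}\|$. By \eqref{eq:error_mu}--\eqref{eq:error_p} and \eqref{eq:assumption_rho}, $\|e_{p_h}^{n+1,l}\|^2\le \frac{\varrho^2 C_{\mu,\max}^2 M}{(\beta^\ast)^2}\sum_i\|e_{c_{i,h}}^{n+1,l}\|^2$. The constant collected here defines $\mathcal C_\phi$ in \eqref{eq:Cphi_def}, so $\|e_{\phi_h}^{n+1,l}\|^2\le \mathcal C_\phi X_1^l$.

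Next comes the concentration estimate. Test \eqref{eq:error_c} with $e_{c_{i,h}}^{n+1,l+1}$ and sum over $i$. Using \eqref{eq:error_mu}, the penalty term becomes $\varsigma\theta_nRT/(c_h^n(1-\beta^\ast c_h^n))\sum_e h_e^{-1}\|[e_{c_{i,h}}^{n+1,l+1}]\|^2$, which is bounded below by $\varsigma C_{\mu,\min}$ times the jump part of $X_1^{l+1}$. The mass term gives $C_{\phi,\min}\|e_{c_{i,h}}^{n+1,l+1}\|^2/\tau$, and $\tau$ is absorbed into the constants. The porosity coupling term $(e_{\phi_h}^{n+1,l}c_{i,h}^{n+1,l},e_{c_{i,h}}^{n+1,l+1})$ is handled by Young's inequality, using the bounds $c_{i,h}<1/\beta^\ast$ from Theorem \ref{theo-1}. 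The explicit flux term $\langle c_h^{n*}e_{\boldsymbol u_{i,h}}^{n+1,l}\cdot\boldsymbol n,[e_{c_{i,h}}]\rangle$ is handled by a trace/inverse inequality, $\sum_e h_e\|e_{\boldsymbol u}\cdot\boldsymbol n\|_e^2\le C\|e_{\boldsymbol u}\|^2$, followed by Young, with the jump part absorbed into the penalty. Dividing by the coercivity constant gives $X_1^{l+1}\le \frac{4M\mathcal C_\phi}{3C_{\phi,\min}}X_1^l+\frac13X_2^l$, where the Young weights are chosen to produce $1/3$. This requires $\varsigma C_{\mu,\min}$ to dominate $\varrho^2C/((\beta^\ast)^2C_{\phi,\min})$, and that positive difference is what appears in the denominators of $a_{21},a_{22}$.

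For the velocity estimate, test \eqref{eq:error_ui} with $\mathbf v_h=e_{\boldsymbol u_{i,h}}^{n+1,l+1}$ and sum over $i$. The implicit part ($j\le i$ and the diagonal) together with the drag term gives at least $(MC_{\min}/2+1/D_{\max})X_2^{l+1}$, after a symmetrization argument for the lower-triangular Gauss--Seidel block. The $j\le i$ cross terms $-(C_{ij}e_{\boldsymbol u_j}^{l+1},e_{\boldsymbol u_i}^{l+1})$ are bounded by $M^2 C_{\max}/C_{\min}\,X_2^{l+1}$, which is subtracted and yields $\Upsilon$. The lagged terms $j>i$ give $\frac{MC_{\max}}{C_{\min}}X_2^l$. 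The right-hand side $\langle[e_{\mu_{i,h}}],c^{n*}_{i,h}\mathbf v_h\cdot\boldsymbol n\rangle$ is bounded, via \eqref{eq:error_mu}, the trace inequality and $c\le\varrho/\beta^\ast$, by the jump part of $X_1^{l+1}$ times $\varrho^2C_{\mu,\max}^2C/((\beta^\ast)^2)$. Inserting the first inequality for $X_1^{l+1}$ then produces $a_{21}$ and $a_{22}$ as stated, after division by $\Upsilon>0$; positivity of $\Upsilon$ is ensured by $\mathscr D_{i,s}$ being small.

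\textbf{Main obstacle.} I expect the hardest step to be the porosity coupling: obtaining $\|e_{\phi_h}^{n+1,l}\|^2\le\mathcal C_\phi X_1^l$ uniformly through the DG Biot system. The stability constant must not depend on $h$, which is where the large $\gamma,\varsigma_1,N$ assumptions enter. A second delicate point is bookkeeping the Young's-inequality weights so that the stated explicit constants, such as $a_{12}=1/3$ and the factor $4/3$, come out exactly. I would try to fix those weights at the end, once all the terms have been collected.
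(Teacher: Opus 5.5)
Your proposal follows essentially the same route as the paper: a concentration estimate in which the lagged porosity increment is reduced, through the DG mechanics and porosity equations, to lagged concentration increments, followed by a Gauss--Seidel velocity estimate whose $(l+1)$-level jump term is closed by isolating the jump seminorm (with coefficient $\varsigma C_{\mu,\min}-\varrho^2C/((\beta^\ast)^2C_{\phi,\min})$) from the concentration estimate. Two small corrections are needed. First, to bound $\|e_{\phi_h}^{n+1,l}\|$ you must test \eqref{eq:error_phi} with $\varphi_h=e_{\phi_h}^{n+1,l}$, as the paper does; testing with $e_{p_h}^{n+1,l}$ only controls $(e_{\phi_h},e_{p_h})$, not $\|e_{\phi_h}\|$. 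Second, the paper's $\mathcal C_\phi$ in \eqref{eq:Cphi_def} already contains the factor $\varrho^2/((\beta^\ast)^2C_{\phi,\min})$ from the Young step on the porosity coupling term, and this is what makes $a_{11}=4M\mathcal C_\phi/(3C_{\phi,\min})$ come out exactly.
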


\begin{proof}
The objective is to derive a closed two-component recursion for the error vector $\mathbf{X}^l$.
The proof proceeds in two parts:
(i) derive a concentration/porosity estimate yielding a bound for $X_1^{l+1}$,
(ii) derive a velocity estimate yielding a bound for $X_2^{l+1}$.

Taking $q_h=e_{c_{i,h}}^{n+1,l+1}$ in the concentration error equation \eqref{eq:error_c},
and applying the Cauchy-Schwarz inequality, Young's inequality, we obtain the estimate (see, e.g., \cite{Chenpre}):
\begin{align}\label{eq:c_energy_pre}
&\frac{3C_{\phi, \min}}{4}\sum_{K \in \T_h}\|e_{c_{i,h}}^{n+1,l+1}\|^{2}_{L^2(K)} 
\\\nonumber
&\quad+ \left(\varsigma C_{\mu, \min}- \frac{\varrho^{2} C}{(\beta^{\ast})^{2} C_{\phi, \min}}\right)
\sum_{e\in \E_h^I}\frac{1}{h_e}\|[e_{c_{i,h}}^{n+1,l+1}]\|^{2}_{L^2(e)}\notag\\\nonumber
&\leq
\frac{\varrho^{2}}{(\beta^{\ast})^2C_{\phi, \min}}\sum_{K \in \T_h}\| e_{\phi_{h}}^{n+1, l}\|^{2}_{L^2(K)}
+ \frac{C_{\phi, \min}}{4}\sum_{e\in \E_h^I}\frac{1}{h_e}\|e_{\boldsymbol{u}_{i,h}}^{n+1,l}\|^{2}_{L^{2}(e)}.
\end{align}

Testing the porosity error equation \eqref{eq:error_phi} with $\varphi_h=e_{\phi_h}^{n+1,l}$,
and using the trace inequality together with Cauchy--Schwarz and Young inequalities,
one obtains (cf.~\cite{Chenpre}) the estimate
\begin{align}\label{eq:phi_bound}
\sum_{K \in \T_h}\| e_{\phi_{h}}^{n+1, l}\|^{2}_{L^2(K)}
\le&
\frac{1}{(1-\alpha - \frac{1}{2N})}
\left(
\left(\frac{1}{(\frac{1}{2}-\alpha)\gamma}+\frac{\alpha C}{\varsigma_2-(1+\alpha)C}\right)
\frac{\alpha (1+\gamma)(\varrho C_{\mu, \max})^2}{2\gamma(\beta^{\ast})^2}\right.
\\\nonumber
&\left.+ \frac{(\varrho C_{\mu, \max})^2}{2N(\beta^{\ast})^2}
\right)
\sum_{i=1}^{M}\sum_{K \in \T_h}\| e_{c_{i,h}}^{n+1, l}\|^{2}_{L^2(K)}.
\end{align}

We now introduce the constant $\mathcal{C}_{\phi}$ collecting the coefficients in \eqref{eq:phi_bound}:
\begin{align}\label{eq:Cphi_def}
\mathcal{C}_{\phi} :=
\frac{\varrho^{2}}{(\beta^{\ast})^2C_{\phi, \min}}
\frac{1}{(1-\alpha - \frac{1}{2N})}
&\left(
\left(\frac{1}{(\frac{1}{2}-\alpha)\gamma}+\frac{\alpha C}{\varsigma_2-(1+\alpha)C}\right)\right.\\\nonumber
&\left.\frac{\alpha (1+\gamma)(\varrho C_{\mu, \max})^2}{2\gamma(\beta^{\ast})^2}
+ \frac{(\varrho C_{\mu, \max})^2}{2N(\beta^{\ast})^2}
\right).
\end{align}

\smallskip
For each $i=1,\dots,M$, substituting \eqref{eq:phi_bound} into \eqref{eq:c_energy_pre} yields
\begin{align}\label{eq-109-3}
&\frac{3C_{\phi, \min}}{4}\sum_{K \in \T_h}\|e_{c_{i,h}}^{n+1,l+1}\|^{2}_{L^2(K)}\\\nonumber
&+ \left(\varsigma C_{\mu, \min}- \frac{\varrho^{2} C}{(\beta^{\ast})^{2} C_{\phi, \min}}\right)
\sum_{e\in \E_h^I}\frac{1}{h_e}\|[e_{c_{i,h}}^{n+1,l+1}]\|^{2}_{L^2(e)}\notag\\\nonumber
&\leq \mathcal{C}_{\phi}
\sum_{j=1}^{M}\sum_{K \in \T_h}\| e_{c_{j,h}}^{n+1, l}\|^{2}_{L^2(K)}
+ \frac{C_{\phi, \min}}{4}\sum_{K \in \T_h}\|e_{\boldsymbol{u}_{i,h}}^{n+1,l}\|^{2}_{L^{2}(K)}.
\end{align}

Summing \eqref{eq-109-3} over $i=1,\dots,M$, we obtain
\begin{align}\label{eq-109-3-sum}
&\frac{3C_{\phi, \min}}{4}\sum_{i=1}^{M}\sum_{K \in \T_h}\|e_{c_{i,h}}^{n+1,l+1}\|^{2}_{L^2(K)}\\\nonumber
& + \left(\varsigma C_{\mu, \min}- \frac{\varrho^{2} C}{(\beta^{\ast})^{2} C_{\phi, \min}}\right)
\sum_{i=1}^{M}\sum_{e\in \E_h^I}\frac{1}{h_e}\|[e_{c_{i,h}}^{n+1,l+1}]\|^{2}_{L^2(e)}\notag\\\nonumber
&\leq M\mathcal{C}_{\phi}
\sum_{i=1}^{M}\sum_{K \in \T_h}\| e_{c_{i,h}}^{n+1, l}\|^{2}_{L^2(K)}
+ \frac{C_{\phi, \min}}{4}\sum_{i=1}^{M}\sum_{K \in \T_h}\|e_{\boldsymbol{u}_{i,h}}^{n+1,l}\|^{2}_{L^{2}(K)}.
\end{align}
Next, we use the coercivity condition
\[
\varsigma C_{\mu,\min}- \frac{\varrho^{2} C}{(\beta^{\ast})^{2} C_{\phi, \min}}>0,
\]
which guarantees that the interior-penalty term provides sufficient control of the interelement jumps,
and therefore allows us to bound the jump seminorm.

Up to an equivalent rescaling of constants, the estimate \eqref{eq-109-3-sum} can be rewritten
in the symmetric energy form consistent with the definition of $X_1^{l+1}$:
\begin{align}\label{eq:c_error_first}
&\frac{3C_{\phi,\min}}{4}\sum_{i=1}^{M}\sum_{K \in \T_h}\|e_{c_{i,h}}^{n+1,l+1}\|^{2}_{L^2(K)}
+
\frac{3C_{\phi,\min}}{4}\sum_{i=1}^{M}\sum_{e\in \E_h^I}\frac{1}{h_e}\|[e_{c_{i,h}}^{n+1,l+1}]\|^{2}_{L^2(e)}\notag\\
&\leq
M \mathcal{C}_{\phi}\sum_{i=1}^{M}\sum_{K \in \T_h}\|e_{c_{i,h}}^{n+1,l}\|^{2}_{L^2(K)}
+\frac{C_{\phi,\min}}{4}\sum_{i=1}^{M}\sum_{K \in \T_h}\|e_{\boldsymbol{u}_{i,h}}^{n+1,l}\|_{L^{2}(K)}^{2}.
\end{align}

Dividing \eqref{eq:c_error_first} by $\frac{3C_{\phi,\min}}{4}$ and noting that
$\frac{(C_{\phi,\min}/4)}{(3C_{\phi,\min}/4)}=\frac13$,
we obtain the first recursion component
\begin{equation}\label{eq:X1_recursion}
X_1^{l+1}\le a_{11}X_1^{l}+a_{12}X_2^{l},
\end{equation}
where $a_{11},a_{12}$ are given by \eqref{eq:a11}--\eqref{eq:a12}.

\medskip

Letting $\mathbf{v}_h=e_{\mathbf u_{i,h}}^{n+1,l+1}$ in \eqref{eq:error_ui},
and using the relation \eqref{eq:error_mu} together with the Cauchy--Schwarz and Young inequalities,
we obtain for each $i=1,\dots,M$
\begin{align}\label{eq:u_error_i}
& \left(\frac{M C_{\min}}{2} +\frac{1}{D_{\max}}\right)
\sum_{K \in \T_h}\left\|e_{\boldsymbol{u}_{i,h}}^{n+1,l+1}\right\|_{L^{2}(K)}^2 \\
& \leq
\frac{C_{\max}}{C_{\min}}
\left(
\sum_{j=1}^{i}\sum_{K \in \T_h}\left\|e_{\boldsymbol{u}_{j,h}}^{n+1,l+1}\right\|_{L^2(K)}^2
+\sum_{j=i+1}^{M}\sum_{K \in \T_h}\left\|e_{\boldsymbol{u}_{i,h}}^{n+1,l}\right\|_{L^2(K)}^2
\right)\notag\\\nonumber
&\quad+\frac{\varrho^2 C_{\mu,\max}^2\,C}{(\beta^{\ast})^2 C_{\min}M}
\sum_{e\in \E_h^I}\frac{1}{h_e}\|[e_{c_{i,h}}^{n+1,l+1}]\|_{L^2(e)}^{2}.
\end{align}

\smallskip
Summing \eqref{eq:u_error_i} over $i=1,\dots, M$ yields the intermediate estimate 
\begin{align}
&\sum_{i=1}^{M}
\Biggl(
    \frac{M C_{\min}}{2}
    + \frac{1}{D_{\max}}
    - \frac{(M-i)C_{\max}}{C_{\min}}
\Biggr)
\sum_{K \in \T_h}
\left\|e_{\boldsymbol{u}_{i,h}}^{n+1,l+1}\right\|_{L^{2}(K)}^2
 \\\nonumber
&\leq\;
\sum_{i=1}^{M}
\frac{(i-1)C_{\max}}{C_{\min}}
\sum_{K \in \T_h}
\left\|e_{\boldsymbol{u}_{i,h}}^{n+1,l}\right\|_{L^2(K)}^2
 \\\nonumber
&\quad+ \frac{\varrho^2 C_{\mu,\max}^2\,C}{(\beta^{\ast})^2 C_{\min}M}
\sum_{i=1}^{M}\sum_{e\in \E_h^I}
\frac{1}{h_{e}}
\left\|[e_{c_{i,h}}^{n+1,l+1}]\right\|^{2}_{L^{2}(e)}.
\end{align}

To simplify the inequality into a form suitable for the coupled error recursion, we perform a uniform scaling of the coefficients.
More precisely, we bound the $i$-dependent coefficients on both sides by $i$-independent constants: on the left-hand side, we use a uniform lower bound, while on the right-hand side, we use suitable upper bounds.
This yields a simplified inequality with a single coercivity constant $\Upsilon>0$, namely,
\begin{align}\label{eq:u_error_sum}
\Upsilon\,
\sum_{i=1}^{M}\sum_{K \in \T_h}\left\|e_{\boldsymbol{u}_{i,h}}^{n+1,l+1}\right\|_{L^{2}(K)}^2 
\leq&
\frac{MC_{\max}}{C_{\min}}
\sum_{i=1}^{M}\sum_{K \in \T_h}\left\|e_{\boldsymbol{u}_{i,h}}^{n+1,l}\right\|_{L^2(K)}^2\\\nonumber
&
+
\frac{\varrho^2 C_{\mu,\max}^2\,C}{(\beta^{\ast})^2 C_{\min}M}
\sum_{i=1}^{M}\sum_{e\in \E_h^I}\frac{1}{h_e}\|[e_{c_{i,h}}^{n+1,l+1}]\|_{L^2(e)}^{2}.
\end{align}

\smallskip

The estimate \eqref{eq:u_error_sum} still contains the \emph{$(l+1)$-level} concentration jump term.
To close the recursion, we bound this term using \eqref{eq-109-3}.
Since the left-hand side of \eqref{eq-109-3} is coercive and nonnegative, we can isolate the jump seminorm and obtain
\begin{align}
\sum_{e\in \E_h^I}\frac{1}{h_e}\|[e_{c_{i,h}}^{n+1,l+1}]\|_{L^2(e)}^{2}
\le&
\frac{\mathcal{C}_{\phi}}{\left(\varsigma C_{\mu, \min}- \frac{\varrho^{2} C}{(\beta^{\ast})^{2} C_{\phi, \min}}\right)}
\sum_{i=1}^{M}\sum_{K \in \T_h}\| e_{c_{i,h}}^{n+1, l}\|^{2}_{L^2(K)}
\\\nonumber
&+\frac{C_{\phi, \min}}{4\left(\varsigma C_{\mu, \min}- \frac{\varrho^{2} C}{(\beta^{\ast})^{2} C_{\phi, \min}}\right)}
\sum_{K \in\mathcal{T}_h}\|e_{\boldsymbol{u}_{i,h}}^{n+1,l}\|^{2}_{L^{2}(e)}.  
\end{align}

Substituting this bound into \eqref{eq:u_error_sum} yields the closed form
\begin{align}\label{eq:u_error_sum_closed}
&\Upsilon\,
\sum_{i=1}^{M}\sum_{K \in \T_h}\left\|e_{\boldsymbol{u}_{i,h}}^{n+1,l+1}\right\|_{L^{2}(K)}^2\\
&\leq
\left(\frac{MC_{\max}}{C_{\min}}+\frac{\varrho^2 C_{\mu,\max}^2\,C}{(\beta^{\ast})^2 C_{\min}M}
\frac{C_{\phi,\min}}{4\left(\varsigma C_{\mu,\min}- \frac{\varrho^{2} C}{(\beta^{\ast})^{2} C_{\phi,\min}}\right)}\right)
\sum_{i=1}^{M}\sum_{K \in \T_h}\left\|e_{\boldsymbol{u}_{i,h}}^{n+1,l}\right\|_{L^2(K)}^2 \notag\\\nonumber
&+
\frac{\varrho^2 C_{\mu,\max}^2\,C}{(\beta^{\ast})^2 C_{\min}M}
\frac{M\mathcal{C}_{\phi}}{\left(\varsigma C_{\mu,\min}- \frac{\varrho^{2} C}{(\beta^{\ast})^{2} C_{\phi,\min}}\right)}
\sum_{i=1}^{M}\sum_{K\in \T_h}\|e_{c_{i,h}}^{n+1,l}\|_{L^2(K)}^{2}.
\end{align}
Dividing \eqref{eq:u_error_sum_closed} by $\Upsilon$ and using the definition of $X_1^l,X_2^l$ in \eqref{eq:X_def},
we obtain the second recursion component
\begin{equation}\label{eq:X2_recursion}
X_2^{l+1}\le a_{21}X_1^{l}+a_{22}X_2^{l},
\end{equation}
with $a_{21},a_{22}$ defined in \eqref{eq:a21}--\eqref{eq:a22}.

Combining \eqref{eq:X1_recursion} and \eqref{eq:X2_recursion} yields
\[
\mathbf X^{l+1}\le A\,\mathbf X^l,
\]
with $A$ defined in \eqref{eq:A_entries}. This completes the proof.
\end{proof}

\begin{lemma}\label{lem:contraction}
Let $A$ be the matrix defined in Lemma~\ref{lem:matrix_recursion}, and define the full iterative solution at time level $t^{n+1}$ by
\[
\mathbf W_h^{\,l}
:=
\bigl(
\boldsymbol u_{s,h}^{n+1,l},\, p_h^{n+1,l},\,
\{c_{i,h}^{n+1,l},\,\boldsymbol u_{i,h}^{n+1,l},\,\mu_{i,h}^{n+1,l}\}_{i=1}^M,\,
\phi_h^{n+1,l}
\bigr).
\]
Assume that the following conditions hold:
\begin{subequations}\label{eq:norm_conditions}
\begin{align}
a_{11}+a_{12}<1,\qquad &a_{21}+a_{22}<1,\label{eq:row_sum_cond}\\
a_{11}+a_{21}<1,\qquad &a_{12}+a_{22}<1.\label{eq:col_sum_cond}
\end{align}
\end{subequations}
Then we have $\|A\|_{\infty}<1$ and $\|A\|_{1}<1$. Hence $\rho(A)<1$, where $\rho(A)$ denotes the spectral radius of $A$.
Moreover, the nonlinear splitting map at time level $t^{n+1}$ admits a unique fixed point
\[
\mathbf W_h^\dagger
:=
\bigl(
\boldsymbol u_{s,h}^{n+1,\dagger},\, p_h^{n+1,\dagger},\,
\{c_{i,h}^{n+1,\dagger},\,\boldsymbol u_{i,h}^{n+1,\dagger},\,\mu_{i,h}^{n+1,\dagger}\}_{i=1}^M,\,
\phi_h^{n+1,\dagger}
\bigr),
\]
and the iterative solutions $\{\mathbf W_h^{\,l}\}_{l\ge 0}$ generated by the splitting scheme converge to $\mathbf W_h^\dagger$
from any initial guess.
\end{lemma}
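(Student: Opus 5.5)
The argument has two parts. First, a short matrix-norm computation for $A$ yields a spectral radius bound. Second, this bound is converted into geometric decay of the iteration increments, and hence into a Cauchy sequence for $\mathbf W_h^{\,l}$ in the finite-dimensional discrete space.

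\textbf{Step 1 (norms of $A$).} Since $a_{ij}\ge 0$, we have $\|A\|_\infty=\max(a_{11}+a_{12},\,a_{21}+a_{22})$ and $\|A\|_1=\max(a_{11}+a_{21},\,a_{12}+a_{22})$. Both are $<1$ by \eqref{eq:row_sum_cond}--\eqref{eq:col_sum_cond}. Because every induced norm dominates the spectral radius, $\rho(A)\le\min(\|A\|_1,\|A\|_\infty)<1$.

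\textbf{Step 2 (geometric decay of $\mathbf X^l$).} The entries of $A$ are nonnegative, so $A$ preserves the componentwise order on nonnegative vectors. Iterating \eqref{eq:X_recursion} from Lemma~\ref{lem:matrix_recursion} therefore gives $\mathbf X^{l}\le A^{l-1}\mathbf X^{1}$ componentwise. Taking $\|\cdot\|_\infty$ gives
\[
\|\mathbf X^{l}\|_\infty\le q^{\,l-1}\|\mathbf X^{1}\|_\infty,\qquad q:=\|A\|_\infty<1.
\]
In particular, $\sum_i\|e^{n+1,l}_{c_{i,h}}\|^2_{L^2}$ and $\sum_i\|e^{n+1,l}_{\boldsymbol u_{i,h}}\|^2_{L^2}$ decay like $q^{l}$.

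The remaining components of $\mathbf W_h$ are controlled by $X_1$:
\begin{itemize}
\item $e_{\mu_{i,h}}$ by \eqref{eq:error_mu}, using the bound on $\theta_nRT/(c_h^n(1-\beta^\ast c_h^n))$ from \eqref{eq:assumption_rho};
\item $e_{p_h}$ by \eqref{eq:error_p};
\item $e_{\phi_h}$ by \eqref{eq:phi_bound};
\item $e_{\boldsymbol w_{s,h}}$ by the DG coercivity of $\mathcal A$ applied to \eqref{eq:error_u_p}, with the pressure coupling, as already used in Theorem~\ref{thm:phi_bound}.
\end{itemize}
This gives $\|\mathbf W_h^{l+1}-\mathbf W_h^{l}\|\le C q^{l/2}$ in a fixed norm on the product space. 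Summing the geometric tail shows $\{\mathbf W_h^l\}$ is Cauchy, so it converges to some $\mathbf W_h^\dagger$.

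\textbf{Step 3 (fixed point and uniqueness).} The linearized scheme \eqref{eq-4-1}--\eqref{eq-4-6} has coefficients that depend continuously on the previous iterate, and all spaces are finite-dimensional. Passing to the limit $l\to\infty$ therefore shows that $\mathbf W_h^\dagger$ satisfies the nonlinear system \eqref{eq-4-1-1}--\eqref{eq-4-1-6}, i.e.\ it is a fixed point of the splitting map.

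For uniqueness, suppose $\mathbf W_h^\dagger$ and $\widetilde{\mathbf W}_h$ are two fixed points. Their difference satisfies the same error system \eqref{eq:error_system} with both levels $l$ and $l+1$ replaced by the fixed difference. The estimates of Lemma~\ref{lem:matrix_recursion} then give $\mathbf Y\le A\mathbf Y$ for the corresponding nonnegative vector $\mathbf Y$. Hence $\|\mathbf Y\|_\infty\le q\|\mathbf Y\|_\infty$, so $\mathbf Y=0$. Thus the $c$ and $\boldsymbol u$ components coincide, and the remaining components then coincide by the algebraic relations above.

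Independence from the initial guess follows for the same reason: the constants in $A$ depend only on the data at level $n$ and on \eqref{eq:assumption_rho}, not on the starting iterate.

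\textbf{Main obstacle.} The delicate point is justifying that the recursion of Lemma~\ref{lem:matrix_recursion} applies uniformly along the whole sequence, and also to the difference of two fixed points. This requires the bounds $\varrho_0\le\beta^\ast c_h^{n}\le\varrho$ and the porosity bounds $C_{\phi,\min}$ to hold for every iterate. I would first secure this by invoking Theorem~\ref{theo-1} and Theorem~\ref{thm:phi_bound} at each $l$, which keep every iterate inside the admissible set. Since that set is closed, the limit inherits the bounds.

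A second, more technical issue is that \eqref{eq:error_c} mixes the iterate levels $l$ and $l+1$. For the fixed-point difference these levels coincide, so the mixed terms must be handled by the same Young splitting used in the lemma. I expect that splitting to carry over verbatim.
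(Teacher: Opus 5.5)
Your proposal is correct and follows essentially the same route as the paper. Row and column sums give $\|A\|_\infty,\|A\|_1<1$ and hence $\rho(A)<1$; the iterated componentwise recursion drives $\mathbf X^{l}\to\mathbf 0$; and the remaining components ($\mu$, $p$, $\phi$, $\boldsymbol w_s$) are controlled by $X_1$. Where the paper simply declares the map a contraction and cites the Banach fixed-point theorem (deferring the limit passage to Theorem~\ref{thm:fixed_point_weak_form}), you spell this out. You use the explicit rate $q=\|A\|_\infty$, a summable-tail Cauchy argument, and a direct $\mathbf Y\le A\mathbf Y$ uniqueness argument. This is if anything more careful, including your observation that uniqueness and uniform constants hold only within the admissible set secured by Theorems~\ref{theo-1} and~\ref{thm:phi_bound}.
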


\begin{proof}
Since $a_{ij}\ge 0$, the induced matrix $\infty$-norm and $1$-norm are given by the maximum row sum and the maximum column sum, respectively, i.e.,
\[
\|A\|_\infty=\max\{a_{11}+a_{12},\,a_{21}+a_{22}\},
\qquad
\|A\|_1=\max\{a_{11}+a_{21},\,a_{12}+a_{22}\}.
\]
Assumption~\eqref{eq:norm_conditions} implies $\|A\|_\infty<1$ and $\|A\|_1<1$.
For any consistent matrix norm $\|\cdot\|$, it holds that $\rho(A) \le \|A\|$. Hence, $\rho(A) < 1$.
Therefore, Lemma~\ref{lem:matrix_recursion} yields the componentwise inequality
\begin{equation}\label{eq:X_contract_recursion}
	\mathbf X^{\,l+1}\le A\,\mathbf X^{\,l}\le A^{\,l+1}\mathbf X^{\,0},\qquad l\ge 0.
\end{equation}
Since $\rho(A)<1$, we have $A^{\,l}\to 0$ as $l\to\infty$, and thus
\begin{equation}\label{eq:X_to_zero}
	\mathbf X^{\,l}\to \mathbf 0 \quad\text{as }l\to\infty .
\end{equation}

Consequently, the iterative mapping is a contraction. By the Banach fixed-point theorem,
there exists a unique fixed point $\bigl(c_{i,h}^{n+1,\dagger},\,\mathbf{u}_{i,h}^{n+1,\dagger}\bigr)$ for all $i=1,\ldots,M$ such that
\[
c_{i,h}^{n+1,l}\to c_{i,h}^{n+1,\dagger}\quad\text{in } \mathcal{Q}_h, \qquad l\to\infty,\qquad i=1,\ldots,M,
\]
and
\[
\mathbf{u}_{i,h}^{n+1,l}\to \mathbf{u}_{i,h}^{n+1,\dagger}\quad\text{in } \boldsymbol{\mathcal{U}}_h, \qquad l\to\infty,\qquad i=1,\ldots,M.
\]
Lemma~\ref{lem:matrix_recursion} and the contraction above yield $X_1^{l}\to 0$ and $X_2^{l}\to 0$ as $l\to\infty$.
The remaining error components are controlled by the concentration energy through the stability bounds proved in
\cite{Chenpre}. Concretely, there exists a constant $C>0$, independent of $l$, such that
\begin{align}\label{eq:aux_controls}
\sum_{K \in \mathcal{T}_h}\left(\sum_{i=1}^M \|e_{\mu_{i,h}}^{n+1,l+1}\|_{L^2(K)}^2
+ \|e_{p_h}^{n+1,l+1}\|_{L^2(K)}^2
+ \|e_{\phi_h}^{n+1,l+1}\|_{L^2(K)}^2\right) \\
+ \sum_{K \in \mathcal{T}_h} \|\varepsilon(e_{\boldsymbol{u}_{s,h}}^{n+1,l+1})\|^{2}_{L^2(K)} + \sum_{K \in \mathcal{T}_h} \|\nabla\cdot(e_{\boldsymbol{u}_{s,h}}^{n+1,l+1})\|^{2}_{L^2(K)}
\le C\,X_1^{l+1}, \nonumber
\end{align}
so these components converge as well. Therefore, the full discrete vector $\mathbf W_h^{\,l}$ converges to $\mathbf W_h^\dagger$ strongly in all components,
i.e., each component of $\mathbf W_h^{\,l}$ converges to the corresponding component of $\mathbf W_h^\dagger$
in the associated discrete space as $l\to\infty$.

\end{proof}

\begin{remark}\label{rem:large_parameter}
The contraction regime required in Lemma~\ref{lem:contraction} is compatible with the intended physical setting.
In weakly compressible rocks (large Biot modulus $N$), the porosity--pressure coupling is moderate, which reduces
the coupling constant $\mathcal{C}_{\phi}$ in \eqref{eq:Cphi_def}. 
Moreover, in low-permeability formations the mobility and multicomponent diffusion are small,
corresponding to small MSD diffusion coefficients and thus weaker transport coupling. 
These features support the assumptions under which the entries of $A$ become small enough to ensure $\rho(A)<1$.
\end{remark}

\begin{theorem}\label{theo-final-bound}
Assume that $0<\beta^\ast c_h^n<1$ and that the boundary conditions \eqref{eq-BD}--\eqref{eq-BD-1} hold.
Let the time step size $\tau_n$ be chosen according to the adaptive strategy \eqref{eq-tau}.
Suppose further that the nonlinear splitting iteration at the time level $t^{n+1}$ is convergent, i.e.,
\[
(
c_{i,h}^{n+1,l},\mu_{i,h}^{n+1,l},\phi_h^{n+1,l})
\longrightarrow
(c_{i,h}^{n+1,\dagger},\mu_{i,h}^{n+1,\dagger},\phi_h^{n+1,\dagger})
\quad \text{as } l\to\infty.
\]
Then the converged discrete molar densities satisfy the physical bounds
\begin{align*}
0 < c_h^{n+1,\dagger} < \frac{1}{\beta^\ast},
\quad
0 < c_{i,h}^{n+1,\dagger} \le c_h^{n+1},
\quad i=1,\dots,M.\\
0<\phi_h^{n}-C_\epsilon \le \phi_h^{n+1,\dagger} \le \phi_h^{n}+C_\epsilon <1,
\end{align*}
\end{theorem}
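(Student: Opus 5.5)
The plan is to pass to the limit $l\to\infty$ in the iterate-wise bounds of Theorem~\ref{theo-1} and Theorem~\ref{thm:phi_bound}. Everything lives in the finite-dimensional space $\mathcal{Q}_h$ of piecewise constants. Convergence in any norm is therefore equivalent to elementwise convergence of the cell values, so closed (non-strict) inequalities are preserved in the limit. The strict inequalities are kept by routing them through intermediate bounds that are uniform in $l$.

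\textbf{Step 1 (total density).} By Theorem~\ref{theo-1}, for every $l$ and every $K\in\mathcal{T}_h$,
\[
(1-\delta_1(1-\beta^\ast c_h^n))c_h^n\le c_h^{n+1,l+1}\le (1+\delta_2(1-\beta^\ast c_h^n))c_h^n .
\]
The outer bounds depend only on $c_h^n$, not on $l$, so letting $l\to\infty$ elementwise gives the same two-sided bound for $c_h^{n+1,\dagger}$. The left constant is strictly positive because $\delta_1<1$ and $0<\beta^\ast c_h^n<1$. For the right constant, write $x=\beta^\ast c_h^n\in(0,1)$. Then $(1+\delta_2(1-x))x<1$ is equivalent to $\delta_2 x(1-x)<1-x$, i.e.\ $\delta_2 x<1$, which holds. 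This yields $0<c_h^{n+1,\dagger}<1/\beta^\ast$ with a margin that does not depend on $l$.

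\textbf{Step 2 (components and porosity).} For $i\le M-1$, the proof of Theorem~\ref{theo-1} gives the uniform lower bound $c_{i,h}^{n+1,l+1}\ge (1-\delta_{i,1}\frac{c_h^n}{c_{i,h}^n}(1-\beta^\ast c_h^n))c_{i,h}^n>0$. It passes to the limit, so $c_{i,h}^{n+1,\dagger}>0$. Likewise, $c_{i,h}^{n+1,l+1}\le c_h^{n+1,l+1}$ passes to $c_{i,h}^{n+1,\dagger}\le c_h^{n+1,\dagger}$.

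For $i=M$, I would use Remark~\ref{re-4-1}: $c_{M,h}^{n+1,l}=c_h^{n+1,l}-\sum_{i<M}c_{i,h}^{n+1,l}$ is a continuous (linear) function of the iterates, so it converges as well. Theorem~\ref{theo-1} applied to the $M$-th balance gives the same uniform positive lower bound, which is inherited in the limit.

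For porosity, Theorem~\ref{thm:phi_bound} gives $\phi_h^n-C_\epsilon\le\phi_h^{n+1,l+1}\le\phi_h^n+C_\epsilon$ with $C_\epsilon$ independent of $l$, and these closed bounds pass directly to $\phi_h^{n+1,\dagger}$. The statement's $c_h^{n+1}$ is read as $c_h^{n+1,\dagger}$.

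\textbf{Step 3 (admissibility of the limit).} The main obstacle is that the time step $\tau_n^{l+1}$ in \eqref{eq-tau} depends on $l$ through $\phi_h^{n+1,l}$ and $\boldsymbol u_{i,h}^{n+1,l}$. The limit must solve the scheme with a single, well-defined $\tau_n$. I would first use the convergence of $\phi_h^{n+1,l}$ and $\boldsymbol u_{i,h}^{n+1,l}$ (Lemma~\ref{lem:contraction}) to show that $\tau_n^{l}\to\tau_n^\dagger$; here the $\epsilon$ in the denominators keeps the map continuous, and the numerators stay positive by \eqref{eq-1012-2}. The fallback is to freeze $\tau_n$ after finitely many iterations, taking it as the minimum over all iterates, which is positive by the uniform bounds. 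Then all bounds in Theorem~\ref{theo-1} hold with this fixed $\tau_n$, and since the scheme's equations are continuous in $(c,\mu,\phi,\boldsymbol u)$, the limit $\mathbf W_h^\dagger$ satisfies \eqref{eq-full-discrete} and inherits the bounds.
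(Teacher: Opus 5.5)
Your proposal is correct and follows the paper's proof, which is simply the passage to the limit $l\to\infty$ in the iterate-wise bounds \eqref{eq-bound-c}--\eqref{eq-bound-ci} and \eqref{eq-phi-bounded}; your remark that these bounds depend only on $c_h^n$ and $\phi_h^n$, so the strict inequalities survive the limit, makes explicit what the paper leaves implicit. Step 3 is not needed for the stated bounds (that question belongs to Theorem~\ref{thm:fixed_point_weak_form}), and its fallback of freezing $\tau_n$ at the minimum over all iterates is circular, since the iterates themselves depend on $\tau_n$, so keep only the continuity argument there.
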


\begin{proof}
By Theorem~\ref{theo-1}, for every nonlinear iterate $l\ge0$ the bounds 
\eqref{eq-bound-c}--\eqref{eq-bound-ci}, and \eqref{eq-phi-bounded} hold for $(c_h^{n+1,l},c_{i,h}^{n+1,l})$, and $\phi^{n+1,l}_{h}$.
Passing to the limit $l\to\infty$ and using the convergence of the nonlinear splitting iteration
yields the desired bounds for the limiting solution $(c_h^{n+1,\dagger},c_{i,h}^{n+1,\dagger})$ and $\phi_{h}^{n+1, \dagger}$.
\end{proof}

\begin{theorem}[The fixed point solves the discrete weak problem]\label{thm:fixed_point_weak_form}
Let $\mathbf W_h^\dagger$ be the fixed point provided by Lemma~\ref{lem:contraction}.
Then $\mathbf W_h^\dagger$ satisfies the coupled discrete weak formulation at time level $t^{n+1}$,
i.e., it is a solution of the fully implicit nonlinear scheme \eqref{eq-full-discrete}.
\end{theorem}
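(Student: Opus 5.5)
The argument is a passage to the limit in the linearized iteration \eqref{eq-4-1}--\eqref{eq-4-6}. Fix the time level $t^{n+1}$ and the (converged) time step $\tau_n$.

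\emph{Step 1: convergence of every unknown.} By Lemma~\ref{lem:contraction}, $\mathbf W_h^{\,l}\to\mathbf W_h^\dagger$ componentwise in the finite-dimensional spaces $\boldsymbol{\mathcal V}_h,\boldsymbol{\mathcal U}_h,\mathcal Q_h$. In finite dimensions all norms are equivalent, so this holds in any norm, including the $L^2$ norms on elements and facets. It therefore gives elementwise and facetwise convergence of traces, jumps and averages.

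\emph{Step 2: the coefficients do not depend on $l$.} The upwind values $c^{n*}_{i,h}$, the coefficients $c^n_{i,h}c^n_{j,h}/((c^n_h)^2\mathscr D_{i,j})$ and $\mathscr D_{i,s}$ are all built from time level $n$ data. The same holds for the factor $\theta_nRT/(c^n_h(1-\beta^\ast c^n_h))$ in the chemical potential \eqref{eq-3-7}, which is well defined because $0<\beta^\ast c^n_h<1$. Consequently, each equation of \eqref{eq-4-1}--\eqref{eq-4-6}, tested with a fixed test function, is a finite sum of terms. Each term is either affine in one iterate (level $l$ or $l+1$), or the product $\phi^{n+1,l}_h c^{n+1,l+1}_{i,h}$ of two iterates. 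Both kinds of term are continuous on the finite-dimensional space. In addition, $\mu^{n+1,l+1}_{i,h}$ is an affine function of $c^{n+1,l+1}_{i,h}$ through \eqref{eq-3-7}, so $\mu^{n+1,l}_{i,h}\to\mu_{i,h}^{n+1,\dagger}$ with the same affine relation.

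\emph{Step 3: pass to the limit equation by equation.} Fix $l$, insert the $l$- and $(l+1)$-iterates, and let $l\to\infty$.
\begin{itemize}
\item In \eqref{eq-4-1}--\eqref{eq-4-2}, the lagged quantities $\phi^{n+1,l}_h$ and $\boldsymbol u^{n+1,l}_{i,h}$ and the updated quantities $c^{n+1,l+1}_{(i),h}$ and $\mu^{n+1,l+1}_{(i),h}$ all converge to the same $\dagger$ limit. This recovers \eqref{eq-4-1-1}--\eqref{eq-4-1-2}.
\item In the Gauss--Seidel MSD equation \eqref{eq-4-3}, the terms with index $j\le i$ use level $l+1$ and those with $j>i$ use level $l$. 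Both converge to $\boldsymbol u^{n+1,\dagger}_{j,h}$, so the split sums merge into the full sum $\sum_{j\neq i}$ of \eqref{eq-4-1-3}; the $j=i$ term vanishes identically.
\item Equations \eqref{eq-4-4}--\eqref{eq-4-6} are linear in the $(l+1)$ iterates and have fixed data, so they pass directly to \eqref{eq-4-1-4}--\eqref{eq-4-1-6}.
\end{itemize}
Since each identity holds for every test function in a finite basis, the limit identities hold for all test functions. Hence $\mathbf W_h^\dagger$ solves \eqref{eq-full-discrete}. Finally, $c^{n+1,\dagger}_{M,h}$ is recovered from the algebraic constraint of Remark~\ref{re-4-1}.

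\emph{Main obstacle.} The delicate point is consistency of the auxiliary data across the iteration. The time step $\tau^{l+1}_n$ in \eqref{eq-tau} depends on $l$, through $\phi^{n+1,l}_h$ and $\boldsymbol u^{n+1,l}_{i,h}$, so one must first argue that it converges, or is frozen, as $l\to\infty$. I would handle this as follows.
\begin{itemize}
\item Interpret the fixed point as corresponding to the converged step $\tau_n=\lim_l\tau^{l}_n$.
\item This limit exists because \eqref{eq-tau} is a minimum of continuous functions of the iterates. The denominators are kept away from zero by $\epsilon$, and the numerators are positive by \eqref{eq-1012-2} and Theorem~\ref{thm:phi_bound}.
\item Then $1/\tau^l_n\to 1/\tau_n$, and $D_{\tau_n}$ passes to the limit as well.
\end{itemize}
A further point is the sign-dependent index sets $\partial K^{\pm}$. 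These only enter the step-size formula, not the scheme itself, so they cause no discontinuity in the limit of the equations.
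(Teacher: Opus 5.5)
Your proposal is correct and follows the paper's own argument. You pass to the limit $l\to\infty$ in \eqref{eq-4-1}--\eqref{eq-4-6} using convergence in the finite-dimensional discrete spaces, with the mixed product $\phi_h^{n+1,l}c_{i,h}^{n+1,l+1}$ as the only genuinely nonlinear term. Where the paper uses a H\"older splitting plus a uniform $L^\infty$ bound, you use continuity of products of convergent sequences, which is equivalent and slightly cleaner.

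Your treatment of the $l$-dependent step $\tau_n^{l+1}$ goes beyond the paper. The paper tacitly keeps $\tau_n$ fixed during the iteration, as the error system \eqref{eq:error_system} with a single $\tau$ presupposes. If you keep that discussion, positivity of each numerator alone does not keep $\lim_l \tau_n^l$ away from zero. You would need a uniform lower bound, for instance from Theorem~\ref{thm:phi_bound} with $C_\epsilon$ small.
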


\begin{proof}
	By Lemma~\ref{lem:contraction}, the splitting sequence converges strongly in the discrete spaces as
	$l\to\infty$. Since all discrete spaces are finite dimensional, this convergence holds in any discrete norm.
	We now pass to the limit $l\to\infty$ in \eqref{eq-4-1}--\eqref{eq-4-6}.
	All terms are linear with respect to the unknowns at level $l+1$ except the mixed-level products
	\(
	\phi_h^{n+1,l}c_{i,h}^{n+1,l+1}.
	\)
	Thus it suffices to justify the convergence of these terms; the remaining terms follow directly from
	strong convergence and continuity of the bilinear forms.
	
	Let $q_h\in\mathcal Q_h$ be arbitrary. For the total molar density we write
	\[
	\phi_h^{n+1,l}c_{i,h}^{n+1,l+1}-\phi_h^{n+1,\dagger}c_{i,h}^{n+1,\dagger}
	=
	(\phi_h^{n+1,l}-\phi_h^{n+1,\dagger})c_{i,h}^{n+1,l+1}
	+\phi_h^{n+1,\dagger}(c_{i,h}^{n+1,l+1}-c_{i,h}^{n+1,\dagger}).
	\]
	Hence, by H\"older's inequality,
	\begin{align*}
		&\bigl|(\phi_h^{n+1,l}c_{i,h}^{n+1,l+1}-\phi_h^{n+1,\dagger}c_{i,h}^{n+1,\dagger},\,q_h)\bigr|\\
		&\quad\le
		\sum_{K \in \T_h}\|\phi_h^{n+1,l}-\phi_h^{n+1,\dagger}\|_{L^2(K)}\,
		\|c_{i,h}^{n+1,l+1}\|_{L^\infty(\Omega)}\,\sum_{K \in \T_h}\|q_h\|_{L^2(K)}\\
		&+\|\phi_h^{n+1,\dagger}\|_{L^\infty(\Omega)}\,
		\sum_{K \in \T_h}\|c_{i,h}^{n+1,l+1}-c_{i,h}^{n+1,\dagger}\|_{L^2(K)}\,\sum_{K \in \T_h}\|q_h\|_{L^2(K)}.
	\end{align*}
	By the boundedness assumptions and the discrete maximum principle established earlier,
	$\|c_{i,h}^{n+1,l+1}\|_{L^\infty(\Omega)}$ is uniformly bounded in $l$, and
	$\|\phi_h^{n+1,\dagger}\|_{L^\infty(\Omega)}<\infty$.
	Together with the strong convergence
	\(
	\phi_h^{n+1,l}\to\phi_h^{n+1,\dagger}
	\)
	and
	\(
	c_{i,h}^{n+1,l+1}\to c_{i,h}^{n+1,\dagger}
	\)
	in $L^2(\Omega)$, we obtain
	\begin{equation}\label{eq:phi_c_limit}
		(\phi_h^{n+1,l}c_{i,h}^{n+1,l+1},\,q_h)\to(\phi_h^{n+1,\dagger}c_{i,h}^{n+1,\dagger},\,q_h),
		\qquad \forall\,q_h\in\mathcal Q_h.
	\end{equation}
	Recall that the discrete time difference operator $D_{\tau_n}$ is linear. Therefore, by \eqref{eq:phi_c_limit},
	\[
	(D_{\tau_n}(\phi_h^{n+1,l}c_{i,h}^{n+1,l+1}),\,q_h)
	\to
	(D_{\tau_n}(\phi_h^{n+1,\dagger}c_{i,h}^{n+1,\dagger}),\,q_h),
	\qquad \forall\,q_h\in\mathcal Q_h.
	\]
All other terms in \eqref{eq-4-1}--\eqref{eq-4-2} are linear in the level-($l+1$) unknowns, and their coefficients are
uniformly bounded and converge strongly; hence, they pass to the limit by continuity of the associated forms.
Therefore, letting $l\to\infty$ in \eqref{eq-4-1}--\eqref{eq-4-2} gives the transport equations at the fixed point.
Since \eqref{eq-4-3}--\eqref{eq-4-6} are linear with respect to the level-($l+1$) unknowns, we can pass to the limit
$l\to\infty$ term by term using strong convergence and continuity. The limiting identities coincide with the fully
coupled discrete weak formulation at time level $t^{n+1}$. Consequently, $\mathbf W_h^\dagger$ is a solution of the nonlinear system \eqref{eq-full-discrete}.

\end{proof}

\section{Numerical Examples}\label{sec-num}

In this section, we present a series of numerical experiments to validate the proposed numerical method for multicomponent gas flow in poroelastic media. The tests are designed to verify key properties of the scheme, including: mass conservation for each component; the bounds-preserving property for all molar densities ($0 < \beta^{\ast} c_{h} < 1$); the discrete energy dissipation law; and the robustness and efficiency of the adaptive time-stepping strategy. We consider a ternary gas mixture consisting of Methane (CH$_4$), Carbon Dioxide (CO$_2$), and Ethane (C$_2$H$_6$) flowing through a porous reservoir. The physical parameters and Peng-Robinson equation of state (PR-EoS) parameters for each component are summarized in Table \ref{table1}. The computational domain is set to $\Omega = [0, L]^d$ with $L = 100$ m. A quasi-uniform triangular mesh with approximately $20,000$ elements is used for 2D simulations. The parameter $\delta_{i, 1}= \delta_{i, 2}= \delta_{1} = \delta_{2} = 0.3$, and the time step is dynamically adjusted based on the criterion given in Eq. (\ref{eq-tau}).
\begin{table}[htbp]
	\centering
	\caption {Physical properties of CH$_4$, C$_2$H$_6$ \text{and} CO$_2$}\label{table1}
	\begin{tabular}{cccc}
		\hline \text { Parameter } & CH$_4$ &C$_2$H$_6$ & CO$_2$ \\
		\hline $P_c(\mathrm{bar})$ & 45.99&48.72 & 73.75 \\
		$T_c(\mathrm{~K})$ & 190.56 &305.32 & 304.14 \\
		\text { Acentric factor } & 0.011 &0.099 & 0.239 \\
		\text { Molar weight }($\mathrm{g}$ / \text { mole }) & 16.04&30.07 & 44.01 \\
		\hline
	\end{tabular}
\end{table}
\subsection{Example 1}
This example tests a closed binary gas system (CO$_2$ and CH$_4$) to verify the proposed numerical scheme's capability to preserve fundamental physical properties, including individual species mass conservation, energy dissipation, and the bounds-preserving nature of all molar densities. A spatially heterogeneous permeability field, as illustrated on the left-hand side of Figure \ref{fig1-initial}, is generated using the Perlin noise method to mimic realistic geological conditions.

The initial molar density distributions are defined to create a strong contrast between two distinct square regions. Let the central square domain be $\Omega_1 = [30~\text{m},~70~\text{m}] \times [30~\text{m},~70~\text{m}]$. The initial conditions are set as follows
\begin{align*}
	c_{\text{CO}_2}^0(\boldsymbol{x}) &= \begin{cases}
		300 \text{ mol/m}^3, & \boldsymbol{x} \in \Omega_1, \\
		10 \text{ mol/m}^3, & \boldsymbol{x} \notin \Omega_1.
	\end{cases} \\
	c_{\text{CH}_4}^0(\boldsymbol{x}) &= \begin{cases}
		10 \text{ mol/m}^3, & \boldsymbol{x} \in \Omega_1, \\
		300 \text{ mol/m}^3, & \boldsymbol{x} \notin \Omega_1.
	\end{cases}
\end{align*}
This configuration creates a high molar density of CO$_2$ in the center against a background rich in CH$_4$, and vice versa, establishing sharp initial gradients that will drive the subsequent diffusion and mixing processes. The mechanical parameters are chosen as $N = 10^{11}$ Pa, $\gamma = 10^{15}$ Pa, $\eta = 10^{15}$ Pa$\cdot$s.

Figure \ref{fig1-initial} shows the initial distributions of the molar densities for both CO$_2$ and CH$_4$. Figure \ref{fig1-energy} demonstrates the performance of the proposed scheme: the total free energy decays monotonically, confirming the energy dissipation property; the total mass of each component remains constant over time, verifying discrete mass conservation for each individual species; and the molar densities for both CO$_2$ and CH$_4$ remain strictly within their physical bounds throughout the simulation.  The graph on the right-hand side shows the adaptive time step size, which increases gradually as the sharp initial gradients smooth out and the system evolves toward a homogeneous equilibrium, eventually reaching the prescribed maximum step size $\tau_{\text{max}} = 1000$ s.

In Figures \ref{fig1-co2} and \ref{fig1-ch4}, we illustrate the spatial evolution of the molar densities of CO$_2$ and CH$_4$ at different time steps: $n = 100, 200, 300, 400$. The sequence clearly shows the mutual diffusion process: CO$_2$ diffuses outward from the high-density central region, while CH$_4$ diffuses inward from the surrounding. Figures \ref{fig1-co2-che} and \ref{fig1-ch4-che} present the corresponding chemical potential distributions for both components at different time steps: $n = 100, 200, 300, 400$. Initially, significant chemical potential gradients exist at the interface between $\Omega_1$ and the outer domain, which serve as the primary driving force for the diffusion process. As the system evolves, these gradients gradually diminish until the chemical potentials become uniform throughout the domain at equilibrium, indicating no further net diffusion to occur.

Figure \ref{fig1-pres} shows the evolution of the total pressure field. The initial pressure is non-uniform due to the compositional heterogeneity. The pressure field evolves dynamically as the components interdiffuse, and eventually reaches a homogeneous state at equilibrium. Figure \ref{fig1-poro} presents the evolution of porosity. Porosity adjusts dynamically with the pressure field (Figure \ref{fig1-pres}), increasing where pressure rises and decreasing where pressure drops, following the transient pressure gradients. 

\begin{figure}[htbp]
	\centering
	\includegraphics[width=5.0cm, height=4.0cm,trim=0.5cm 0cm 0cm 0cm,clip]{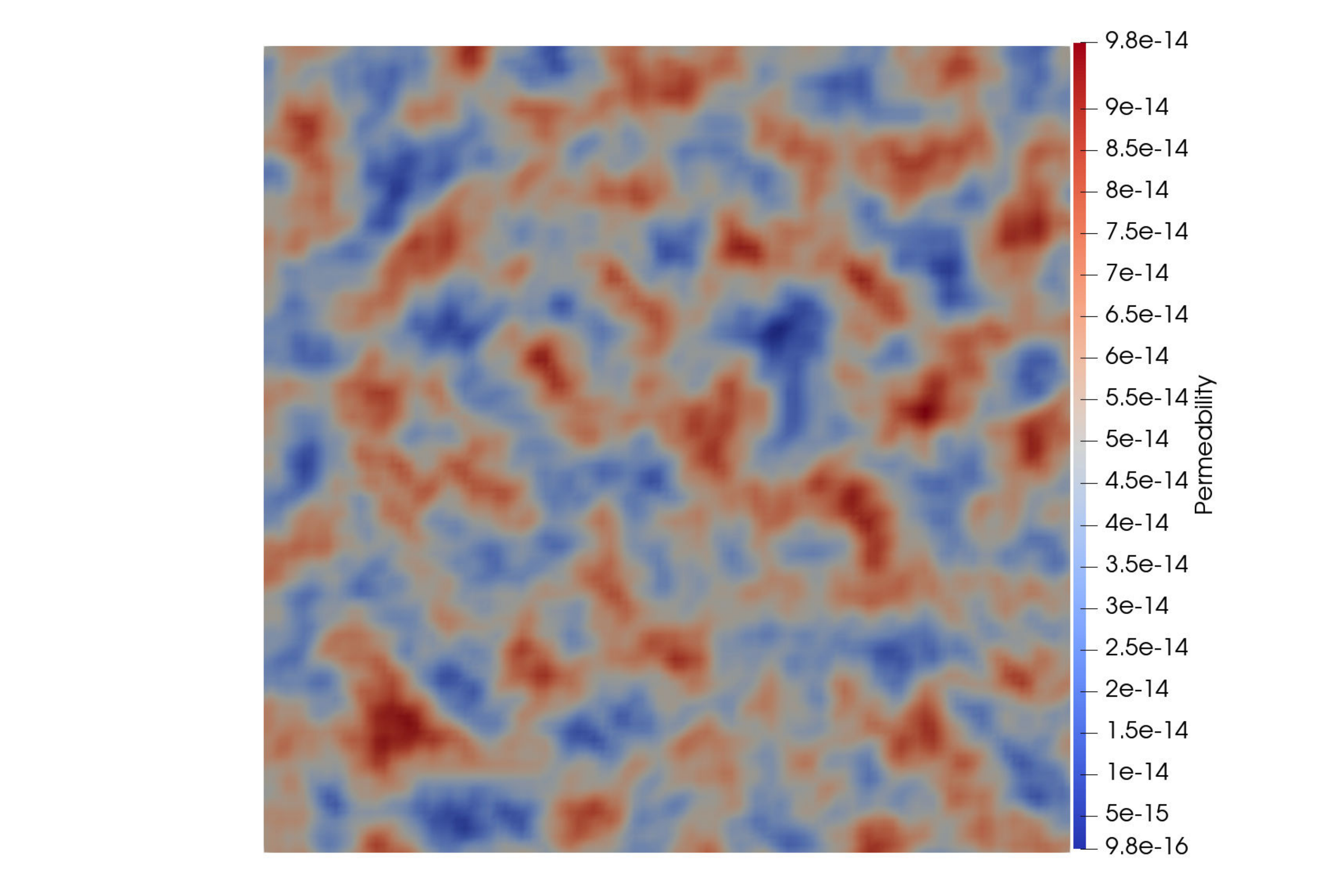}
	\includegraphics[width=5.0cm, height=4.0cm,trim=0.5cm 0cm 0cm 0cm,clip]{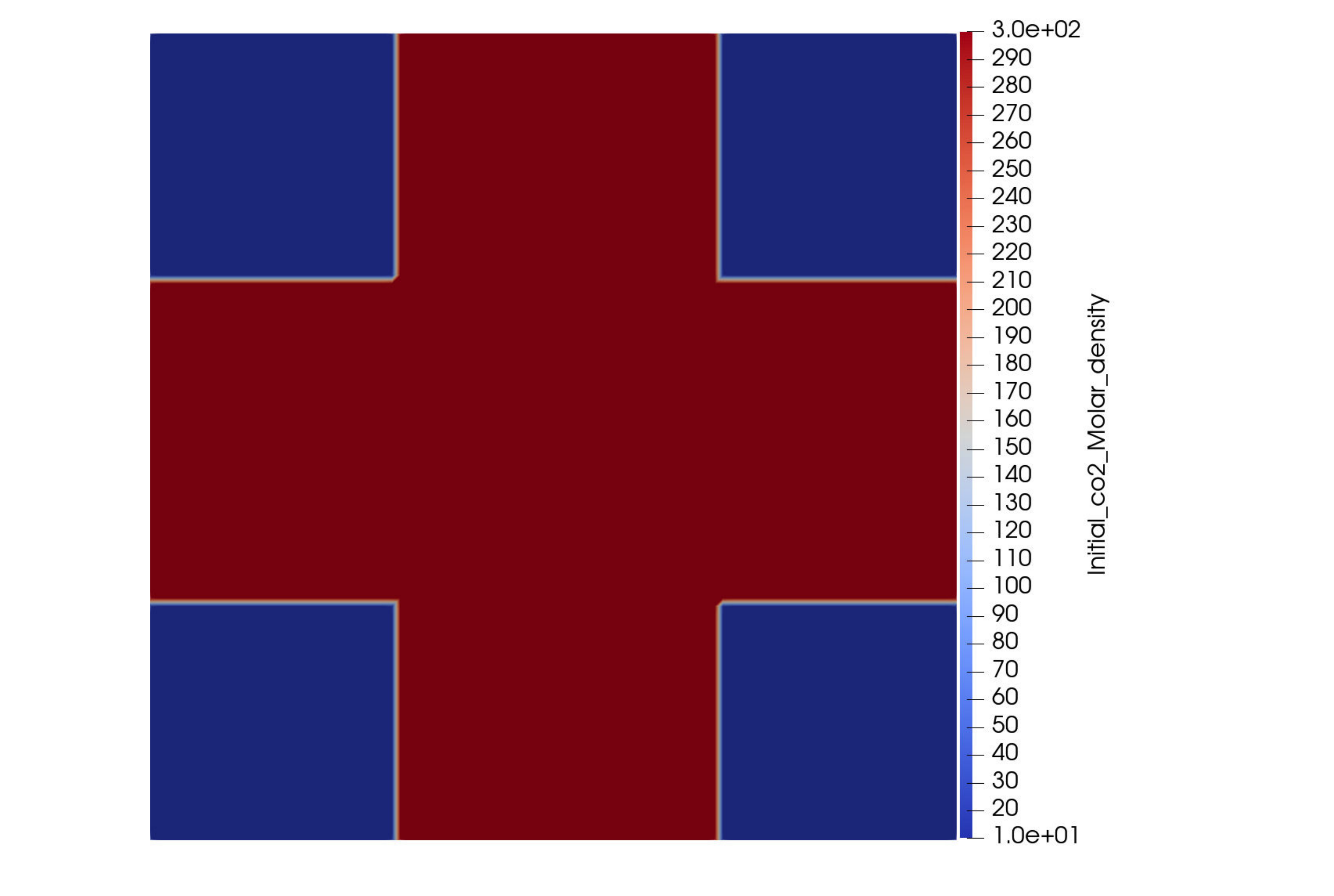}
	\includegraphics[width=5.0cm, height=4.0cm,trim=0.5cm 0cm 0cm 0cm,clip]{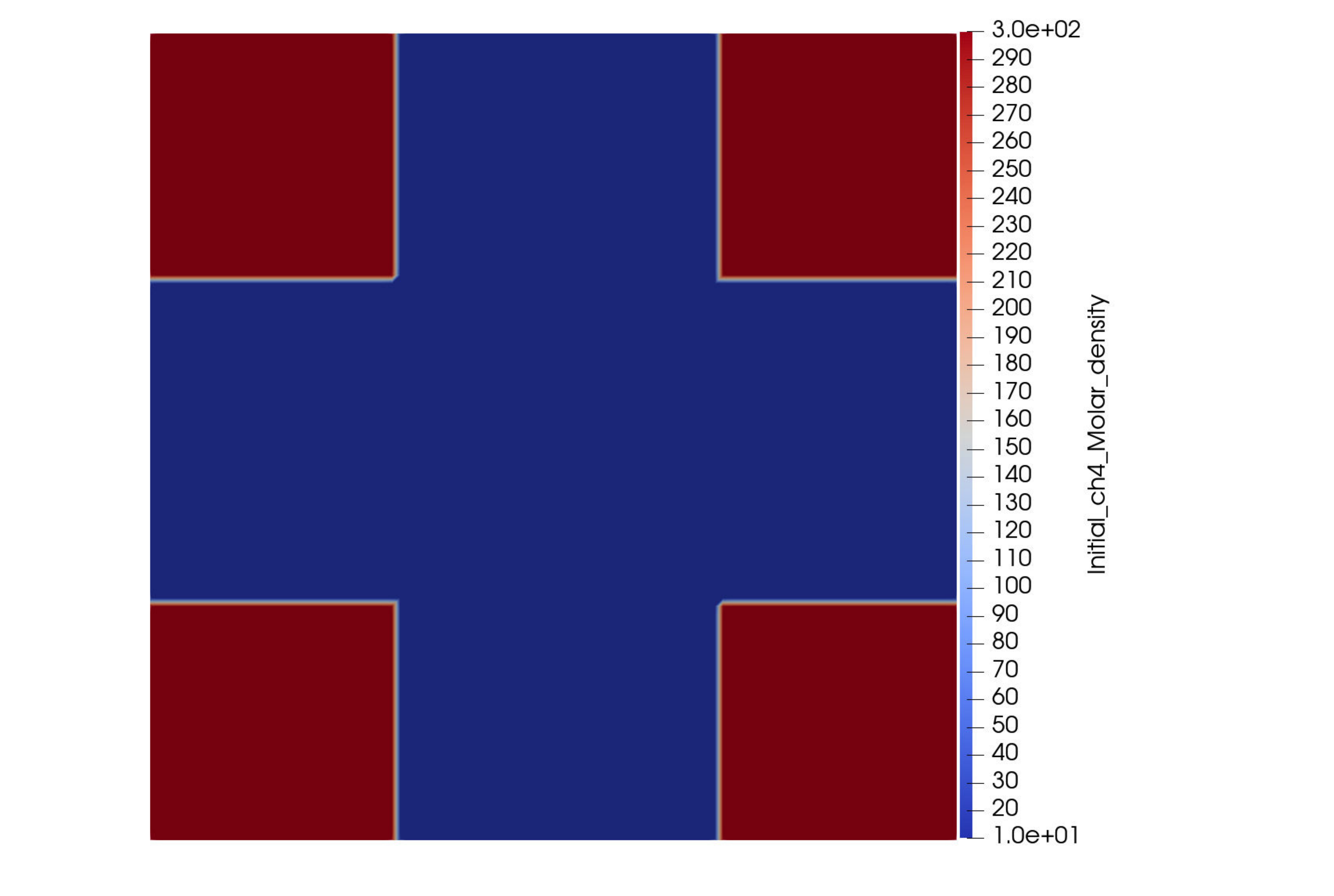}
	\caption{Example 1: Left: Initial distributions of permeability. Middle: Initial distributions of molar density of  CO$_2$. Right: Initial distributions of molar density of CH$_4$.}\label{fig1-initial}
\end{figure}

\begin{figure}[htbp]
	\centering
	\includegraphics[width=5.0cm, height=4.5cm]{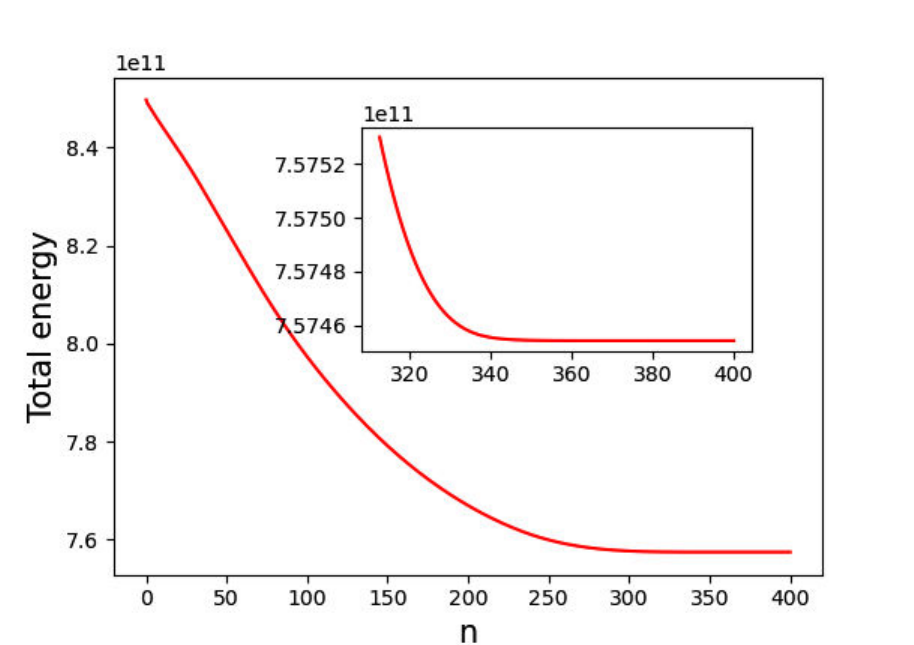}
	\includegraphics[width=5.0cm, height=4.5cm]{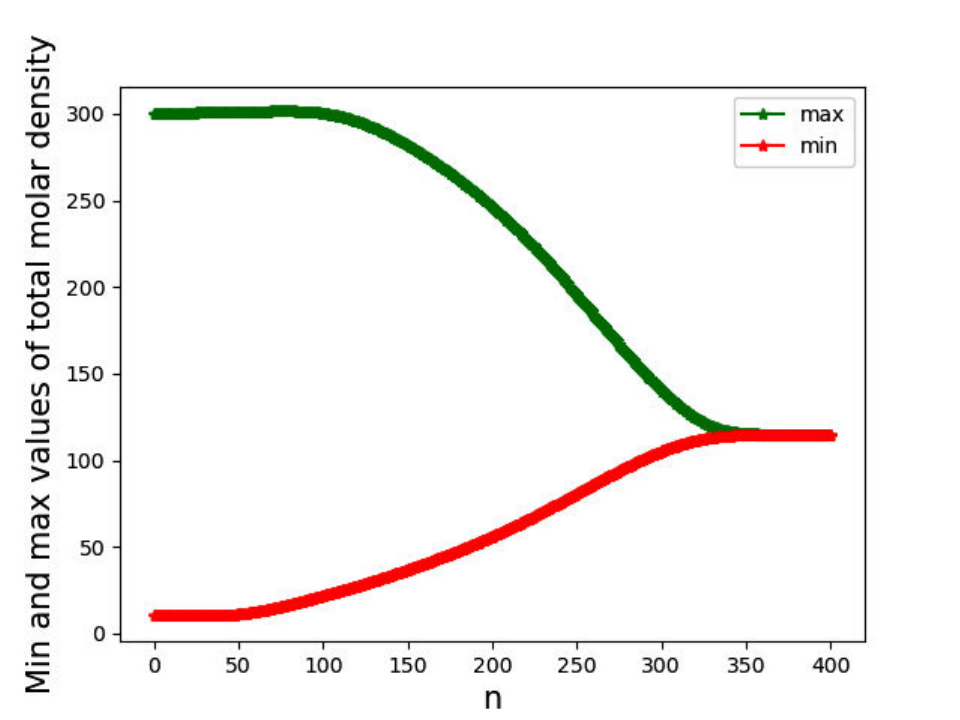}
	\includegraphics[width=5.0cm, height=4.5cm]{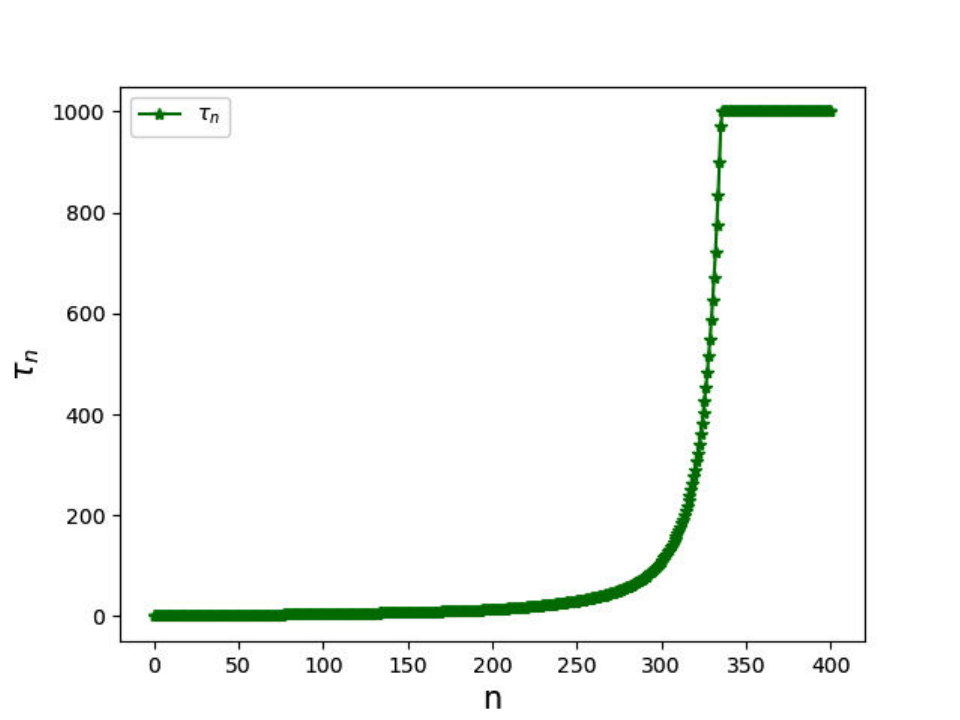}
	\caption{Example 1:  Left: Distributions of energy at different time steps. Middle: Minimum and maximum values of molar density. Right: Adaptive values of the time step size.}\label{fig1-energy}
\end{figure}
\begin{figure}[htbp]
	\centering
	\includegraphics[width=5.0cm, height=4.5cm]{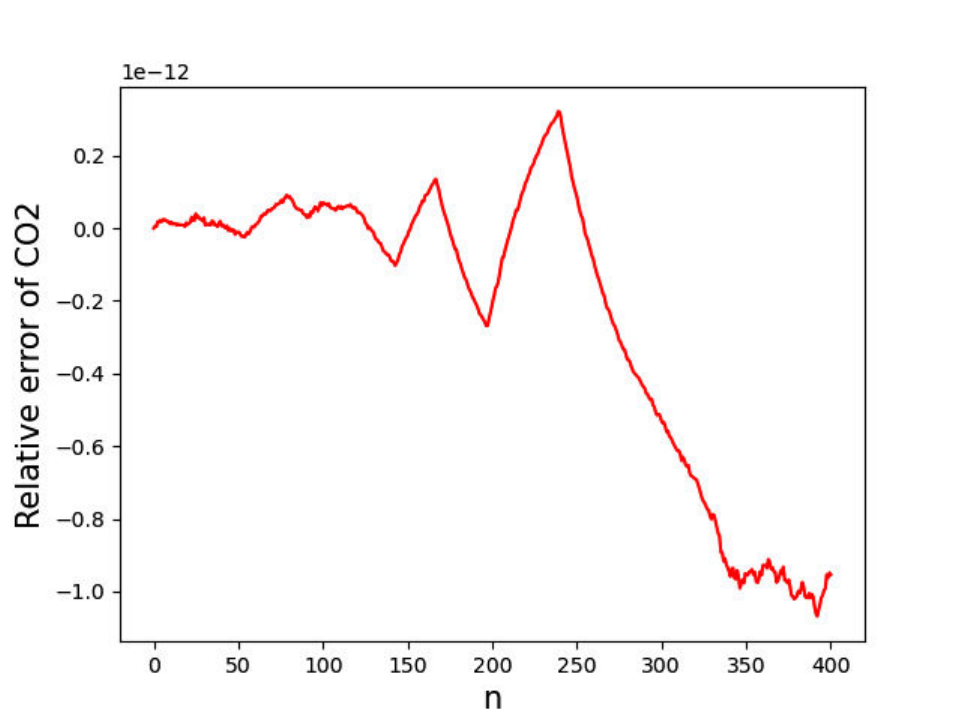}
	\includegraphics[width=5.0cm, height=4.5cm]{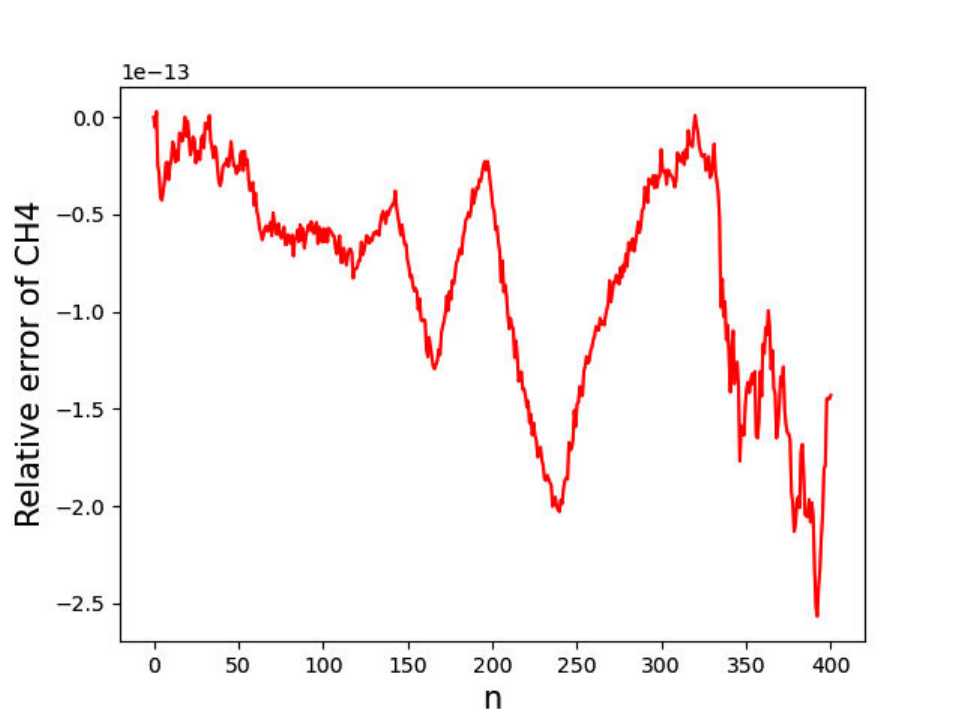}
	\caption{Example 1:  Left: Mass conservation of CO$_2$  at different time steps. Right: Mass conservation of CH$_4$ at different time steps. }\label{fig1-error}
\end{figure}
\begin{figure}[htbp]
	\centering
	\includegraphics[width=5.5cm, height=4cm,trim=0.5cm 0cm 0cm 0cm,clip]{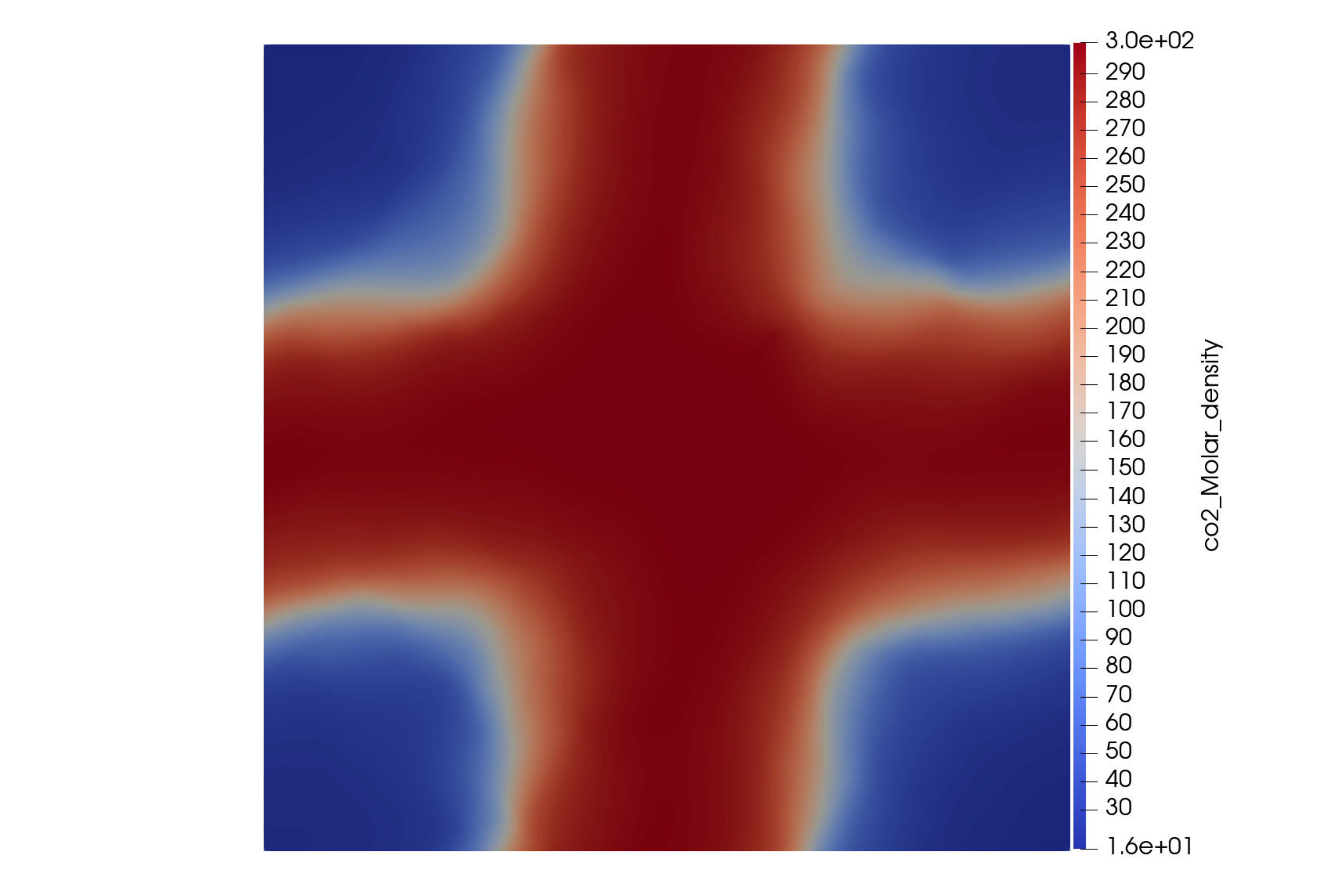}
	\includegraphics[width=5.5cm, height=4cm,trim=0.5cm 0cm 0cm 0cm,clip]{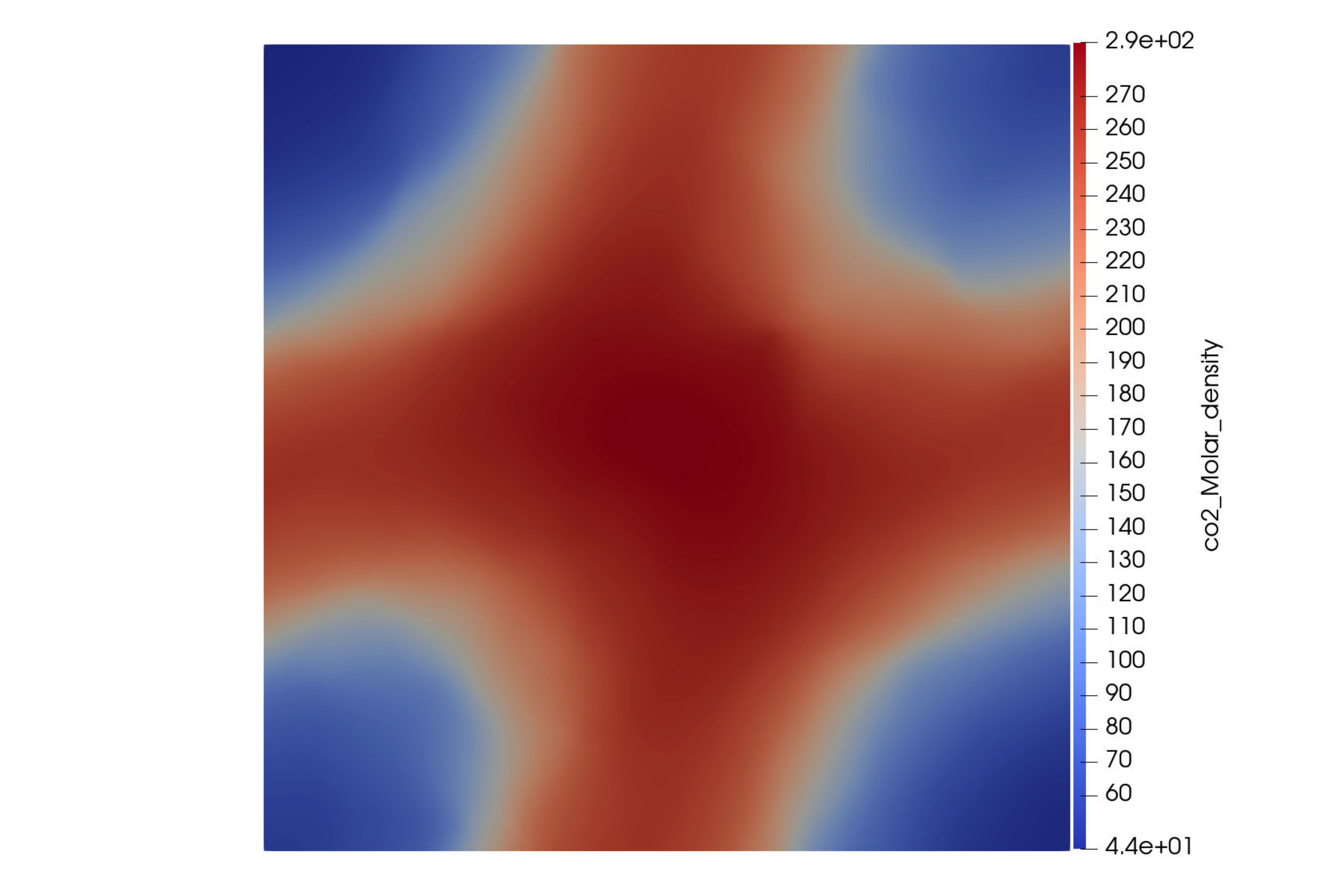}
	
	\includegraphics[width=5.5cm, height=4cm,trim=0.5cm 0cm 0cm 0cm,clip]{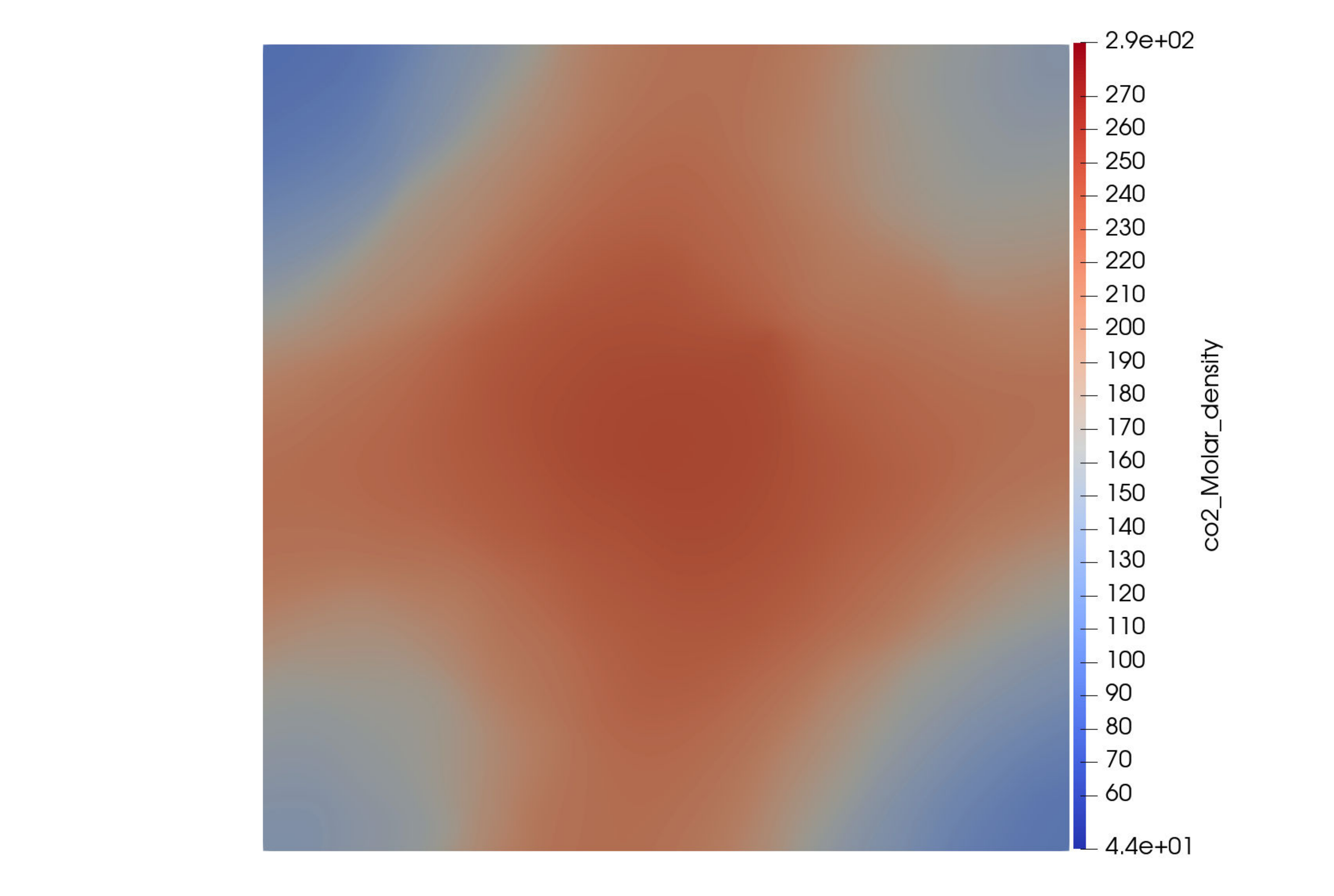}
	\includegraphics[width=5.5cm, height=4cm,trim=0.5cm 0cm 0cm 0cm,clip]{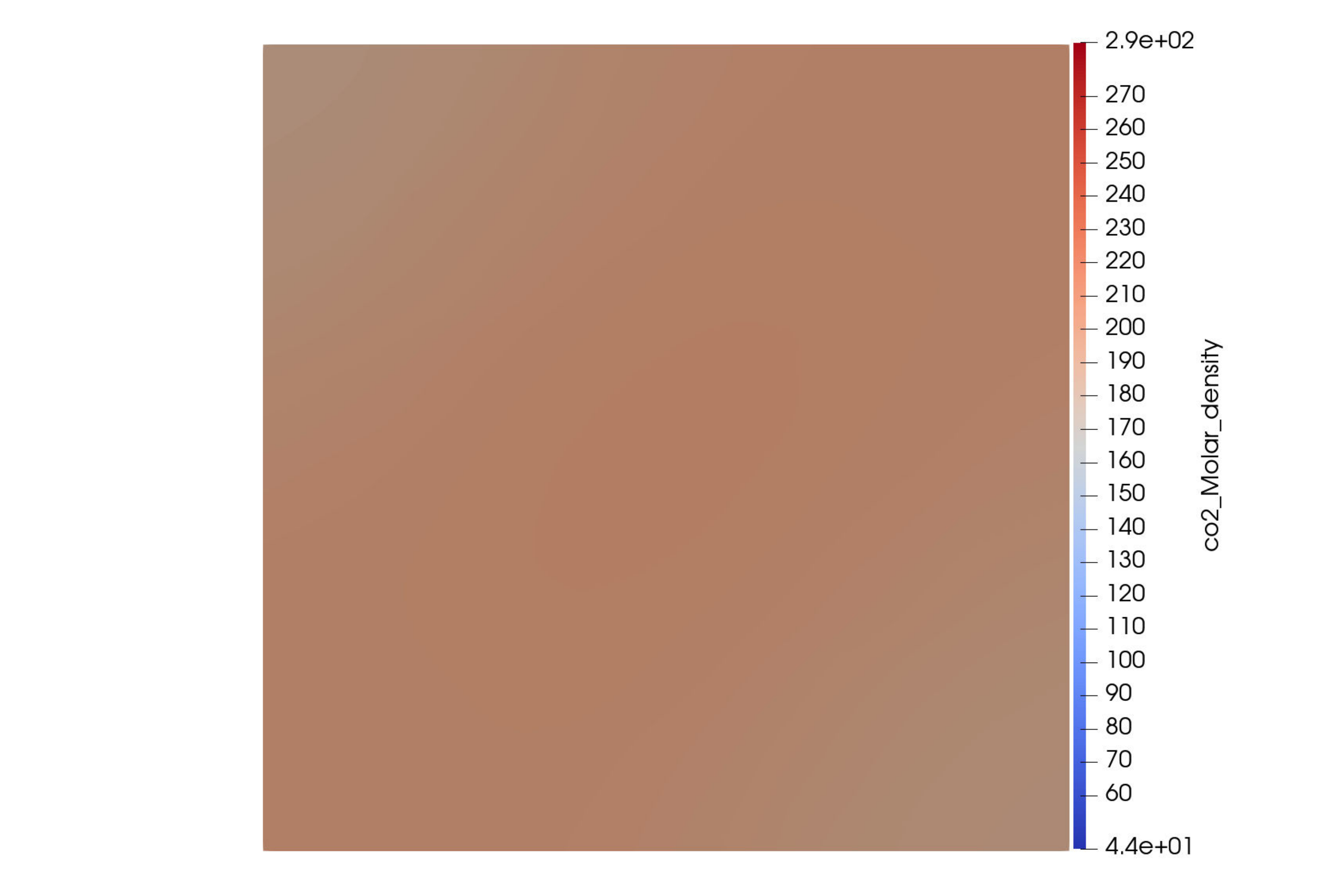}
	\caption{Distributions of molar density of CO$_2$ at different times in Example 1. Top-left: $n = 100$. Top-right: $n = 200$. Bottom-left: $n = 300$. Bottom-right: $n = 400$.}\label{fig1-co2}
\end{figure}

\begin{figure}[htbp]
	\centering
	\includegraphics[width=5.5cm, height=4cm,trim=0.5cm 0cm 0cm 0cm,clip]{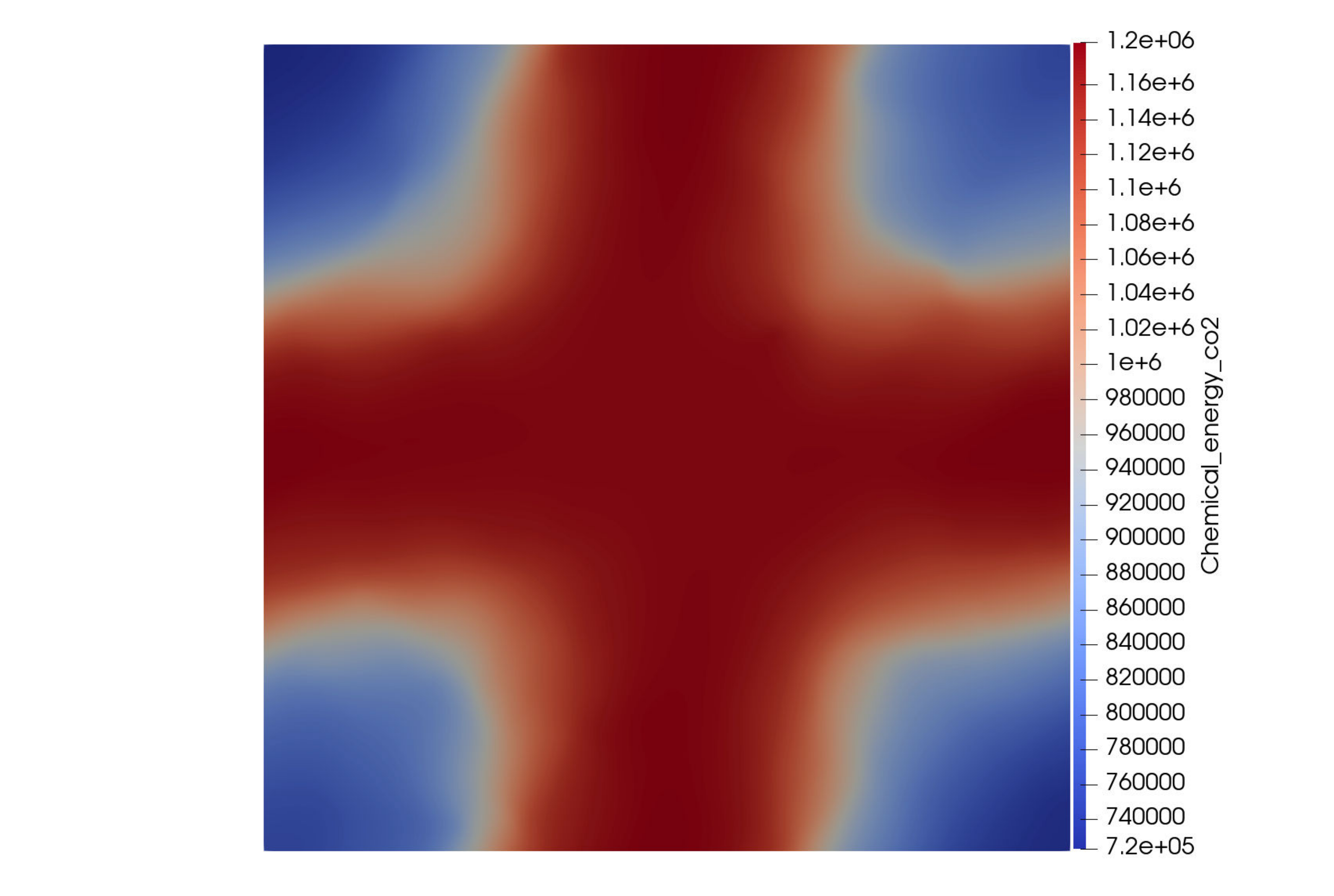}
	\includegraphics[width=5.5cm, height=4cm,trim=0.5cm 0cm 0cm 0cm,clip]{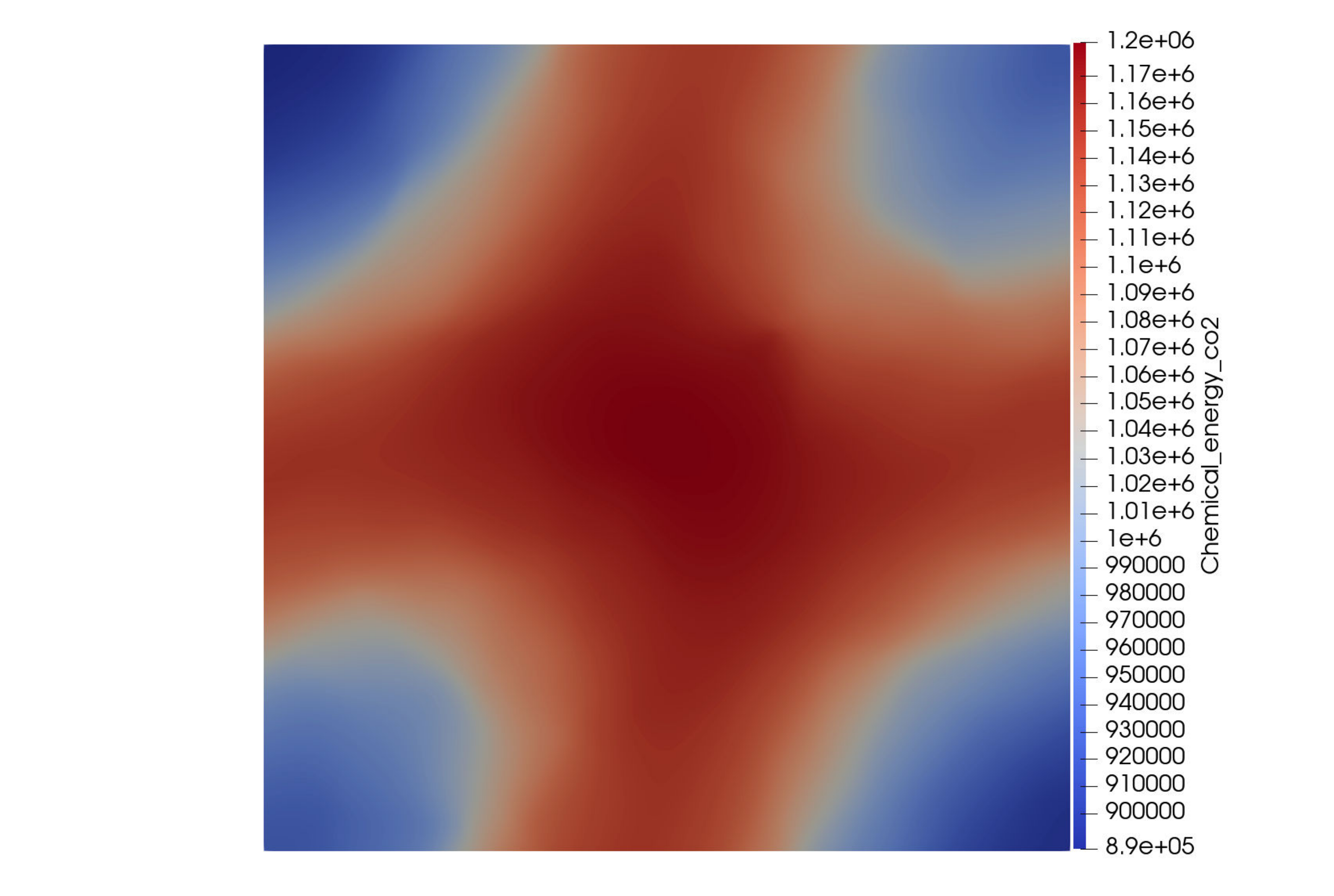}
	
	\includegraphics[width=5.5cm, height=4cm,trim=0.5cm 0cm 0cm 0cm,clip]{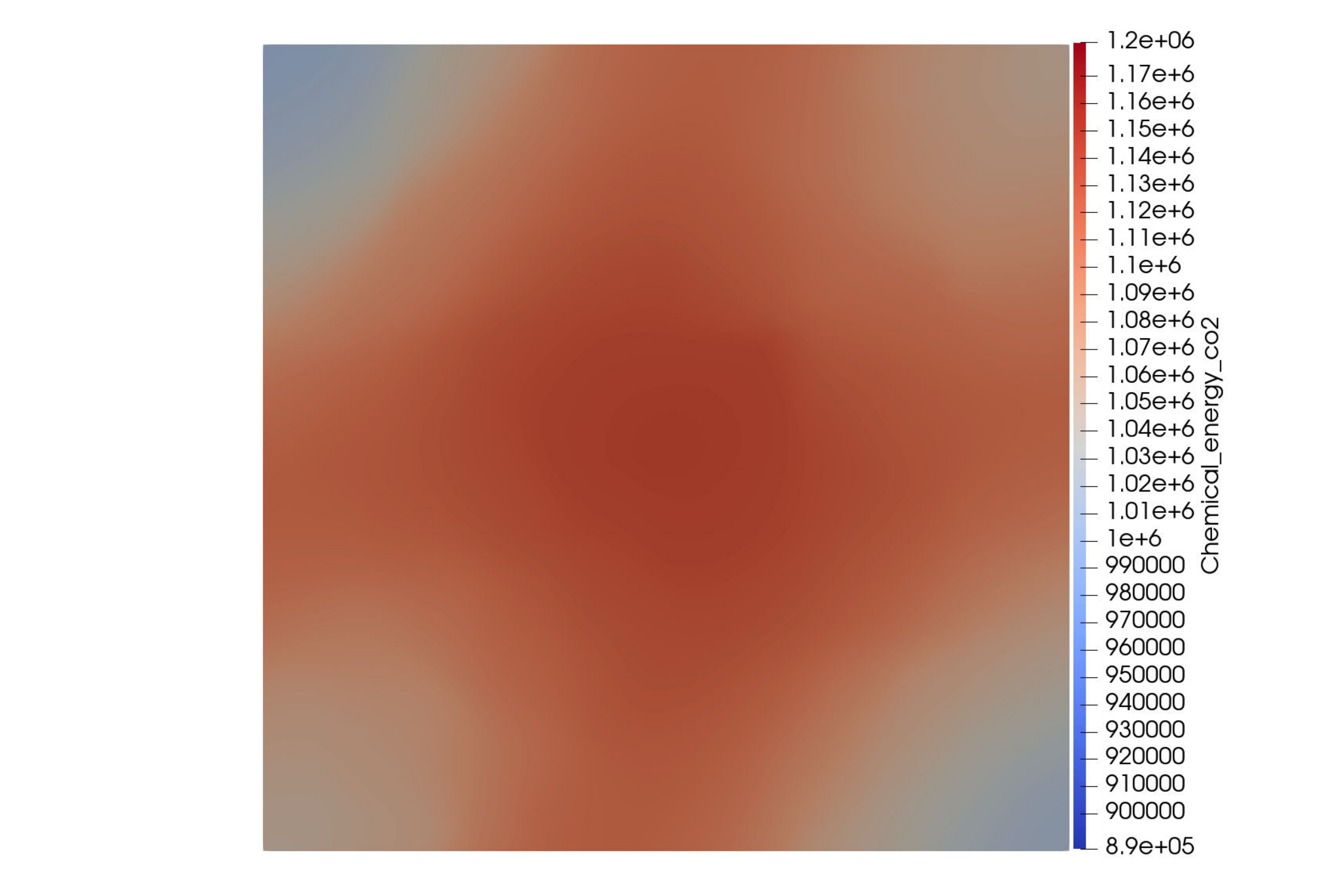}
	\includegraphics[width=5.5cm, height=4cm,trim=0.5cm 0cm 0cm 0cm,clip]{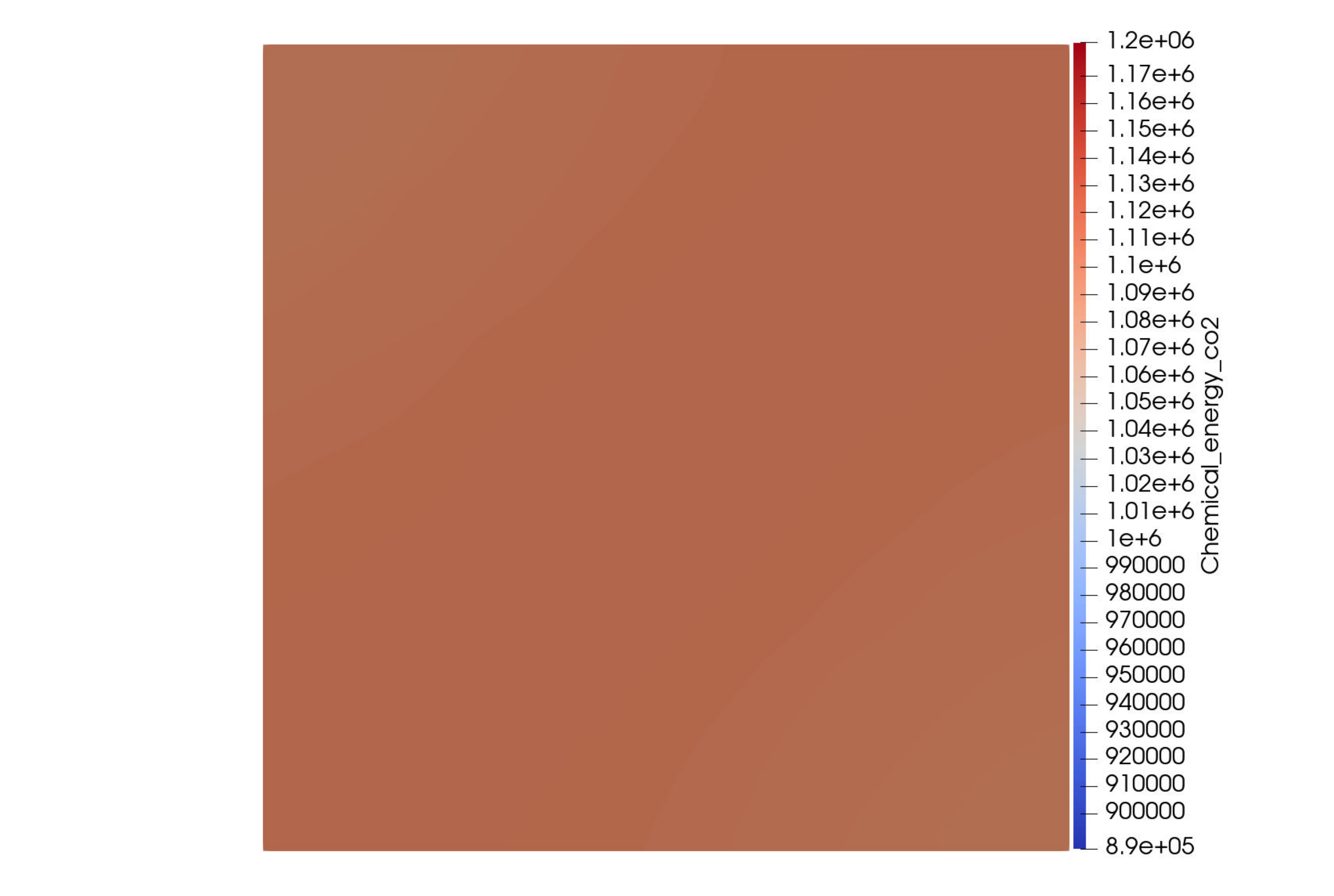}
	\caption{Distributions of chemical potential of CO$_2$ at different times in Example 1. Top-left: $n = 100$. Top-right: $n = 200$. Bottom-left: $n = 300$. Bottom-right: $n = 400$.}\label{fig1-co2-che}
\end{figure}

\begin{figure}[htbp]
	\centering
	\includegraphics[width=5.5cm, height=4cm,trim=0.5cm 0cm 0cm 0cm,clip]{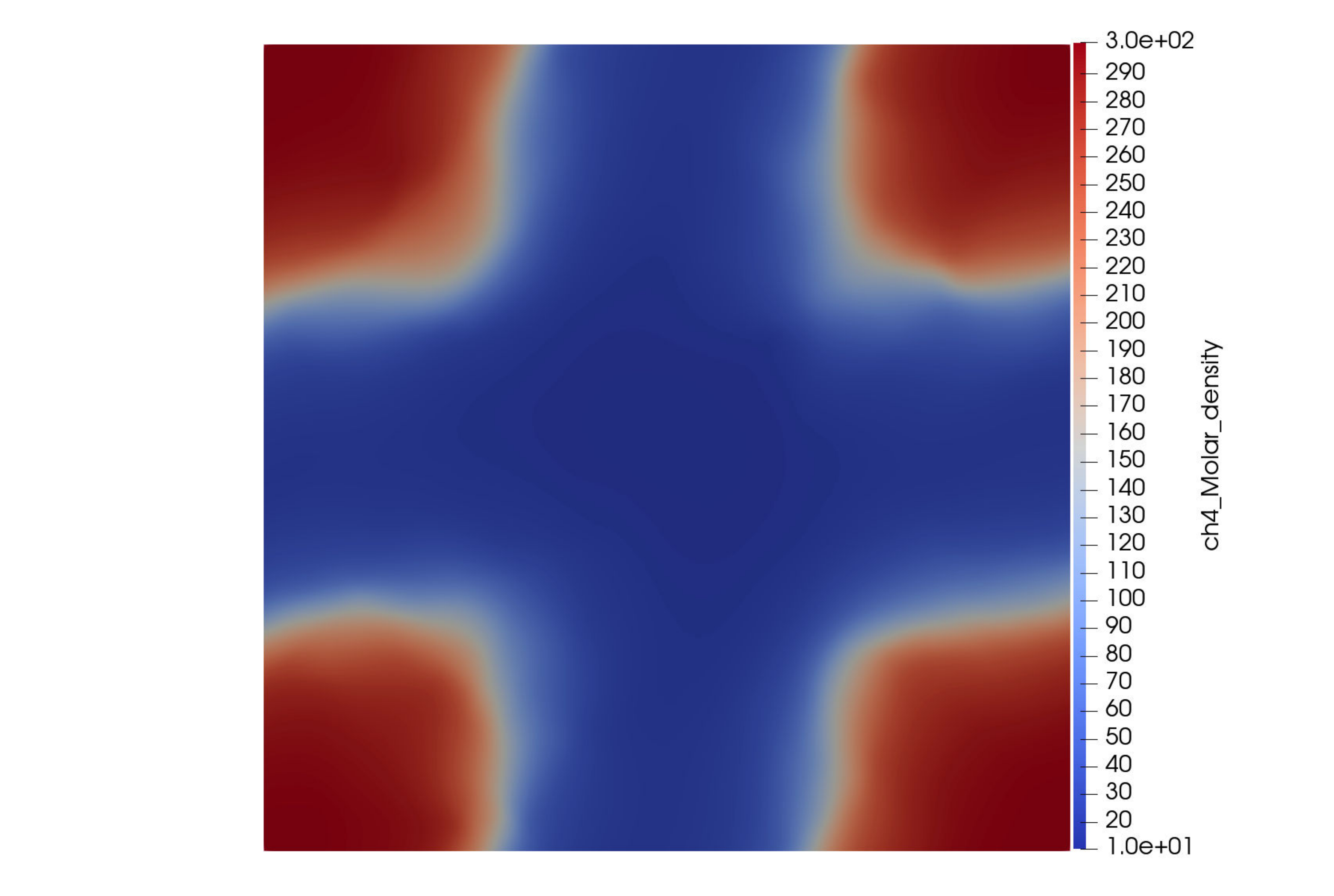}
	\includegraphics[width=5.5cm, height=4cm,trim=0.5cm 0cm 0cm 0cm,clip]{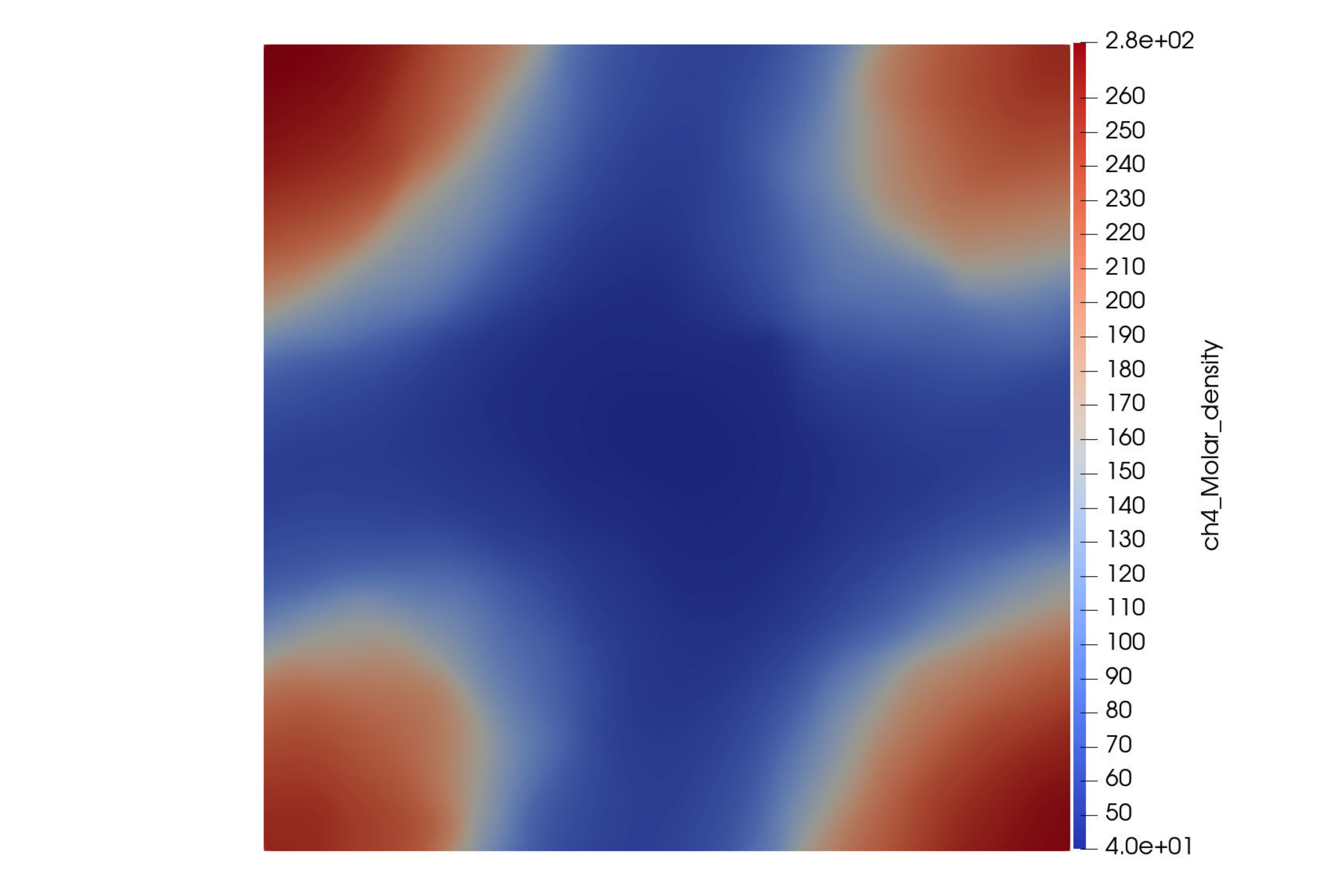}
	
	\includegraphics[width=5.5cm, height=4cm,trim=0.5cm 0cm 0cm 0cm,clip]{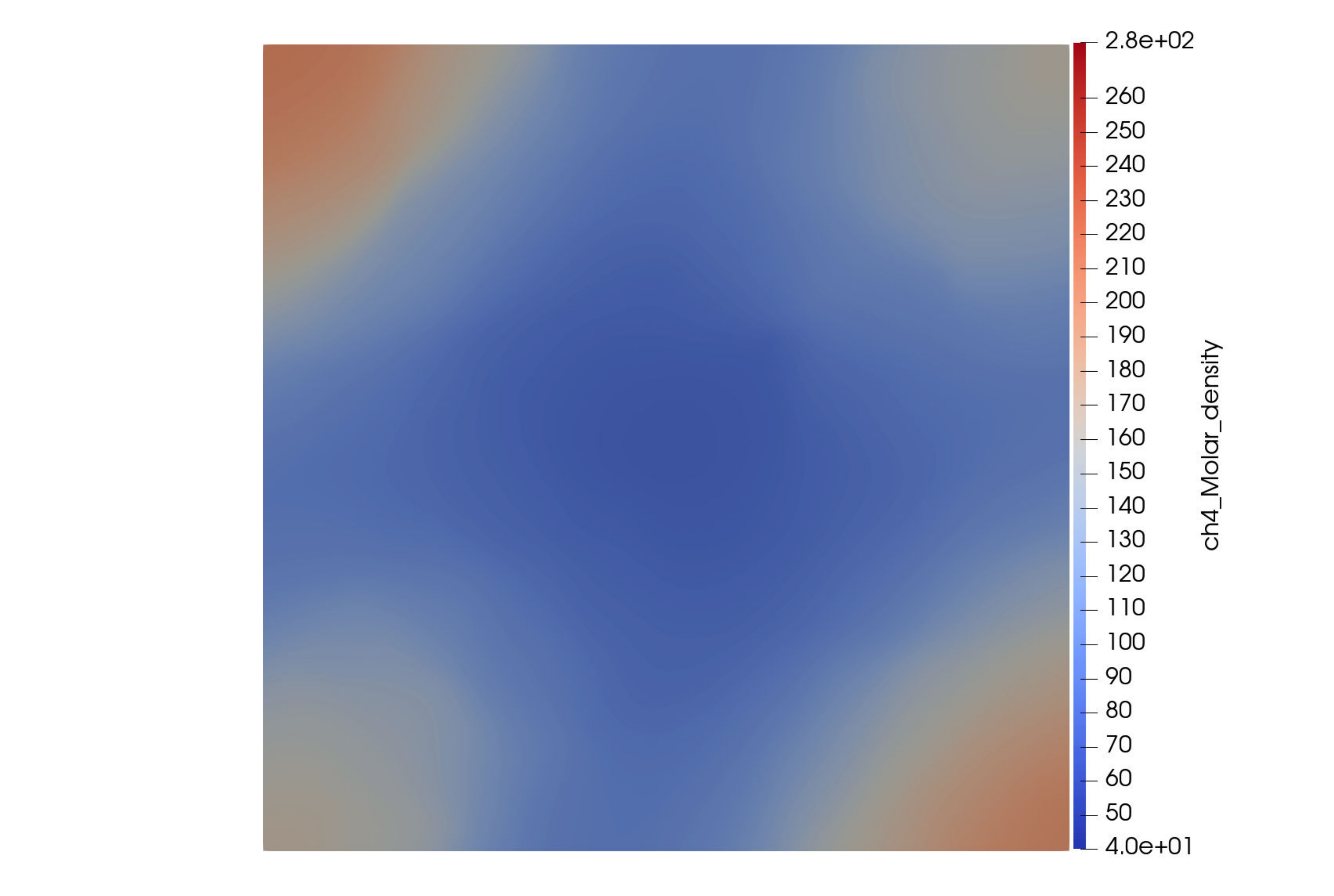}
	\includegraphics[width=5.5cm, height=4cm,trim=0.5cm 0cm 0cm 0cm,clip]{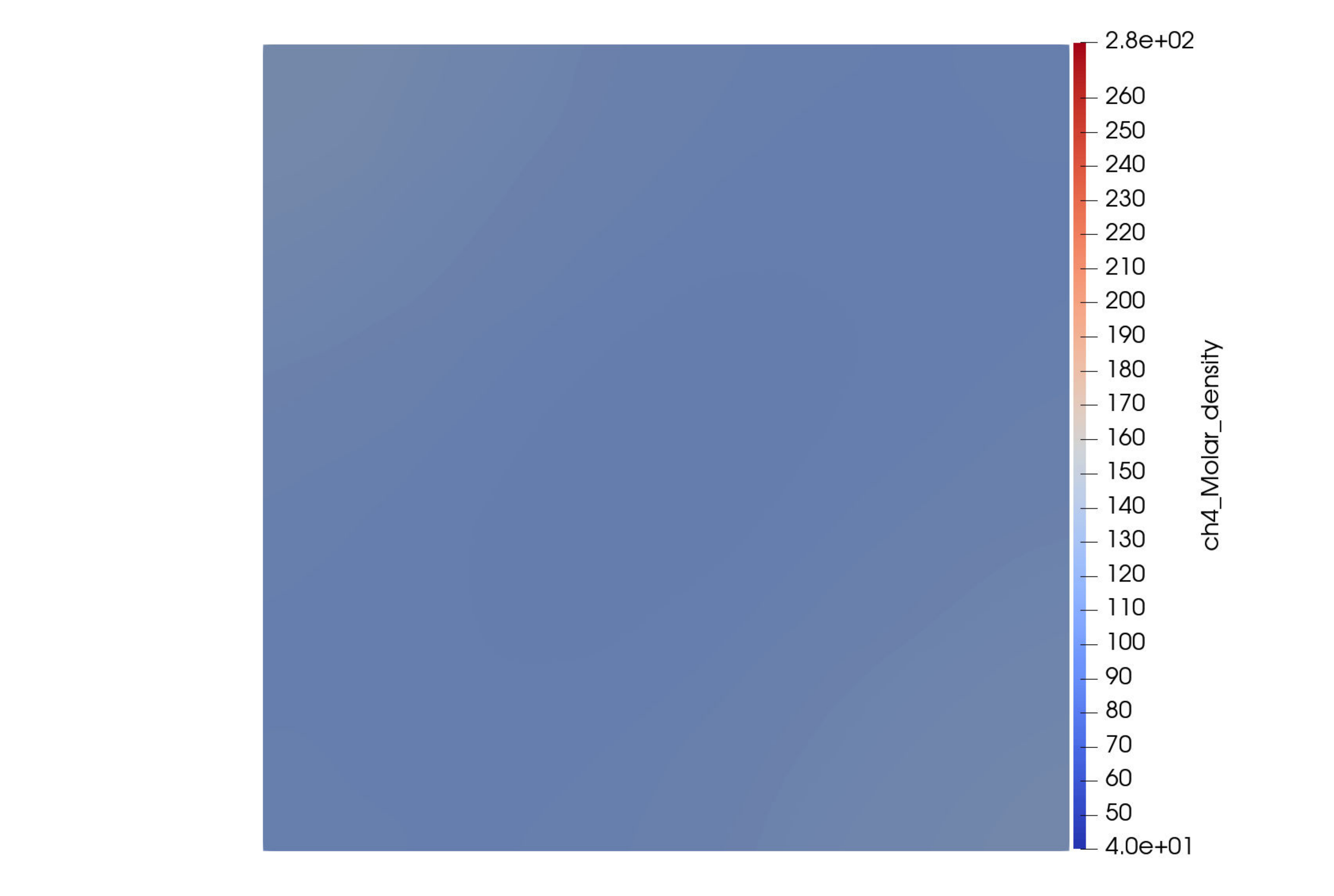}
	\caption{Distributions of molar density of CH$_4$ at different times in Example 1. Top-left: $n = 100$. Top-right: $n = 200$. Bottom-left: $n = 300$. Bottom-right: $n = 400$.}\label{fig1-ch4}
\end{figure}

\begin{figure}[htbp]
	\centering
	\includegraphics[width=5.5cm, height=4cm,trim=0.5cm 0cm 0cm 0cm,clip]{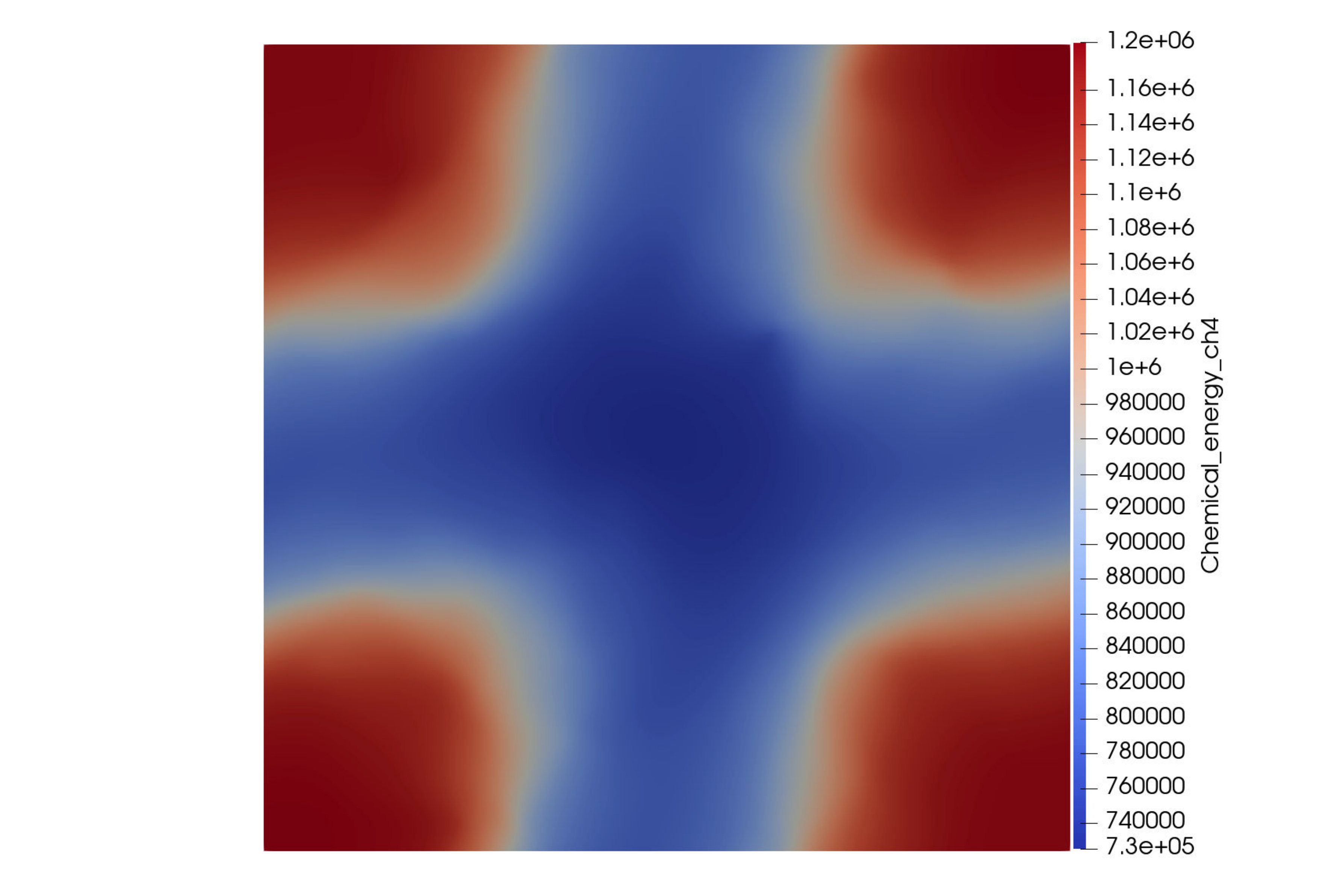}
	\includegraphics[width=5.5cm, height=4cm,trim=0.5cm 0cm 0cm 0cm,clip]{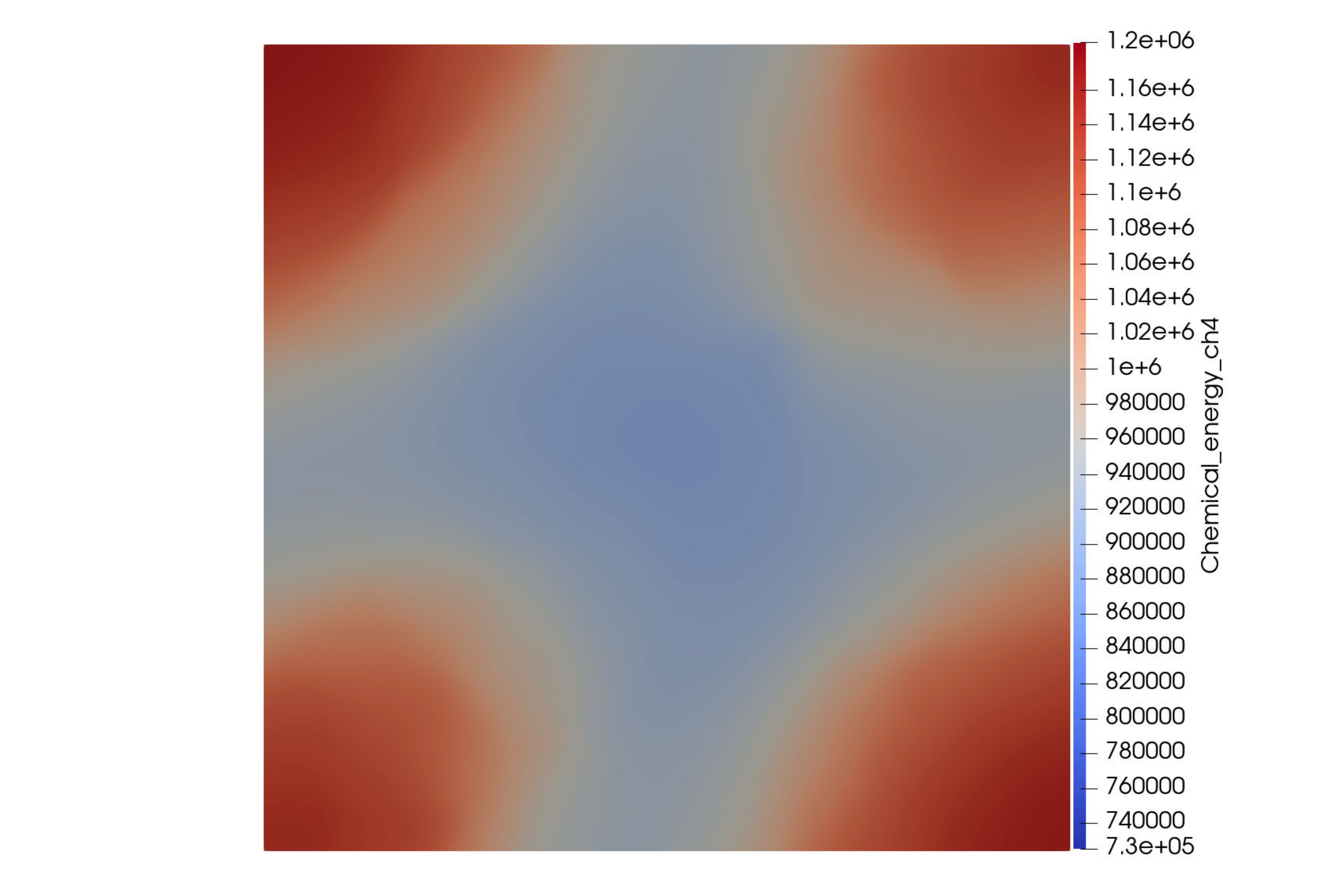}
	
	\includegraphics[width=5.5cm, height=4cm,trim=0.5cm 0cm 0cm 0cm,clip]{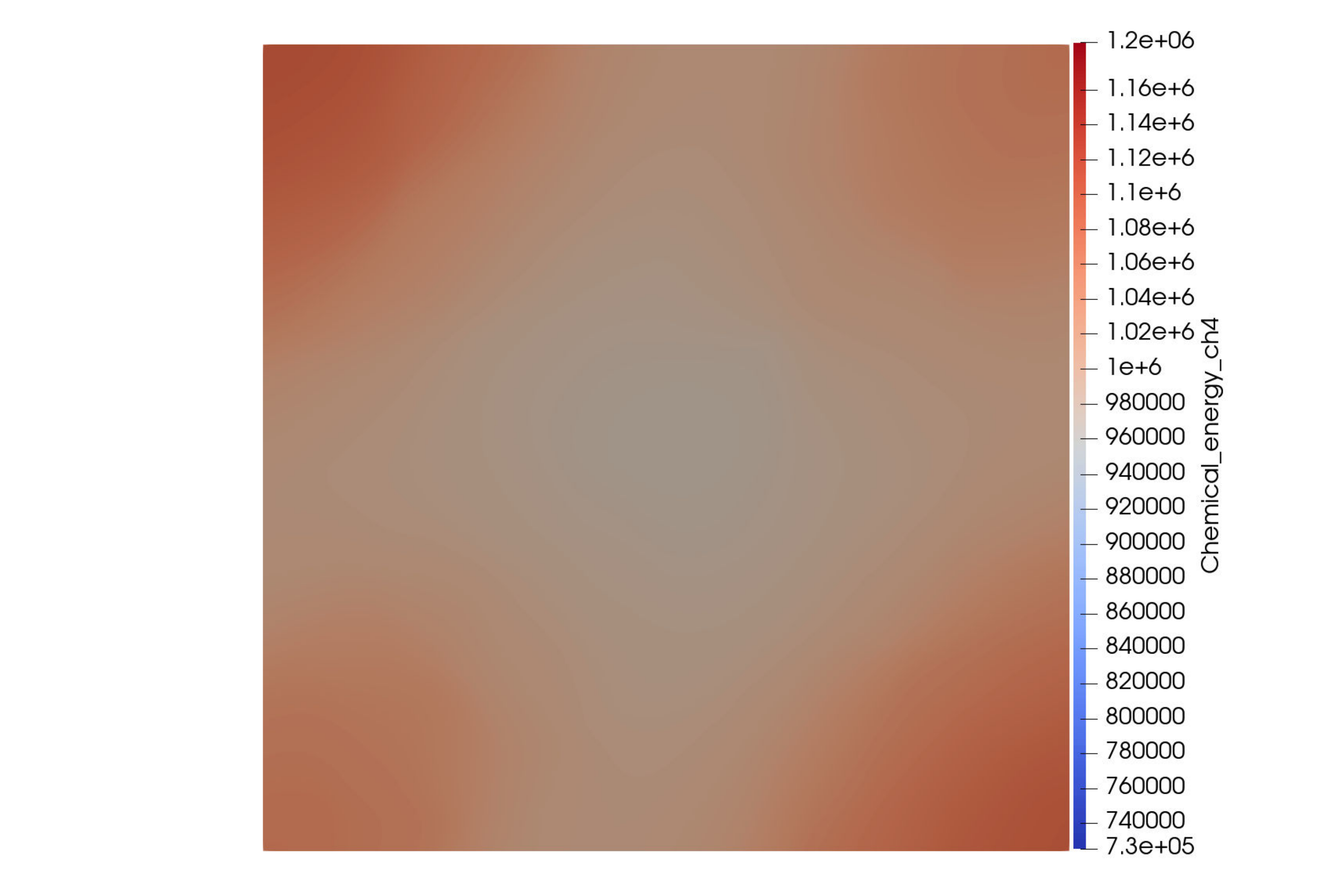}
	\includegraphics[width=5.5cm, height=4cm,trim=0.5cm 0cm 0cm 0cm,clip]{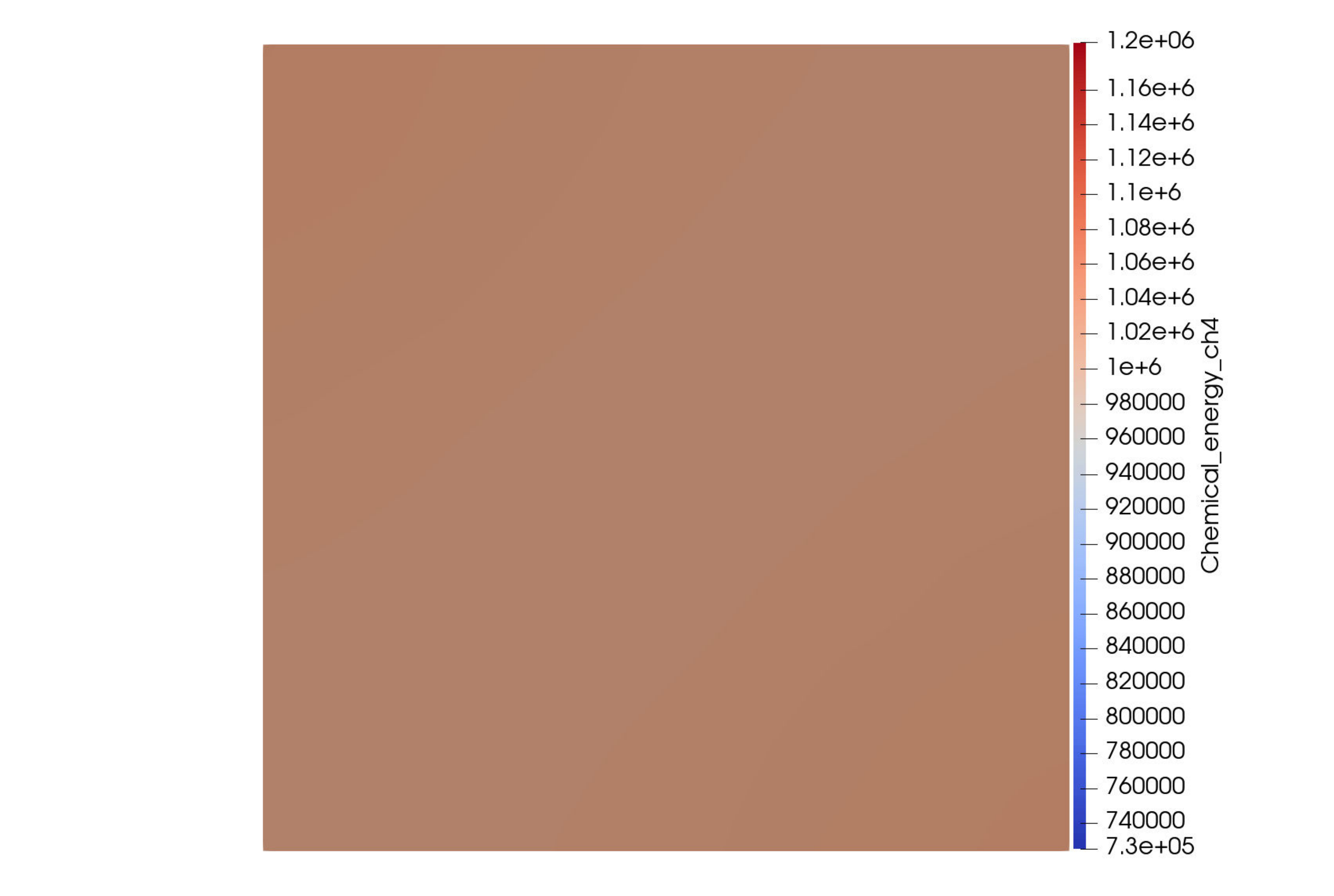}
	\caption{Distributions of chemical potential of CH$_4$ at different times in Example 1. Top-left: $n = 100$. Top-right: $n = 200$. Bottom-left: $n = 300$. Bottom-right: $n = 400$.}\label{fig1-ch4-che}
\end{figure}

\begin{figure}[htbp]
	\centering
	\includegraphics[width=5.5cm, height=4cm,trim=0.5cm 0cm 0cm 0cm,clip]{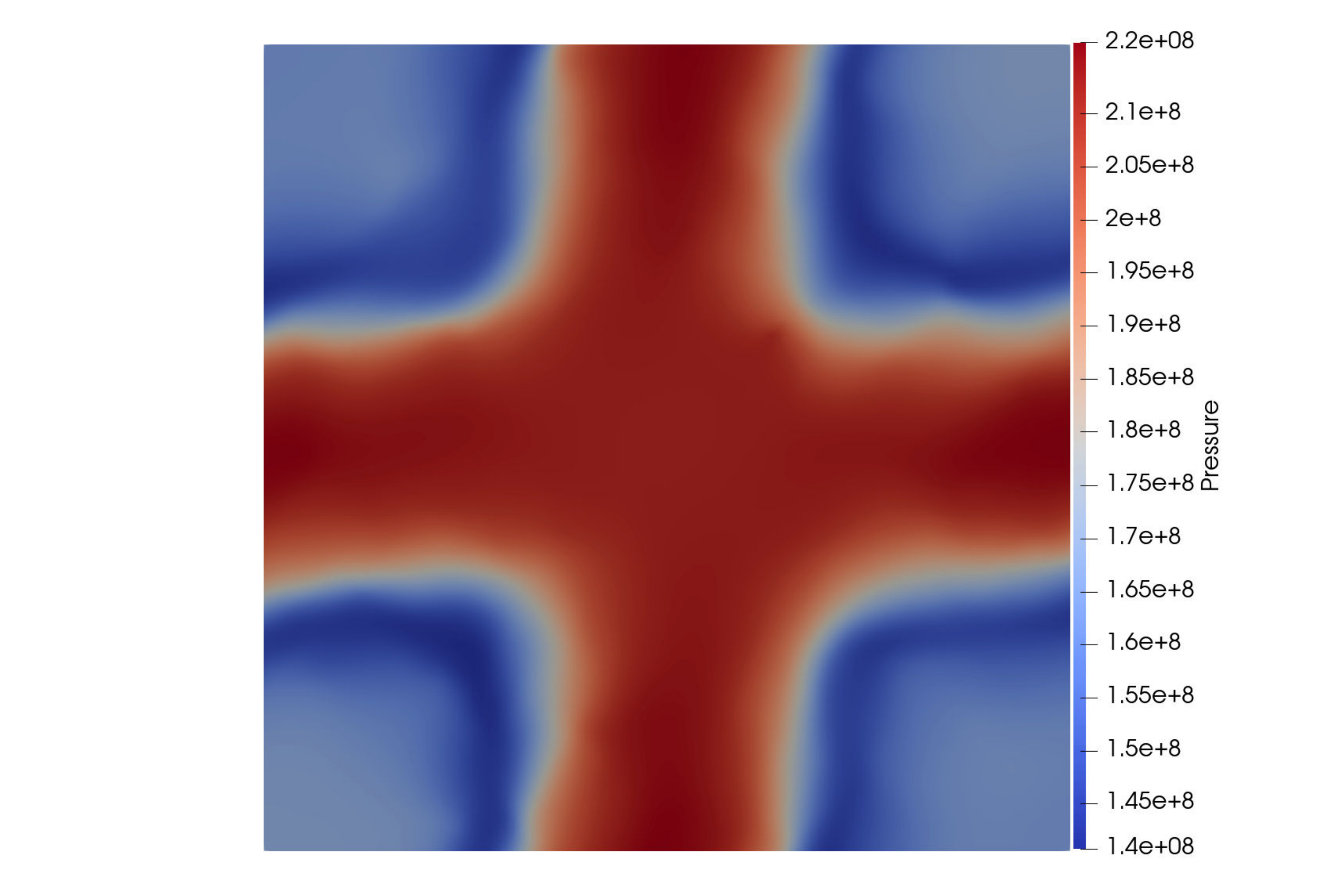}
	\includegraphics[width=5.5cm, height=4cm,trim=0.5cm 0cm 0cm 0cm,clip]{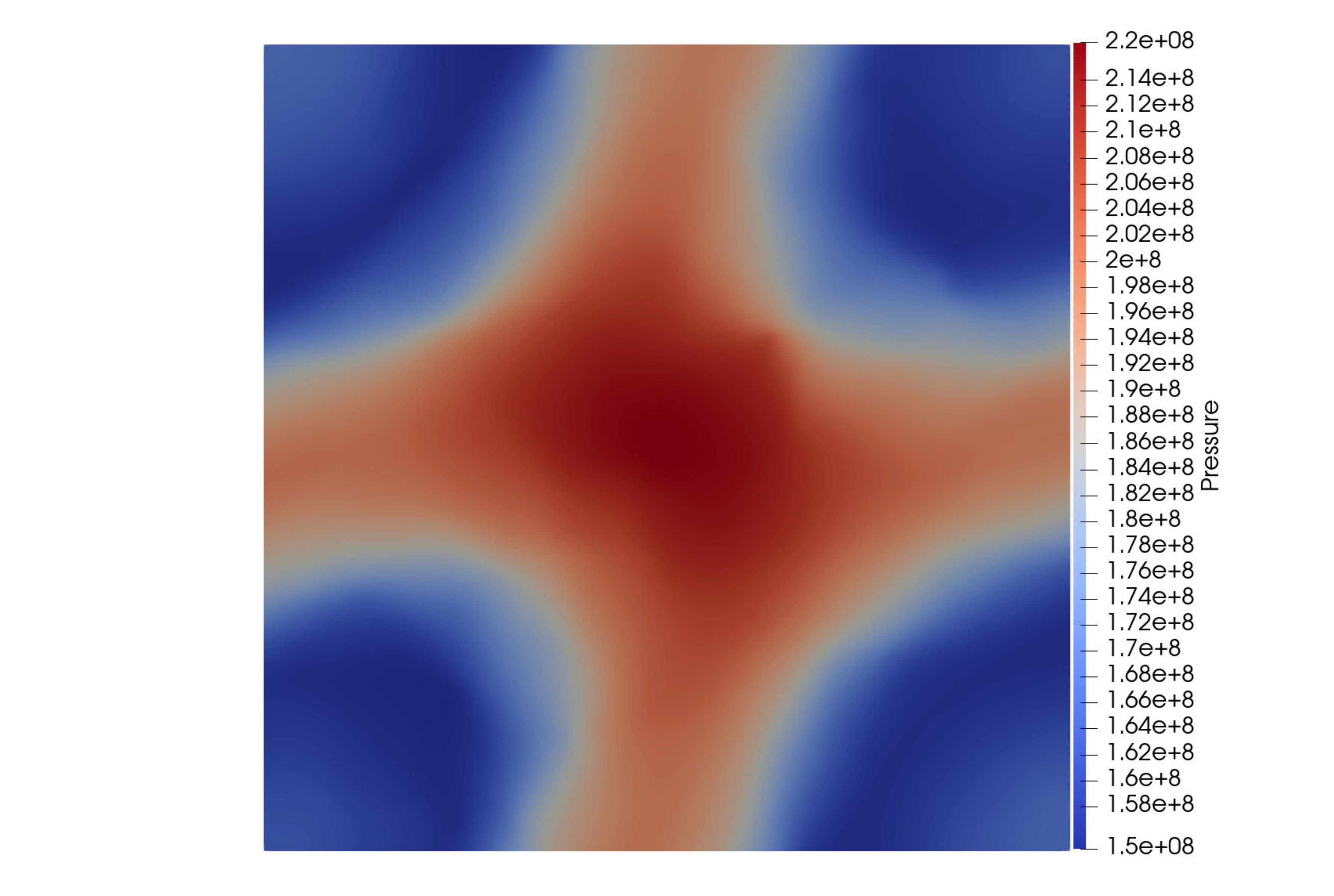}
	
	\includegraphics[width=5.5cm, height=4cm,trim=0.5cm 0cm 0cm 0cm,clip]{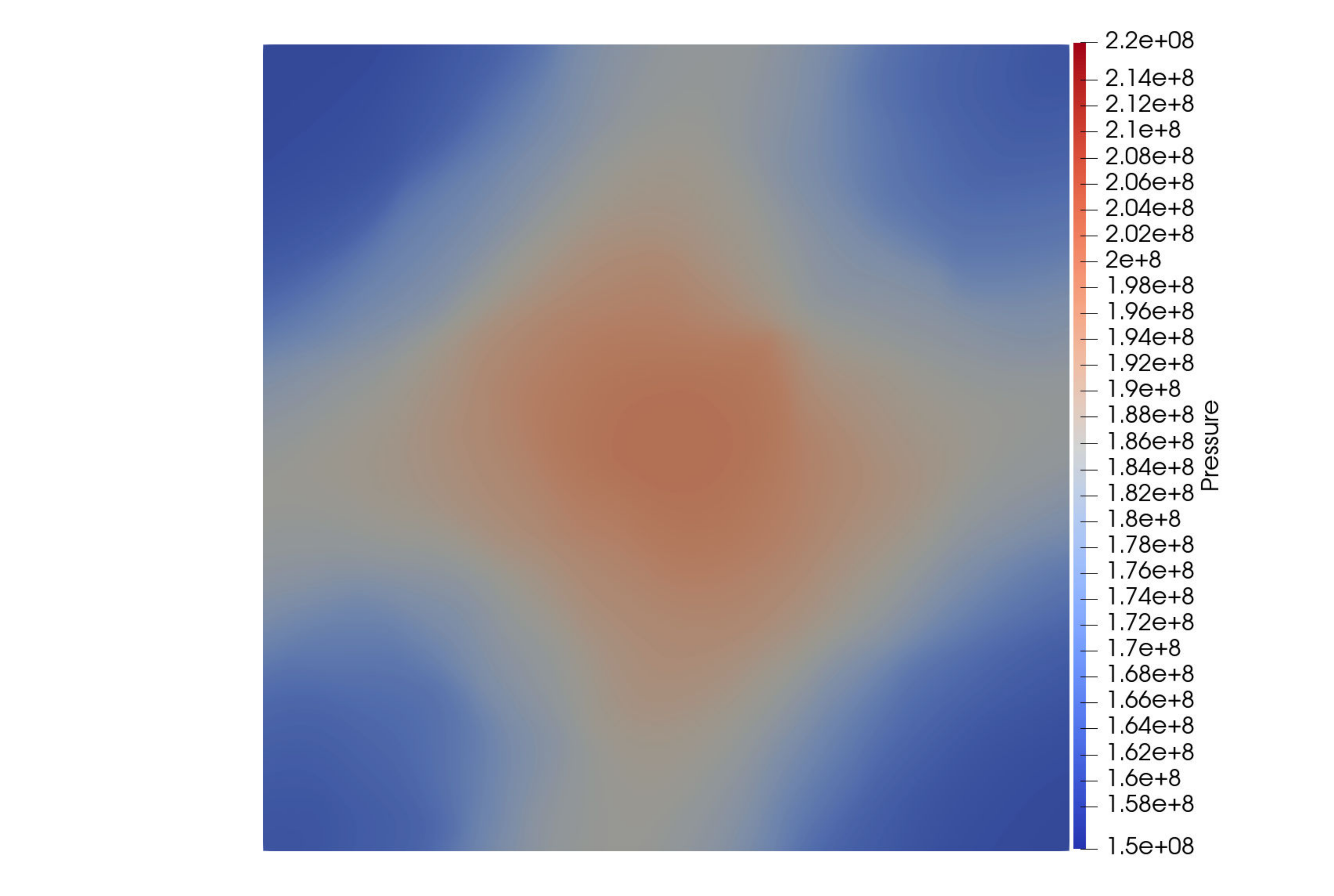}
	\includegraphics[width=5.5cm, height=4cm,trim=0.5cm 0cm 0cm 0cm,clip]{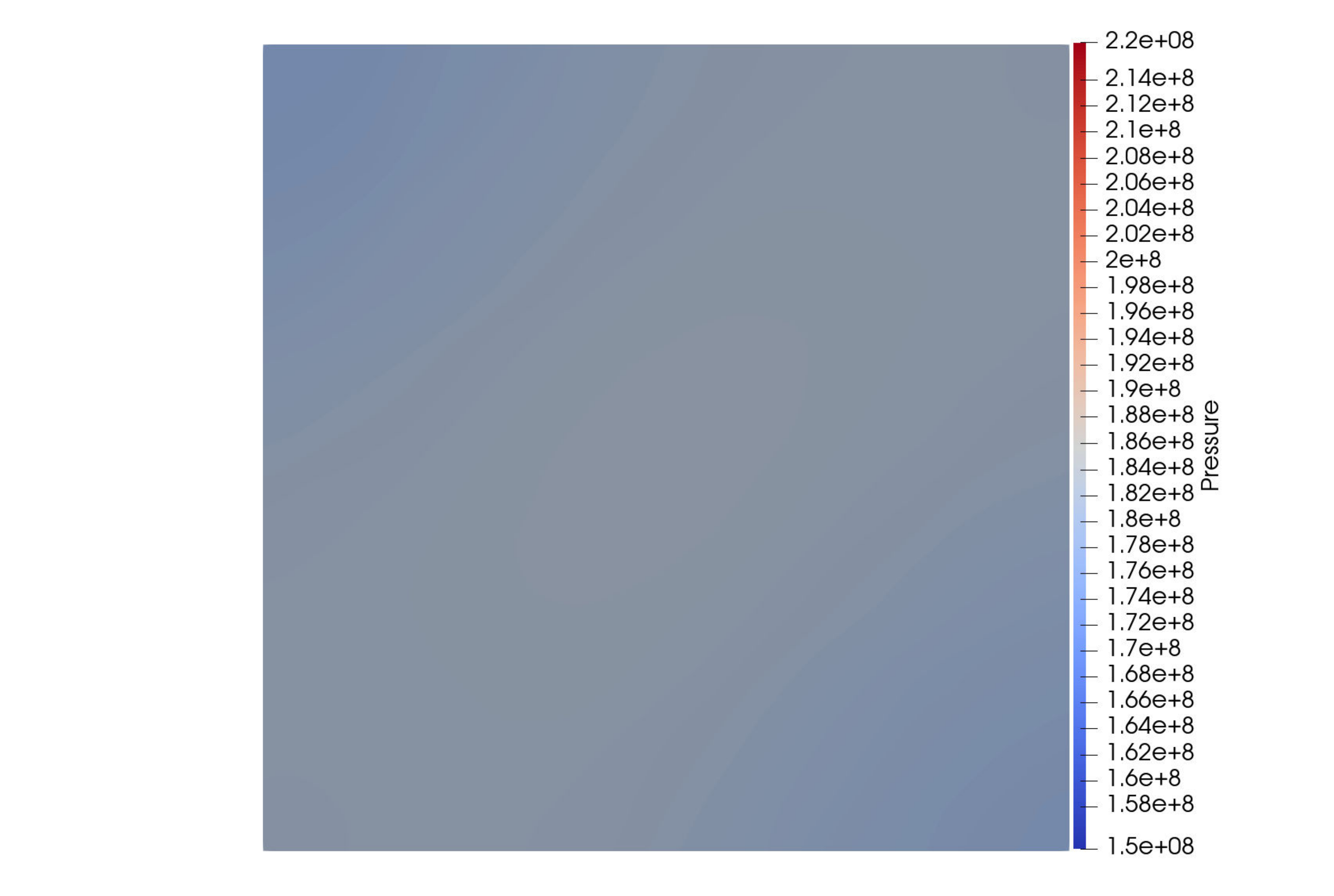}
	\caption{Distributions of pressure at different times in Example 1. Top-left: $n = 100$. Top-right: $n = 200$. Bottom-left: $n = 300$. Bottom-right: $n = 400$.}\label{fig1-pres}
\end{figure}
\begin{figure}[htbp]
	\centering
	\includegraphics[width=5.5cm, height=4cm,trim=0.5cm 0cm 0cm 0cm,clip]{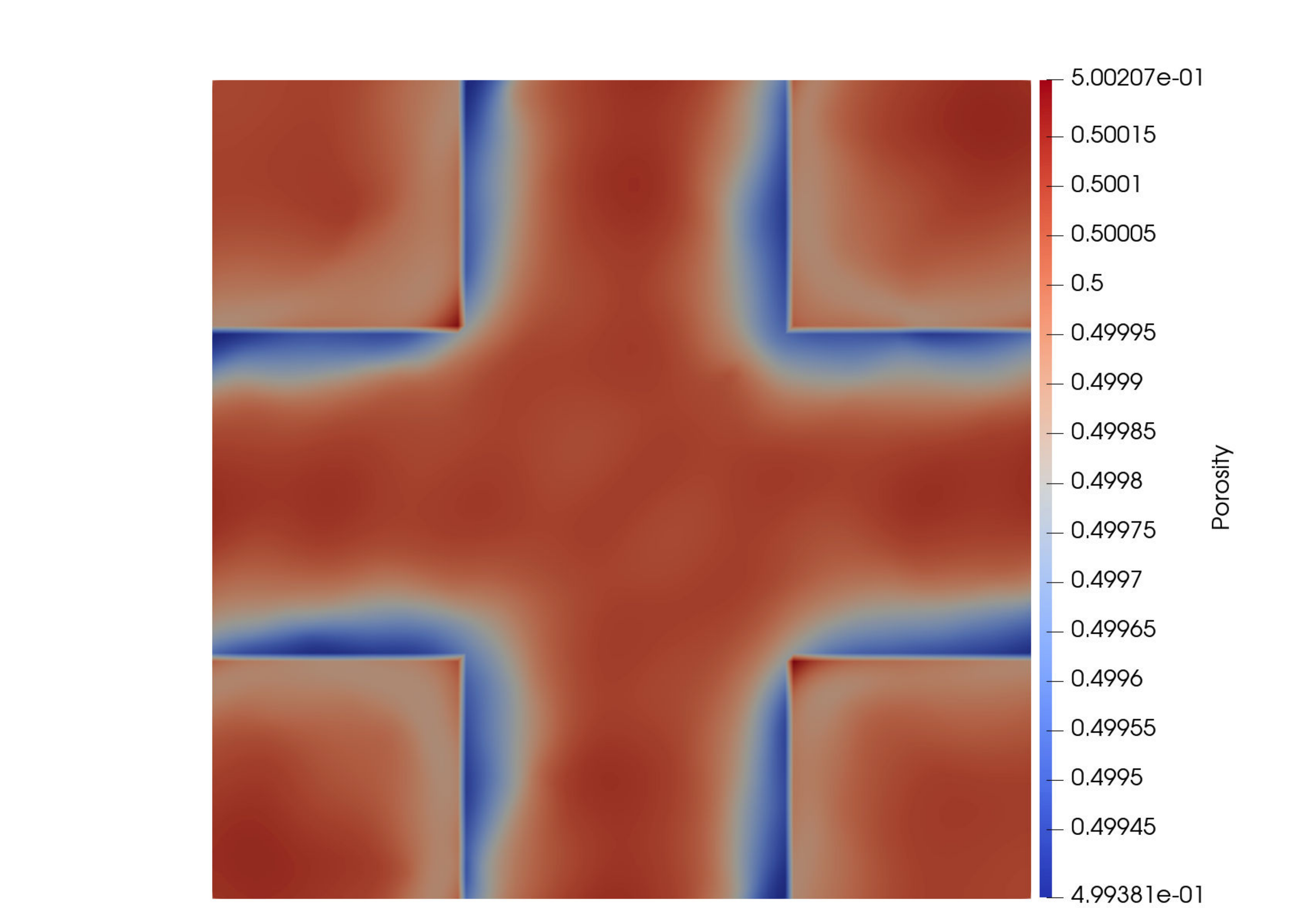}
	\includegraphics[width=5.5cm, height=4cm,trim=0.5cm 0cm 0cm 0cm,clip]{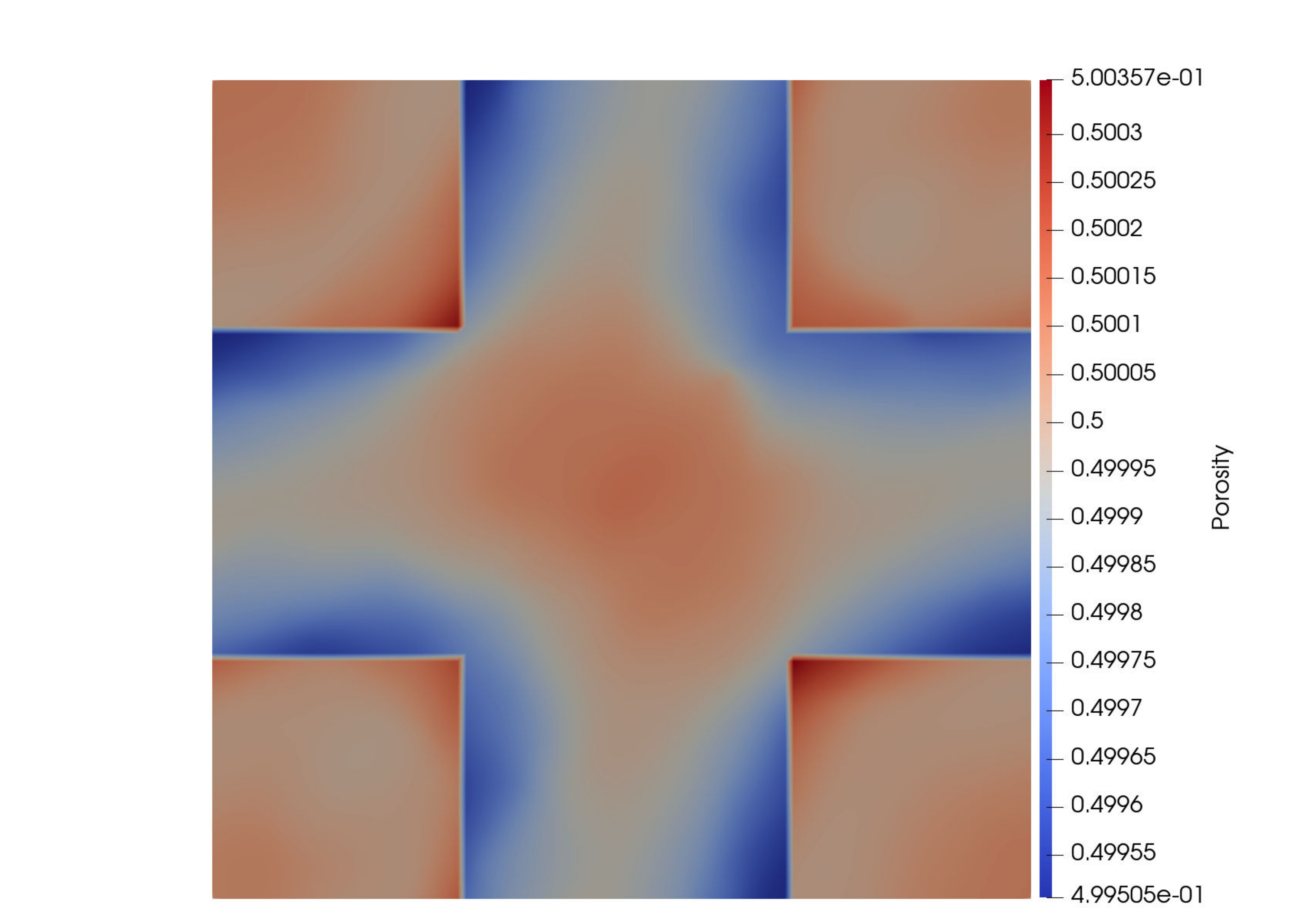}
	
	\includegraphics[width=5.5cm, height=4cm,trim=0.5cm 0cm 0cm 0cm,clip]{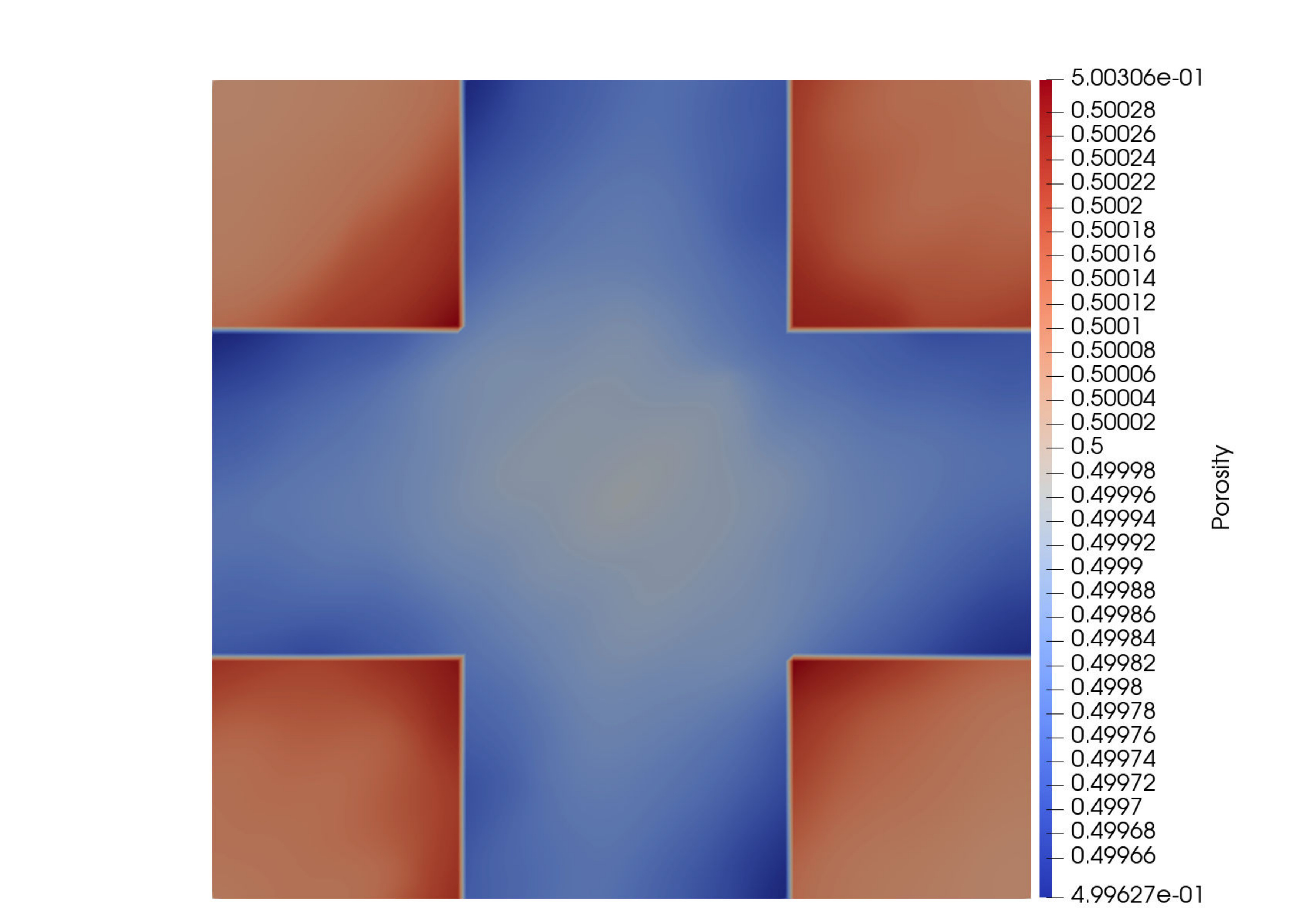}
	\includegraphics[width=5.5cm, height=4cm,trim=0.5cm 0cm 0cm 0cm,clip]{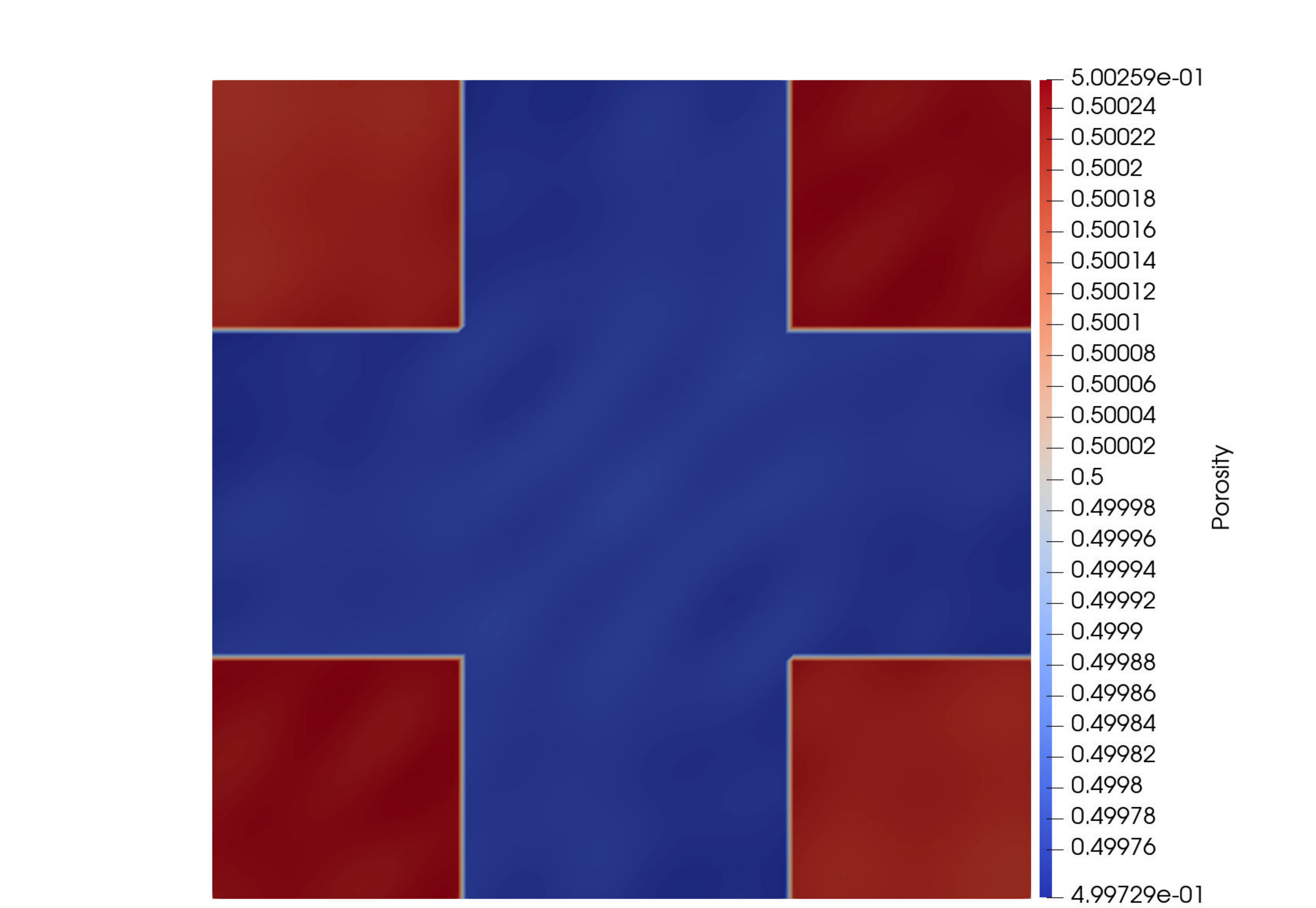}
	\caption{Distributions of porosity at different times in Example 1. Top-left: $n = 100$. Top-right: $n = 200$. Bottom-left: $n = 300$. Bottom-right: $n = 400$.}\label{fig1-poro}
\end{figure}
\subsection{Example 2}
In this numerical experiment, we simulate a multicomponent gas injection process in a poroelastic medium. The model considers a ternary mixture of CO$_2$, CH$_4$, and C$_2$H$_6$. 
The initial conditions are defined by a uniform molar density of gas, with values of $C_{\mathrm{CH}_4}^0 = C_{\mathrm{C}_2\mathrm{H}_6}^0 = 100$  $\mathrm{mol/m^3}$ and $C_{\mathrm{CO}_2}^0 = 10$  $\mathrm{mol/m}^3$. A Dirichlet boundary condition for the molar density of $\mathrm{CO_2}$ is prescribed at the left boundary ($x = 0$), driving the injection and displacement process. The permeability field is characterized by a highly heterogeneous, as illustrated on the left-hand side of Figure \ref{fig2-initial}, featuring two high-permeability zones ($\mathcal{K} = 200$ $\mathrm{md}$) within $[0~\text{m},~80~\text{m}] \times [65~\text{m},~70~\text{m}]$ and $[0~\text{m},~80~\text{m}] \times [35~\text{m},~40~\text{m}]$, embedded within a low-permeability matrix ($\mathcal{K}  = 1~\mathrm{md}$) that constitutes the remainder of the domain. As shown on the right-hand side of Figure \ref{fig2-initial}, the adaptive time step values at different computational stages are presented. It can be observed that the time step continues to increase, which can be attributed to the fact that the system has not yet reached an equilibrium state. Figures \ref{fig2-co2}, \ref{fig2-ch4}, and \ref{fig2-c2h6} depict the spatial distributions of molar density for $\mathrm{CO_2}$, $\mathrm{CH_4}$, and $\mathrm{C_2H_6}$, respectively, at different time steps $n = 50, 150, 350, 500$. The results demonstrate that the Dirichlet boundary condition on the left boundary induces a chemical potential gradient, which drives the  transport of $\mathrm{CO_2}$ into the domain. The advancing $\mathrm{CO_2}$ front efficiently displaces the native $\mathrm{CH_4}$ and $\mathrm{C_2H_6}$ mixtures towards the production outlet. The flow dynamics are dominantly channeled through the high-permeability layers, showcasing a clear bypassing effect characteristic of heterogeneous media. Figure \ref{fig2-poro} presents the evolution of porosity at time steps $n=50,150,350,500$. Driven by poroelastic coupling, changes in fluid pressure during injection alter the local effective stress, resulting in dynamic porosity adjustments. The variations are most pronounced within the high-permeability layers, where fluid flow and pressure transients are concentrated. 


\begin{figure}[htbp]
	\centering
	\includegraphics[width=5.5cm, height=4cm,trim=0.5cm 0cm 0cm 0cm,clip]{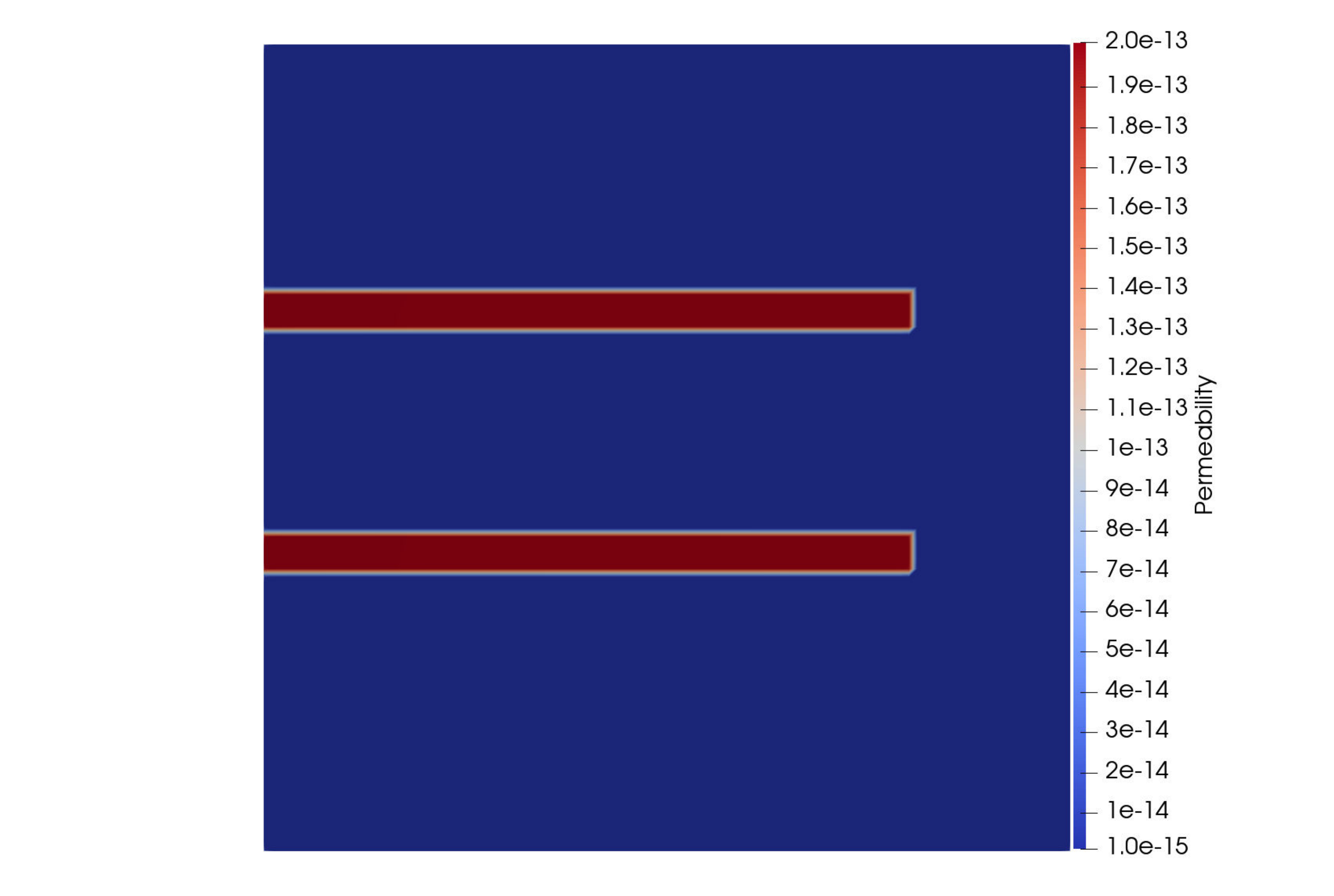}
	\includegraphics[width=5.0cm, height=4.5cm]{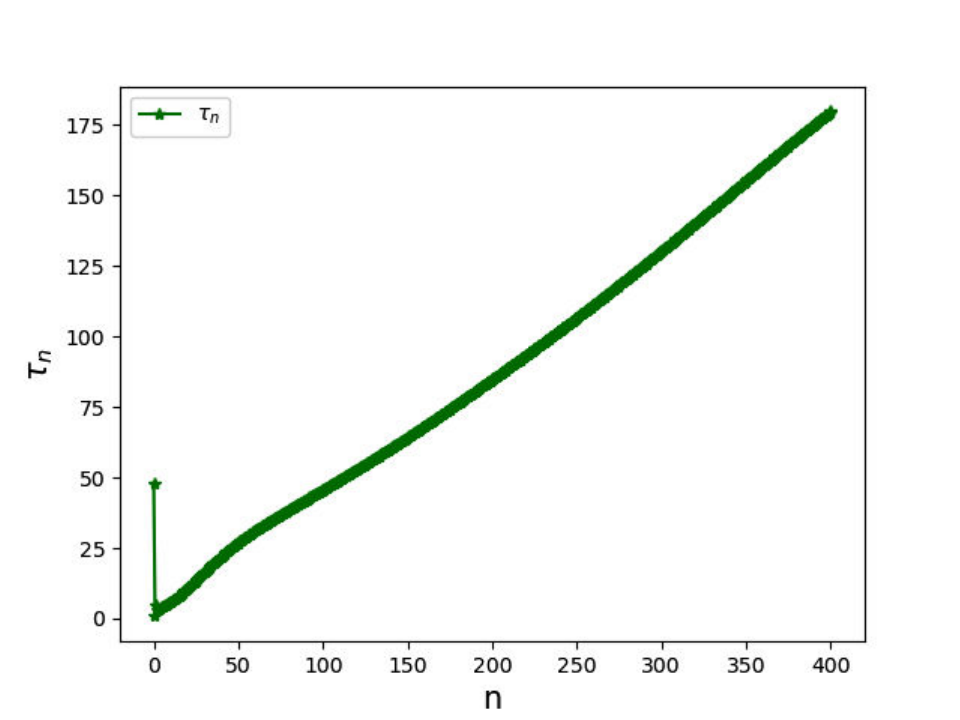}
	\caption{Example 2: Left: Initial distributions of permeability. Right: Adaptive values of the time step size.}\label{fig2-initial}
\end{figure}
\begin{figure}[htbp]
	\centering
	\includegraphics[width=5.5cm, height=4cm,trim=0.5cm 0cm 0cm 0cm,clip]{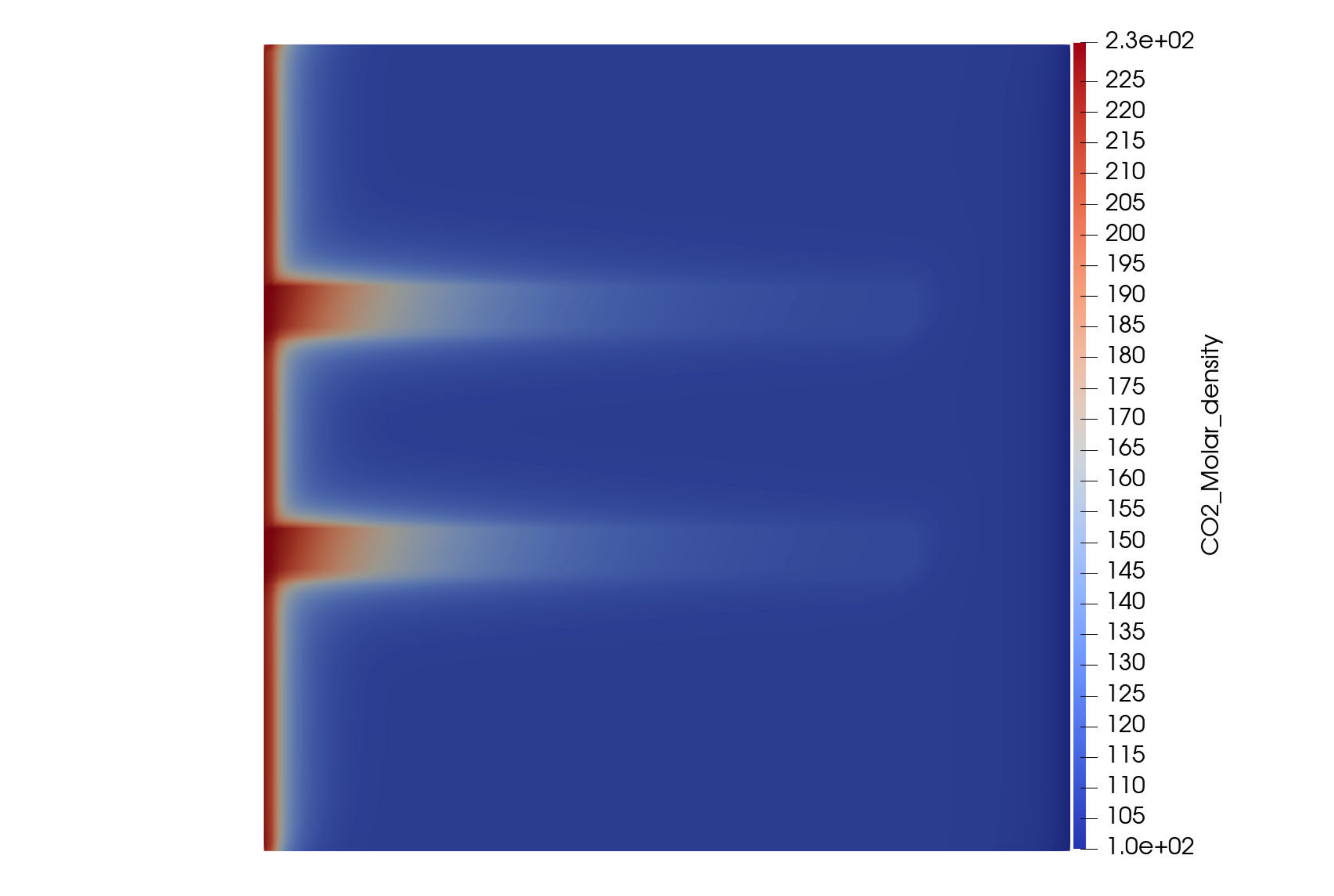}
	\includegraphics[width=5.5cm, height=4cm,trim=0.5cm 0cm 0cm 0cm,clip]{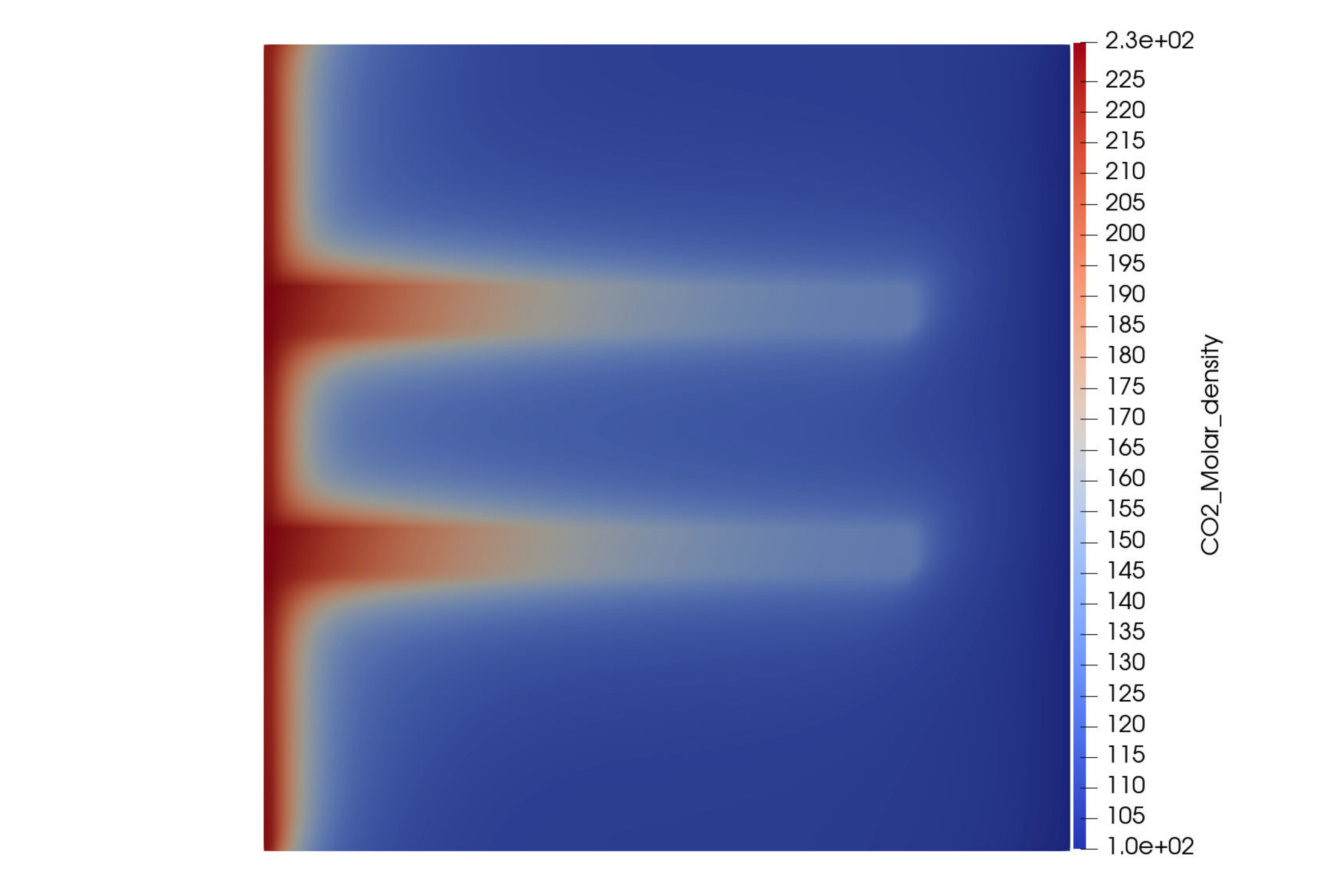}
	
	\includegraphics[width=5.5cm, height=4cm,trim=0.5cm 0cm 0cm 0cm,clip]{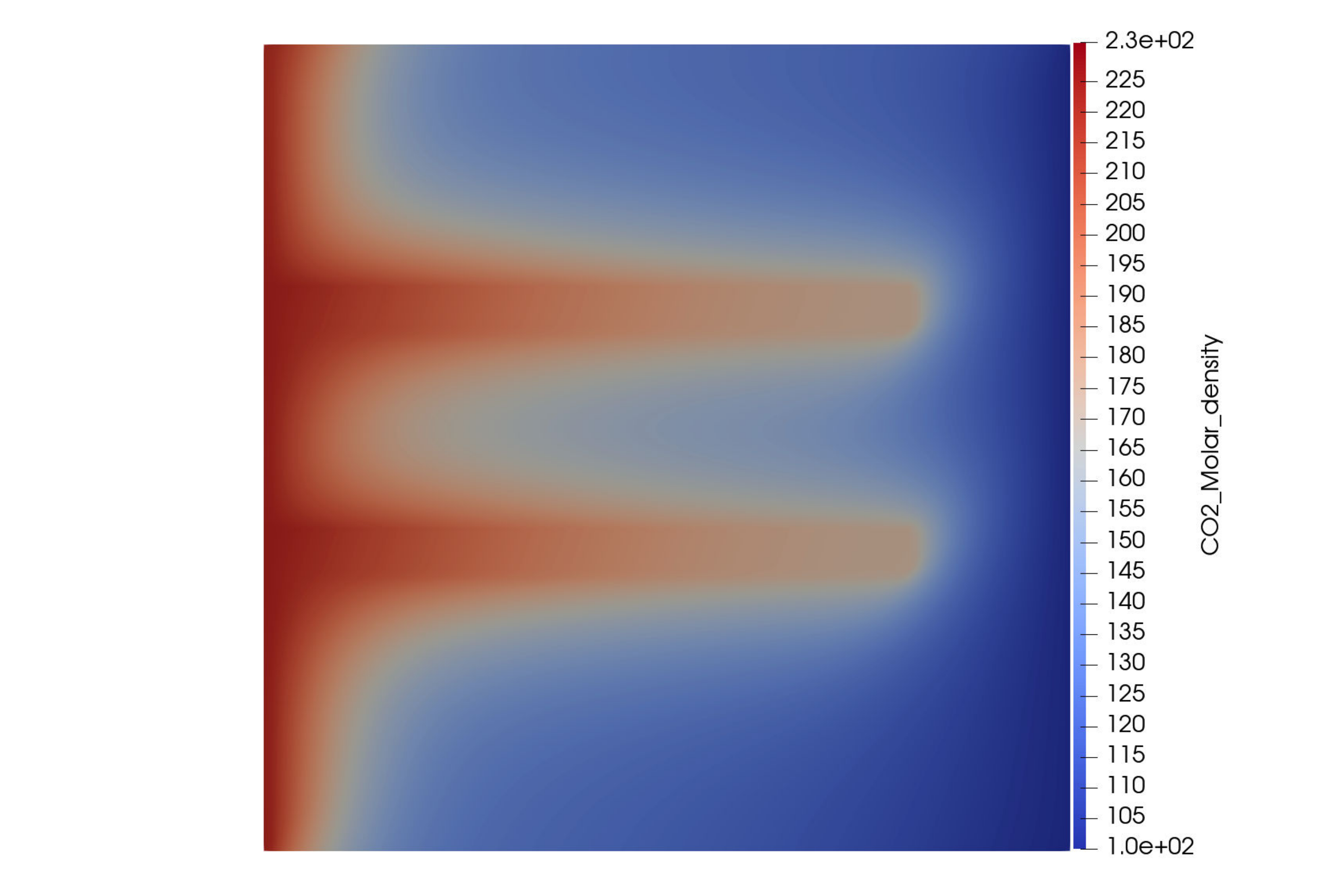}
	\includegraphics[width=5.5cm, height=4cm,trim=0.5cm 0cm 0cm 0cm,clip]{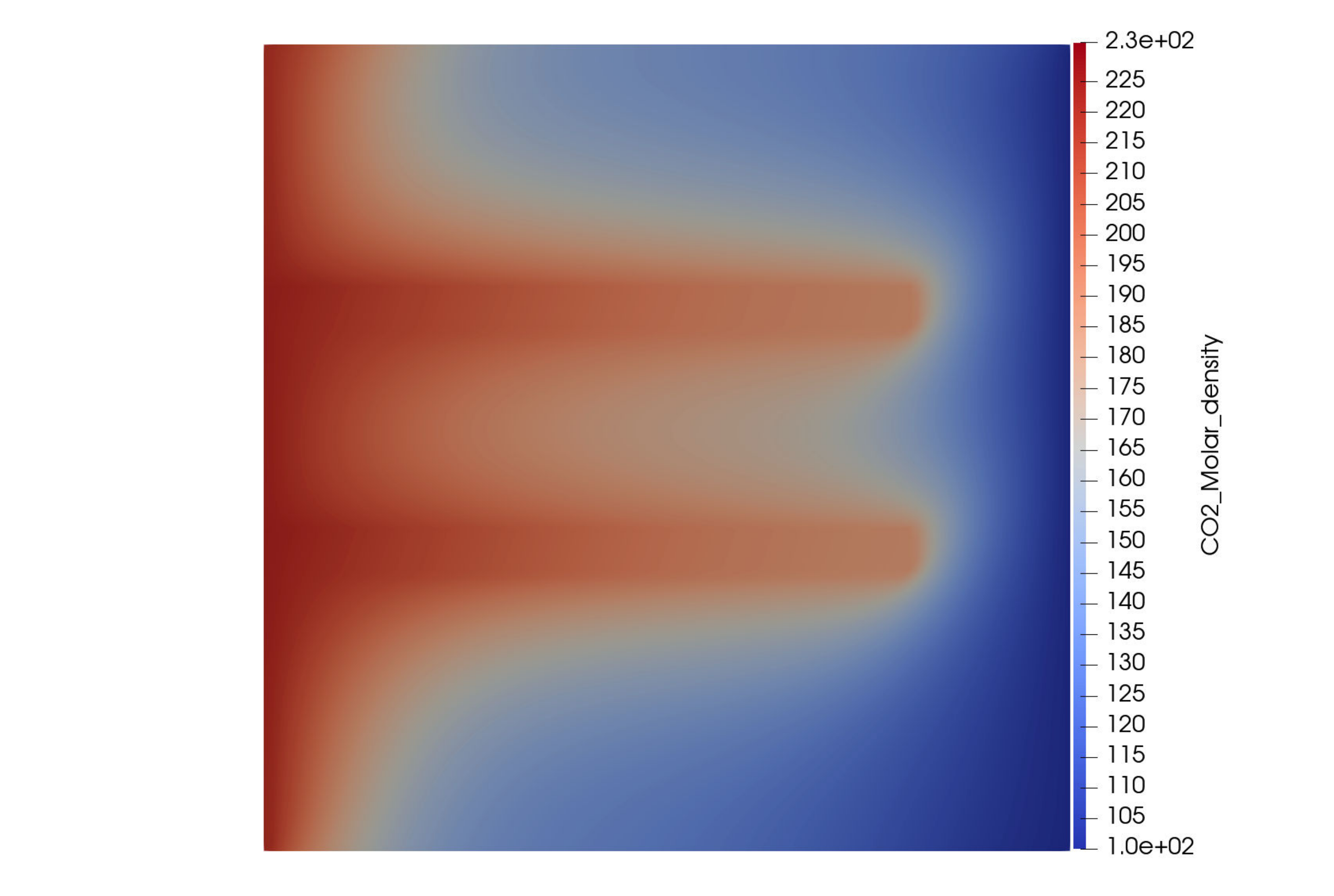}
	\caption{Distributions of molar density of CO$_2$ at different times in Example 2. Top-left: $n = 50$. Top-right: $n = 150$. Bottom-left: $n = 350$. Bottom-right: $n = 500$.}\label{fig2-co2}
\end{figure}

%

\begin{figure}[htbp]
	\centering
	\includegraphics[width=5.5cm, height=4cm,trim=0.5cm 0cm 0cm 0cm,clip]{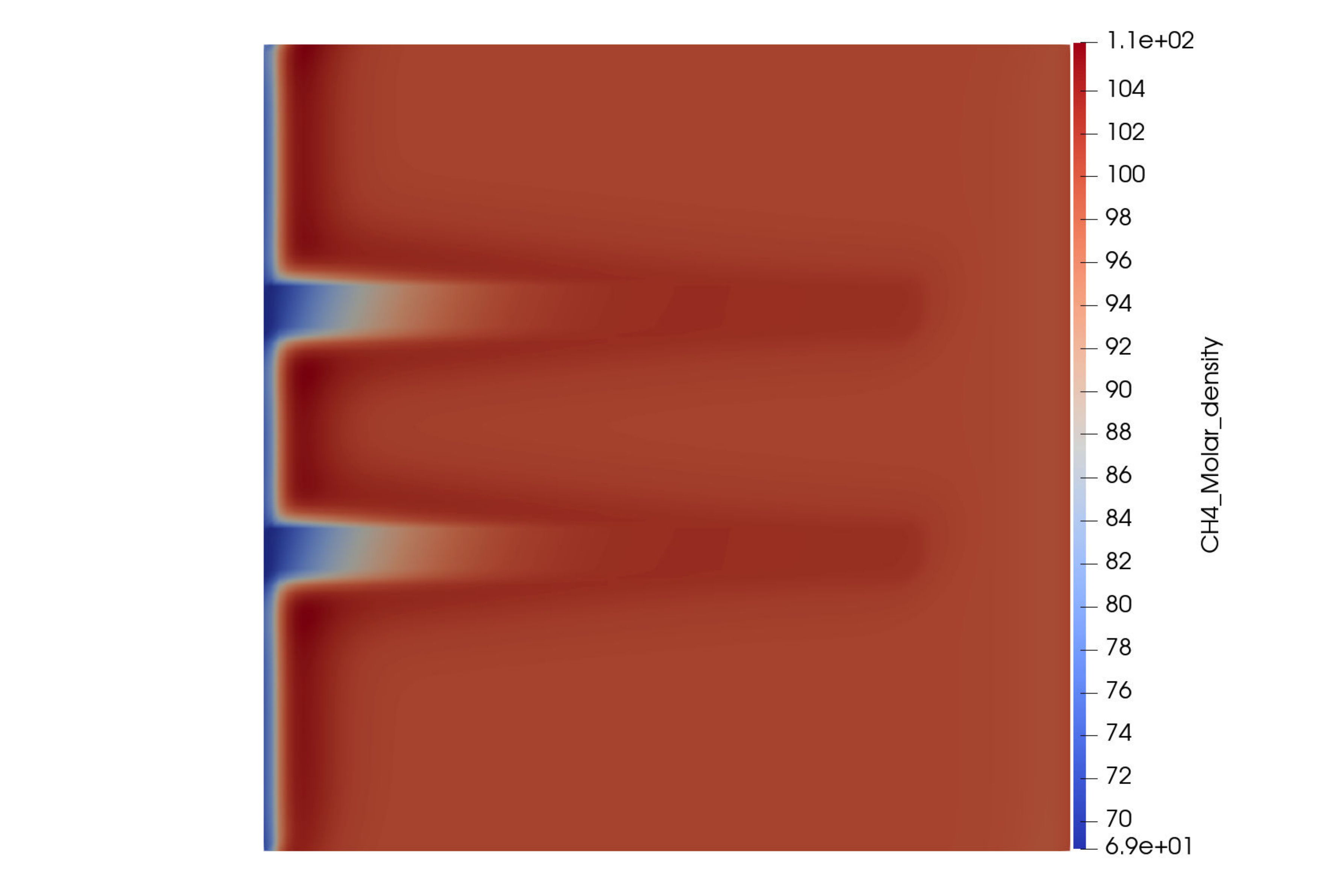}
	\includegraphics[width=5.5cm, height=4cm,trim=0.5cm 0cm 0cm 0cm,clip]{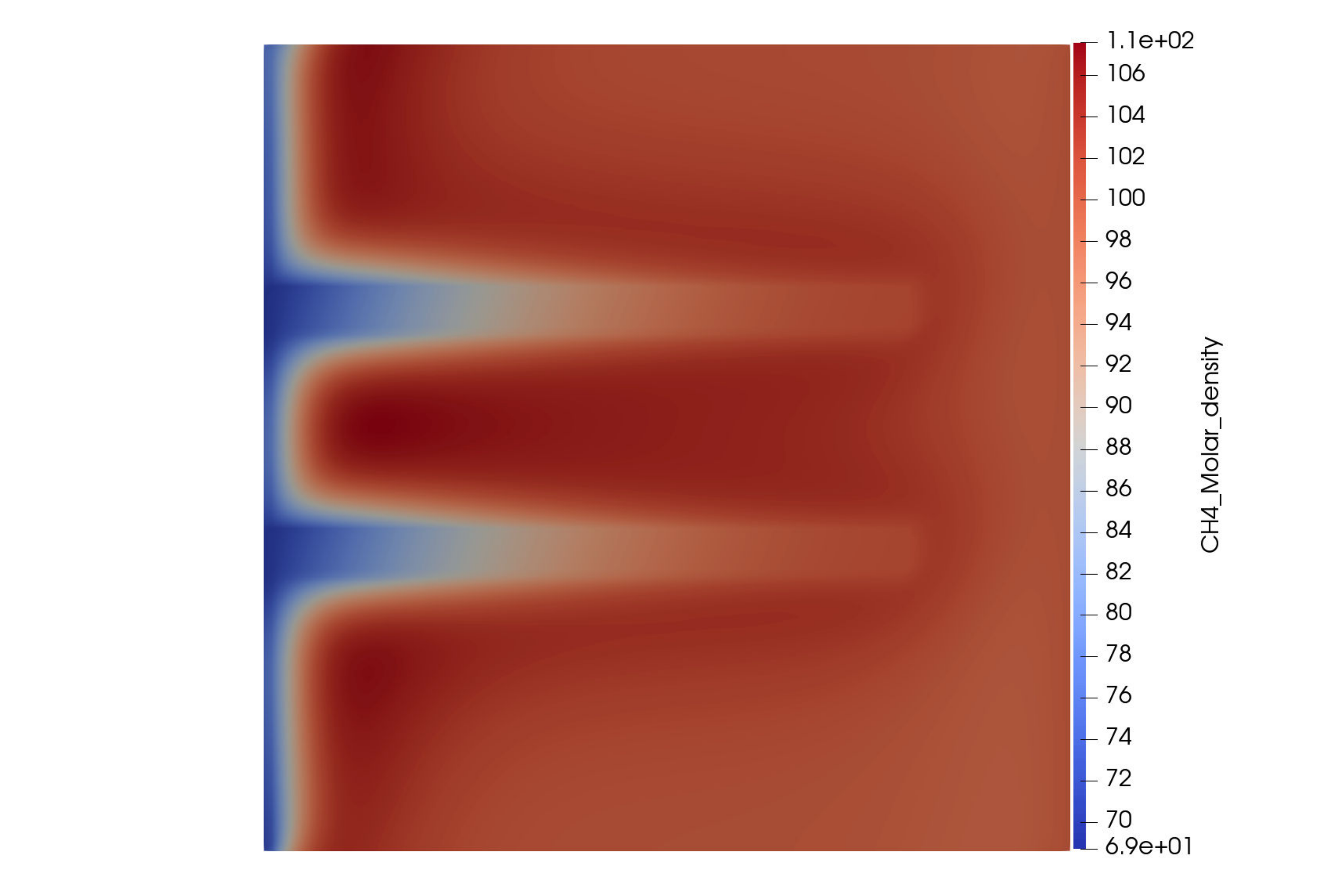}
	
	\includegraphics[width=5.5cm, height=4cm,trim=0.5cm 0cm 0cm 0cm,clip]{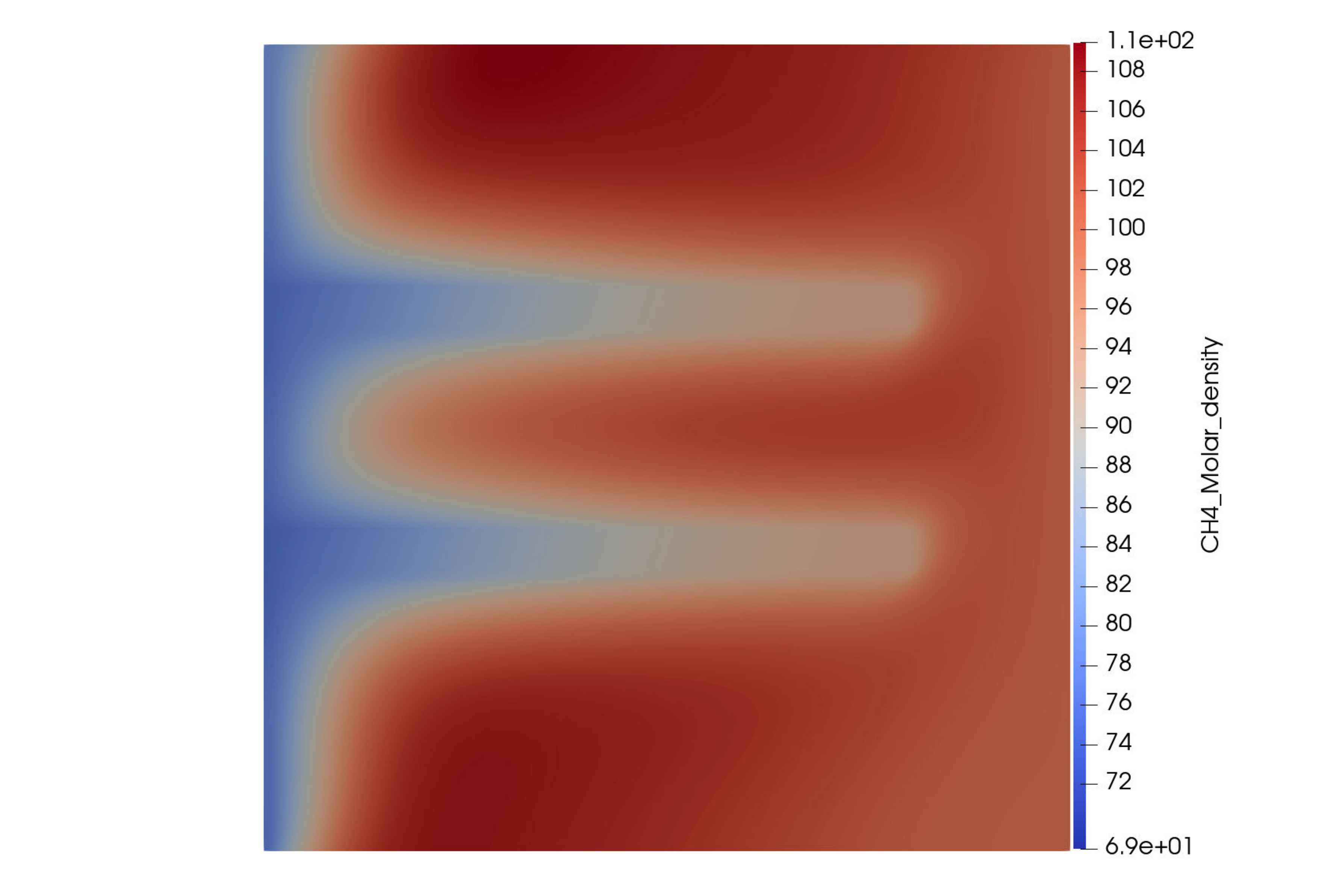}
	\includegraphics[width=5.5cm, height=4cm,trim=0.5cm 0cm 0cm 0cm,clip]{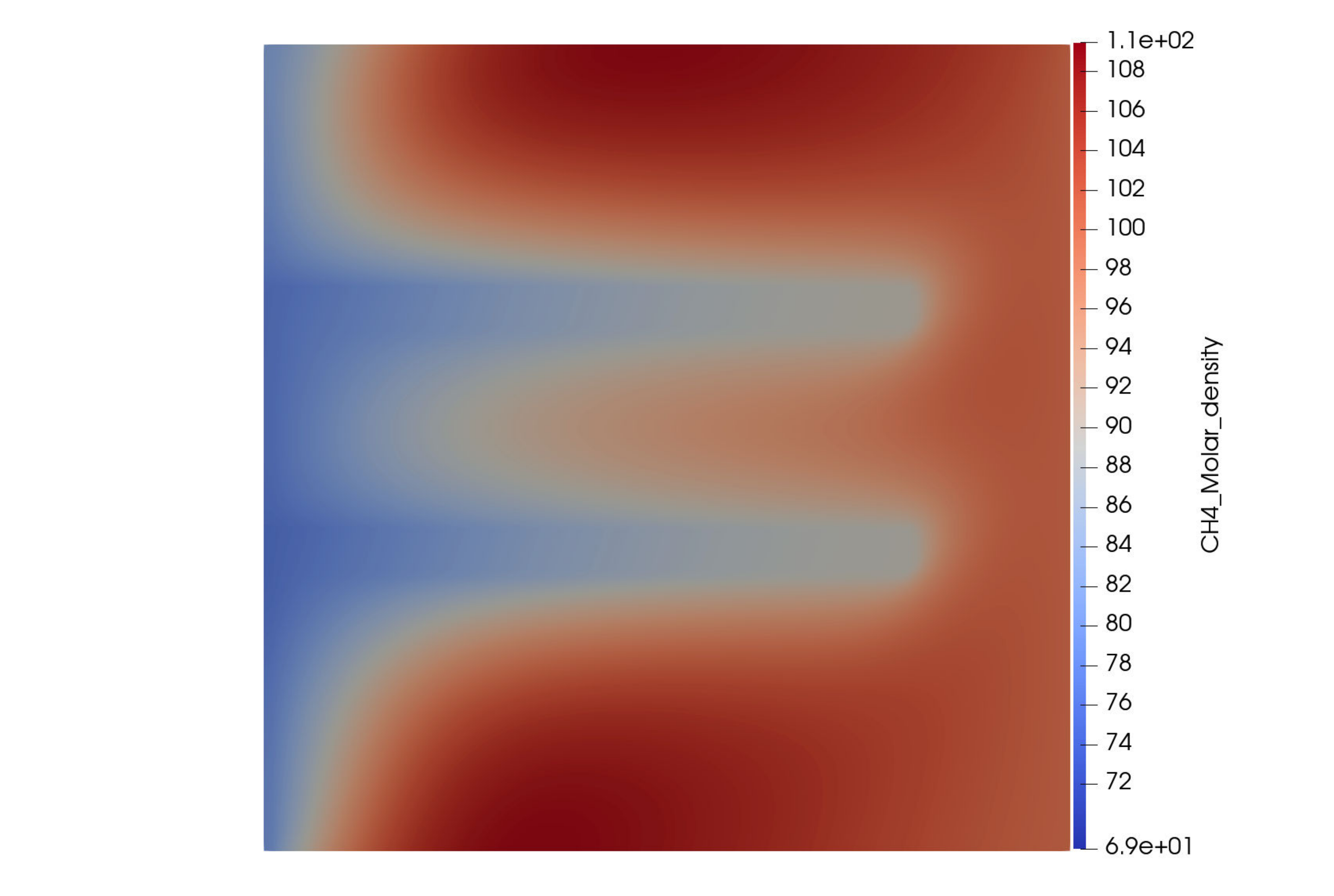}
	\caption{Distributions of molar density of CH$_4$ at different times in Example 2. Top-left: $n = 50$. Top-right: $n = 150$. Bottom-left: $n = 350$. Bottom-right: $n = 500$.}\label{fig2-ch4}
\end{figure}

%

\begin{figure}[htbp]
	\centering
	\includegraphics[width=5.5cm, height=4cm,trim=0.5cm 0cm 0cm 0cm,clip]{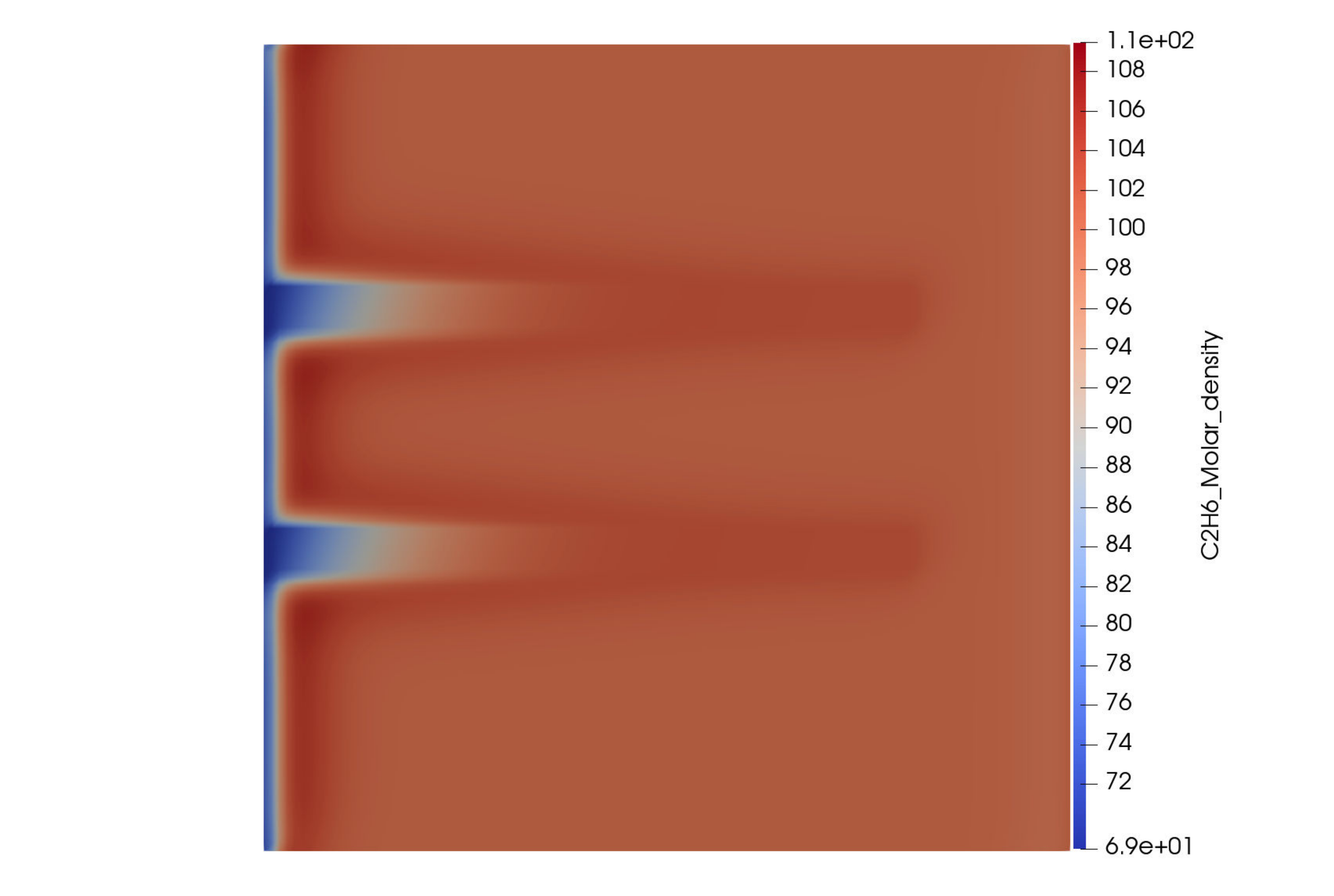}
	\includegraphics[width=5.5cm, height=4cm,trim=0.5cm 0cm 0cm 0cm,clip]{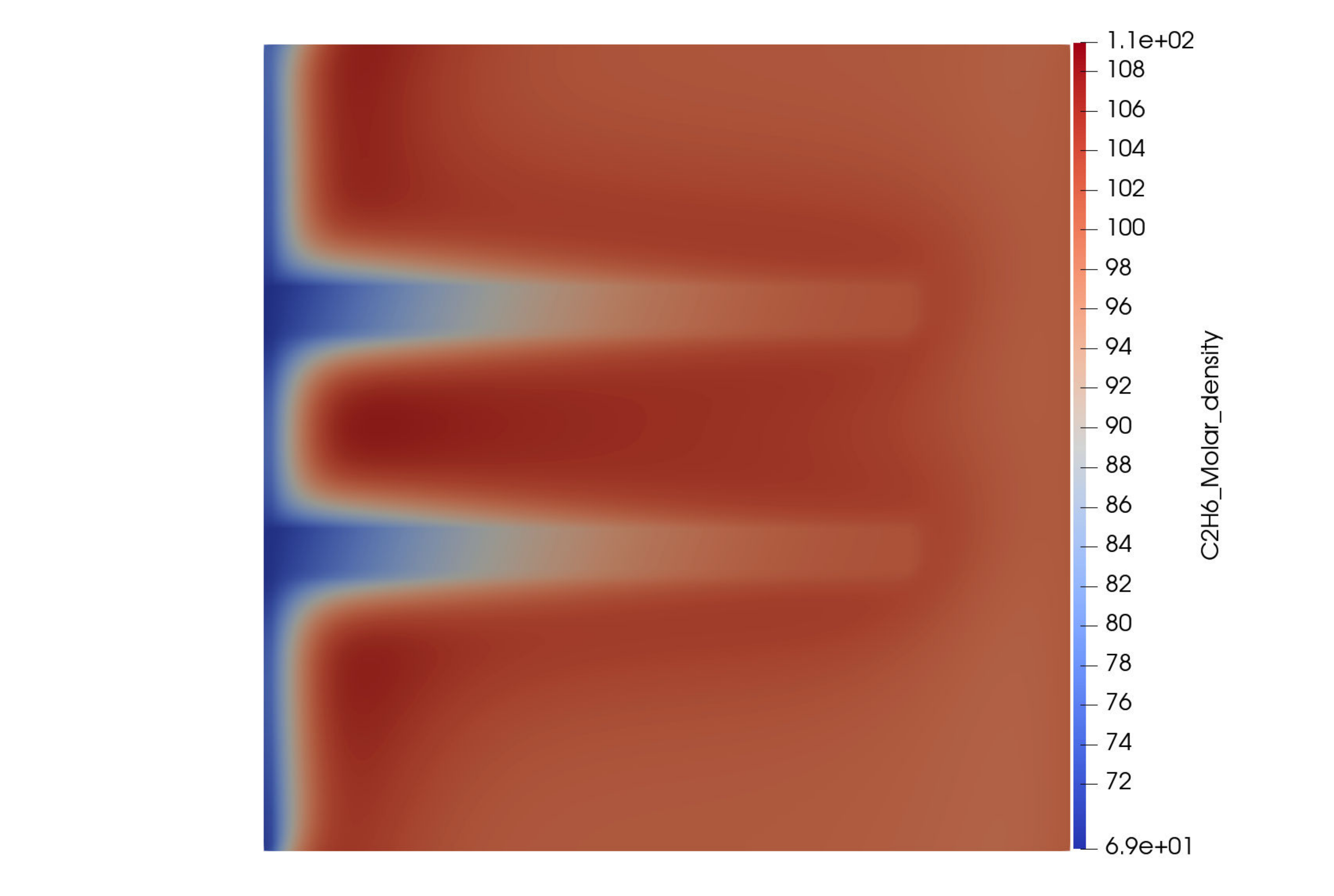}
	
	\includegraphics[width=5.5cm, height=4cm,trim=0.5cm 0cm 0cm 0cm,clip]{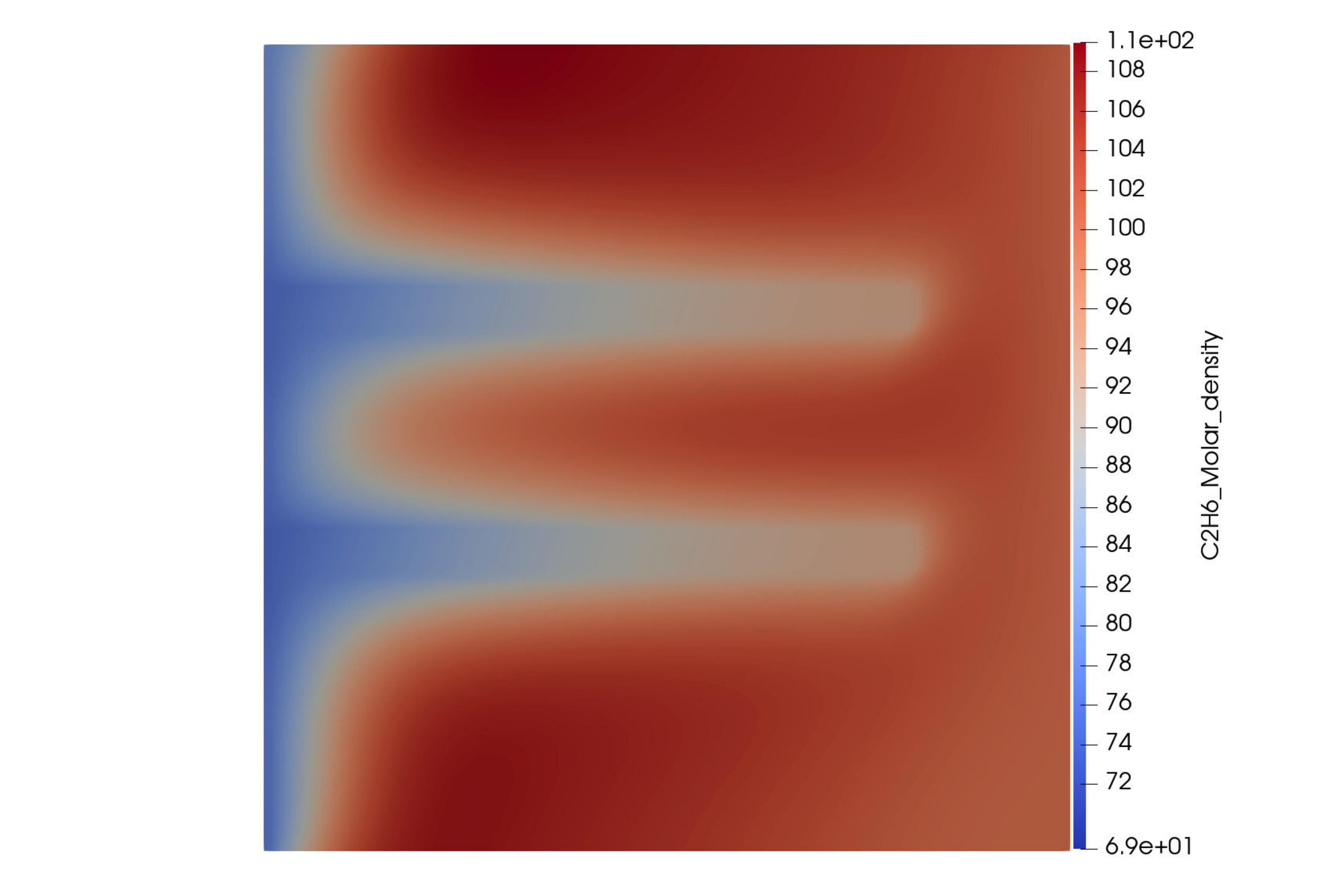}
	\includegraphics[width=5.5cm, height=4cm,trim=0.5cm 0cm 0cm 0cm,clip]{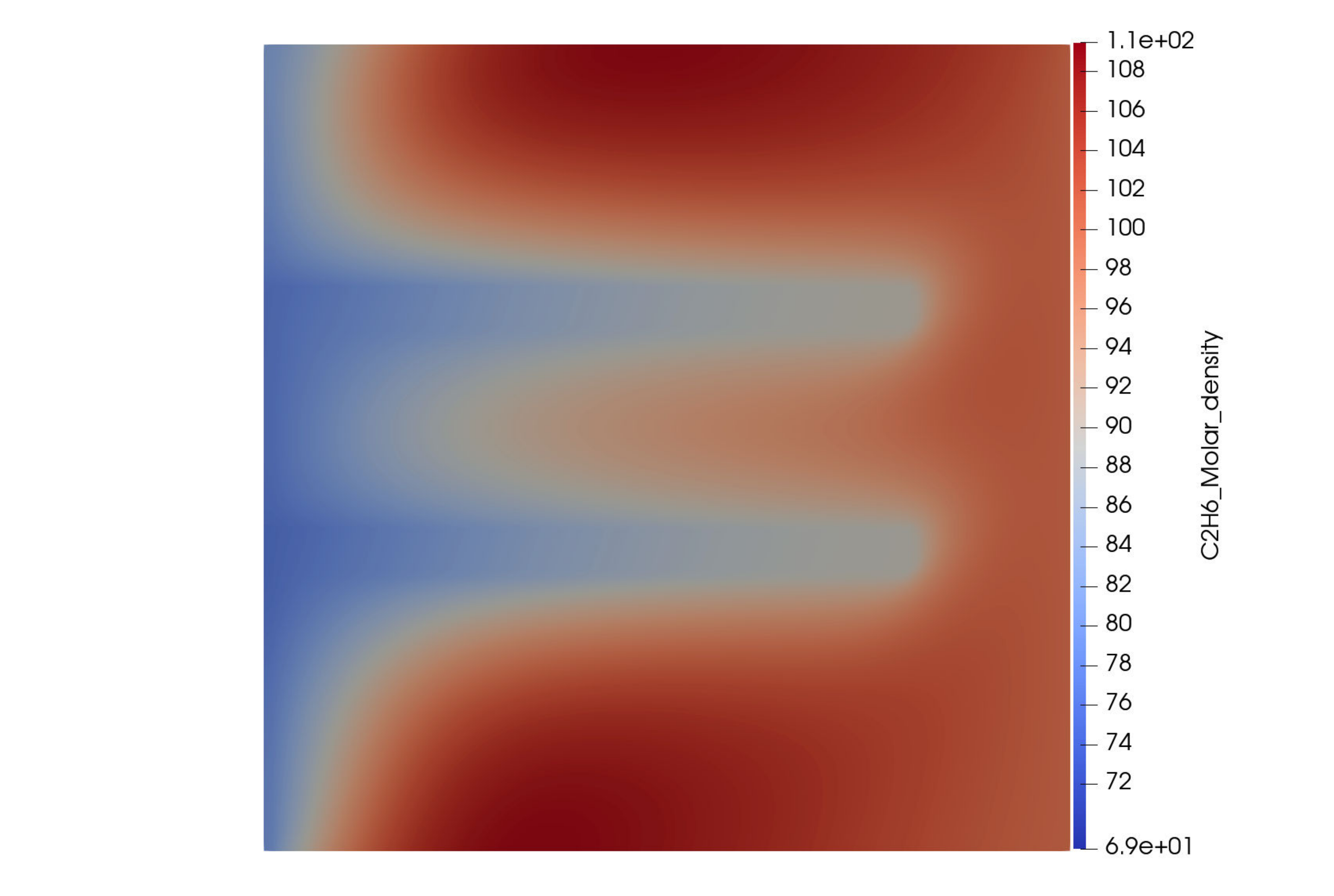}
	\caption{Distributions of molar density of C$_2$H$_6$ at different times in Example 2. Top-left: $n = 50$. Top-right: $n = 150$. Bottom-left: $n = 350$. Bottom-right: $n = 500$.}\label{fig2-c2h6}
\end{figure}

	%

\begin{figure}[htbp]
	\centering
	\includegraphics[width=5.5cm, height=4cm,trim=0.5cm 0cm 0cm 0cm,clip]{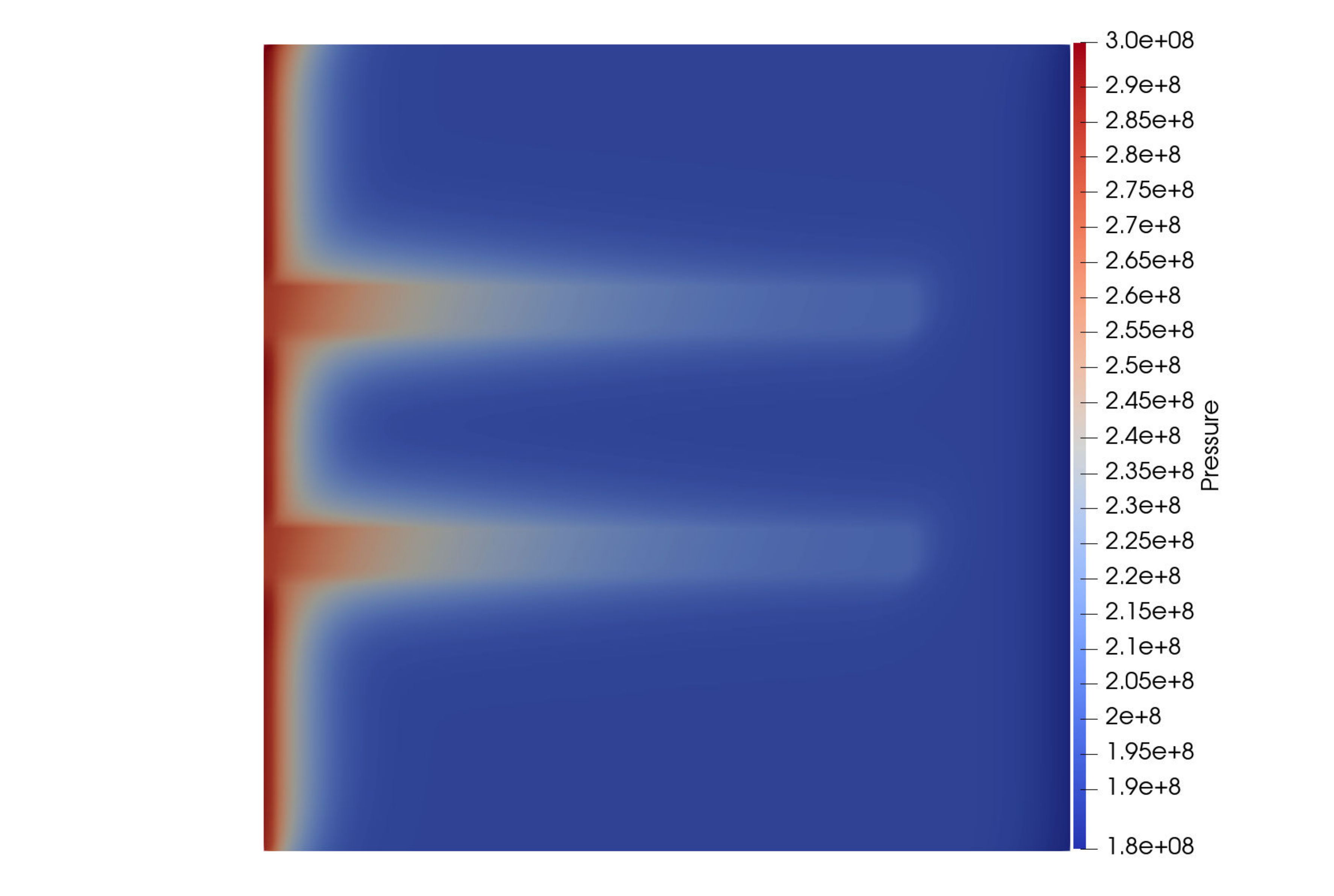}
	\includegraphics[width=5.5cm, height=4cm,trim=0.5cm 0cm 0cm 0cm,clip]{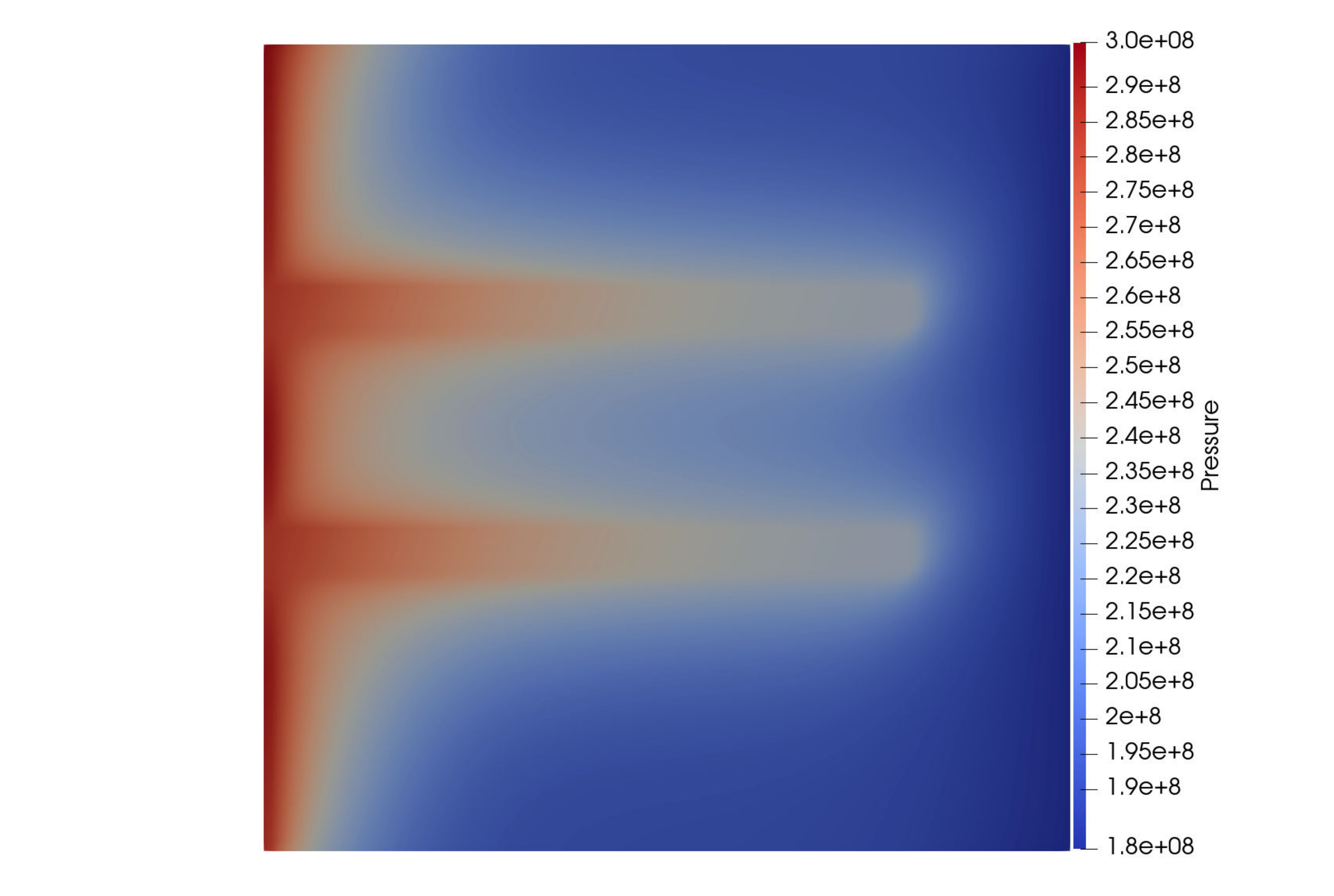}
	
	\includegraphics[width=5.5cm, height=4cm,trim=0.5cm 0cm 0cm 0cm,clip]{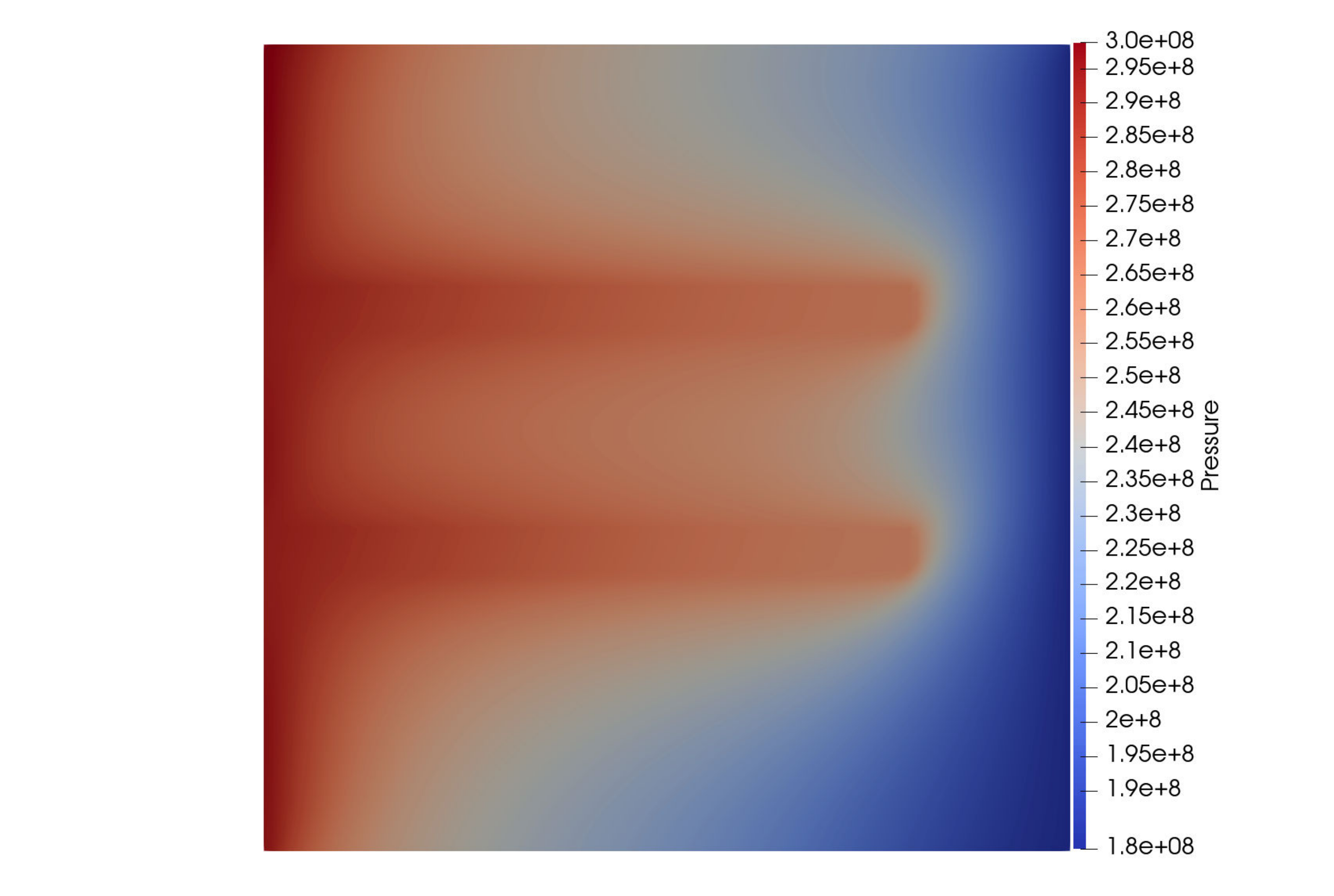}
	\includegraphics[width=5.5cm, height=4cm,trim=0.5cm 0cm 0cm 0cm,clip]{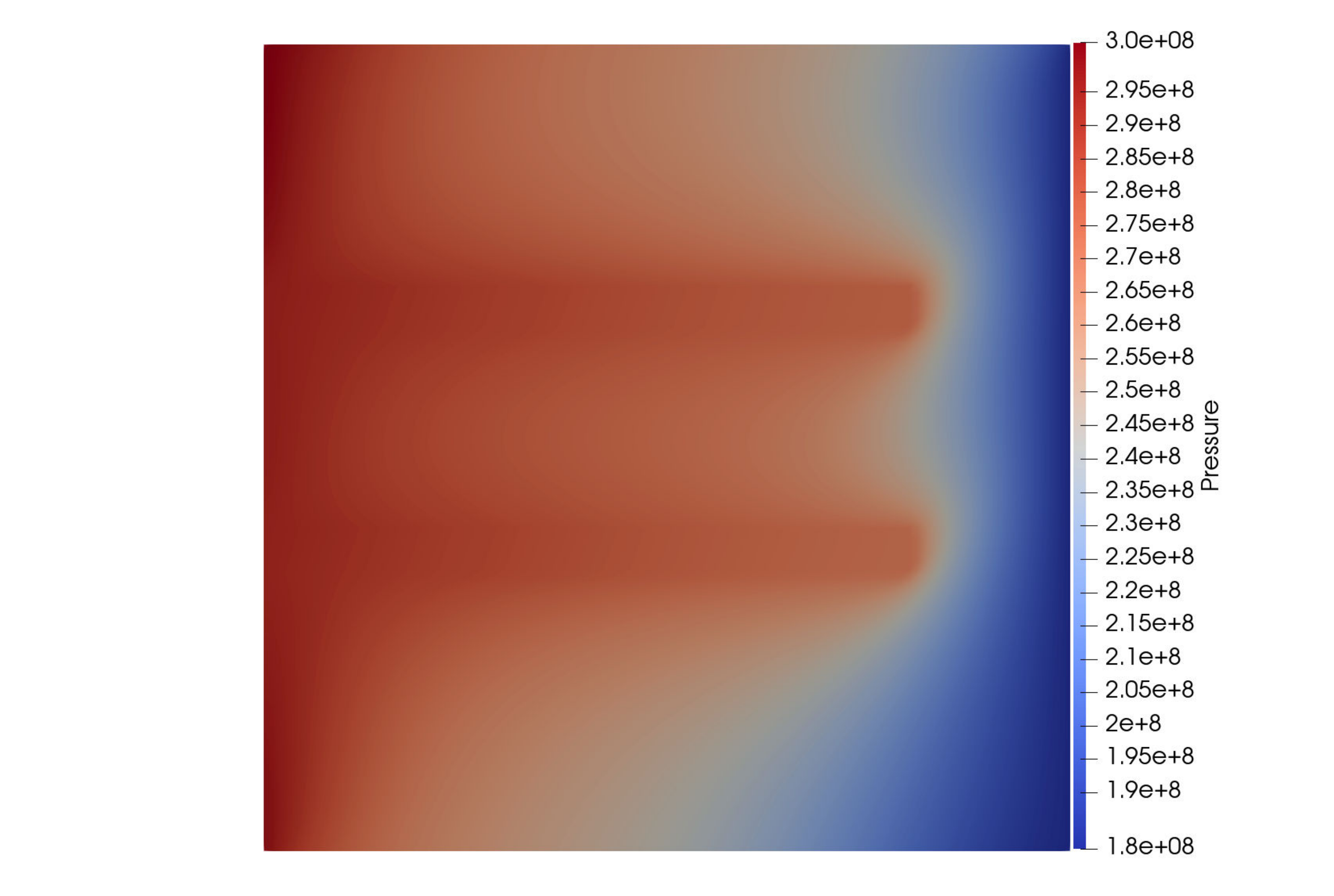}
	\caption{Distributions of pressure at different times in Example 2. Top-left: $n = 50$. Top-right: $n = 150$. Bottom-left: $n = 350$. Bottom-right: $n = 500$.}\label{fig2-pres}
\end{figure}
\begin{figure}[htbp]
	\centering
	\includegraphics[width=5.5cm, height=4cm,trim=0.5cm 0cm 0cm 0cm,clip]{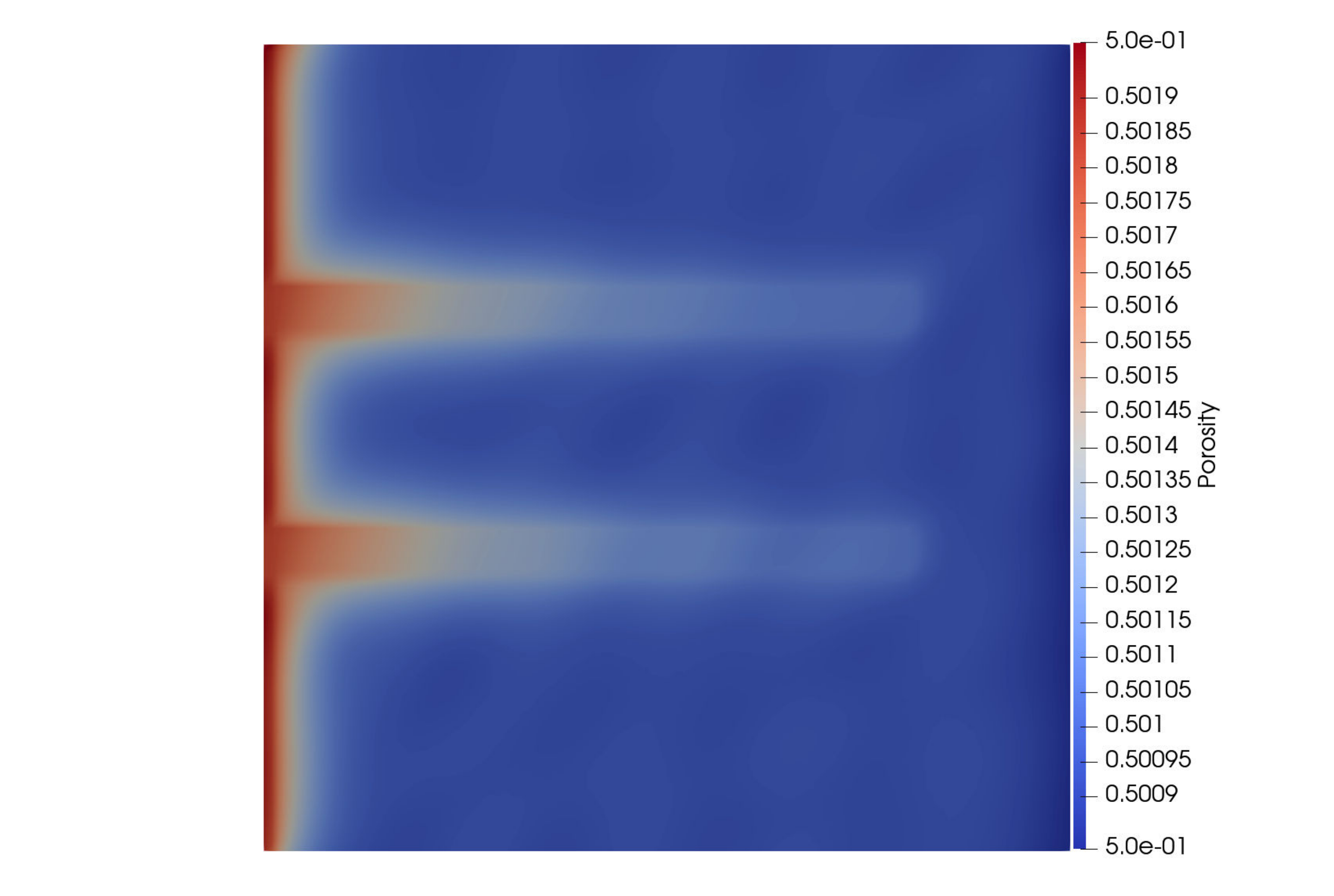}
	\includegraphics[width=5.5cm, height=4cm,trim=0.5cm 0cm 0cm 0cm,clip]{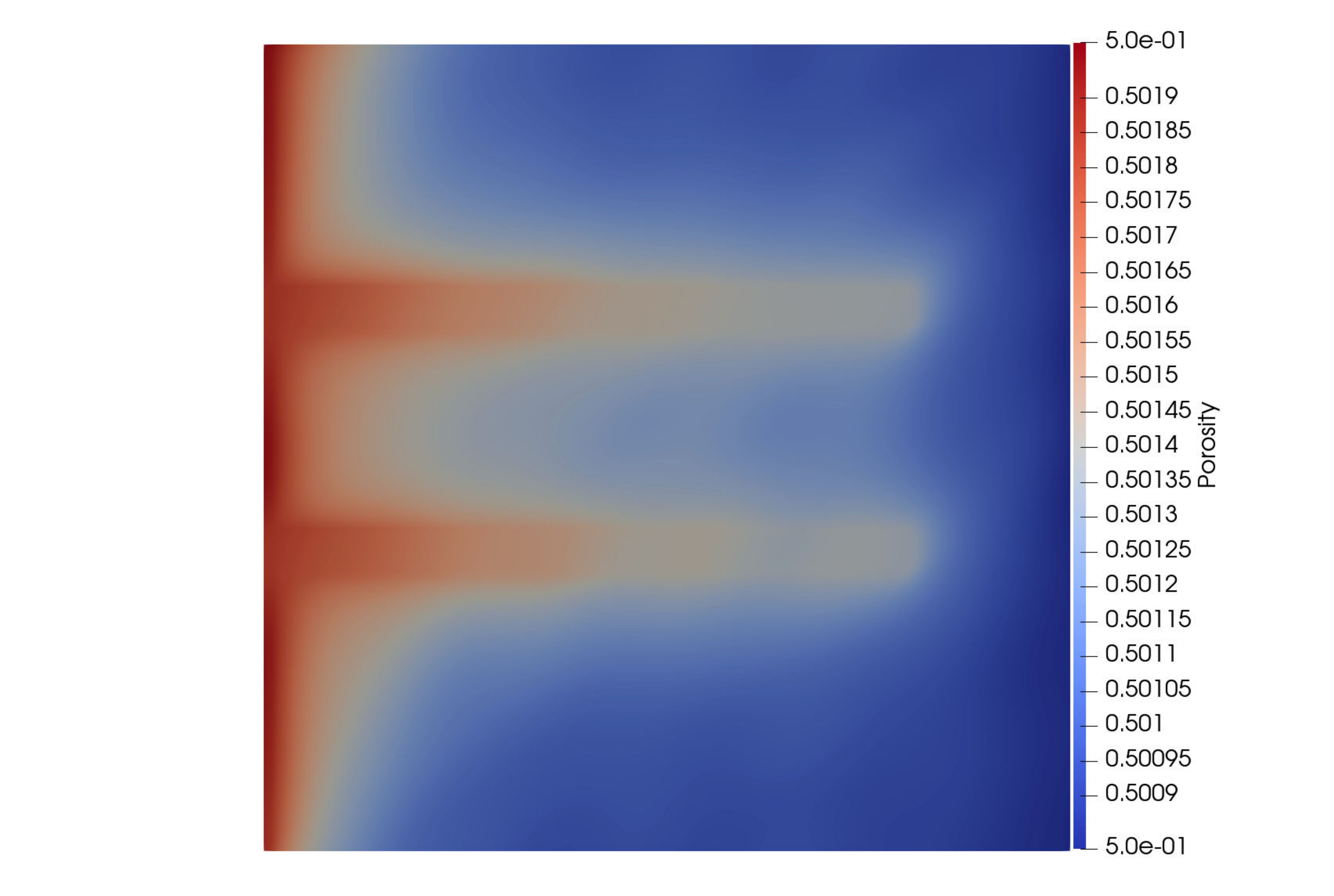}
	
	\includegraphics[width=5.5cm, height=4cm,trim=0.5cm 0cm 0cm 0cm,clip]{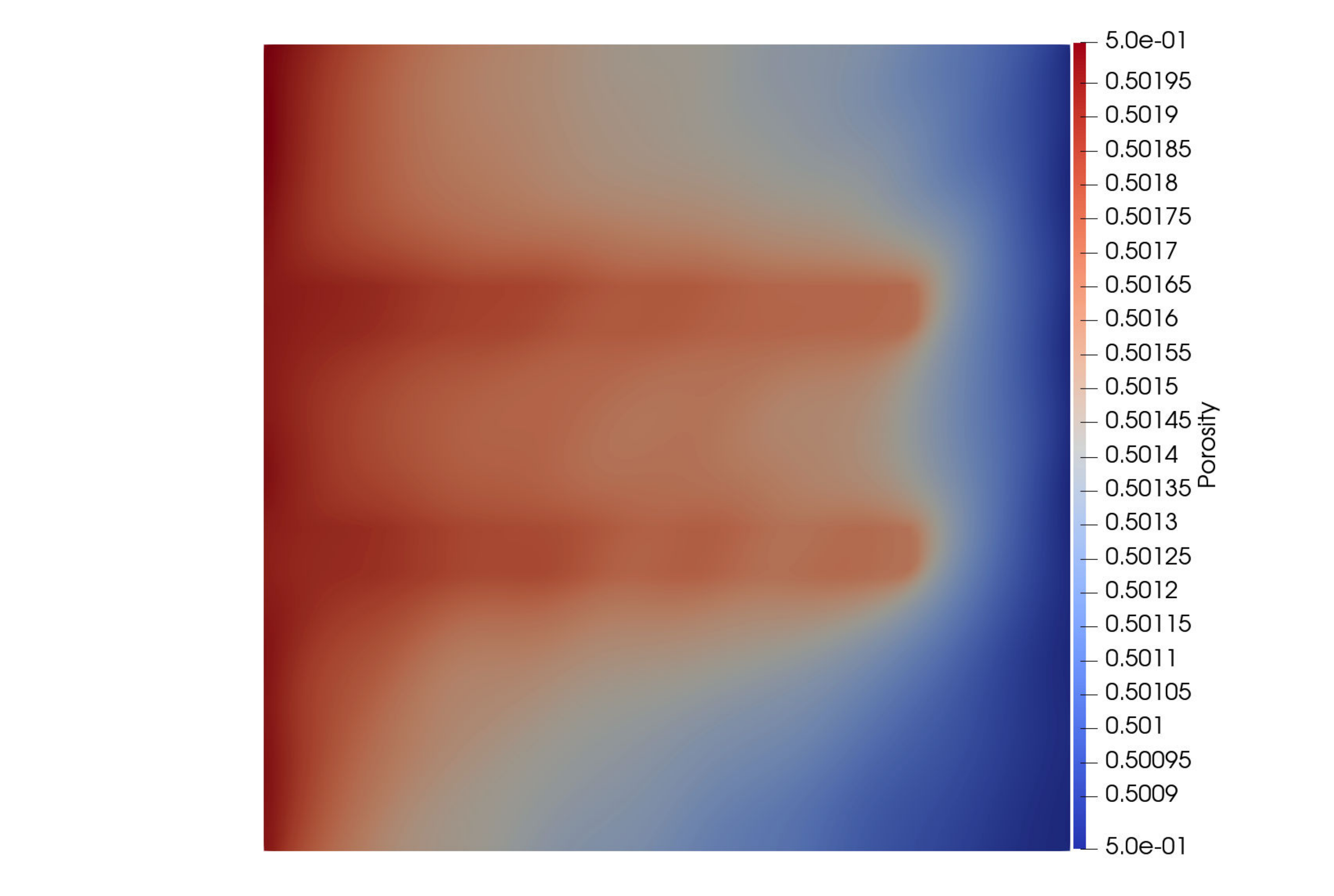}
	\includegraphics[width=5.5cm, height=4cm,trim=0.5cm 0cm 0cm 0cm,clip]{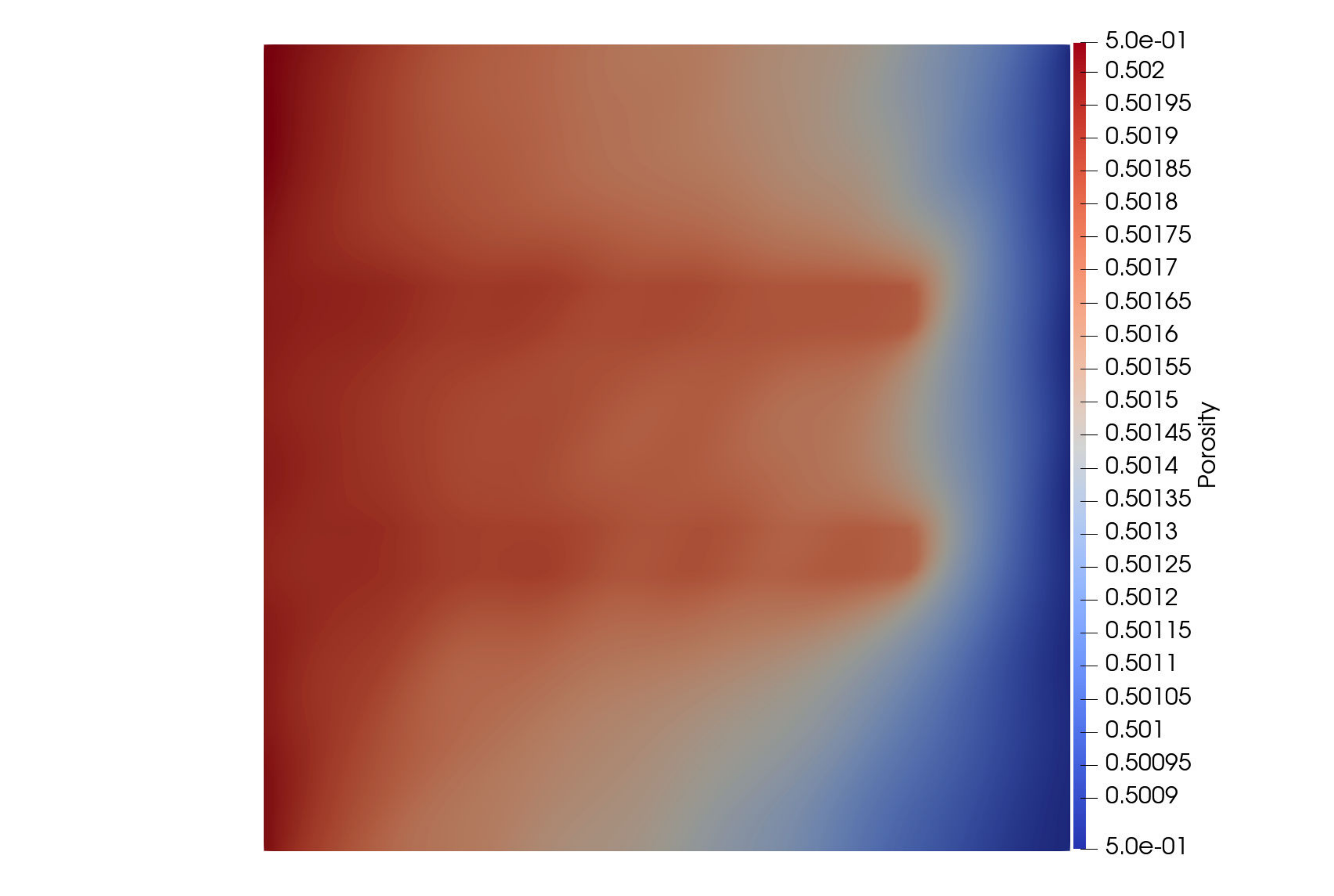}
	\caption{Distributions of porosity at different times in Example 2. Top-left: $n = 50$. Top-right: $n = 150$. Bottom-left: $n = 350$. Bottom-right: $n = 500$.}\label{fig2-poro}
\end{figure}
\subsection{Example 3}	

In this three-dimensional numerical experiment, we investigate the multicomponent gas flow process within a highly heterogeneous porous medium. The computational domain is extracted from the benchmark SPE10 geological model, which provides realistic spatial distributions of permeability and porosity representative of actual reservoir formations. The original SPE10 domain extends over $x \in [0, 60]~\mathrm{m}$, $y \in [0, 220]~\mathrm{m}$, and $z \in [0, 85]~\mathrm{m}$. To construct the present example, we extract the permeability and porosity data along the $x$-direction from the SPE10 model, forming a three-dimensional subregion defined by 
\[
x \in [15, 45]~\mathrm{m}, \quad
y \in [95, 125]~\mathrm{m}, \quad
z \in [0, 30]~\mathrm{m}.
\]
The computational mesh consists of $40 \times 40 \times 40$ tetrahedral elements, providing adequate spatial resolution to represent the fine-scale heterogeneity of the extracted subdomain. The permeability field in the selected subdomain spans several orders of magnitude, ranging from approximately $0.038~\mathrm{md}$ to $2.0\times10^4~\mathrm{md}$, reflecting a highly heterogeneous medium. The corresponding porosity field, obtained from the same dataset, exhibits spatial variability correlated with the permeability structure. The initial permeability and porosity distributions are illustrated in Figures~\ref{fig3-initial} and \ref{fig3-initial-slice}.

The simulated system involves a binary gas mixture composed of carbon dioxide (CO$_2$) and methane (CH$_4$). Initially, the domain is fully saturated with methane, with a uniform molar concentration of $C_{{CH_4}}^0 = 50~\mathrm{mol/m^3}$, while $C_{{CO_2}}^0 = 10~\mathrm{mol/m^3}$. A Dirichlet boundary condition is prescribed for the molar concentration of {CO2} at the bottom boundary ($z = 0$), where $C_{CO_2} = 50~\mathrm{mol/m^3}$, representing a continuous CO$_2$ injection process. All remaining boundaries are treated as impermeable (no-flux) conditions. The initial molar concentration distributions of both gas components are shown in Figure~\ref{fig3-initial-molar}.

As the simulation progresses, the injected CO$_2$ migrates upward, displacing the resident CH$_4$. Figures~\ref{fig3-co2} and~\ref{fig3-ch4} depict the temporal evolution of the molar concentration fields for CO$_2$ and CH$_4$, respectively, at representative time steps ($n = 50 , 500, 1000$, and $2000$). The results clearly demonstrate that CO$_2$ preferentially propagates along high-permeability channels.

The corresponding chemical potential fields of CO$_2$ and CH$_4$, presented in Figures~\ref{fig3-co2-che} and~\ref{fig3-ch4-che}, reveal the dominant thermodynamic gradients driving both advective and diffusive transport mechanisms. The evolution of the pressure field is illustrated in Figure~\ref{fig3-pres}. 	As illustrated in Figure~\ref{fig3-timestep}, the time step size fluctuates dynamically throughout the computation because the system does not reach a steady state within the simulated time frame. The spatial distributions of CO$_2$ and CH$_4$ from the displacement process at time step $n=5000$ are shown in Figure \ref{fig3-slice}, using both clip and slice views.

Overall, this three-dimensional numerical example demonstrates the capability of the proposed framework to accurately capture multicomponent gas transport in highly heterogeneous porous media.

\begin{figure}[htbp]
	\centering
	\includegraphics[width=5.5cm, height=4cm,trim=0.5cm 0cm 0cm 0cm,clip]{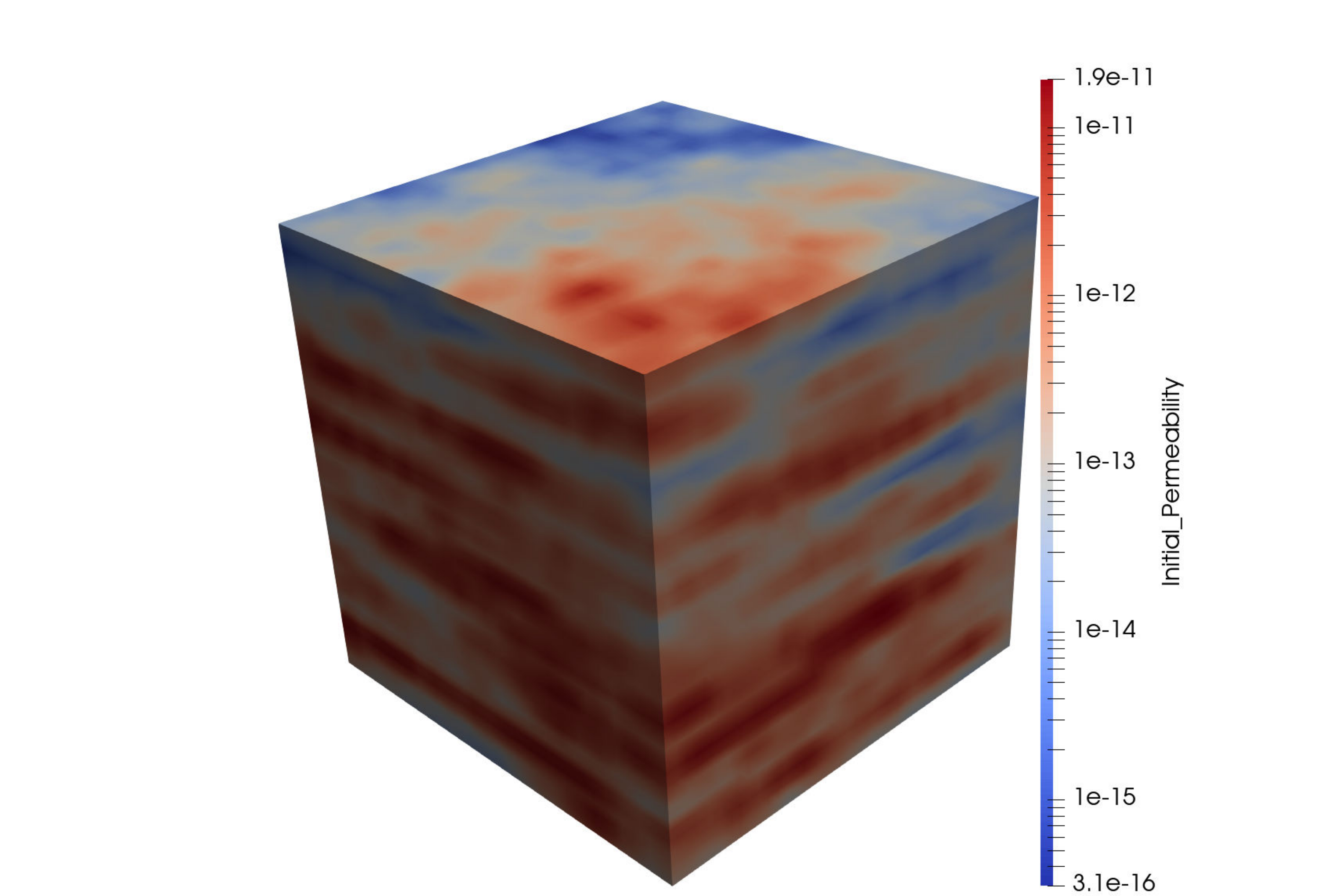}
	\includegraphics[width=5.5cm, height=4cm,trim=0.5cm 0cm 0cm 0cm,clip]{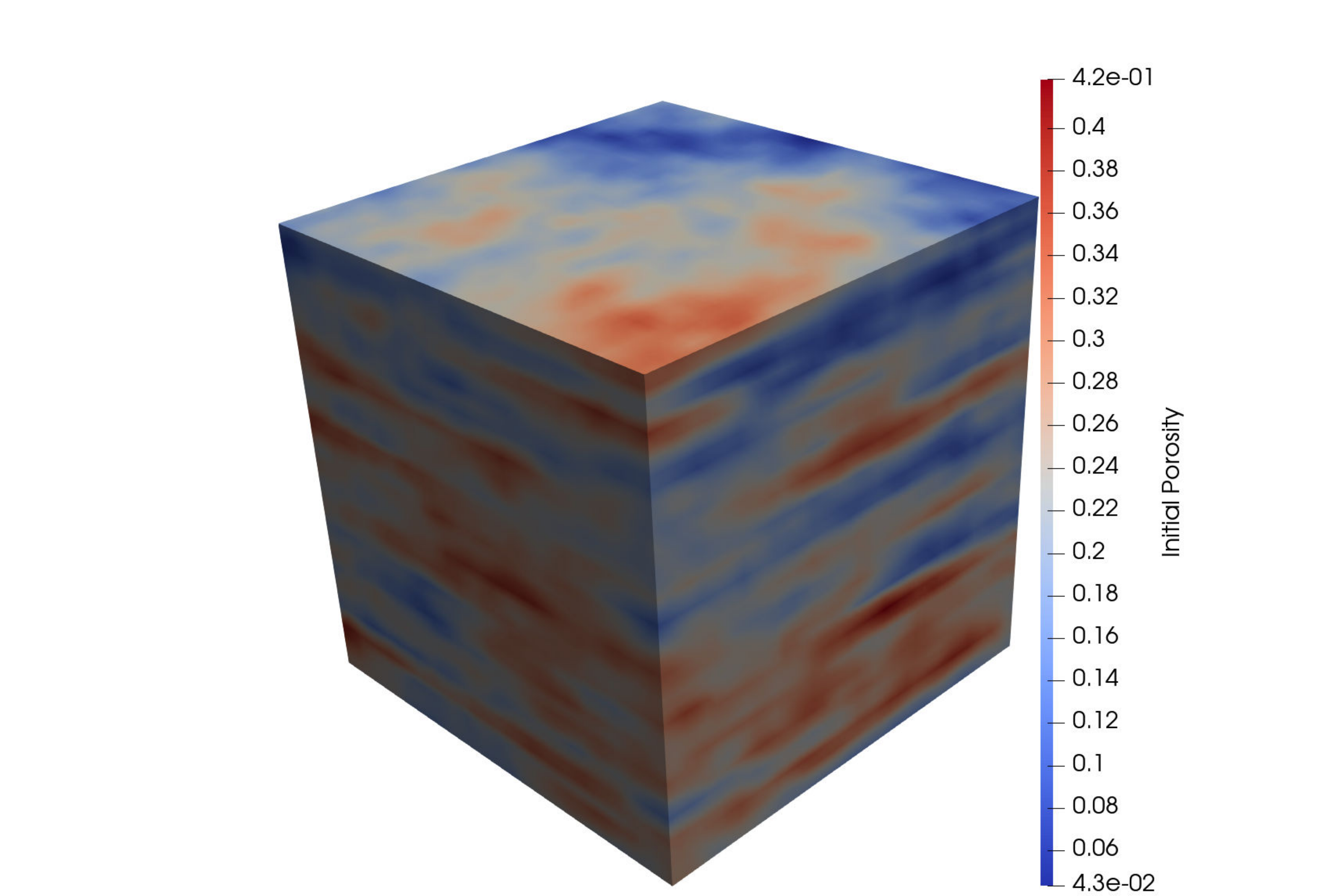}
	\caption{Example 3: Left: Initial distributions of permeability. Right: Initial distributions of porosity.}\label{fig3-initial}
\end{figure}
\begin{figure}[htbp]
	\centering
	\includegraphics[width=7cm, height=4cm, trim=6cm 0cm 6cm 0cm,clip]{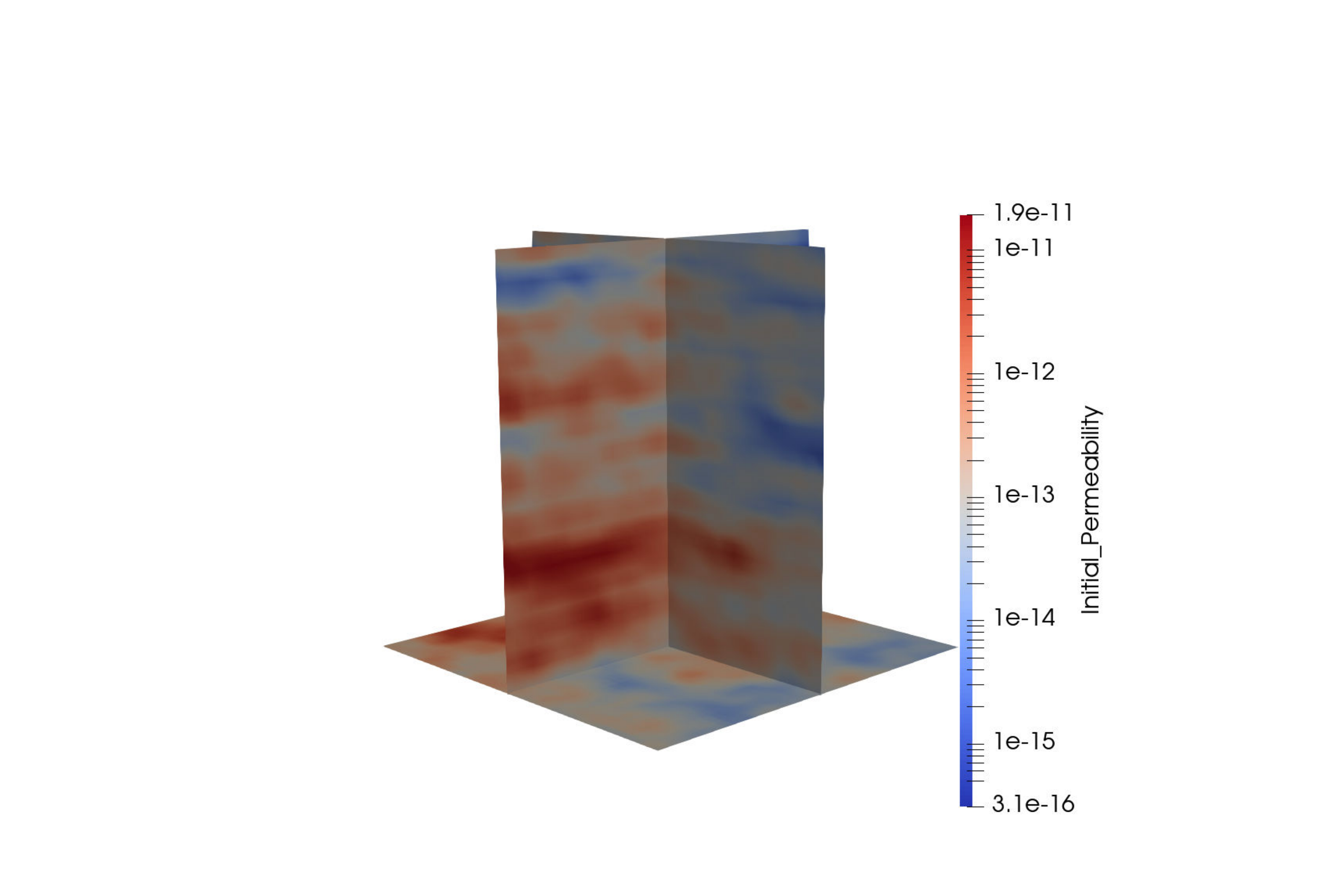}
	\includegraphics[width=7cm, height=4cm, trim=6cm 0cm 6cm 0cm,clip]{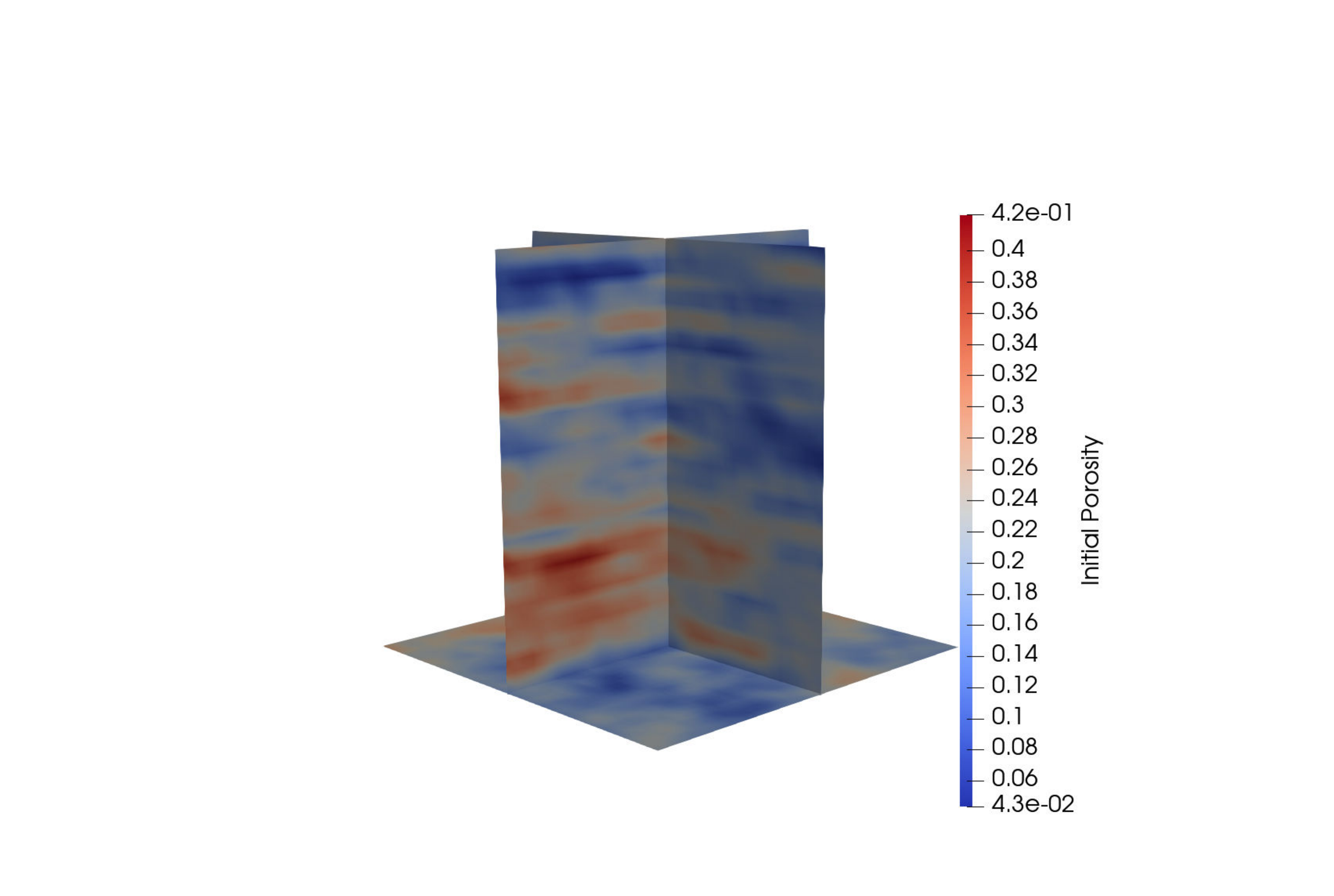}
	\caption{Example 3: Left: Slice of the initial permeability distribution. Right: Slice of the initial porosity distribution.}\label{fig3-initial-slice}
\end{figure}
\begin{figure}[htbp]
	\centering
	\includegraphics[width=5.0cm, height=4.5cm]{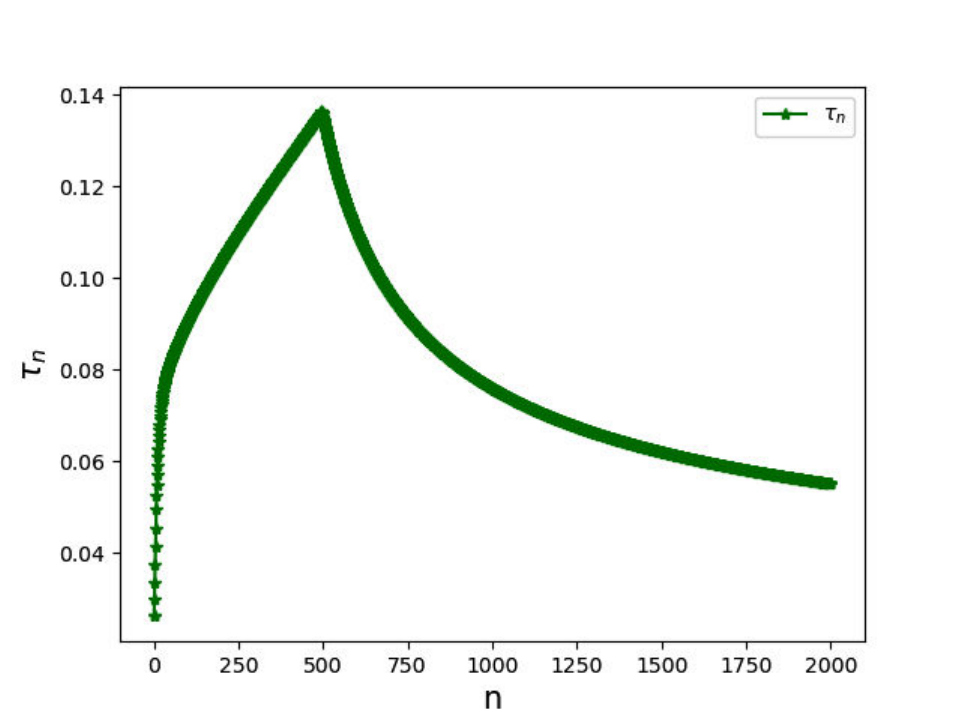}
	\caption{Example 3: Adaptive values of the time step size.}\label{fig3-timestep}
\end{figure}
\begin{figure}[htbp]
	\centering
	\includegraphics[width=7cm, height=4cm, trim=6cm 0cm 6cm 0cm,clip]{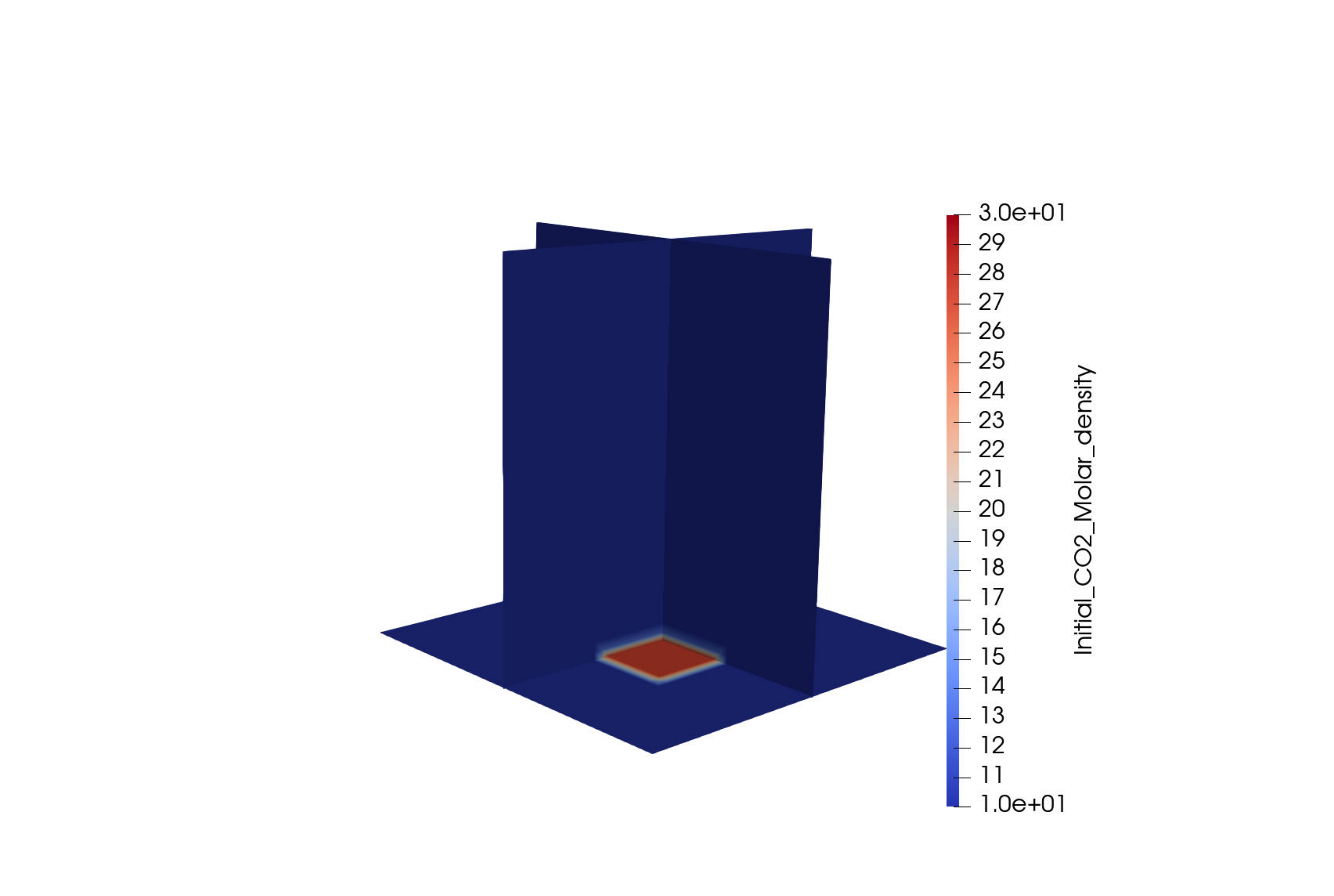}
	\includegraphics[width=7cm, height=4cm, trim=6cm 0cm 6cm 0cm,clip]{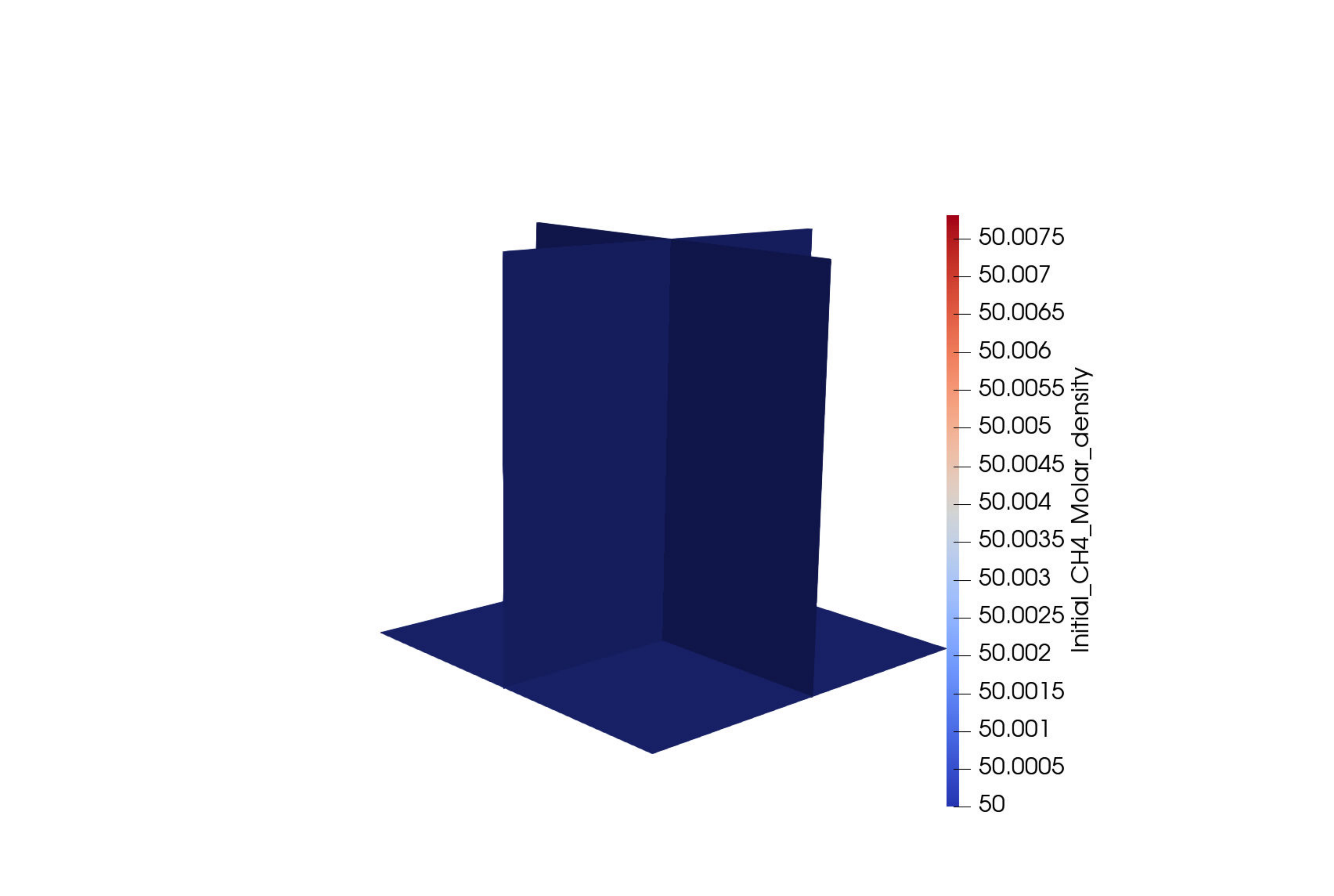}
	\caption{Example 3: Left: Initial molar distribution of CO$_2$. Right: Initial molar distribution of CH$_4$.}\label{fig3-initial-molar}
\end{figure}
\begin{figure}[htbp]
	\centering
	\includegraphics[width=7cm, height=4cm, trim=6cm 0cm 6cm 0cm,clip]{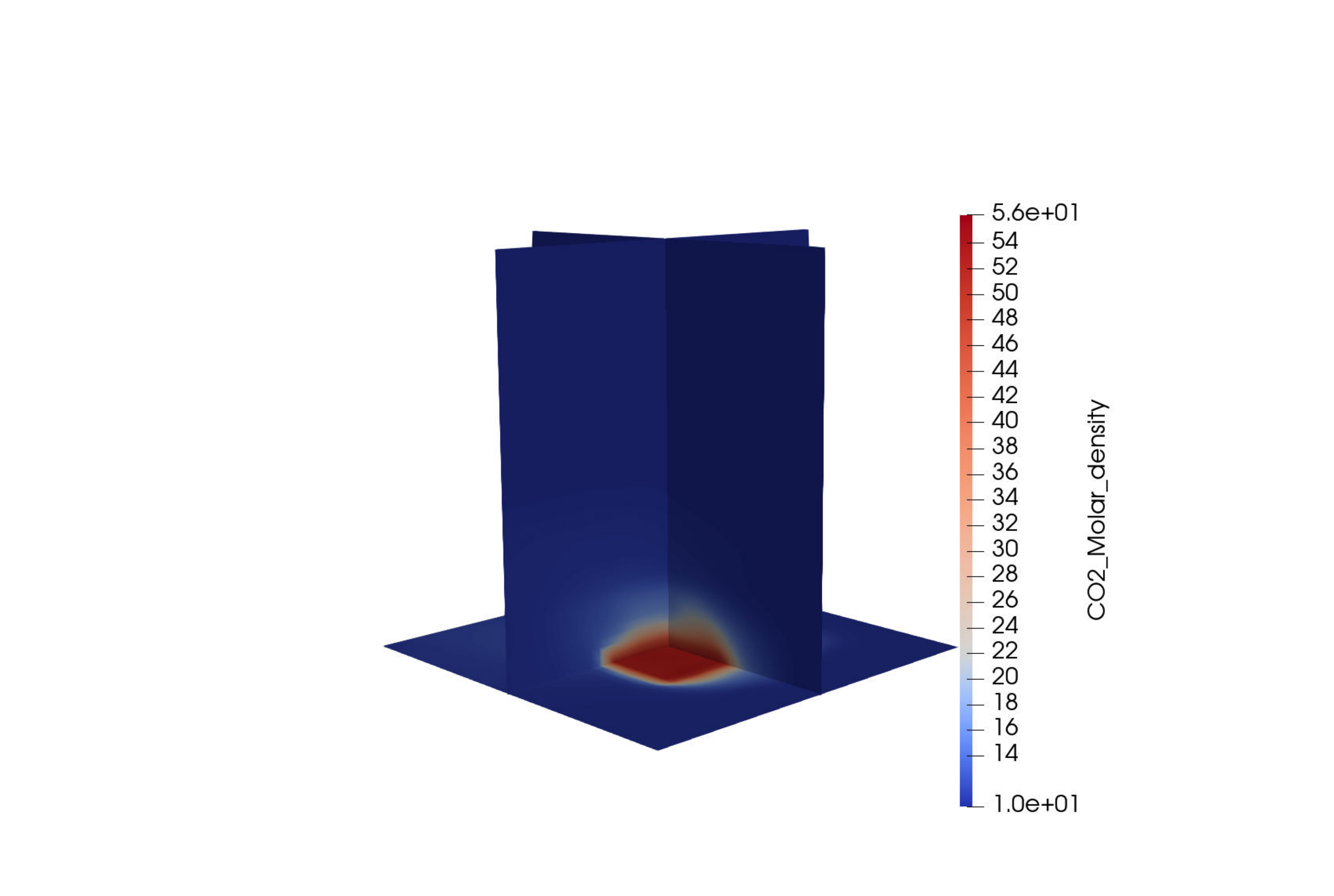}
	\includegraphics[width=7cm, height=4cm, trim=6cm 0cm 6cm 0cm,clip]{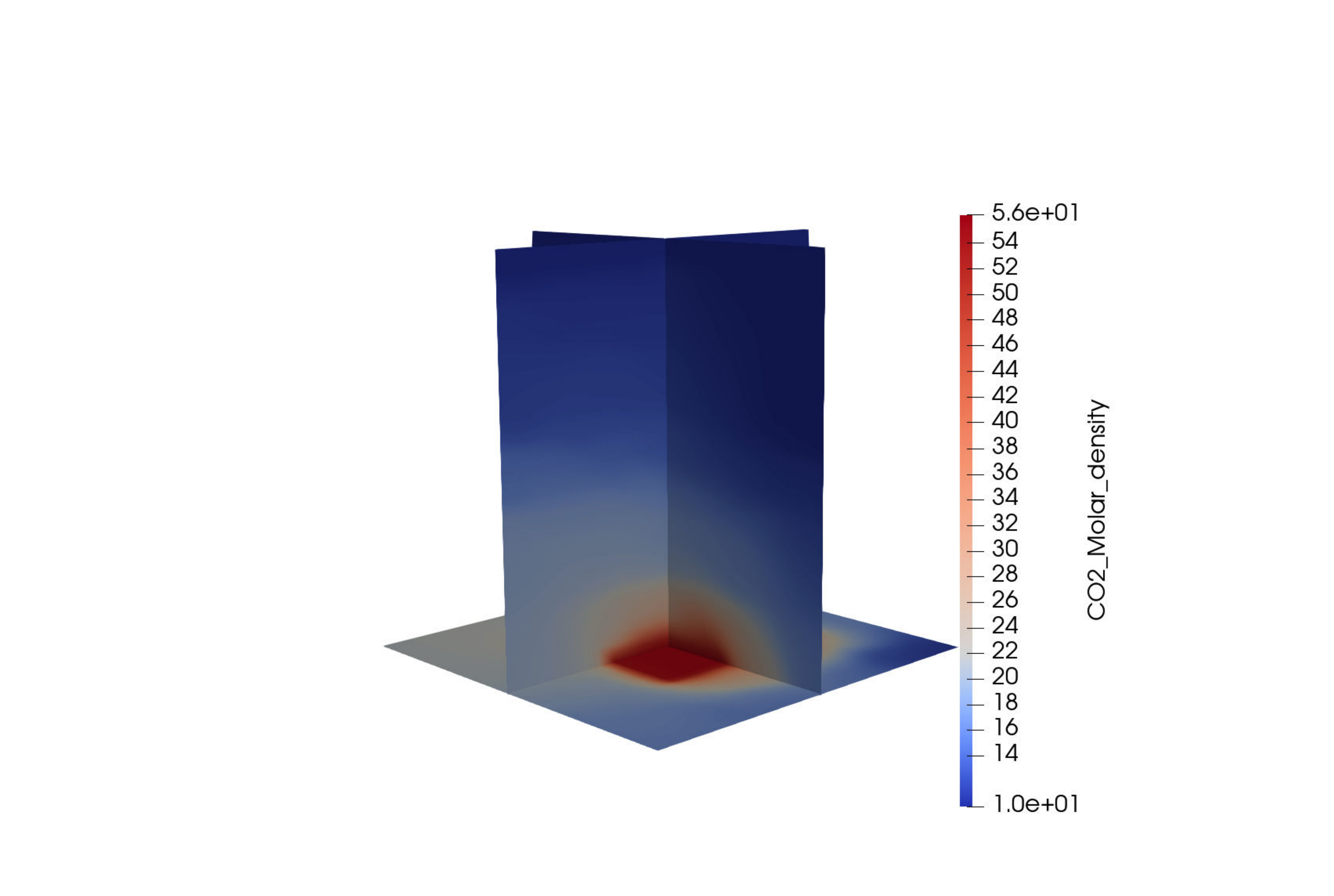}
	
	\includegraphics[width=7cm, height=4cm, trim=6cm 0cm 6cm 0cm,clip]{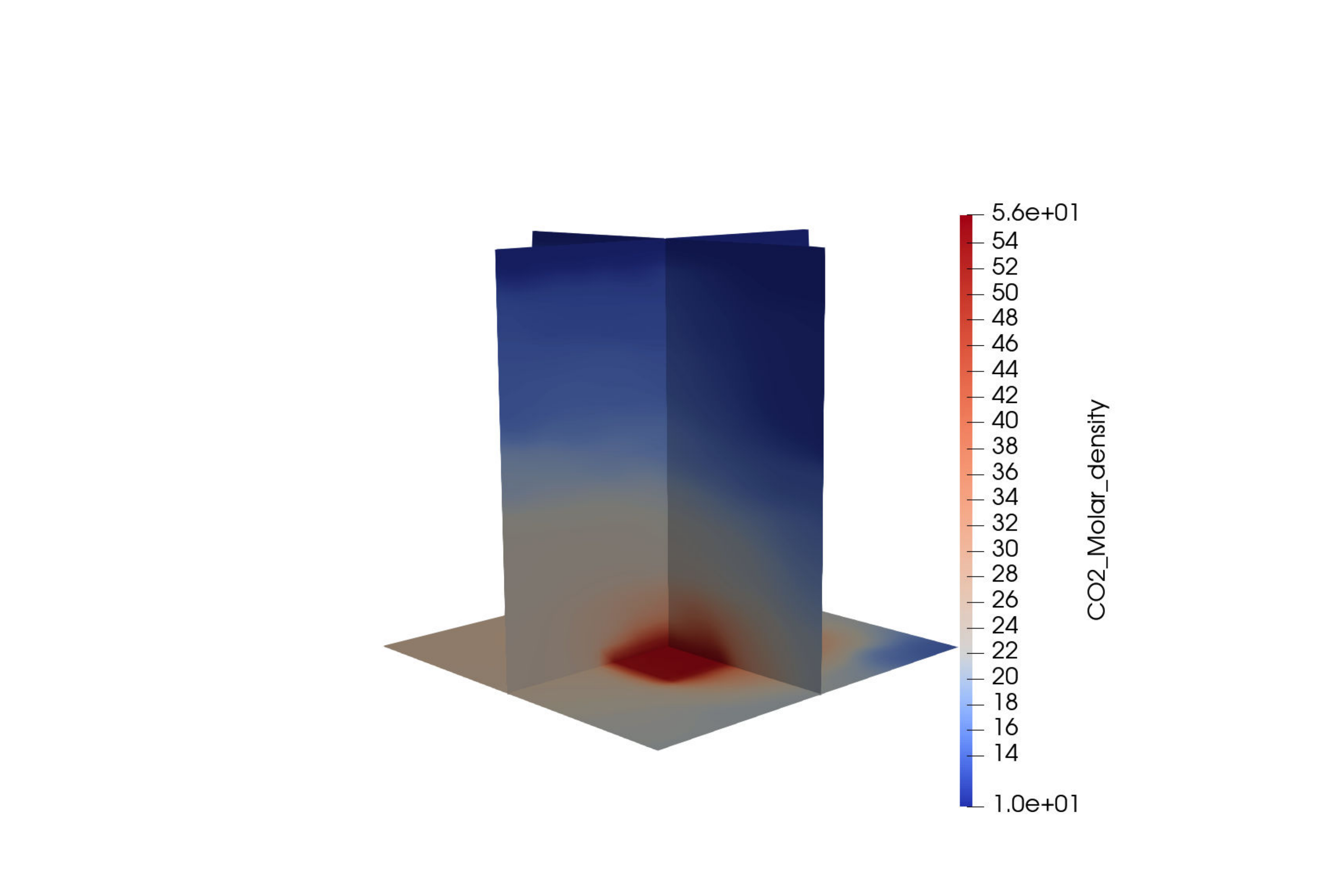}
	\includegraphics[width=7cm, height=4cm, trim=6cm 0cm 6cm 0cm,clip]{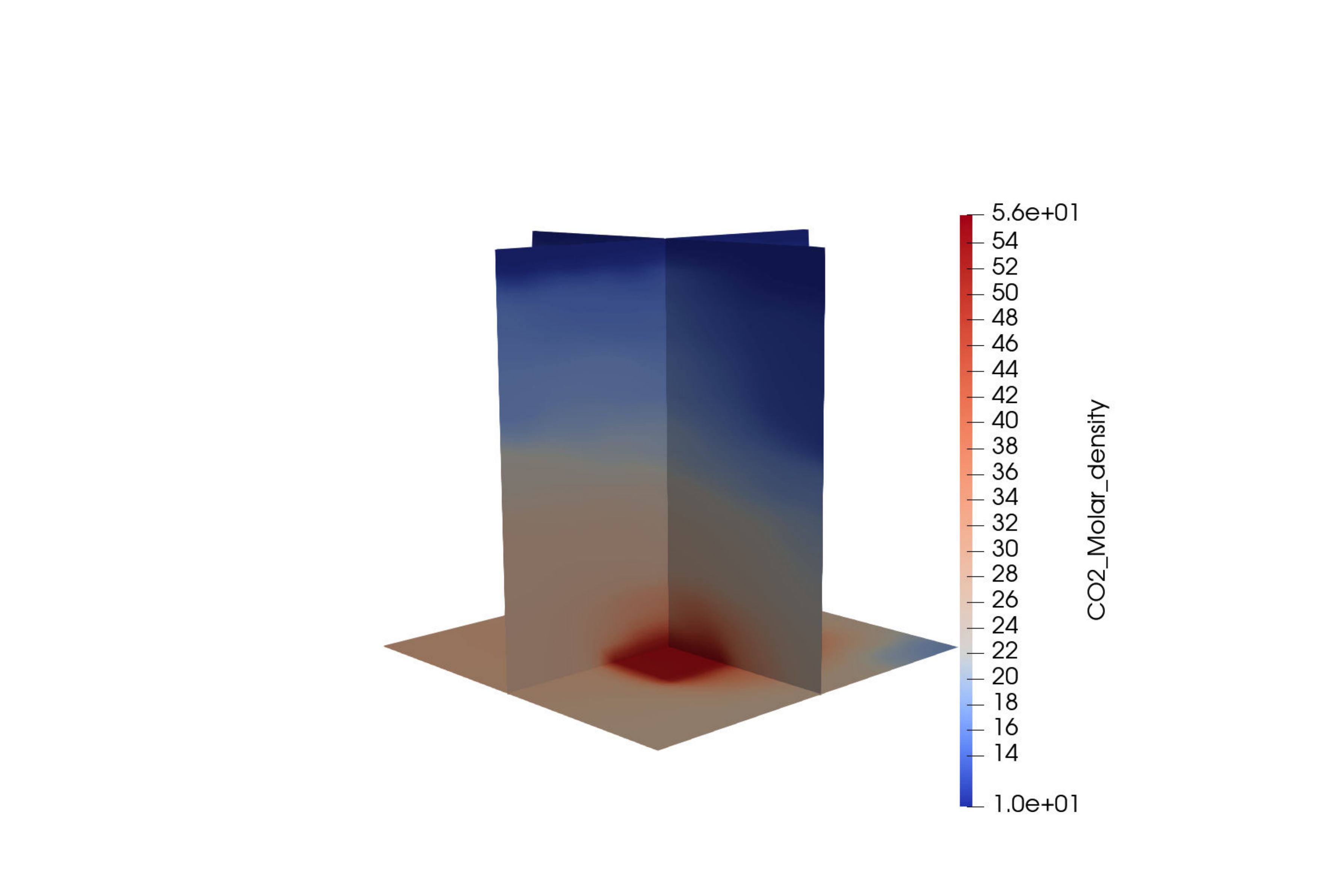}
	\caption{Distributions of molar density of CO$_2$ at different times in Example 3. Top-left: $n = 50$. Top-right: $n = 500$. Bottom-left: $n = 1000$. Bottom-right: $n = 2000$.}\label{fig3-co2}
\end{figure}

\begin{figure}[htbp]
	\centering
	\includegraphics[width=7cm, height=4cm, trim=6cm 0cm 6cm 0cm,clip]{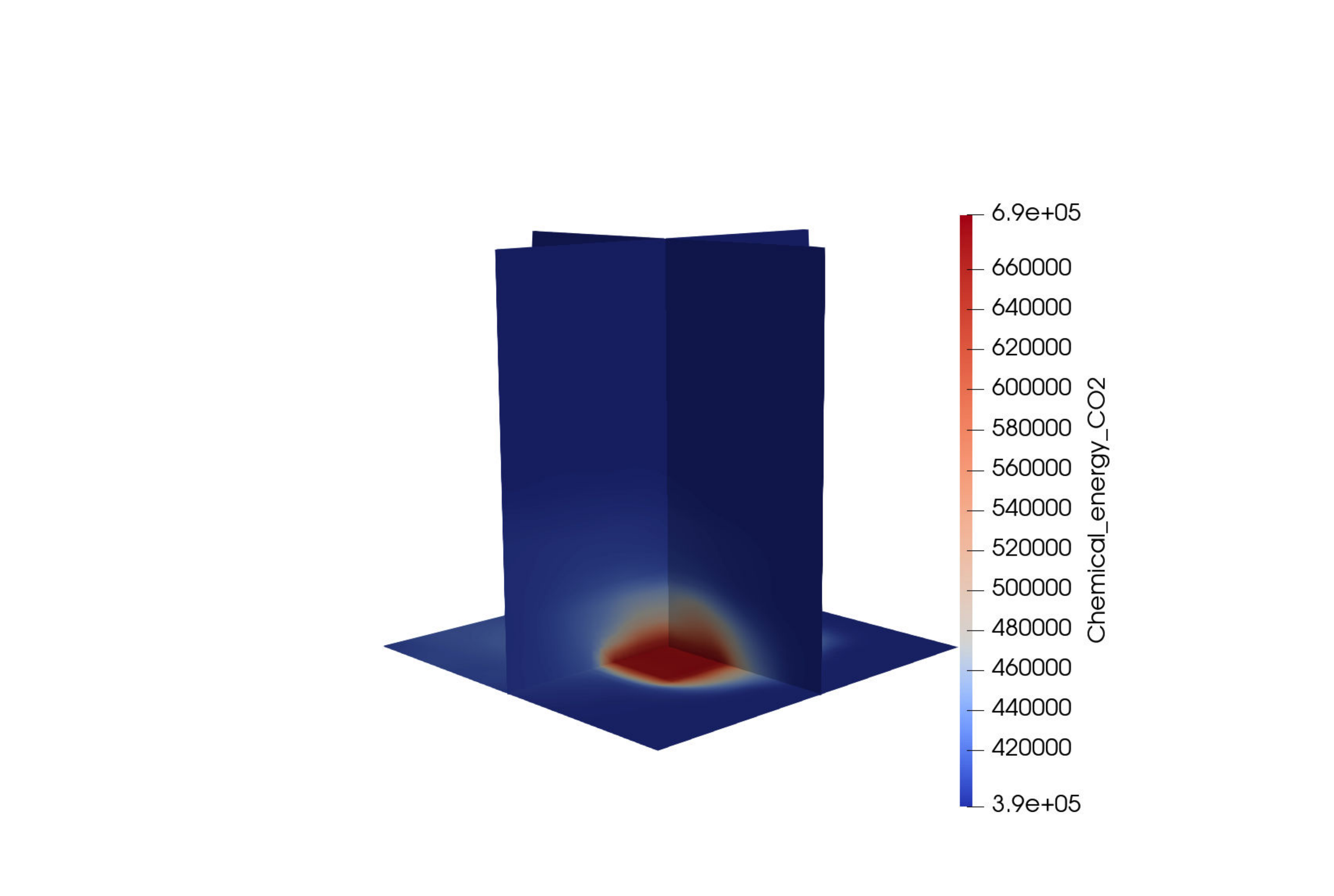}
	\includegraphics[width=7cm, height=4cm, trim=6cm 0cm 6cm 0cm,clip]{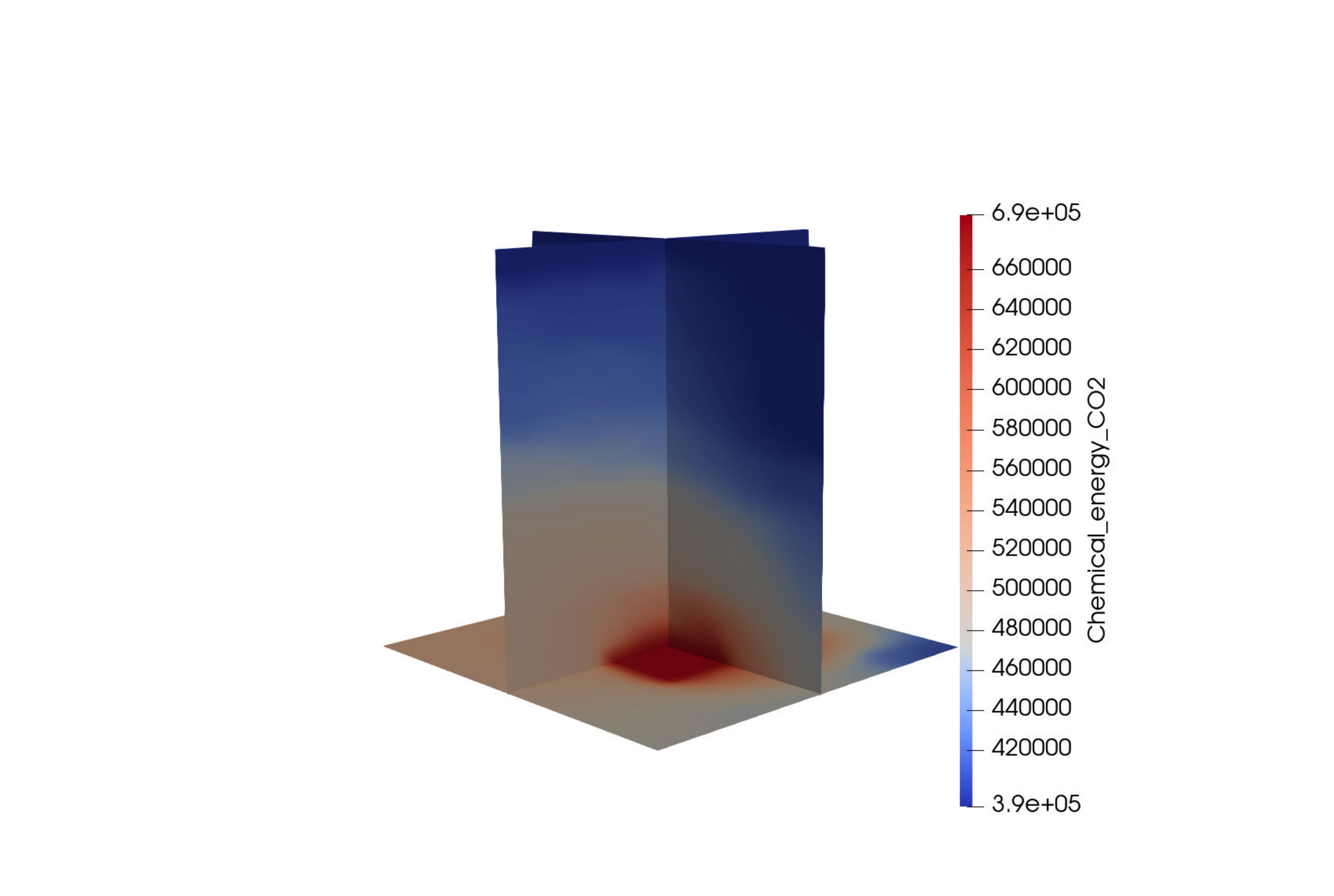}
	
	\includegraphics[width=7cm, height=4cm, trim=6cm 0cm 6cm 0cm,clip]{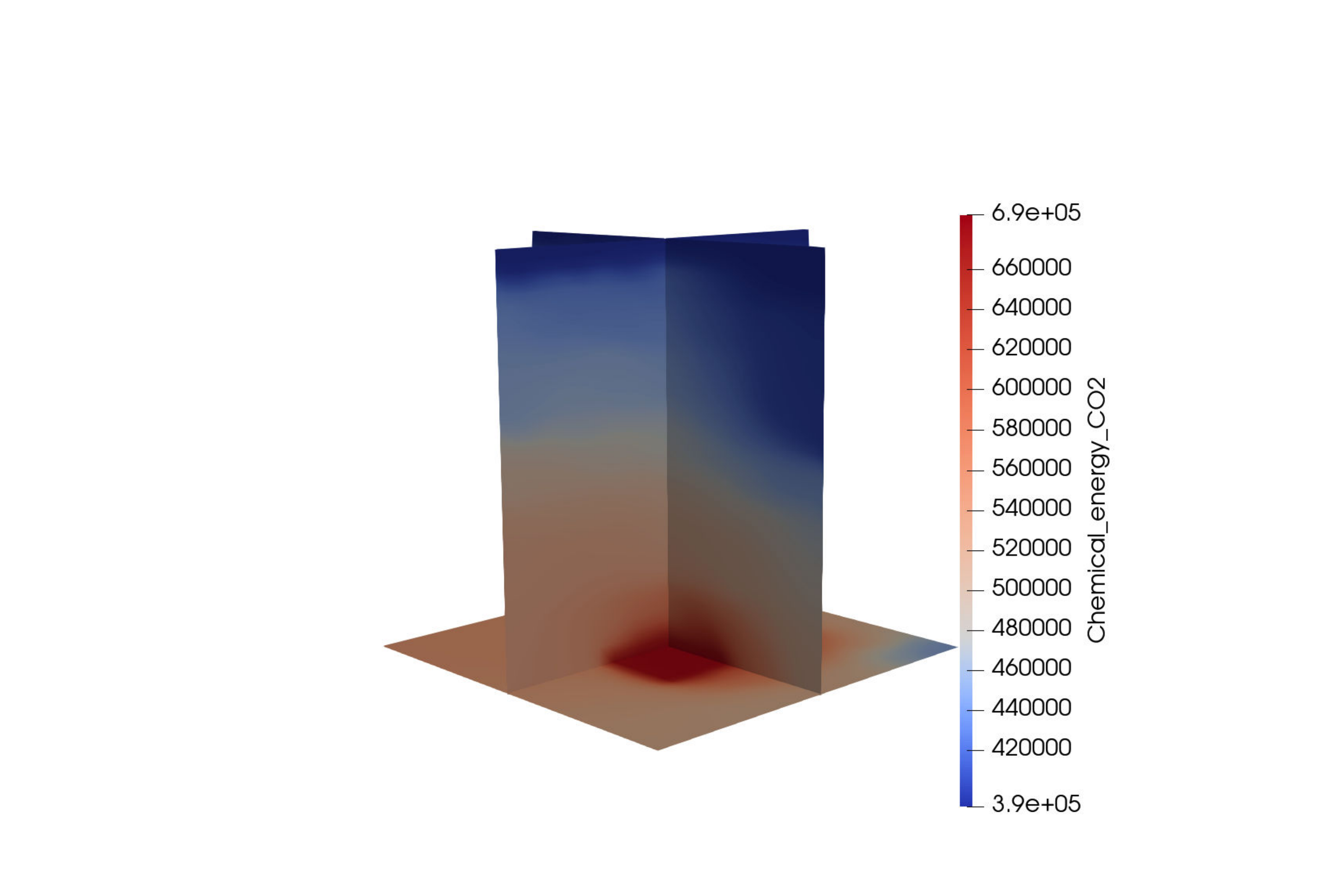}
	\includegraphics[width=7cm, height=4cm, trim=6cm 0cm 6cm 0cm,clip]{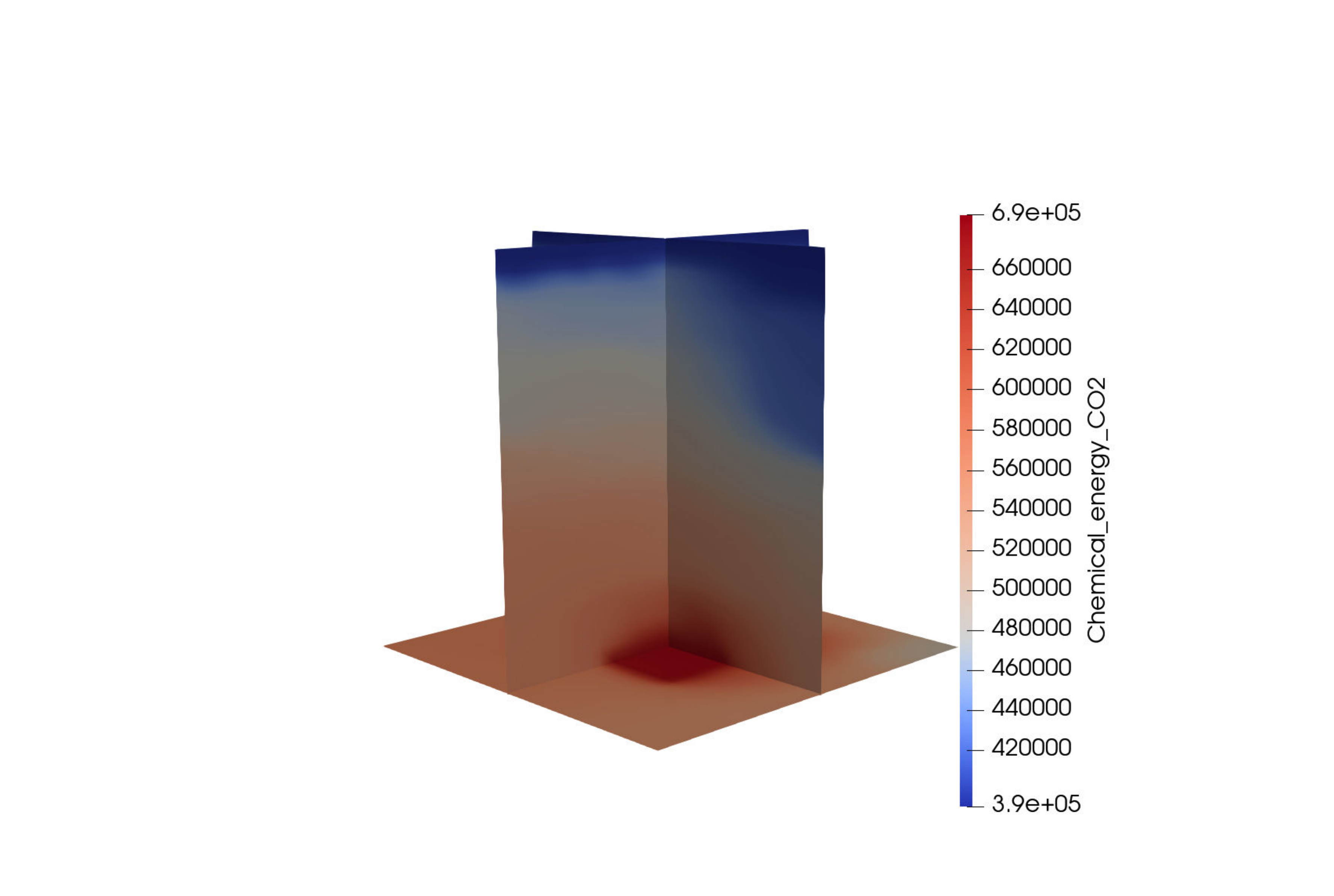}
	\caption{Distributions of chemical potential of CO$_2$ at different times in Example 3. Top-left: Top-left: $n = 50$. Top-right: $n = 500$. Bottom-left: $n = 1000$. Bottom-right: $n = 2000$.}\label{fig3-co2-che}
\end{figure}

\begin{figure}[htbp]
	\centering
	\includegraphics[width=7cm, height=4cm, trim=6cm 0cm 6cm 0cm,clip]{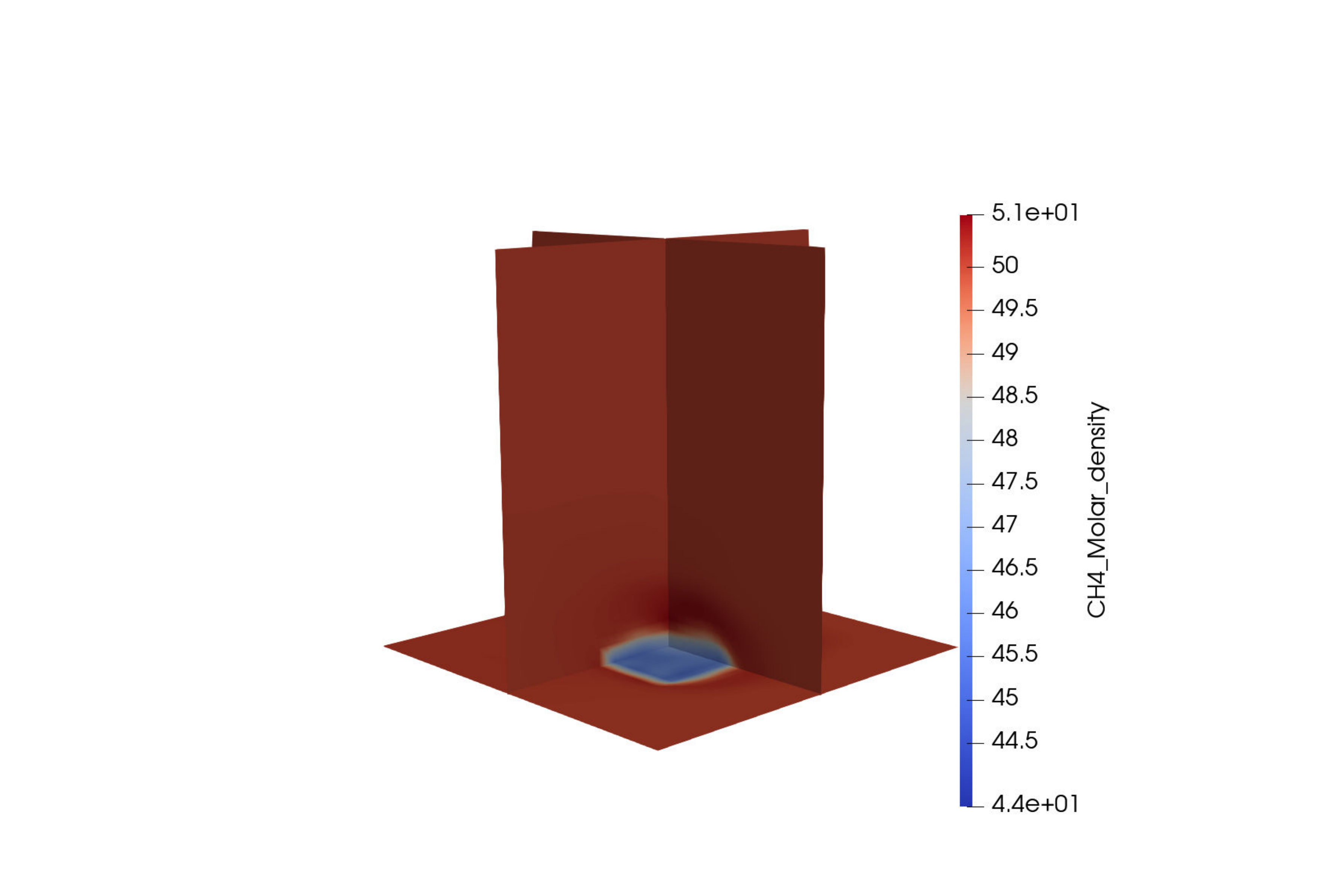}
	\includegraphics[width=7cm, height=4cm, trim=6cm 0cm 6cm 0cm,clip]{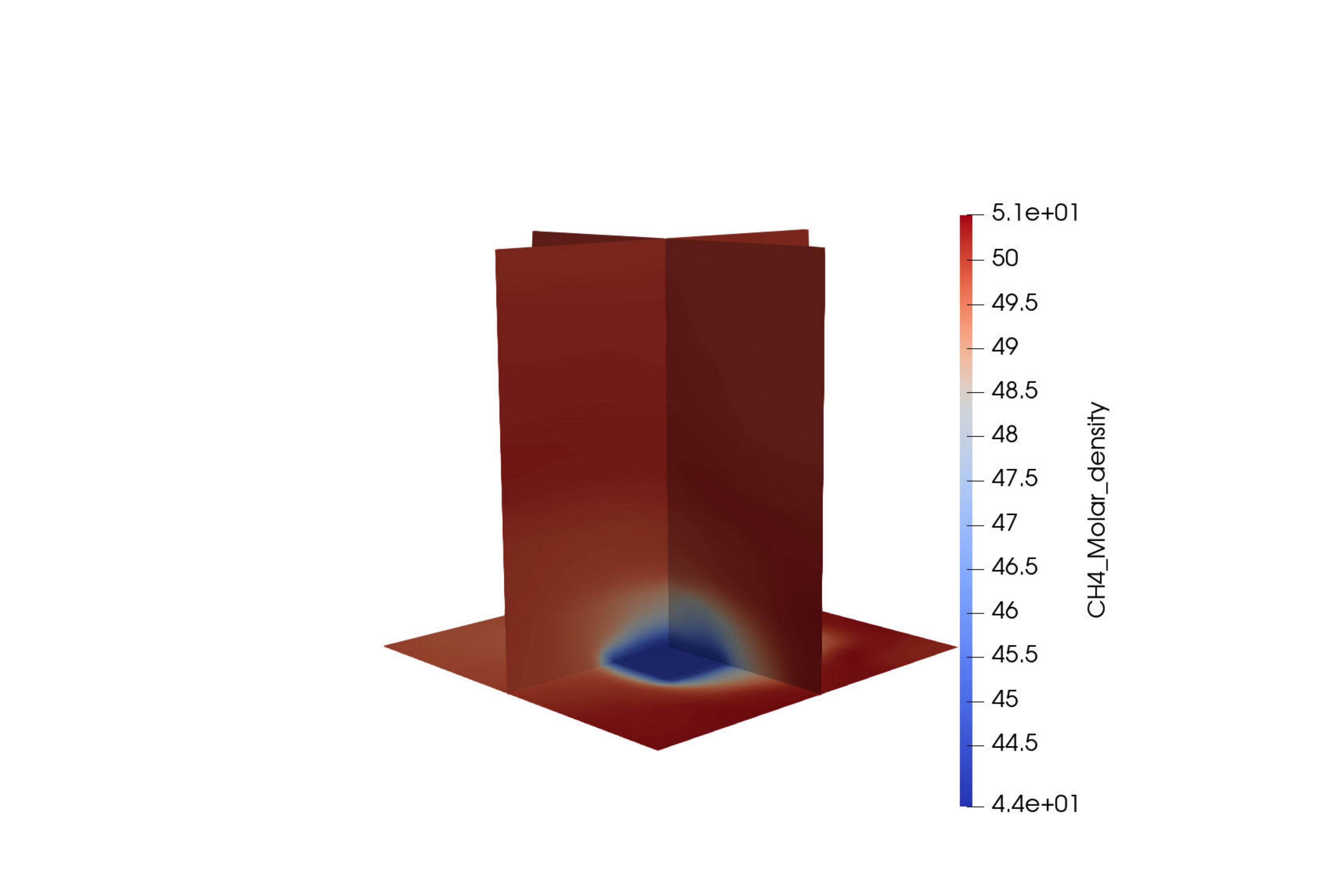}
	
	\includegraphics[width=7cm, height=4cm, trim=6cm 0cm 6cm 0cm,clip]{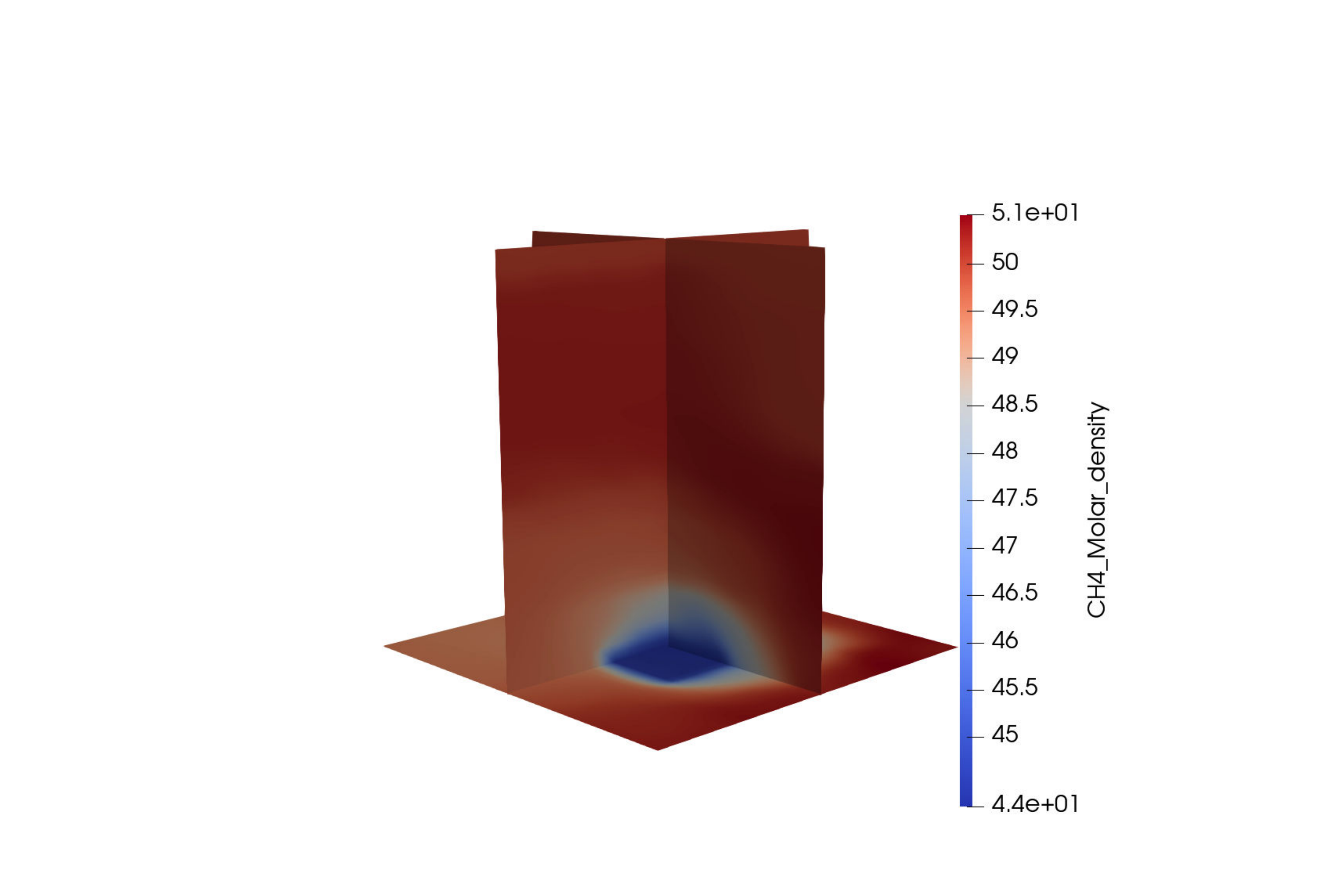}
	\includegraphics[width=7cm, height=4cm, trim=6cm 0cm 6cm 0cm,clip]{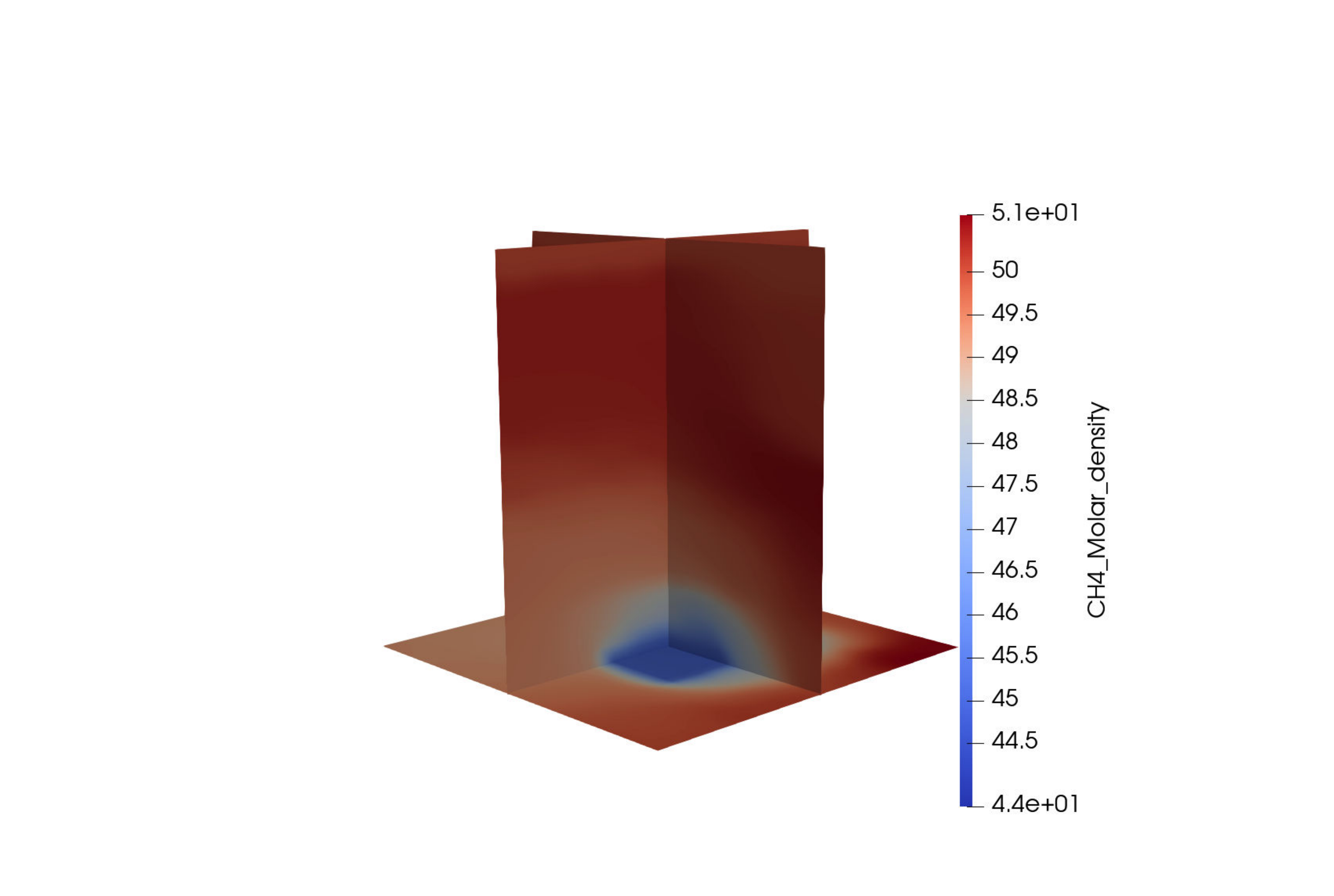}
	\caption{Distributions of molar density of CH$_4$ at different times in Example 3. Top-left: Top-left: $n = 50$. Top-right: $n = 500$. Bottom-left: $n = 1000$. Bottom-right: $n = 2000$.}\label{fig3-ch4}
\end{figure}

\begin{figure}[htbp]
	\centering
	\includegraphics[width=7cm, height=4cm, trim=6cm 0cm 6cm 0cm,clip]{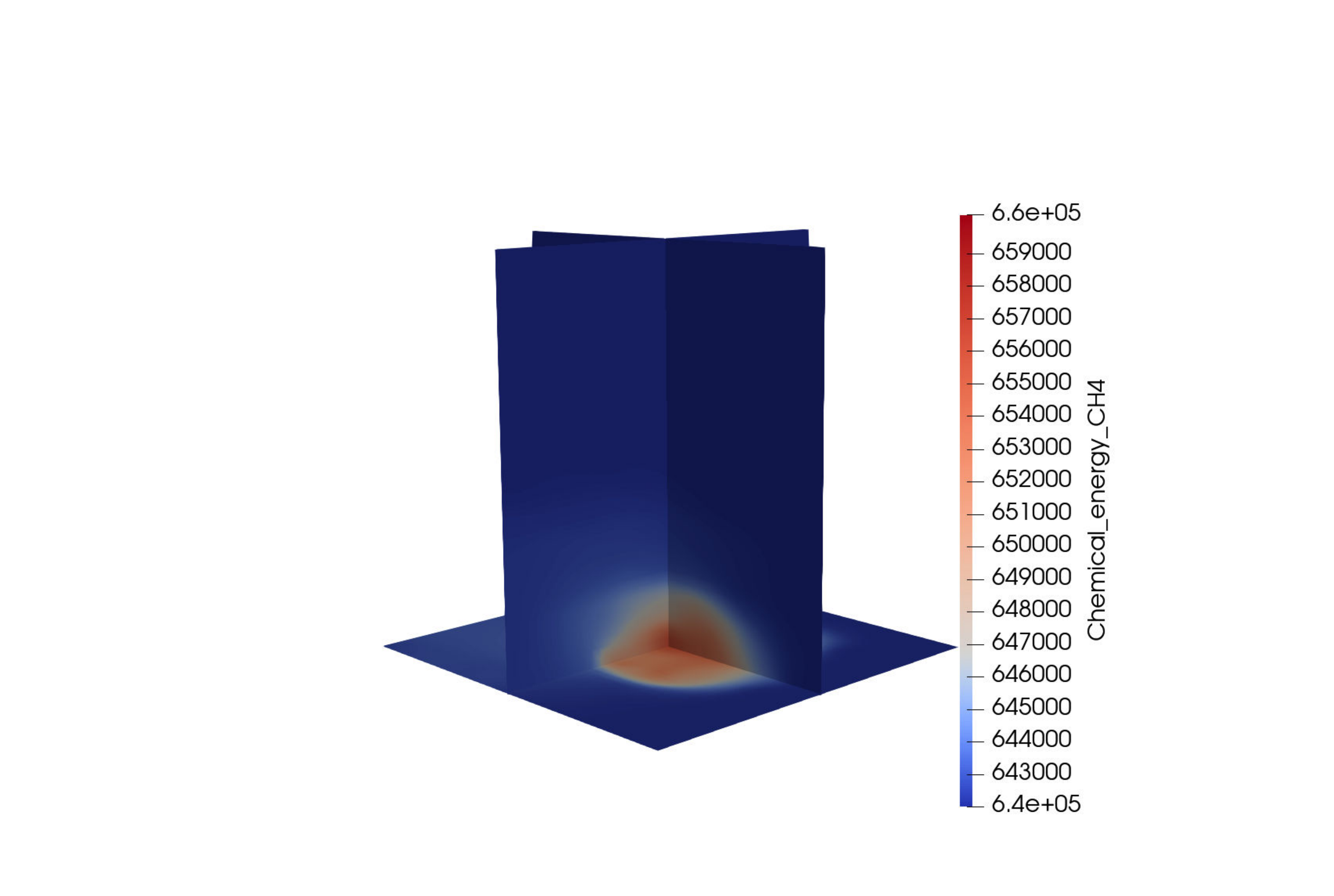}
	\includegraphics[width=7cm, height=4cm, trim=6cm 0cm 6cm 0cm,clip]{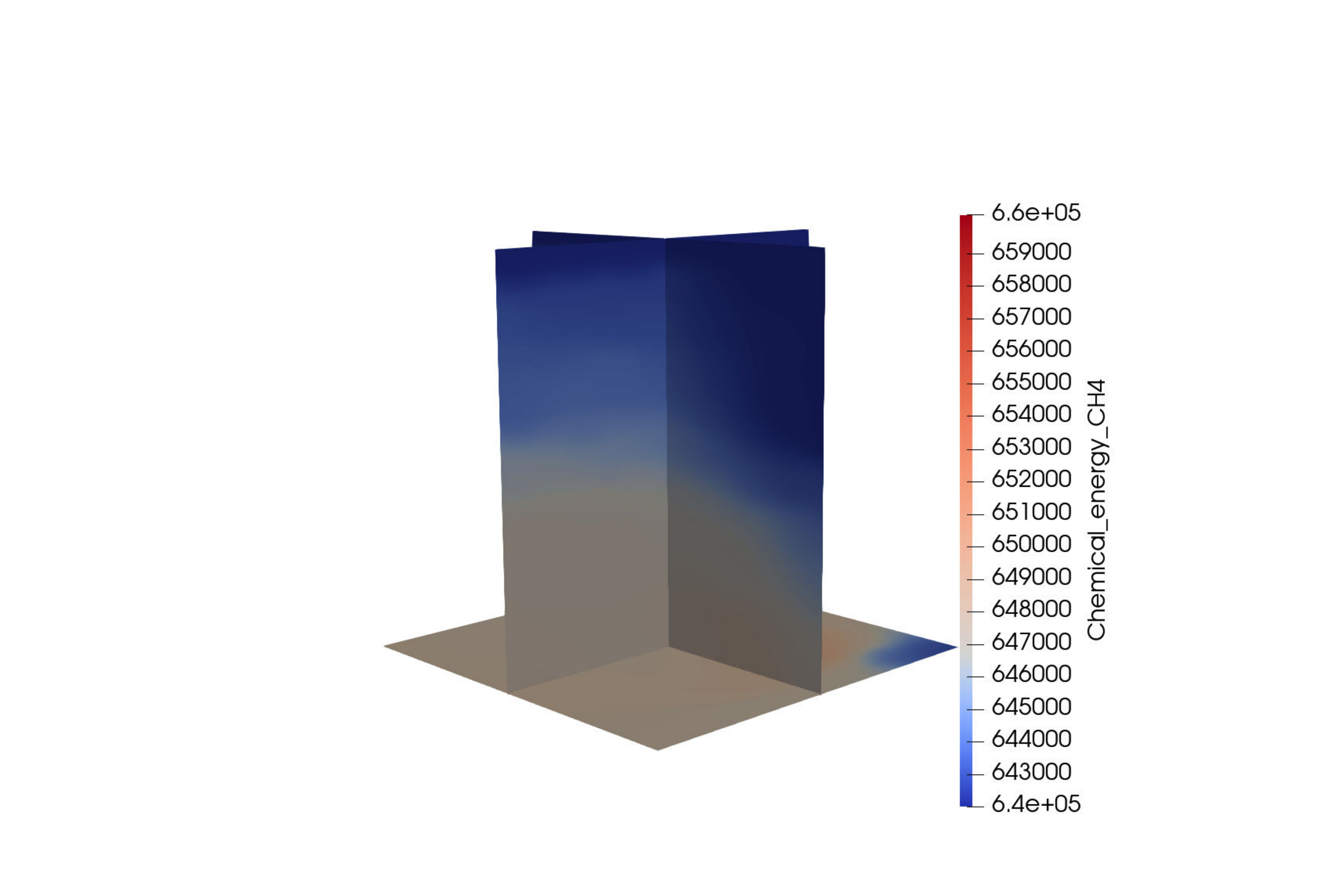}
	
	\includegraphics[width=7cm, height=4cm, trim=6cm 0cm 6cm 0cm,clip]{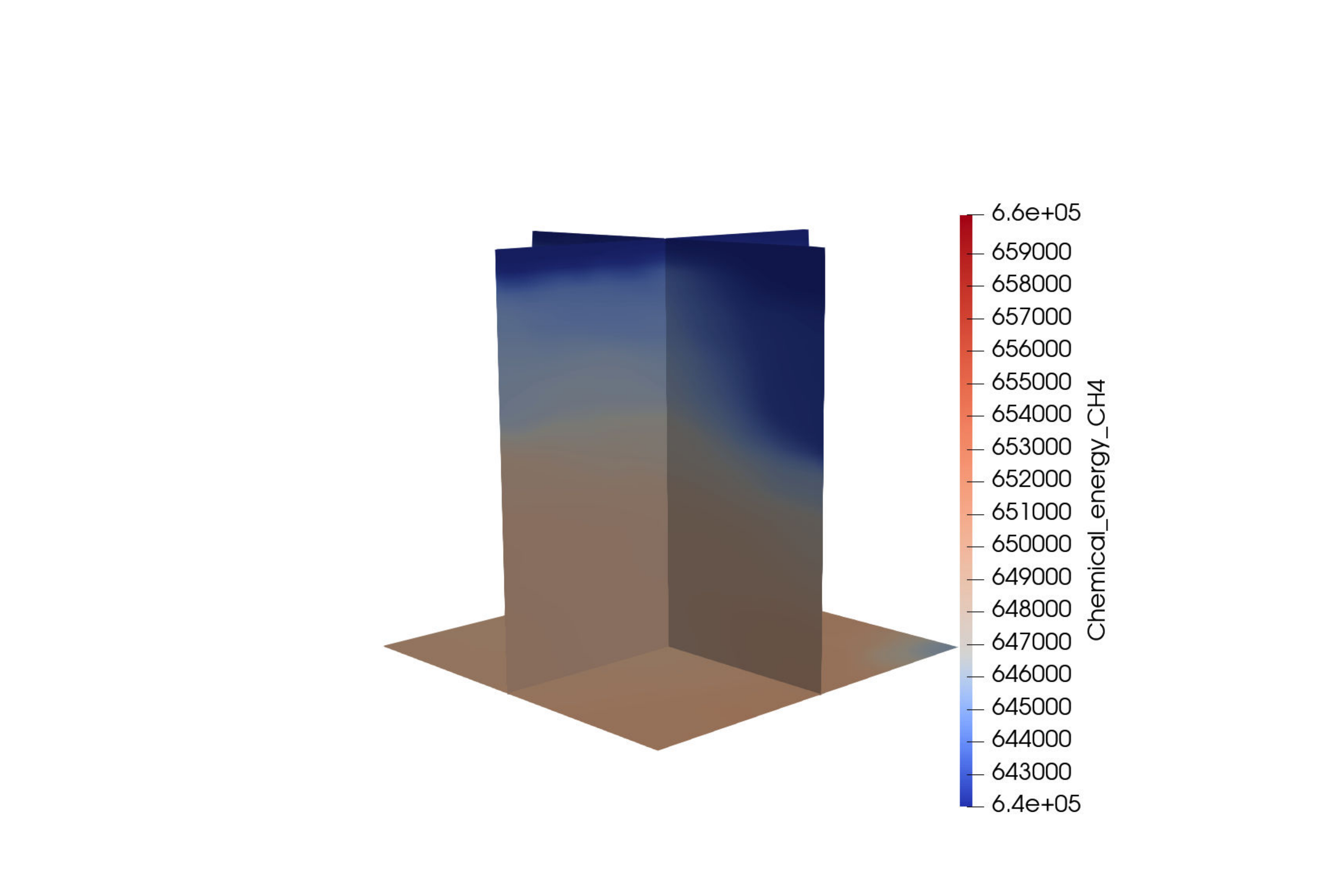}
	\includegraphics[width=7cm, height=4cm, trim=6cm 0cm 6cm 0cm,clip]{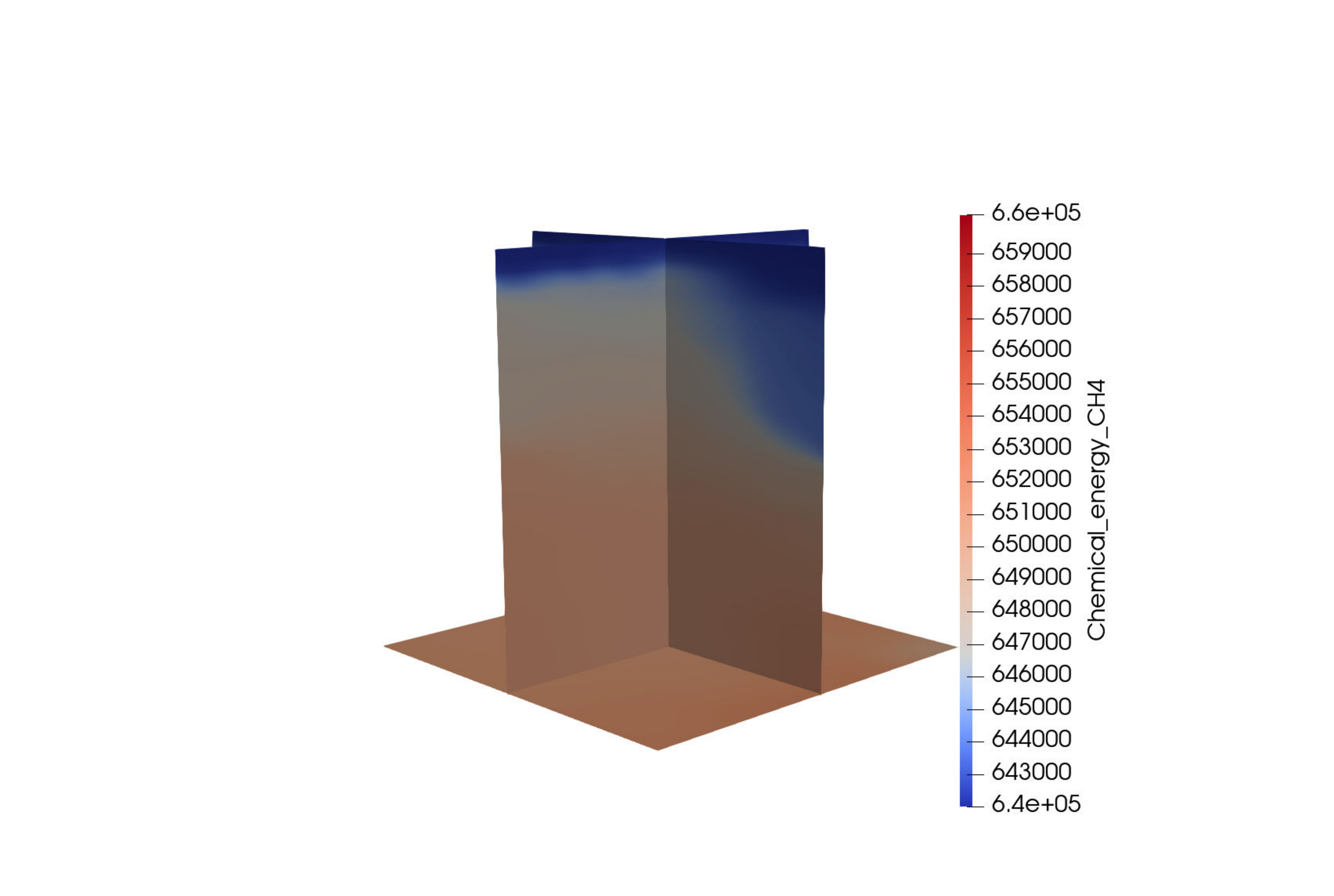}
	\caption{Distributions of chemical potential of CH$_4$ at different times in Example 3. Top-left: Top-left: $n = 50$. Top-right: $n = 500$. Bottom-left: $n = 1000$. Bottom-right: $n = 2000$.}\label{fig3-ch4-che}
\end{figure}

	%

\begin{figure}[htbp]
	\centering
	\includegraphics[width=7cm, height=4cm, trim=6cm 0cm 6cm 0cm,clip]{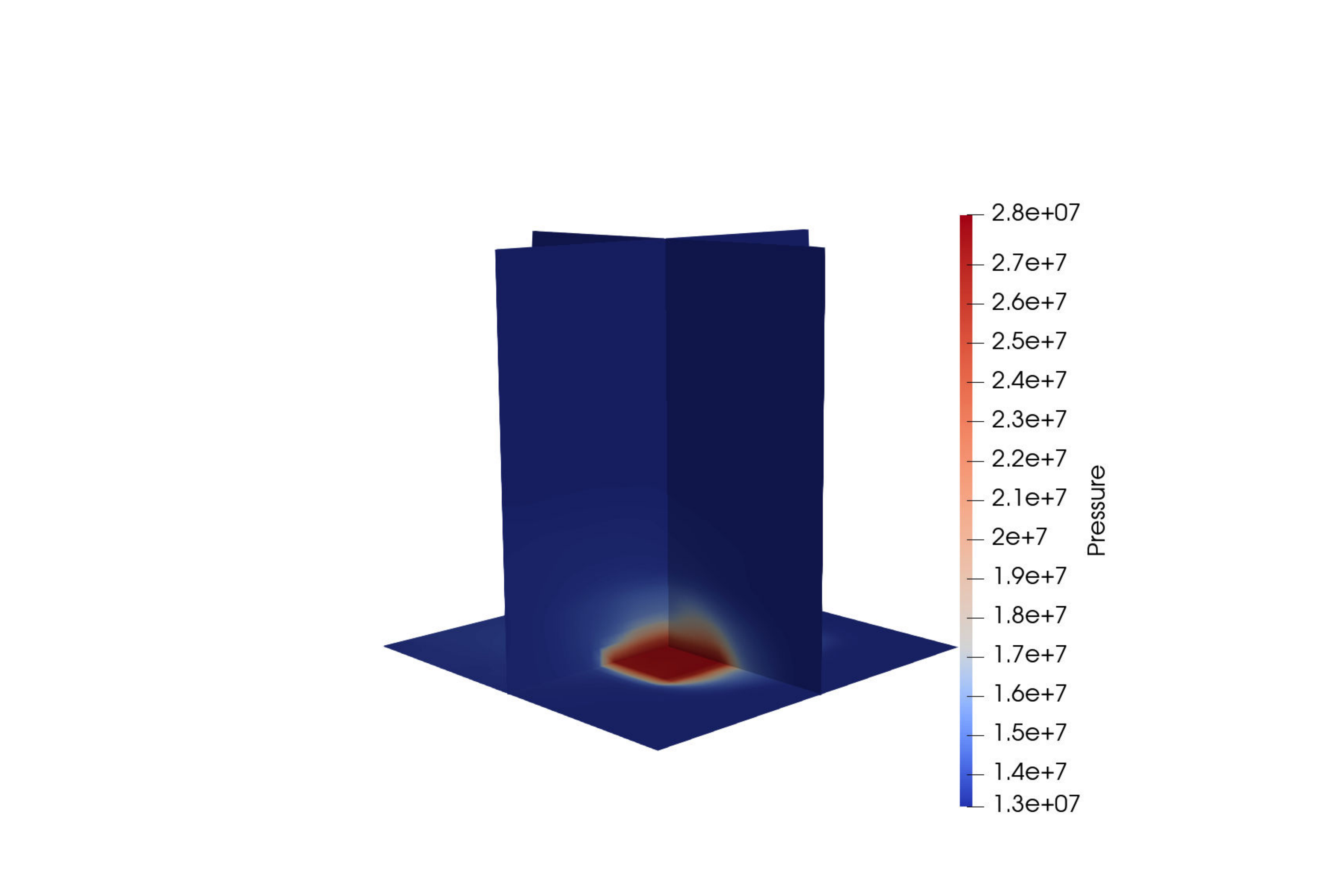}
	\includegraphics[width=7cm, height=4cm, trim=6cm 0cm 6cm 0cm,clip]{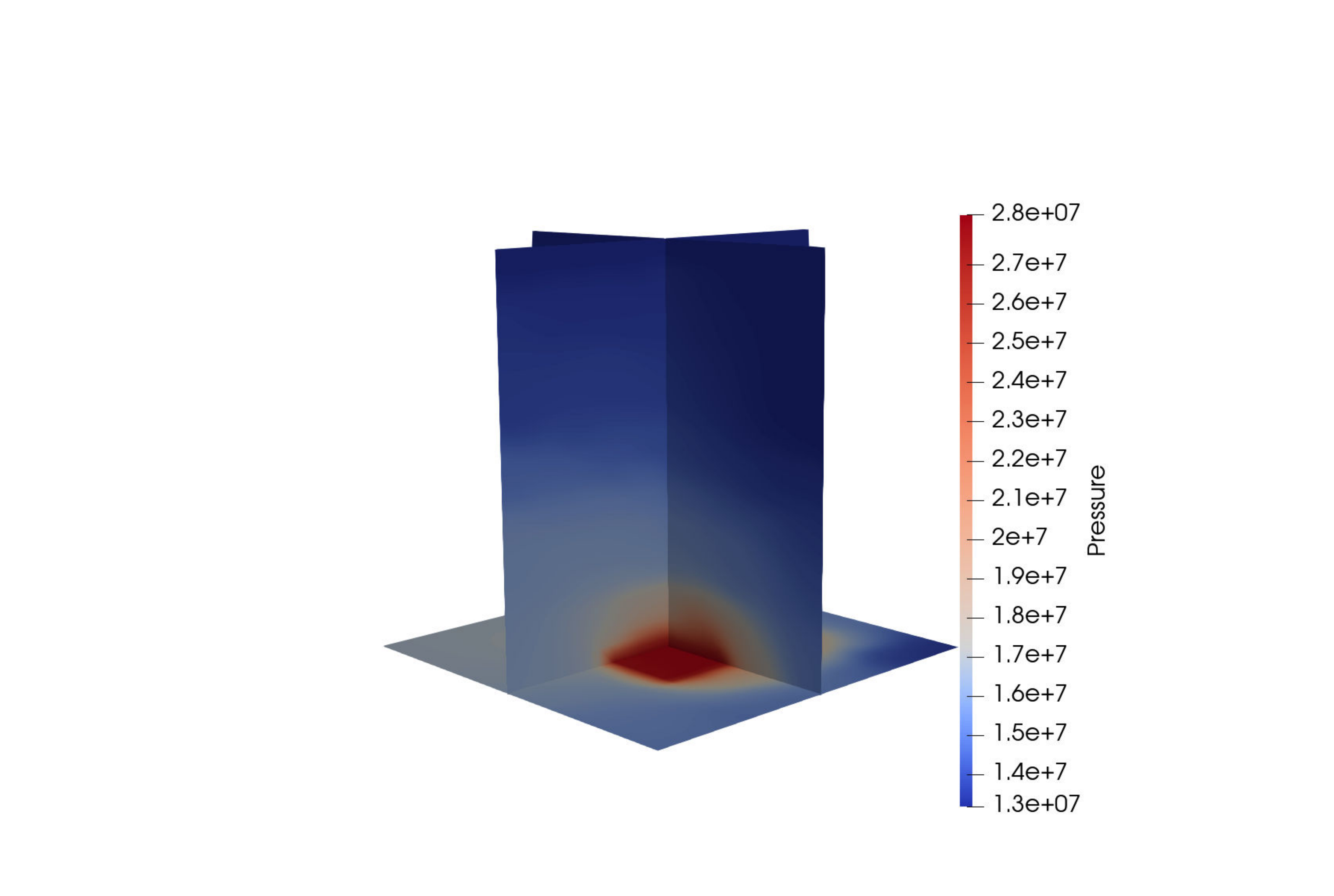}
	
	\includegraphics[width=7cm, height=4cm, trim=6cm 0cm 6cm 0cm,clip]{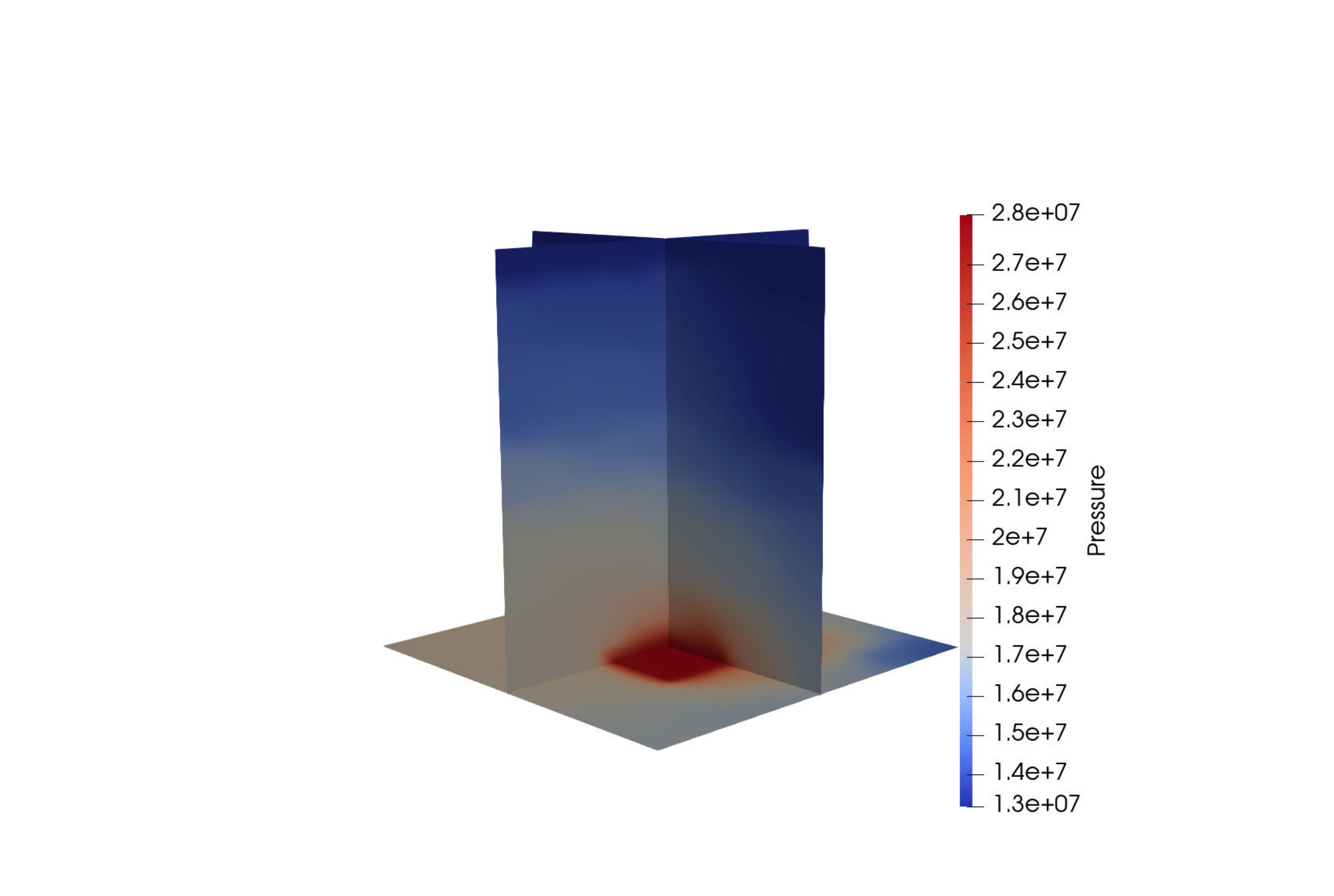}
	\includegraphics[width=7cm, height=4cm, trim=6cm 0cm 6cm 0cm,clip]{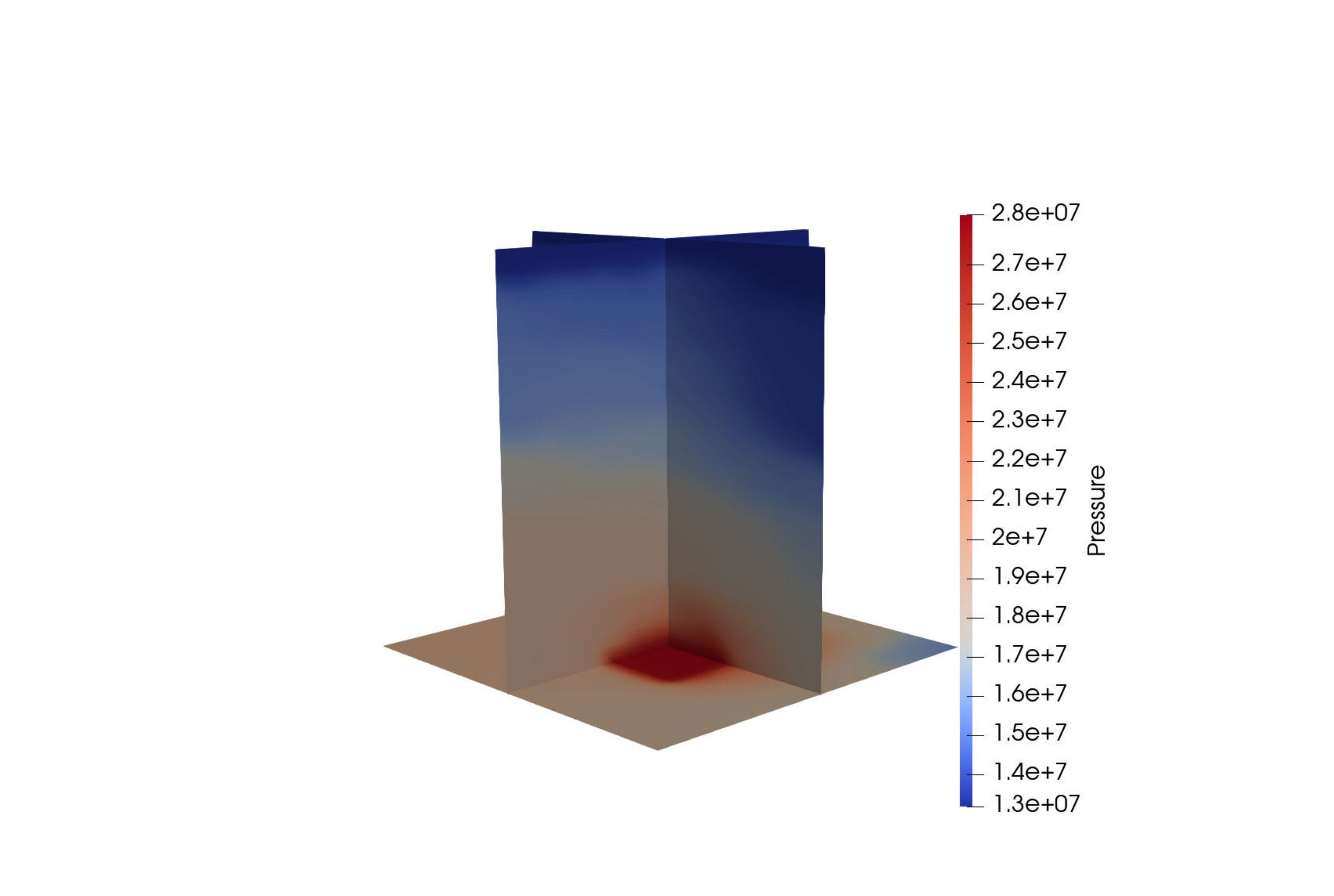}
	\caption{Distributions of pressure at different times in Example 3. Top-left: $n = 50$. Top-right: $n = 500$. Bottom-left: $n = 1000$. Bottom-right: $n = 2000$.}\label{fig3-pres}
\end{figure}

\begin{figure}[htbp]
	\centering
	\includegraphics[width=5cm, height=4cm, trim=6cm 0cm 6cm 0cm,clip]{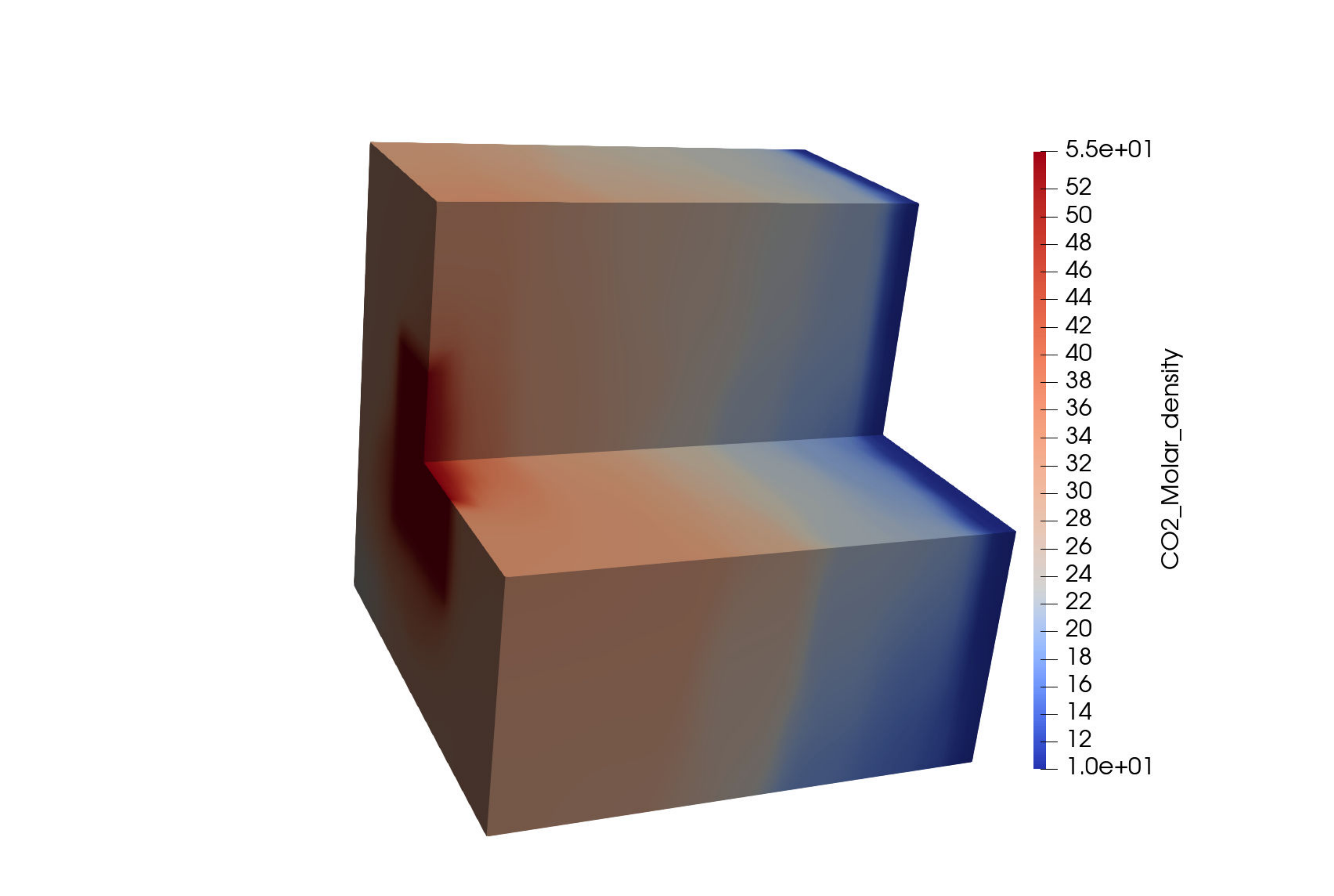}
	\includegraphics[width=5cm, height=4cm, trim=6cm 0cm 6cm 0cm,clip]{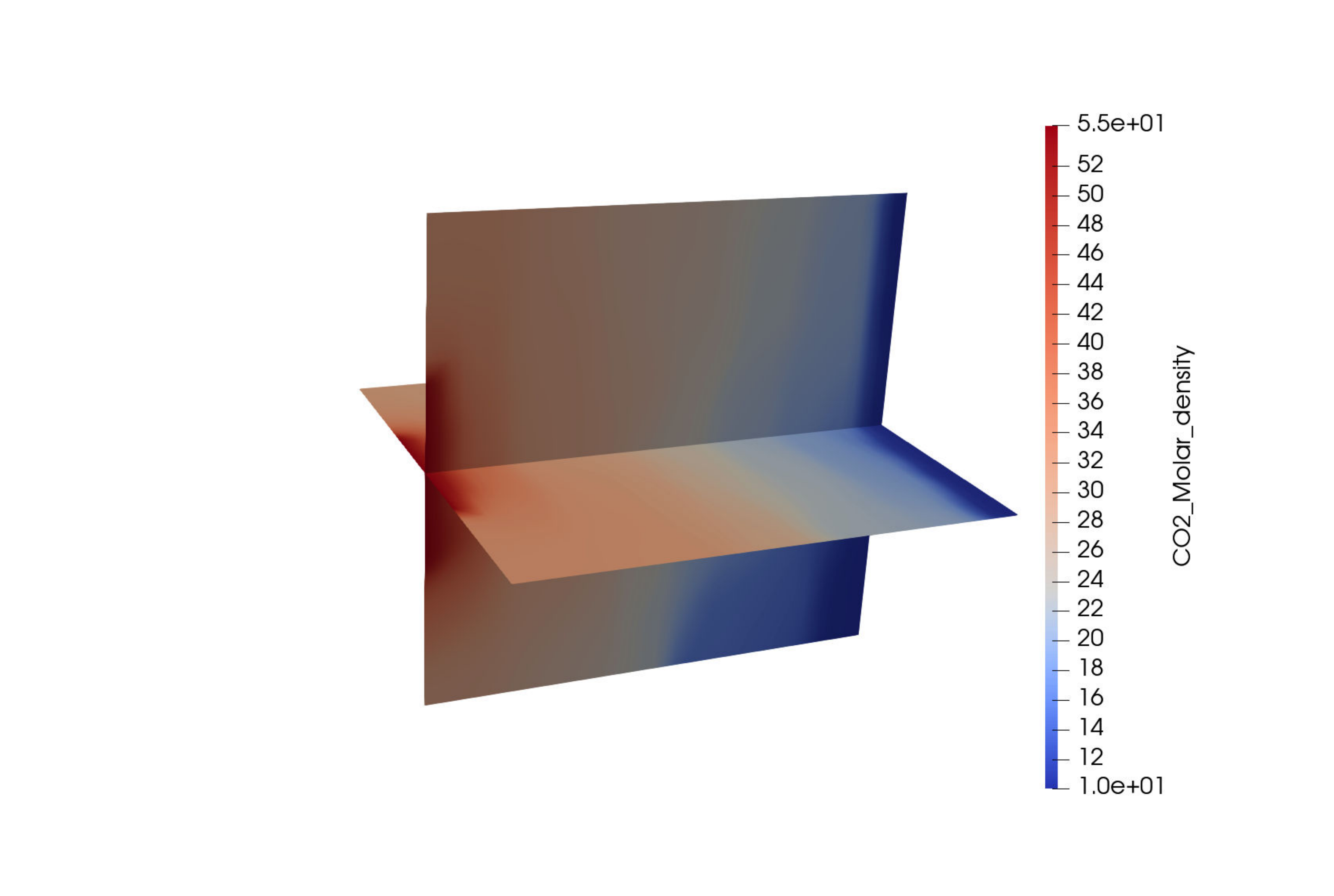}
	
	\includegraphics[width=5cm, height=4cm, trim=6cm 0cm 6cm 0cm,clip]{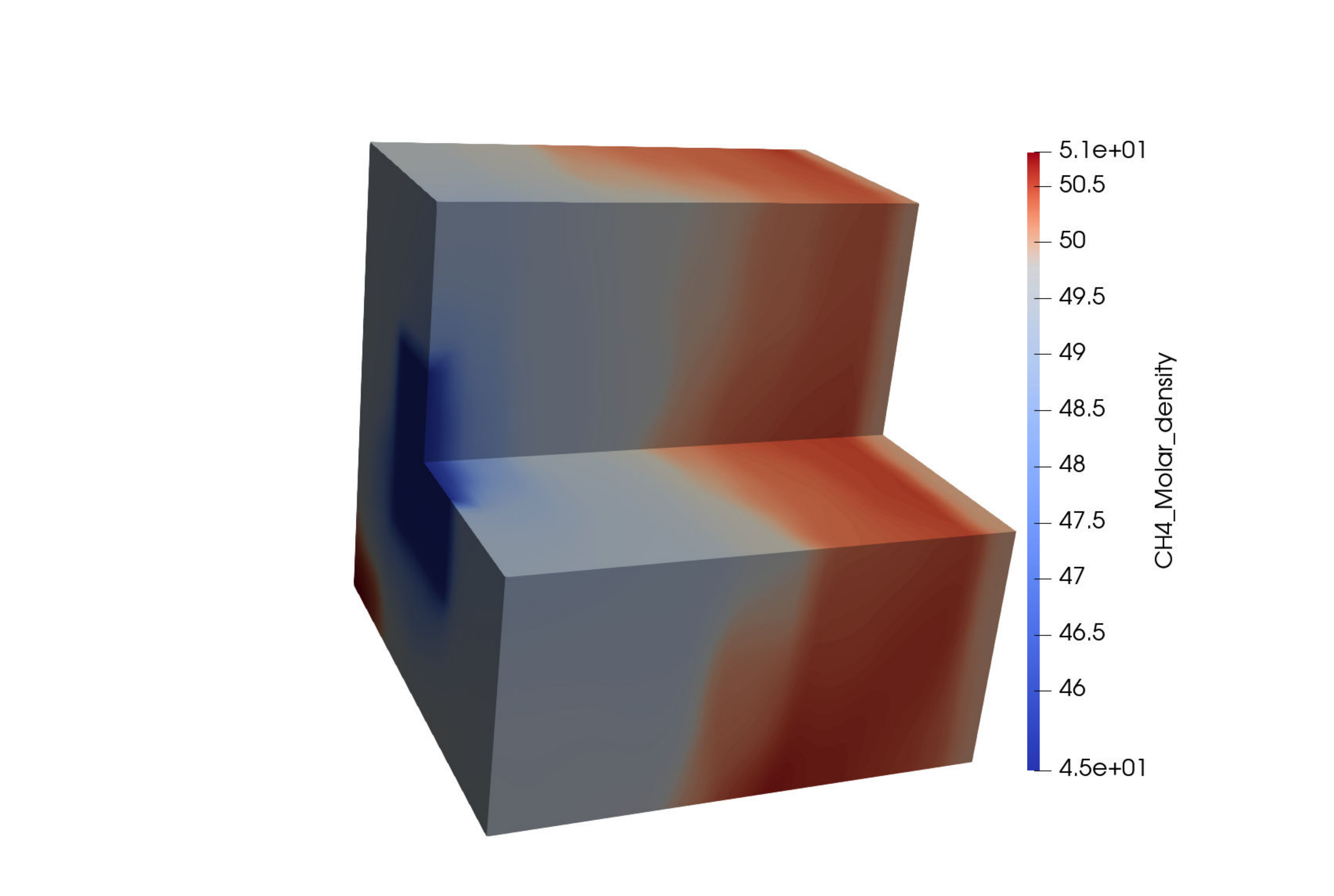}
	\includegraphics[width=5cm, height=4cm, trim=6cm 0cm 6cm 0cm,clip]{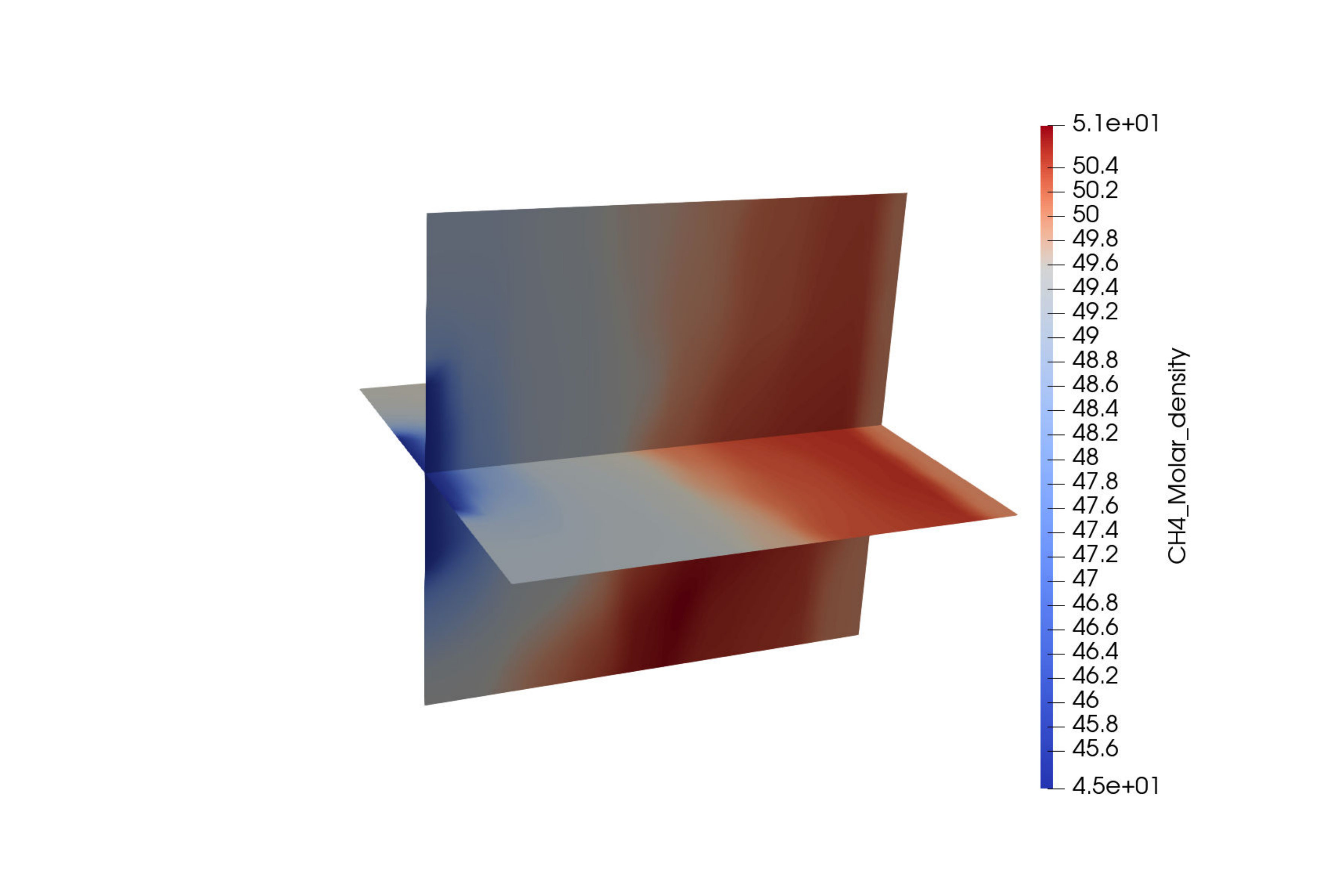}
	\caption{Distributions of molar densities of CO$_2$ and CH$_4$ at time step n = 5000. Top-left: CO$_2$  clip view. Top-right: CO$_2$  slice view. Bottom-left: CH$_4$  clip view. Bottom-right: CH$_4$  slice view.}\label{fig3-slice}
\end{figure}
\section{Conclusions}\label{sec-con}
In this work, we have developed a robust and efficient numerical framework for simulating multicomponent gas flow in poroelastic media. The proposed model systematically integrates multicomponent transport with the poroelastic response of the media. By introducing a stabilized discretization strategy and an adaptive time-stepping scheme, the numerical method ensures both numerical stability and computational efficiency. The use of a mixed finite element method with upwind stabilization for flow and transport, together with a discontinuous Galerkin formulation for the poroelastic momentum equation, further enhances accuracy and effectively mitigates numerical locking phenomena. Numerical experiments confirm the robustness and applicability of the framework, demonstrating its capability to handle complex multicomponent transport processes in poroelastic media. The proposed numerical framework provides a thermodynamically consistent discretization that ensures numerical stability and boundedness of molar densities, while the adaptive time-stepping strategy significantly improves computational efficiency. Together, these features result in a robust and reliable numerical scheme for simulating multicomponent transport in poroelastic media.
\section*{Acknowledgments}
Huangxin Chen was supported by the National Key Research and Development Project of China (Grant No. 2023YFA1011702) and the National Natural Science Foundation of China (Grant No. 12471345).
Shuyu Sun was supported by the National Key Research and Development Project of China (Grant No. 2023YFA1011701), the National Natural Science Foundation of China (Grant No. 12571466), the Fundamental Research Funds for the Central Universities, the Shanghai Magnolia Talent Fund (Innovation Talent Category) of Shanghai Municipal Human Resources and Social Security Bureau, and the Chang Jiang Scholars Program of the Ministry of Education of China. 

	\end{document}